\DeclareSymbolFont{largesymbols}{OMX}{zplm}{m}{n} 
\let\originalleft\left     
\let\originalright\right
\renewcommand{\left}{\mathopen{}\mathclose\bgroup\originalleft}
\renewcommand{\right}{\aftergroup\egroup\originalright}
\newcolumntype{C}{>{$}c<{$}} 
\numberwithin{equation}{section}
\newcommand{\sfrac}[2]{#1/#2}
\renewcommand{\ge}{\geq}
\renewcommand{\le}{\leq}
\DeclarePairedDelimiter{\brac}{\lparen}{\rparen} 
\DeclarePairedDelimiter{\sqbrac}{\lbrack}{\rbrack} 
\DeclarePairedDelimiter{\set}{\lbrace}{\rbrace}
\newcommand{\st}{\mspace{5mu} : \mspace{5mu}} 
\DeclarePairedDelimiter{\norm}{\lVert}{\rVert}
\DeclarePairedDelimiter{\ang}{\langle}{\rangle}
\DeclarePairedDelimiter{\normord}{{} :}{: {}} 
\DeclarePairedDelimiter{\powser}{\llbracket}{\rrbracket} 
\DeclarePairedDelimiterX{\comm}[2]{\lbrack}{\rbrack}{#1 , #2}  
\DeclarePairedDelimiterX{\acomm}[2]{\lbrace}{\rbrace}{#1 , #2} 
\DeclarePairedDelimiterX{\super}[2]{\lparen}{\rparen}{#1 \delimsize\vert \mathopen{} #2} 
\newcommand{\killing}[2]{\kappa \brac[\big]{#1 , #2}} 
\DeclareMathOperator{\pf}{pf}
\DeclareMathOperator{\id}{id}
\newcommand{\dd}{\mathrm{d}}   
\newcommand{\ee}{\mathsf{e}}   
\newcommand{\wun}{\mathbf{1}}  
\newcommand{\poch}[2]{\brac*{#1}_{#2}}
\DeclareMathOperator{\cspn}{span}
\newcommand{\spn}[1]{\cspn\set*{#1}}                    
\newcommand{\lra}{\longrightarrow}
\newcommand{\ira}{\hookrightarrow}    
\newcommand{\dses}[5]{0 \lra #1 \overset{#2}{\lra} #3 \overset{#4}{\lra} #5 \lra 0} 
\DeclareMathOperator{\ind}{Ind}
\newcommand{\Ind}[3]{\ind^{#1}_{#2} #3}
\DeclareMathOperator{\ext}{Ext}
\newcommand{\Extgrp}[3]{\ext^{#1}\brac*{#2,#3}}
\DeclareMathOperator{\cent}{C}
\newcommand{\Cent}[2]{\cent\brac*{#1,#2}}
\newcommand{\fld}[1]{\mathbb{#1}}    
\newcommand{\alg}[1]{\mathfrak{#1}}  
\newcommand{\Mod}[1]{\mathcal{#1}}   
\newcommand{\VOA}[1]{\mathsf{#1}}    
\newcommand{\categ}[1]{\mathscr{#1}} 
\newcommand{\ZZ}{\fld{Z}}
\newcommand{\CC}{\fld{C}}
\newcommand{\even}[1]{{#1}_{\overline{0}}}
\newcommand{\odd}[1]{{#1}_{\overline{1}}}
\newcommand{\rpar}[1]{\Pi{#1}}
\newcommand{\affine}[1]{\widehat{#1}}
\newcommand{\SLA}[2]{\alg{#1} \brac*{#2}}                 
\newcommand{\SLSA}[3]{\alg{#1} \super*{#2}{#3}}           
\newcommand{\AKMA}[2]{\affine{\alg{#1}} \brac*{#2}}       
\newcommand{\AKMSA}[3]{\affine{\alg{#1}} \super*{#2}{#3}} 
\newcommand{\osp}{\SLSA{osp}{1}{2}}                            
\newcommand{\slt}{\SLA{sl}{2}}
\newcommand{\aosp}[1]{\AKMSA{osp}{1}{2}_{#1}}
\newcommand{\NSosp}[1]{\aosp{0}^{#1}}
\newcommand{\Rosp}[1]{\aosp{1/2}^{#1}}
\newcommand{\UEA}[1]{\mathsf{U}\brac*{#1}}
\newcommand{\cart}{\affine{\alg{h}}}                           
\newcommand{\heisvs}[1]{\Mod{H}_{#1}}                                  
\newcommand{\hlie}[1]{\alg{H}_{#1}}                           
\newcommand{\bglie}{\alg{G}_{\beta\gamma}}                
\newcommand{\bclie}[1]{\alg{bc}_{#1}}                         
\newcommand{\MinMod}[2]{\VOA{B}_{0|1}\left( #1 , #2 \right)}                
\newcommand{\vosp}[1]{\VOA{V}_{#1}}                                           
\newcommand{\hvoa}[1]{\VOA{H}_{#1}}                                          
\newcommand{\ffa}{\VOA{F}}
\newcommand{\ffb}{\VOA{B}}
\newcommand{\lva}[1]{\VOA{L}_{\ideal{#1}}}                                  
\newcommand{\bgva}{\VOA{G}}
\newcommand{\bcva}{\VOA{bc}}
\newcommand{\ideal}[1]{\ang*{#1}}
\newcommand{\vsv}[1]{\chi_{#1}}                                             
\newcommand{\Vac}{\Omega}
\newcommand{\gtzhu}[2]{\mathsf{Zhu}^{\!#2}\sqbrac*{#1}} 
\newcommand{\zhu}[1]{\gtzhu{#1}{}}                                          
\newcommand{\parity}{\tau}                                                  
\newcommand{\tzhu}[1]{\gtzhu{#1}{\parity}}                                  
\newcommand{\finite}[1]{\overline{#1}}
\newcommand{\sfaut}{\sigma}                          
\newcommand{\NS}{\mathrm{NS}}
\newcommand{\Ra}{\mathrm{R}}
\newcommand{\Ver}[1]{\Mod{V}_{#1}}                   
\newcommand{\Irr}[1]{\Mod{L}_{#1}}                   
\newcommand{\NSf}[1]{\Mod{A}_{#1}}       
\newcommand{\fNSf}[1]{\finite{\Mod{A}}_{#1}}
\newcommand{\NSh}[1]{\Mod{B}^+_{#1}}       
\newcommand{\fNSh}[1]{\finite{\Mod{B}}^+_{#1}}
\newcommand{\NSl}[1]{\Mod{B}^-_{#1}}       
\newcommand{\fNSl}[1]{\finite{\Mod{B}}^-_{#1}}
\newcommand{\NSr}[1]{\Mod{C}_{#1}}       
\newcommand{\fNSr}[1]{\finite{\Mod{C}}_{#1}}
\newcommand{\Rf}[1]{\Mod{F}_{#1}}       
\newcommand{\fRf}[1]{\finite{\Mod{F}}_{#1}}
\newcommand{\Rh}[1]{\Mod{D}^+_{#1}}       
\newcommand{\fRh}[1]{\finite{\Mod{D}}^+_{#1}}
\newcommand{\Rl}[1]{\Mod{D}^-_{#1}}       
\newcommand{\fRl}[1]{\finite{\Mod{D}}^-_{#1}}
\newcommand{\Rr}[1]{\Mod{E}_{#1}}       
\newcommand{\fRr}[1]{\finite{\Mod{E}}_{#1}}
\newcommand{\bgh}{\Mod{G}^+}            
\newcommand{\dbgh}{\Mod{G}^{+\,\ast}}            
\newcommand{\bgl}{\Mod{G}^-}            
\newcommand{\bgd}[1]{\Mod{W}_{#1}}      
\newcommand{\fbgh}{\finite{\Mod{G}}^+}            
\newcommand{\fbgl}{\finite{\Mod{G}}^-}
\newcommand{\fbgd}[1]{\finite{\Mod{W}}_{#1}}
\newcommand{\catO}{\categ{O}}           
\newcommand{\catR}{\categ{R}}           
\newcommand{\Fock}[1]{\mathbb{F}_{#1}}                       
\newcommand{\NSFock}{\Fock{}^{\NS}}          
\newcommand{\dNSFock}{\Fock{}^{\NS\,\ast}}
\newcommand{\RFock}{\Fock{}^{\Ra}}            
\newcommand{\dRFock}{\Fock{}^{\Ra\,\ast}}
\newcommand{\NSFFaFock}[1]{\tensor*[^\ffa]{\mathbb{F}}{_{#1}^{\NS}}}
\newcommand{\RFFaFock}[1]{\tensor*[^\ffa]{\mathbb{F}}{_{#1}^{\Ra}}}
\newcommand{\NSFFbFock}[1]{\tensor*[^\ffb]{\mathbb{F}}{_{#1}^{\NS}}}
\newcommand{\RFFbFock}[1]{\tensor*[^\ffb]{\mathbb{F}}{_{#1}^{\Ra}}}
\DeclarePairedDelimiter{\bra}{\langle}{\rvert}
\DeclarePairedDelimiter{\ket}{\lvert}{\rangle}
\DeclarePairedDelimiterX{\braket}[2]{\langle}{\rangle}{#1 \delimsize\vert \mathopen{} #2}
\DeclarePairedDelimiterX{\bracket}[3]{\langle}{\rangle}{#1 \delimsize\vert \mathopen{} #2 \delimsize\vert \mathopen{} #3}
\newcommand{\brab}[1]{\bra*{#1}}
\newcommand{\ketb}[1]{\ket*{#1}}
\newcommand{\braketb}[2]{\braket*{#1}{#2}}
\newcommand{\bracketb}[3]{\bracket*{#1}{#2}{#3}}
\newcommand{\NSfbra}[1]{\brab{#1;\NS}}          
\newcommand{\NSfket}[1]{\ketb{#1;\NS}}          
\newcommand{\Rfket}[1]{\ketb{#1;\Ra}}            
\newcommand{\NSffbracket}[3]{\bracketb{#1;\NS}{#2}{#3;\NS}}
\newcommand{\Rffbracket}[3]{\bracketb{#1;\Ra}{#2}{#3;\Ra}}
\DeclarePairedDelimiter{\corrfn}{\langle}{\rangle}   
\newcommand{\corrfnb}[1]{\corrfn*{#1}} 
\newcommand{\NScorrfn}[2]{\corrfnb{#1}_{\NS}^{#2}}        
\newcommand{\Rcorrfn}[1]{\corrfnb{#1}_{\Ra}}          
\newcommand{\Rbgcorrfn}[2]{\corrfnb{#1}_{\Ra}^{#2}}
\newcommand{\FNScorrfn}[3]{\corrfnb{#1}_{\NS}^{#2;#3}}
\newcommand{\kacsymbol}{\mathsf{K}}
\newcommand{\kac}[1]{\kacsymbol_{#1}}                
\newcommand{\NSkac}[1]{\kac{#1}^{\NS}}
\newcommand{\Rkac}[1]{\kac{#1}^{\Ra}}
\newcommand{\rkac}[1]{\overline{\kacsymbol}_{#1}}    
\newcommand{\rNSkac}[1]{\rkac{#1}^{\NS}}
\newcommand{\rRkac}[1]{\rkac{#1}^{\Ra}}
\newcommand{\admp}[2]{\delta^{(#1)}\brac*{#2}}      
\newcommand{\uniqp}[2]{\epsilon^{(#1)}\brac*{#2}}   
\DeclareMathOperator{\tr}{tr}
\newcommand{\chmap}{\mathrm{ch}}
\newcommand{\Gr}[1]{\sqbrac[\big]{#1}}          
\newcommand{\ch}[1]{\chmap \Gr{#1}}             
\newcommand{\vop}[2]{\mathrm{V}_{#1}\brac*{#2}}
\newcommand{\SCR}{\mathcal{Q}}
\newcommand{\scr}[1]{\SCR_{#1}}
\newcommand{\scrf}[2]{\SCR_{#1}(#2)}
\newcommand{\scrs}[2]{\SCR_{#1}^{[#2]}}
\newcommand{\cyc}[2]{\Gamma(#1,#2)}
\newcommand{\Van}{\Delta}
\newcommand{\van}[1]{\Van\brac*{#1}}
\newcommand{\poly}[1]{\mathsf{#1}}
\newcommand{\powsum}[1]{\poly{p}_{#1}}                           
\newcommand{\fpowsum}[2]{\poly{p}_{#1} \brac[\big]{#2}}
\newcommand{\jack}[2]{\poly{P}_{#1}^{#2}}                        
\newcommand{\fjack}[3]{\poly{P}_{#1}^{#2} \brac[\big]{#3}}
\newcommand{\fdjack}[3]{\poly{Q}_{#1}^{#2} \brac[\big]{#3}}
\newcommand{\jprod}[3]{\ang*{#1}_{#2}^{#3}}    
\newcommand{\cjprod}[2]{\ang*{#1}^{#2}}        
\newcommand{\symiso}[2]{\rho^{#1}_{#2}}                  
\newcommand{\uea}{universal enveloping algebra}
\newcommand{\lw}{lowest-weight}
\newcommand{\lwv}{\lw{} vector}
\newcommand{\hw}{highest-weight}
\newcommand{\hwv}{\hw{} vector}
\newcommand{\sv}{singular vector}
\newcommand{\svs}{singular vectors}
\newcommand{\va}{vertex algebra}
\newcommand{\vsa}{vertex superalgebra}
\newcommand{\voa}{vertex operator algebra}
\newcommand{\vosa}{vertex operator superalgebra}
\newcommand{\ope}{operator product expansion}
\newcommand{\PBW}{Poincar\'{e}-Birkhoff-Witt}
\newcommand{\ns}{Neveu-Schwarz}
\newcommand{\rhs}{right-hand side}
\theoremstyle{plain}
\newtheorem{thm}{Theorem}[section]
\newtheorem{prop}[thm]{Proposition}
\newtheorem{lem}[thm]{Lemma}
\newtheorem{cor}[thm]{Corollary}
\newtheorem*{thm*}{Theorem}
\theoremstyle{definition} 
\newtheorem{defn}[thm]{Definition}
\newtheorem*{rmk}{Remark}
\Crefname{thm}{Theorem}{Theorems}
\Crefname{prop}{Proposition}{Propositions}
\Crefname{lem}{Lemma}{Lemmas}
\Crefname{cor}{Corollary}{Corollaries}
\Crefname{defn}{Definition}{Definitions}
\begin{document}

\title[]{Admissible level \(\osp\) minimal models and their relaxed highest weight modules}

\author[S Wood]{Simon Wood}

\address[Simon Wood]{
School of Mathematics \\
Cardiff University \\
Cardiff, United Kingdom, CF24 4AG.
}

\email{woodsi@cardiff.ac.uk}

\subjclass[2010]{Primary 17B69, 81T40; Secondary 17B10, 17B67, 05E05}

\begin{abstract}
  The minimal model \(\osp\) \vosa{s} are the simple quotients of affine
  \vosa{s} constructed from the
  affine Lie super algebra \(\aosp{}\) at certain rational values of the
  level \(k\). 
  We classify all isomorphism classes of \(\ZZ_2\)-graded simple relaxed highest weight modules over the minimal model
  \(\osp\) \vosa{s} in both the \ns{} and Ramond sectors. To this end, we
  combine free field realisations, screening operators and the theory of symmetric functions in the
  Jack basis to compute explicit presentations for the Zhu
  algebras in both the \ns{} and Ramond sectors.
  Two different free field realisations are used depending on the level. For
  \(k<-1\), the free field realisation resembles the Wakimoto free field
  realisation of affine \(\slt\) and is originally due to Bershadsky and Ooguri. It involves 1 free boson (or rank 1
  Heisenberg \va{}), one \(\beta\gamma\) bosonic ghost system and one \(bc\)
  fermionic ghost system. For \(k>-1\), the argument presented here requires
  the bosonisation of the \(\beta\gamma\) system by embedding it into an indefinite rank 2 lattice \va{}.
\end{abstract}

\maketitle

\onehalfspacing

\section{Introduction}
\label{sec:intro}

The orthosymplectic Lie superalgebra \(\osp\) is the Lie superalgebra of
endomorphisms of the vector superspace \(\CC^{1|2}\) that preserves the
standard supersymmetric bilinear form on \(\CC^{1|2}\). It is arguably the
easiest example of a finite-dimensional simple complex Lie superalgebra in
Kac's classification \cite{KacLsa77}. 
The purpose of this article is to classify the simple relaxed highest weight
modules over the minimal model \(\osp\) \vosa{s}, that is, the simple quotient
\vosa{s} constructed from the affinisation of \(\osp\) at certain
rational levels, called admissible levels.

Let \(u,v\) be integers satisfying \(u\ge 2,\ v\ge1\), \(u-v\in 2\ZZ\) and
\(\gcd\brac*{u,\frac{u-v}{2}}=1\) and let
\begin{equation}
  k_{u,v}=\frac{u-3v}{2v},\quad
  \lambda_{i,j}=\frac{i-1}{2}-\frac{1+(-1)^{i+j}}{4}-\frac{u}{2v}j,\quad
  s_{i,j}=\frac{i}{2}-\frac{u}{2v}j,\quad
  q_{i,j}=\frac{(uj-vi)^2-4v^2}{8v^2},
\end{equation}
where \(i\) and \(j\) are integers. We denote the simple relaxed highest
weight \(\aosp{}\) modules as follows (see \cref{sec:affineosp} for the precise
definitions):
\begin{itemize}
\item In the \ns{} sector, \(\ZZ_2\)-graded simple relaxed highest weight \(\aosp{}\)
  modules at level \(k\in\CC\) are characterised by the simple \(\osp\) weight module formed by the
  vectors of least conformal weight. Let \(\alpha\) denote the simple root of \(\osp\).
  \begin{enumerate}
  \item \(\NSf{\lambda}\), \(\lambda\in \ZZ_{\ge0}\), denotes the 
    simple module whose space of
    vectors of least conformal weight is a finite dimensional highest (and
    lowest) weight \(\osp\) module of highest weight \(\lambda\alpha\) with a
    highest weight vector of even parity.
  \item \(\NSh{\lambda}\), \(\lambda\in \CC\setminus\ZZ_{\ge0}\), denotes the 
    simple module whose space of
    vectors of least conformal weight is an infinite dimensional highest weight \(\osp\) module
    of highest weight \(\lambda\alpha\) with a
    highest weight vector of even parity.
  \item \(\NSl{\lambda}\), \(\lambda\in \CC\setminus\ZZ_{\le0}\), denotes the 
    simple module whose space of
    vectors of least conformal weight is an infinite dimensional lowest weight \(\osp\) module
    of lowest weight \(\lambda\alpha\) with a
    lowest weight vector of even parity.
  \item \(\NSr{\sqbrac*{\lambda},s}\), \(\sqbrac*{\lambda}\in \CC /2\ZZ,\
    s\in\CC\) satisfying \(s^2\neq \brac*{\mu+\frac{1}{2}}^2\) for all
    \(\mu\in\sqbrac*{\lambda}\cup \sqbrac*{\lambda+1}\), denotes the 
    simple module whose space of
    vectors of least conformal weight is a dense \(\osp\) module, which is
    characterised by its weight support and the action of the \(\osp\) super-Casimir operator.
    The weight support of even vectors is \(\sqbrac*{\lambda}\alpha\), while the
    weight
    support of odd vectors is \(\sqbrac*{\lambda+1}\alpha\). The
    super-Casimir acts as multiplication by \(s\) on even vectors and \(-s\) on odd vectors.
  \end{enumerate}
\item In the Ramond sector, \(\ZZ_2\)-graded simple relaxed highest weight
  \(\aosp{}\) modules at level \(k\in\CC\) 
  are characterised by the simple \(\slt\) weight module formed by the vectors of
  least conformal weight. Since the even subalgebra of \(\osp\) is isomorphic to
  \(\slt\), the simple root of this subalgebra is \(2\alpha\) (equivalently, \(\alpha\) is
  the fundamental weight).
  \begin{enumerate}[resume]
  \item \(\Rf{\lambda}\), \(\lambda\in \ZZ_{\ge0}\), denotes the 
    simple module whose space of
    vectors of least conformal weight is even and is the finite dimensional highest (and
    lowest) weight \(\slt\) module of highest weight \(\lambda\alpha\).
  \item \(\Rh{\lambda}\), \(\lambda\in \CC\setminus\ZZ_{\ge0}\), denotes the 
    simple module whose space of
    vectors of least conformal weight is even and is an infinite dimensional highest weight \(\slt\) module
    of highest weight \(\lambda\alpha\).
  \item \(\Rl{\lambda}\), \(\lambda\in \CC\setminus\ZZ_{\le0}\), denotes the 
    simple module whose space of
    vectors of least conformal weight is even and is an infinite dimensional lowest weight \(\slt\) module
    of lowest weight \(\lambda\alpha\).
  \item \(\Rr{\sqbrac*{\lambda},q}\), \(\sqbrac*{\lambda}\in \CC /2\ZZ,\
    q\in\CC\) satisfying \(q\neq \mu\brac*{\mu+2}\) for all
    \(\mu\in\sqbrac*{\lambda}\), denotes the 
    simple module whose space of
    vectors of least conformal weight is even and is a dense \(\slt\) module,
    which is characterised by its weight support and the action of the
    \(\slt\) quadratic Casimir operator.
    Its weight support is \(\sqbrac*{\lambda}\alpha\)
    and the \(\slt\) Casimir operator acts as multiplication by \(q\).
  \end{enumerate}
\end{itemize}

We denote the simple quotient of the universal \vosa{} constructed from
\(\osp\) at level \(k_{u,v}\) by \(\MinMod{u}{v}\).
The main result of this paper can then be stated as follows.
\begin{thm*}
  Every \(\ZZ_2\)-graded simple relaxed highest weight module over the minimal
  model \(\osp\) \vosa{}, \(\MinMod{u}{v}\),
  at level \(k_{u,v}\) is isomorphic to one of the
  following or their parity reversals.
  \begin{trivlist}
  \item In the \ns{} sector:
    \begin{enumerate}
    \item\label{itm:nsfintro} \(\NSf{\lambda_{i,0}}\), where \(1\le i\le u-1\) and \(i\) is odd.
    \item\label{itm:nshwintro} \(\NSh{\lambda_{i,j}}\), where \(1\le i\le u-1\), \(1\le j\le v-1\)
      and \(i+j\) is odd.
    \item \(\NSl{-\lambda_{i,j}}\), where \(1\le i\le u-1\), \(1\le j\le v-1\)
      and \(i+j\) is odd.
    \item \(\NSr{\sqbrac*{\lambda},s_{i,j}}\), where \(\sqbrac*{\lambda}\in
      \CC /2\ZZ\), \(1\le i\le u-1\), \(1\le j\le v-1\)
      and \(i+j\) is odd.
    \end{enumerate}
  \item In the Ramond sector:
    \begin{enumerate}[resume]
    \item\label{itm:rfintro} \(\Rf{\lambda_{i,0}}\), where \(1\le i\le u-1\) and \(i\) is even.
    \item\label{itm:rhwintro} \(\Rh{\lambda_{i,j}}\), where \(1\le i\le u-1\), \(1\le j\le v-1\)
      and \(i+j\) is even.
    \item \(\Rl{-\lambda_{i,j}}\), where \(1\le i\le u-1\), \(1\le j\le v-1\)
      and \(i+j\) is even.
    \item \(\Rr{\sqbrac*{\lambda},q_{i,j}}\), where \(\sqbrac*{\lambda}\in
      \CC /2\ZZ\), \(1\le i\le u-1\), \(1\le j\le v-1\)
      and \(i+j\) is even.
    \end{enumerate}
  \end{trivlist}
\end{thm*}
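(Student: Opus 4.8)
The plan is to reduce the classification to the representation theory of the Zhu algebra $\zhu{\MinMod{u}{v}}$ and its Ramond-twisted counterpart $\tzhu{\MinMod{u}{v}}$. By Zhu's correspondence and its twisted generalisation, the simple $\ZZ_2$-graded positive-energy $\MinMod{u}{v}$-modules in the \ns{} sector are in bijection with the simple $\ZZ_2$-graded $\zhu{\MinMod{u}{v}}$-modules, and those in the Ramond sector with the simple $\tzhu{\MinMod{u}{v}}$-modules; under these bijections the space of minimal conformal weight of a module corresponds to the Zhu module. A module is \emph{relaxed highest weight} precisely when this space is a simple weight module over $\osp$ (in the \ns{} sector) or over its even subalgebra $\slt\cong\even{\osp}$ (in the Ramond sector). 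Since $\zhu{\vosp{k_{u,v}}}\cong\UEA{\osp}$ for the universal affine \vosa{} $\vosp{k_{u,v}}$, with $\tzhu{\vosp{k_{u,v}}}$ the corresponding Ramond associative algebra, the task is to identify the two-sided ideals by which these algebras must be quotiented to obtain $\zhu{\MinMod{u}{v}}$ and $\tzhu{\MinMod{u}{v}}$, and then to classify the simple weight modules of the resulting quotients.

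These ideals are generated by the images, in the Zhu and twisted Zhu algebras, of the singular vector $\vsv{u,v}$ generating the maximal submodule of $\vosp{k_{u,v}}$ at the admissible level $k_{u,v}$. To compute those images I would use the two free field realisations. For $k_{u,v}<-1$, the Bershadsky--Ooguri realisation embeds $\vosp{k_{u,v}}$ into the tensor product of a rank-$1$ Heisenberg \va{}, a $\beta\gamma$ system and a $bc$ system, and realises $\MinMod{u}{v}$ as the joint kernel of the relevant screening operators; for $k_{u,v}>-1$, one first bosonises the $\beta\gamma$ system inside an indefinite rank-$2$ lattice \va{} and then proceeds in the same way. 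In both cases $\vsv{u,v}$ is exhibited as a screening image of an explicit free-field state, and its projection to the (twisted) Zhu algebra, once the Fock-space combinatorics is rewritten in the language of symmetric functions, is identified with the evaluation of a Jack symmetric polynomial $\jack{\lambda}{\alpha}$ whose Jack parameter is fixed by $k_{u,v}+\tfrac32=\tfrac{u}{2v}$; the coprimality hypothesis $\gcd\brac*{u,\tfrac{u-v}{2}}=1$ guarantees that no pole of the relevant Jack polynomials is encountered. Standard symmetric-function identities, such as the Pieri rules and principal specialisations, then put the relation in closed form: a polynomial in the Cartan generator and the (super-)Casimir element whose zero locus factorises over the admissible labels into the linear and quadratic factors recorded by $\lambda_{i,j}$, $s_{i,j}$ and $q_{i,j}$.

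Granted the explicit presentations, classifying the simple weight modules is the representation theory of $\osp$ and $\slt$ constrained by this one additional polynomial relation. A simple highest (respectively lowest) weight module is determined by a weight $\lambda\alpha$, and the relation forces $\lambda$ (respectively $-\lambda$) to lie in the finite set $\set{\lambda_{i,j}}$; whether such a module is the finite-dimensional $\NSf{\lambda}$ or $\Rf{\lambda}$, or one of the infinite-dimensional $\NSh{\lambda}$, $\NSl{\lambda}$, $\Rh{\lambda}$, $\Rl{\lambda}$, is decided by the usual dominant-integral condition, which singles out $j=0$ and restricts the range of $i$. A simple dense module is determined by a weight-support coset $\sqbrac*{\lambda}\in\CC/2\ZZ$ together with a (super-)Casimir eigenvalue; for each fixed coset the relation becomes a polynomial condition on that eigenvalue whose solutions are exactly $s=s_{i,j}$ in the \ns{} sector and $q=q_{i,j}$ in the Ramond sector, the non-degeneracy conditions built into the definitions of $\NSr{\sqbrac*{\lambda},s}$ and $\Rr{\sqbrac*{\lambda},q}$ excluding the reducible specialisations. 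Finally, the parity of the distinguished extreme vector, which in the free field realisation is tracked by the fermion number of the $bc$ sector, sorts the admissible labels into those with $i+j$ odd, occurring in the \ns{} sector, and those with $i+j$ even, occurring in the Ramond sector; together with the compatibility of Zhu's correspondence with the $\ZZ_2$-grading and with parity reversal, this reproduces exactly the eight families in the statement.

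The main obstacle is the explicit evaluation of $\vsv{u,v}$ in the Zhu and twisted Zhu algebras: this is where the free field realisations, the screening operators and the Jack-polynomial technology all have to be combined, and the computation must be carried out twice, once in each realisation, because no single free field realisation covers all admissible levels. The Ramond sector is the more delicate case, since the twisted Zhu algebra is less transparent than the untwisted one and the $bc$ ghosts must themselves be taken in their Ramond sector, which changes their mode algebra and the attendant Fock-space combinatorics. A further point, routine but essential for exhaustiveness, is to confirm that every simple relaxed highest weight $\MinMod{u}{v}$-module is genuinely a positive-energy module, so that Zhu theory applies, and that the relevant category equivalences respect the $\ZZ_2$-grading and parity reversal.
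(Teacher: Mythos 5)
Your overall strategy is the paper's: pass to the untwisted and twisted Zhu algebras, compute the image of the singular vector \(\vsv{u,v}\) there via the two free field realisations and Jack-polynomial technology, and then run through the classification of simple weight \(\osp\) and \(\slt\) modules. However, there are two concrete gaps in how you propose to execute it. First, you claim the finite-dimensional modules \(\NSf{\lambda_{i,0}}\) and \(\Rf{\lambda_{i,0}}\) are selected because ``the relation forces \(\lambda\) to lie in the finite set \(\{\lambda_{i,j}\}\)'' together with a dominant-integrality condition. That is not how they arise: the image of \(\vsv{u,v}\) has the form \(e^{(u-1)/2}g(\Sigma)\) (or \(e^{(u-2)/2}x\,g(\Sigma)\), resp.\ \(e^{(u-2)/2}g_\parity(Q)\)), and the eigenvalues \(s_{i,0}\) are \emph{not} roots of \(g\) (the product in \eqref{eq:centralfactor} runs over \(1\le j\le v-1\) only). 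The finite-dimensional modules survive because the monomial prefactor \(e^{(u-1)/2}\) raises the weight by more than the width of their weight support and hence annihilates them regardless of \(g\); conversely, on infinite-dimensional modules that prefactor never acts trivially, which is what forces \(g\) or \(g_\parity\) to vanish there. Without isolating this monomial-times-central-element shape of the image (the content of \cref{thm:zhusvshape}), the families in points \ref{itm:nsfintro} and \ref{itm:rfintro} cannot be accounted for and the ranges of \(i\) cannot be pinned down.

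Second, your assertion that Pieri rules and principal specialisations ``put the relation in closed form'' skips the actual mechanism of the computation. The zero mode of \(\vsv{u,v}\) cannot be evaluated in closed form by these means; the paper instead (i) bounds \(\deg g\) and \(\deg g_\parity\) using the conformal weight \((u-1)\tfrac{v}{2}\) of the singular vector, (ii) proves separately that the image in each Zhu algebra is non-zero, and (iii) locates zeros of the zero-mode action indirectly, by expressing the relevant matrix elements as inner products \(\jprod{\fjack{\kappa}{-3}{z},\prod_i(1+z_i/w)^{\lambda}}{n}{t}\) and reading off factors common to all terms from the cells shared by every partition dominated by \(\kappa\) (the Feigin--Jimbo--Miwa--Mukhin admissible-partition ideal). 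Only because the zeros so found saturate the degree bound is \(g\) determined up to scale; and the reason two free field realisations are needed is precisely that the Jack parameter \(t\) must be positive rational for the relevant bounds to be saturated, which forces the case split \(k<-1\) versus \(k>-1\). Your sketch names the right tools but omits the degree-saturation argument and the non-vanishing of the image, which together are what make the indirect evaluation conclusive.
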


For the special case of \(u=2\) and \(v=4\), the above classification was
proved in \cite{SnaOsp17}.
The (non-rigorous) classification of simple highest weight \(\MinMod{u}{v}\)
modules, that is, those listed as items \ref{itm:nsfintro},
\ref{itm:nshwintro}, \ref{itm:rfintro} and \ref{itm:rhwintro} in the theorem above,
has already been established in the physics literature
by Fan and Yu, and Ennes and Ramallo \cite{Fanlsa93,EnnOsp98}.
 
In the mathematics literature, the simple modules over \(\MinMod{u}{1}\) in
the \ns{} sector were classified by Kac and Wang in \cite{KacVer94} and those
in the Ramond sector were later classified by Creutzig, Frohlich and Kanade \cite{CreOsp17}. In this case all simple
modules are of the type listed in part \ref{itm:nsfintro} of the above
classification theorem. For general \(u,v\), the \ns{} sector admissible simple highest
weight \(\aosp{}\) modules (modules whose span of analytic continuations of
characters admit a closed action of the modular group) were classified by
Kac and Wakimoto \cite{KacAdm88} and this classification matches the modules of
points \ref{itm:nsfintro} and \ref{itm:nshwintro}.
However, as far as the author is aware, this is the first 
rigorous classification of simple relaxed highest weight \(\ZZ_2\)-graded
\(\MinMod{u}{v}\) modules for general \(u,v\).

The module classification presented above could now be combined with a number
of recent developments.
For example, recent results on
character formulae for relaxed highest weight modules \cite{kawrel18} applied
to the classification could be
used to repeat the Verlinde formula calculations of
\cite{SnaOsp17} using the standard module formalism \cite{RidSL208,CreMod12,CreWZW13}. 
However, an alternative approach to computing fusion rules and Verlinde formulae, realising the \(\osp\) minimal models as an
extension of the tensor product of the Virasoro and \(\slt\) minimal modules
is currently in preparation \cite{KanOsp18}. 
A further application could be the classification of simple Whittaker
\(\aosp{}\) modules in analogy to the \(\slt\) classification
\cite{Adawhit18}.
\smallskip

The minimal model \(\osp\) \vosa{} \(\MinMod{u}{v}\) is a simple quotient of
the universal \(\osp\) \vosa{} by an ideal generated by a singular vector. 
A natural strategy for classifying simple modules over \(\MinMod{u}{v}\) is,
therefore, to identify modules over the universal \vosa{} on which the ideal
acts trivially. This annihilating ideals approach to module classification was
first applied by Feigin, Nakanishi and Ooguri \cite{FeiAnn92} to certain
Virasoro minimal models. In the context of \(\osp\)
this strategy was first used by Kac and Wang
\cite{KacVer94}, as an application of their generalisation of Zhu algebras
\cite{ZhuMod96} to \vosa{s}, to classify simple \ns{} modules when \(v=1\).
These kinds of calculations require explicit formulae for the singular vector
generating the ideal. Unfortunately, for \(v>1\), the only known general formulae are formal
expressions involving non-integer powers of affine \(\osp\) generators
\cite{Fanlsa93,IoEn12} resembling those of Malikov, Feigin and Fuks 
\cite{MalSing84} for
affine Lie algebra \svs{}. Converting these formulae into expressions with
non-negative integer powers is prohibitively laborious, so
instead of using these formulae, the proof of the above
classification theorem presented in this paper exploits a deep connection between free field
realisations and symmetric functions to derive expressions which are more
tractable. Finding such tractable singular vector formulae in terms of symmetric
functions has a long history starting with the Virasoro algebra
\cite{WakSchu86,MimJa95} with later generalisations to other algebras, such as, 
affine \(\slt\) \cite{KatMis92,RidRel15}, the \(N=1\) super Virasoro algebra
\cite{DesSup01,YanUgl15,OPDS16} and the \(W_N\) algebras
\cite{AwaCS95,RidWN18}.
The presence of fermions (or odd fields) in \vosa{s} necessitates the
consideration of skew symmetric functions as well as symmetric
functions. Fortunately, this skew symmetry can be compensated for, so that,
symmetric function methods can still be applied after
considering certain ideals, first considered by Feigin, Jimbo, Miwa and Mukhin
\cite{Feidif02}, spanned by Jack symmetric polynomials whose
parameter is negative rational.

The methods presented in this article were developed for and 
applied to the classification of simple modules of the triplet algebras
\cite{TsuExt13}, the Virasoro minimal models \cite{RidJac14}, the affine \(\slt\)
minimal models \cite{RidRel15} and the \(N=1\) superconformal minimal models
\cite{BloSVir16}. In particular, \cite{RidJac14,RidRel15,BloSVir16} form a
series aimed at systematising the classification of modules over
\vosa{s} which are non-trivial simple quotients of universal \vosa{s}. 
A convenient property of these methods is that they
work in essentially the same way not only in both the \ns{} and Ramond sectors
of a given algebra, but also across all examples of \vosa{s} considered.
Since all the algebras considered so far are rank 1, it will be
interesting to see if these methods can be generalised to higher ranks as was
recently done for the singular vector formulae of the \(W_N\)-algebras \cite{RidWN18}.
\medskip

This article is organised as follows. In \cref{sec:osp}, we give a brief
overview of \(\osp\), its affinisation \(\aosp{}\) and 
modules over both of these algebras. The section concludes with the construction
of the universal \(\osp\) \vosa{} \(\vosp{k}\) at level \(k\) and the minimal model
\vosa{} \(\MinMod{u}{v}\) as a simple quotient of \(\vosp{k}\) at certain
rational levels, termed admissible. In \cref{sec:zhu}, 
we state explicit presentations of the untwisted and twisted Zhu algebras of
\(\MinMod{u}{v}\), that is, the Zhu algebras in the \ns{} and Ramond
sectors. The proof of these presentations is postponed to 
\cref{sec:0modecalc}. These presentations are then used to prove the main
result of the article, \cref{thm:moduleclassification}, that is, the classification of
\(\ZZ_2\)-graded simple relaxed highest weight modules over \(\MinMod{u}{v}\) and the rationality of
\(\MinMod{u}{v}\) in category \(\catO\). 
In \cref{sec:ffrandscr}, we define the free field algebras and screening operators
necessary for the free field realisation of \(\vosp{k}\), and derive
identities for correlation functions in preparation for proving the
presentations of the untwisted and twisted Zhu algebras. In
\cref{sec:0modecalc}, the presentations of the Zhu algebras are proved by
evaluating the action of the zero modes of singular vectors of
\(\vosp{k_{u,v}}\) on candidate relaxed highest weight vectors. This action of
the zero modes depends polynomially on free field weights and, with the methods
used here, can only be evaluated indirectly
by determining certain zeros in the free field weights and showing that the zeros
found saturate certain bounds. This necessitates splitting the calculation
into two cases depending on whether \(k<-1\) or \(k>-1\) in order to assure
that these bounds are indeed saturated.

\subsection*{Acknowledgements}

The author would like to thank the following people for interesting
discussions: Andreas Aaserud on polynomial rings, Tomoyuki Arakawa
on universal enveloping algebras, Pierre Mathieu and Jorgen Rasmussen on fractional powers of screening
operators and bosonising \(\beta\gamma\) systems, and John Snadden on
super-Casimir operators. 
Additionally, the author would like to thank David Ridout for interesting discussions
on too many topics to list, and for the careful reading of a previous version of this
article and giving helpful feedback. The author's research is supported by the Australian Research 
Council Discovery Early Career Researcher Award DE140101825 and the Discovery
Project DP160101520.

\section{The \(\osp\) \vosa{}}
\label{sec:osp}

In this section we settle notation regarding the Lie superalgebra \(\osp\),
its affinisation and its associated \vosa{}, and recall known results. 
Since this article studies Lie superalgebras and \vosa{s}, all vector
spaces will be assumed to be complex vector superspaces, that is, graded by
\(\ZZ_2\). For any vector superspace \(V\), we denote the subspace of even vectors by
\(\even{V}\), the subspace of odd vectors by \(\odd{V}\) and the parity
reversal of \(V\) by \(\rpar{V}\). When considering vector spaces without any
obvious superspace structure, then the grading will be assumed to be trivial
unless stated otherwise, that is, the entire vector space will be assumed to
be even (examples include \(\AKMA{sl}{2}\) or the \(\beta\gamma\) ghost \va{} considered
below). We refer readers unfamiliar with Lie superalgebras to \cite{WangLSA12}
for an exhaustive discussion of the subject and to \cite[Section
2.1]{SnaOsp17} for a summary of \(\osp\) beyond that given below.

\subsection{The finite dimensional simple Lie superalgebra \(\osp\)}
\label{sec:finiteosp}

The orthosymplectic Lie superalgebra \(\osp\) is the Lie superalgebra
preserving the standard supersymmetric bilinear form of \(\CC^{1|2}\), where
supersymmetric means that the bilinear form is symmetric on the even
subspace and skew-symmetric on the odd subspace. This Lie superalgebra is 5
dimensional and we choose the following basis:
\(\{e,x,h,y,f\}\), where \(\spn{e,h,f}=\even{\osp}\) and \(\spn{x,y}=\odd{\osp}\). The
non-vanishing (anti-) commutation relations in this basis are
\begin{align}
  \label{eq:comrels}
  \comm{h}{e}&=2e,\quad \comm{e}{f}=h,\quad\comm{h}{f}=-2f,\nonumber\\
  \comm{e}{y}&=-x,\quad\comm{h}{x}=x,\quad\comm{h}{y}=-y,\quad
  \comm{f}{x}=-y,\nonumber\\
  \acomm{x}{x}&=2e,\quad\acomm{x}{y}=h,\quad\acomm{y}{y}=-2f.
\end{align}
In their standard normalisation, the non-vanishing pairings of the invariant
bilinear form on \(\osp\) are
\begin{align}
  \label{eq:killing}
  \killing{h}{h}=2,\quad\killing{e}{f}=\killing{f}{e}=1,\quad\killing{x}{y}=-\killing{y}{x}=2.
\end{align}

The (anti-)commutation relations \eqref{eq:comrels} imply that the even
subspace of \(\osp\) is isomorphic to \(\slt\) and that \(h\) spans a choice of Cartan subalgebra
for both \(\osp\) and \(\slt\). We denote by \(\alpha\) the simple root corresponding to the root
vector \(x\), whose length with respect to the norm induced from \eqref{eq:killing}
is \(\norm{\alpha}^2=\sfrac{1}{2}\). Note that, for the copy of \(\slt\) sitting
inside \(\osp\), the simple root is \(2\alpha\) and thus \(\alpha\) is the
fundamental weight of \(\slt\).

\begin{defn}
  A \emph{weight module} over a Lie superalgebra is a module over that
  algebra which decomposes into a direct sum of simultaneous eigenspaces of the
  Cartan subalgebra. The \emph{weight support} of a weight module is the set
  of all weights for which the corresponding weight space is non-trivial.
\end{defn}

In this article we shall focus exclusively on weight modules and hence the
subalgebras of the universal enveloping algebras of \(\osp\) and \(\slt\)
which preserve weight spaces will prove vital. These subalgebras are
just the centralisers of the Cartan subalgebra.
\begin{lem}\label{thm:centralgebras}
  Let \(\alg{g}\) be a Lie superalgebra with choice of Cartan subalgebra
  \(\alg{h}\) and let
  \begin{equation}
    \Cent{\alg{h}}{\alg{g}}=\set*{w\in \UEA{g}\st [h,w]=0,\ \forall h\in\alg{h}}
  \end{equation}
  be the centraliser of \(\alg{h}\) in the universal enveloping algebra of \(\alg{g}\).
  \begin{enumerate}
  \item\label{itm:cent1} As an associative algebra
    \begin{equation}
      \Cent{\alg{h}}{\osp}\cong\CC[h,\Sigma],
    \end{equation}
    where \(\Sigma=xy-yx+1/2\) is the \emph{super-Casimir} of \(\osp\).
  \item\label{itm:cent2} As an associative algebra
    \begin{equation}
      \Cent{\alg{h}}{\slt}\cong\CC[h,Q],
    \end{equation}
    where \(Q=h^2/2+ef+fe\) is the \emph{quadratic Casimir} of \(\slt\).
  \end{enumerate}
\end{lem}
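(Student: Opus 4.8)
The plan is to realise each centraliser as the zero-weight subspace of the relevant universal enveloping algebra for the action \(\operatorname{ad}_h\), to read off a spanning set from a \PBW{} basis, and then to match this subspace with the claimed polynomial algebra by a filtration argument.

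For the \(\slt\) statement, \(Q\) is central in \(\UEA{\slt}\), so \(h\) and \(Q\) commute and \(h\mapsto h\), \(Q\mapsto Q\) defines an algebra homomorphism from a two-variable polynomial ring into \(\Cent{\alg h}{\slt}\). In the symmetric algebra \(\mathrm{S}(\slt)\cong\CC[e,h,f]\), which is the associated graded of \(\UEA{\slt}\) for the \PBW{} filtration, the symbols of \(h\) and of \(Q=h^{2}/2+2ef-h\) (using \([e,f]=h\)) are \(h\) and \(h^{2}/2+2ef\); these are algebraically independent, so \(h\) and \(Q\) are algebraically independent in \(\UEA{\slt}\) and \(\CC[h,Q]\) is a genuine polynomial algebra sitting inside \(\Cent{\alg h}{\slt}\). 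For the reverse inclusion, the \PBW{} monomial \(f^{a}h^{b}e^{c}\) has \(\operatorname{ad}_h\)-weight \(2(c-a)\), so \(\Cent{\alg h}{\slt}\) is spanned by the monomials with \(a=c\); passing to associated graded, \(\operatorname{gr}\Cent{\alg h}{\slt}\) is contained in the zero-weight part of \(\mathrm{S}(\slt)\), which is \(\CC[h,ef]=\CC[\bar h,\bar Q]=\operatorname{gr}\CC[h,Q]\). Since the \PBW{} filtration is exhaustive, equality of the associated gradeds forces \(\Cent{\alg h}{\slt}=\CC[h,Q]\).

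For the \(\osp\) statement, the relation \(\acomm{x}{y}=h\) gives \(xy-yx=2xy-h\) in \(\UEA{\osp}\), so \(\Sigma=2xy-h+\tfrac12\); since \(x\) and \(y\) carry opposite \(\operatorname{ad}_h\)-weights one has \([h,xy]=0\), hence \(\Sigma\) commutes with \(h\) and \(\CC[h,\Sigma]\subseteq\Cent{\alg h}{\osp}\). For the reverse inclusion, choose the \PBW{} ordering so that \(\UEA{\osp}\) becomes the free left \(\UEA{\slt}\)-module on \(\set{1,x,y,xy}\). The action \(\operatorname{ad}_h\) preserves each summand, and on \(\UEA{\slt}\cdot w\) an element \(u\cdot w\) is killed by \(\operatorname{ad}_h\) exactly when \(u\) is an \(\operatorname{ad}_h\)-weight vector of weight \(-\wt w\); as \(\UEA{\slt}\) has only even \(\operatorname{ad}_h\)-weights, the \(w=x\) and \(w=y\) summands contribute nothing and
\begin{equation}
  \Cent{\alg h}{\osp}=\Cent{\alg h}{\slt}\cdot 1\ \oplus\ \Cent{\alg h}{\slt}\cdot xy=\CC[h,Q]\oplus\CC[h,Q]\,xy .
\end{equation}
A short computation with \(x^{2}=e\), \(y^{2}=-f\) and \(yx=h-xy\) gives \(xyxy=(h-1)\,xy+ef\), hence \(ef=(xy)^{2}-(h-1)\,xy\); substituting \(xy=\tfrac12\brac*{\Sigma+h-\tfrac12}\) into \(Q=h^{2}/2+2ef-h\) and simplifying yields \(Q=\tfrac12\brac*{\Sigma+\tfrac12}^{2}-\tfrac12\). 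Therefore \(Q\in\CC[\Sigma]\) and \(xy\in\CC[h,\Sigma]\), so the displayed decomposition shows \(\Cent{\alg h}{\osp}\subseteq\CC[h,\Sigma]\) and the two subalgebras coincide. Finally \(\Cent{\alg h}{\osp}=\CC[h,\Sigma]\) is commutative, being generated by the commuting elements \(h\) and \(\Sigma\), and by the decomposition above it is free of rank two over the polynomial ring \(\CC[h,Q]\); being module-finite over a two-dimensional ring it has Krull dimension two, whence \(h\) and \(\Sigma\) cannot be algebraically dependent and \(\Cent{\alg h}{\osp}\cong\CC[h,\Sigma]\) as an associative algebra. (Alternatively, algebraic independence follows by evaluating \(P(h,\Sigma)\) on a highest-weight \(\osp\)-Verma module \(M(\lambda)\), on which \(h\) acts by \(\lambda-2a\) on \(f^{a}v_{\lambda}\) and by \(\lambda-2a-1\) on \(f^{a}yv_{\lambda}\) while \(\Sigma\) acts by \(\pm\brac*{\lambda+\tfrac12}\); varying \(\lambda\) and \(a\) forces \(P\equiv0\).)

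The \(\slt\) part is in the end just bookkeeping with the associated graded. The real obstacle is the \(\osp\) part: the associated-graded shortcut fails there because the symbol of \(\Sigma\) is \(2\,x\wedge y\), which squares to zero in \(\operatorname{gr}\UEA{\osp}\), so \(\CC[h,\Sigma]\) cannot be recovered from symbols alone. The content lies in reducing to the \(\slt\) case via the free-module decomposition, in exhibiting \(Q\) explicitly as a polynomial in \(\Sigma\), and in the separate check that \(h\) and \(\Sigma\) remain algebraically independent.
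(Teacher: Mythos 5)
Your proof is correct, and while it lands on the same skeleton as the paper's argument (\PBW{} basis, identify the weight-zero subspace, express everything through \(\Sigma\), reduce algebraic independence to the \(\slt\) case), the key steps are executed by genuinely different means. The paper spans \(\Cent{\alg{h}}{\osp}\) by the weight-zero \PBW{} monomials, reorders them into polynomials in \(h\), \(fe\) and \(yx\), and then quotes Pinczon and Le\'{s}niewski to rewrite \(yx\) and \(fe\) in terms of \(h\) and \(\Sigma\); algebraic independence is deduced in one line from the (quoted as well known) independence of \(h\) and \(Q\). You instead handle \(\slt\) by an associated-graded sandwich, which has the advantage of also \emph{proving} the independence of \(h\) and \(Q\) rather than assuming it; you handle \(\osp\) by decomposing \(\UEA{\osp}\) as a free left \(\UEA{\slt}\)-module on \(\set*{1,x,y,xy}\), so that the odd summands are killed by the weight constraint and the problem collapses onto the even case; and you replace the paper's one-line independence transfer by a Krull-dimension argument for the module-finite extension \(\CC[h,Q]\subseteq\CC[h,\Sigma]\). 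Your identity \(Q=\tfrac12\brac*{\Sigma+\tfrac12}^{2}-\tfrac12\) for the \(\slt\)-Casimir is consistent with the paper's \(Q^{\osp}=\tfrac12\Sigma^{2}-\tfrac18\) (they differ by \(-\tfrac12\brac*{xy-yx}=-\Sigma+\tfrac12\)), and your closing remark that the symbol of \(\Sigma\) is nilpotent in \(\operatorname{gr}\UEA{\osp}\) correctly pinpoints why some detour of this kind is unavoidable on the odd side. The only caveat is your parenthetical alternative for independence: evaluating \(\Sigma\) as the scalar \(\pm\brac*{\lambda+\tfrac12}\) on an entire Verma module requires that \(\Sigma\) commutes with \(f\) and anticommutes with \(y\), not merely that \(\comm{h}{\Sigma}=0\); this is exactly Le\'{s}niewski's result that the paper invokes, so either cite it there or stick with your self-contained Krull-dimension argument, which does not need it.
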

Part \ref{itm:cent2} of the above lemma is well known and part \ref{itm:cent1} 
is an immediate consequence
of Pinczon's work \cite{PinsUEA90} on the universal enveloping algebra of
\(\osp\) and Le\'sniewski's discovery \cite{LessCas95} of the super-Casimir of
\(\osp\). While part \ref{itm:cent1} is surely also known, the author was not able
to find a source and so a proof has been given for completeness.
\begin{proof}
  It was shown in \cite{PinsUEA90} that the centre of the universal enveloping
  algebra is isomorphic to \(\CC[Q^{\osp}]\), where
  \begin{equation}
    Q^{\osp}=\frac{1}{2}h^2+ef+fe-\frac{1}{2}xy+\frac{1}{2}yx=\frac{1}{2}\Sigma^2-\frac{1}{8}
  \end{equation}
  is the quadratic Casimir of \(\osp\). Further,
  it was shown in \cite{LessCas95} that the super-Casimir \(\Sigma\) commutes
  with all even elements of \(\osp\) and anticommutes with all odd ones.

  To prove the proposition,
  fix any \PBW{} ordering of the bases of \(\osp\) and \(\slt\) and note that
  since \(x^2=e\) and \(y^2=-f\), the exponents of \(x\) and \(y\) in an
  element of the \PBW{} basis of \(\osp\) are at most 1. Every
  element of the \PBW{} basis is an eigenvector of the adjoint action of the
  Cartan subalgebra. Hence, the centraliser algebras \(\Cent{\alg{h}}{\osp}\)
  and \(\Cent{\alg{h}}{\slt}\) are spanned by \PBW{} basis vectors whose
  eigenvalues are 0. Within such basis vectors, every occurrence of the \(f\)
  generator must be countered by an \(e\) and vice versa for both \(\osp\) and
  \(\slt\). Similarly, for \(\osp\), every occurrence of \(y\)
  must be countered by an \(x\) and vice versa.
  Using the commutation relations, the generators in these basis vectors can
  be reordered, so that that basis vectors become sums of monomials in \(h,
  fe, yx\) for \(\osp\) and \(h, fe\) for \(\slt\). It then follows by direct
  computation that
  \begin{equation}
    yx=\frac{1}{2}\brac*{h-\Sigma+\frac{1}{2}},\quad
    fe=-\frac{1}{4}\brac*{h+\Sigma+\frac{3}{2}}\brac*{h-\Sigma+\frac{1}{2}},
  \end{equation}
  for \(\osp\) and for \(\slt\) that
  \begin{equation}
    fe=\frac{1}{2}\brac*{Q-\frac{1}{2}h^2-h}.
  \end{equation}
  The algebraic independence of \(h\) and \(\Sigma\) then follow from the
  algebraic independence of \(h\) and \(Q\), since any algebraic relation of
  \(h\) and \(\Sigma\) would also imply a relation for \(h\) and \(Q\).
\end{proof}

As mentioned in the proof above, the quadratic Casimir operator of \(\osp\) has the nice property of
freely generating the centre of the universal enveloping algebra of
\(\osp\), however, the super-Casimir operator \(\Sigma\) shall prove to be far more
convenient in the considerations that follow.
For example, isomorphism classes of simple \(\osp\) weight modules can be
characterised in terms of their weight support and the action of the
super-Casimir operator \(\Sigma\).
Similarly, isomorphism classes of simple \(\slt\) weight modules can be
characterised in terms of their weight support and the action of the
Casimir operator \(Q\).

\begin{thm}[Block \cite{Blosl279,BloIrr79}]
  \label{thm:finitesl}
  Any simple \(\slt\) weight module is isomorphic to one of the following:
  \begin{enumerate}
  \item The simple \((\lambda+1)\) dimensional module
    \(\fRf{\lambda}\) which is both highest and lowest weight, where
    \(\lambda\in\ZZ_{\ge0}\). The weights of the highest and lowest weight vectors
    is \(\lambda \alpha\) and \(-\lambda\alpha\), respectively, and the eigenvalue
    of \(Q\) is \(\frac{\lambda}{2}\brac*{\lambda+2}\).
  \item The simple infinite dimensional highest weight module \(\fRh{\lambda}\),
    where \(\lambda\in \CC\setminus \ZZ_{\ge0}\). This module is generated
    by a highest weight vector \(v_\lambda\) of weight \(\lambda\alpha\),
    which therefore satisfies \(e v_\lambda=0\) and
    \(hv_\lambda=\lambda v_\lambda\). A basis
    is given by the weight vectors
    \(\set*{f^n v_\lambda\st n\in\ZZ_{\ge0}}\) and the eigenvalue of \(Q\) is \(\frac{\lambda}{2}\brac*{\lambda+2}\).
    This is an example of a Verma module.
  \item The simple infinite dimensional lowest weight module \(\fRl{\lambda}\),
    where \(\lambda\in \CC\setminus \ZZ_{\le0}\). This module is generated
    by a lowest weight vector \(v_\lambda\) of weight \(\lambda\alpha\), which
    therefore satisfies \(f v_\lambda=0\) and
    \(hv_\lambda=\lambda v_\lambda\). A basis is given by the
    weight vectors \(\set*{e^n v_\lambda\st
      n\in\ZZ_{\ge0}}\) and the eigenvalue of \(Q\) is \(\frac{\lambda}{2}\brac*{\lambda-2}\). This is an example of a Verma module for which \(f\)
    (as opposed to \(e\)) has been chosen as the simple root vector.
  \item The simple infinite dimensional dense weight module \(\fRr{\lambda,q}\),
    with \(\sqbrac*{\lambda}\in\CC/2\ZZ\) and \(q\in\CC\)
    satisfying \(q\neq \mu(\mu+2)\) for all \(\mu\in
    \sqbrac*{\lambda}\). This module is generated by a weight vector
    \(v_{\lambda,q}\) that is neither highest nor lowest weight, satisfying
    \(hv_{\lambda,q}=\lambda v_{\lambda,q}\) and \(Q v_{\lambda,q}=q
    v_{\lambda,q}\), and has a basis of weight vectors  
    \(\set*{v_{\lambda,q}, e^n v_{\lambda,q}, f^n v_{\lambda,q}\st
      n\in\ZZ_{\ge1}}\).
  \end{enumerate}
\end{thm}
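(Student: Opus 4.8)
The plan is to take an arbitrary simple \(\slt\) weight module \(M\) and extract enough structure to identify it with one of the four listed families. First I would apply Schur's lemma in the form valid for countable-dimensional algebras over the uncountable field \(\CC\) — \(M\) is a quotient of \(\UEA{\slt}\), hence at most countable-dimensional, so any nonzero \(\slt\)-endomorphism of \(M\) is algebraic over \(\CC\) and therefore a scalar — to conclude that the central element \(Q\) acts on \(M\) by a scalar \(q\). Choosing any nonzero vector \(v\) in the \(h\)-eigenspace \(M_\mu\) of eigenvalue \(\mu\), simplicity gives \(M=\UEA{\slt}v\), and the \PBW{} theorem together with the fact that \(v\) is an \(h\)-eigenvector reduces this to \(M=\sum_{a,c\ge0}\CC f^ae^cv\). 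Since \(e\) and \(f\) shift the \(h\)-eigenvalue by \(\pm2\), the weight of \(f^ae^cv\) depends only on \(c-a\).

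The crucial point is that every weight space of \(M\) is at most one-dimensional. By part \ref{itm:cent2} of \cref{thm:centralgebras}, the elements \(f^ae^a\) and \(f^ce^c\) lie in \(\Cent{\alg{h}}{\slt}\cong\CC[h,Q]\) and hence act on each weight space by a scalar determined by the weight and by \(q\). Writing \(f^ae^cv=f^ae^a\brac*{e^{c-a}v}\) for \(c\ge a\) and \(f^ae^cv=f^{a-c}\brac*{f^ce^cv}\) for \(c<a\), every spanning vector \(f^ae^cv\) is a scalar multiple of a single \(e^nv\) or \(f^nv\), so the nonzero weight spaces are one-dimensional, spanned by the \(e^nv\) and \(f^nv\). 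Using this I would then show that the weight support \(\set*{n\in\ZZ\st M_{\mu+2n}\neq0}\) is an interval of consecutive integers: if \(n\) lies in it but \(n+1\) does not, then \(e\) annihilates \(M_{\mu+2n}\), the unique-up-to-scalar vector there is a highest-weight vector, it generates \(M\) by simplicity, and therefore the support has the form \(\set*{n,n-1,\dots,n-N}\) with \(N\in\ZZ_{\ge0}\) or \(N=\infty\); the mirror statement holds for lowest-weight vectors, ruling out interior gaps.

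This splits the argument into four cases according to whether the support is bounded above and/or below. If it is bounded both ways, \(M\) is finite-dimensional of highest weight \(\lambda\alpha\); taking \(N\) maximal with \(f^Nw\neq0\) for a highest-weight vector \(w\), the standard identity \(ef^{N+1}w=\brac*{N+1}\brac*{\lambda-N}f^Nw=0\) forces \(\lambda=N\in\ZZ_{\ge0}\), a one-line computation gives \(Qw=\tfrac{\lambda}{2}\brac*{\lambda+2}w\), and \(M\cong\fRf{\lambda}\). If the support is bounded above only, \(M\) is a simple highest-weight module; since for \(\lambda\in\ZZ_{\ge0}\) the simple highest-weight module is the finite-dimensional one just treated, here \(\lambda\notin\ZZ_{\ge0}\), the Verma module is already simple, and \(M\cong\fRh{\lambda}\). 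The case bounded below only is the mirror image, yielding \(\fRl{\lambda}\) with \(\lambda\notin\ZZ_{\le0}\). If the support is unbounded both ways, \(M\) is dense, its support is a full coset \(\sqbrac*{\mu}\in\CC/2\ZZ\), and I would use the relation \(fe=\tfrac12\brac*{Q-\tfrac12h^2-h}\) from the proof of \cref{thm:centralgebras} to write the scalars by which the composites of \(e\colon M_\nu\to M_{\nu+2}\) and \(f\colon M_{\nu+2}\to M_\nu\) act as explicit polynomials in \(\nu\) and \(q\); the absence of any highest- or lowest-weight vector then amounts exactly to the requirement that none of these scalars vanishes, which is the genericity condition on \(q\) recorded in the theorem, while the same relations pin down the \(\slt\)-action on the one-dimensional weight spaces up to rescaling of basis vectors, giving uniqueness. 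Existence for admissible data follows by realising the module explicitly on \(\bigoplus_{n\in\ZZ}\CC v_n\) with \(e\) and \(f\) defined through these scalars.

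The main obstacle I anticipate is the dense case: proving that \(\brac*{\sqbrac*{\mu},q}\) is a \emph{complete} invariant requires both the uniqueness argument above and a matching existence-and-irreducibility statement, and one must check carefully that the bookkeeping of which structure constants are permitted to vanish reproduces exactly the stated condition on \(q\). Everything else — the countable-dimensional version of Schur's lemma, the multiplicity-one argument through \cref{thm:centralgebras}, the connectedness of the weight support, and the integrality constraints in the finite-dimensional, highest-weight and lowest-weight cases — is routine once the centraliser description \(\Cent{\alg{h}}{\slt}\cong\CC[h,Q]\) is available.
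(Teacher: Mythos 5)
The paper offers no proof of this theorem at all: it is stated as a citation to Block, with the reader referred to Mazorchuk's lectures for a proof. Your proposal therefore necessarily departs from the paper by actually supplying an argument, and the argument you give is the standard one (essentially the one in the cited reference): Dixmier's countable-dimensional Schur lemma to make \(Q\) scalar, the centraliser description \(\Cent{\alg{h}}{\slt}\cong\CC[h,Q]\) of part \ref{itm:cent2} of \cref{thm:centralgebras} to force one-dimensional weight spaces via \(f^ae^cv\in\CC e^{c-a}v\) or \(\CC f^{a-c}v\), connectedness of the weight support, and then the four-way case split on boundedness. All of these steps are sound, and the paper has already supplied the one nontrivial ingredient (the centraliser lemma), so what you gain is a self-contained proof where the paper has only a pointer to the literature; what the paper gains by citing Block is brevity and the full generality of Block's classification (which also covers non-weight simple modules).

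One caveat in the dense case: if you carry out the computation you describe, with \(Q=\tfrac{1}{2}h^2+ef+fe\) and hence \(fe=\tfrac{1}{2}\brac*{Q-\tfrac{1}{2}h^2-h}\), the scalar by which \(fe\) acts on the weight space of \(h\)-eigenvalue \(\mu\) vanishes precisely when \(q=\tfrac{\mu}{2}\brac*{\mu+2}\), consistent with the eigenvalue \(\tfrac{\lambda}{2}\brac*{\lambda+2}\) recorded for \(\fRf{\lambda}\) and \(\fRh{\lambda}\). The simplicity condition you will actually derive is therefore \(q\neq\tfrac{\mu}{2}\brac*{\mu+2}\) for all \(\mu\in\sqbrac*{\lambda}\), which differs by a factor of \(2\) from the condition \(q\neq\mu\brac*{\mu+2}\) as literally printed; your assertion that your computation reproduces ``exactly the stated condition'' glosses over this normalisation mismatch, which appears to be a slip in the statement rather than in your method, but you should not claim agreement where your own calculation would detect a discrepancy.
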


\begin{thm}
  \label{thm:finiteosp}
  Any simple \(\ZZ_2\)-graded \(\osp\) weight module is isomorphic to one of the following or its parity reversal:
  \begin{enumerate}
  \item The simple \((2\lambda+1)\) dimensional module \(\fNSf{\lambda}\)
    which is both highest and lowest weight, where
    \(\lambda\in\ZZ_{\ge0}\).
    The highest and lowest weight vectors both have even parity and their
    respective weights are \(\lambda \alpha\) and \(-\lambda\alpha\). The
    eigenvalue of \(\Sigma\) is \(\frac{\lambda}{2}\brac*{\lambda+1}\) on even
    vectors and \(-\frac{\lambda}{2}\brac*{\lambda+1}\) on odd vectors.
  \item The simple infinite dimensional highest weight module \(\fNSh{\lambda}\),
    where \(\lambda\in \CC\setminus \ZZ_{\ge0}\).
    This module is generated by an even parity highest weight vector
    \(v_\lambda\) of weight \(\lambda\alpha\), which therefore satisfies \(x v_\lambda=0\) and
    \(hv_\lambda=\lambda v_\lambda\). A basis of weight vectors of this module
    is given by
    \(\set*{y^n v_\lambda \st n\in\ZZ_{\ge0}}\).
    The
    eigenvalue of \(\Sigma\) is \(\frac{\lambda}{2}\brac*{\lambda+1}\) on even
    vectors and \(-\frac{\lambda}{2}\brac*{\lambda+1}\) on odd vectors.
    This is an example of a Verma module.
  \item The simple infinite dimensional lowest weight module \(\fNSl{\lambda}\),
    where \(\lambda\in \CC\setminus \ZZ_{\le0}\).
    This module is generated
    by an even parity lowest weight vector \(v_\lambda\), which therefore
    satisfies \(y v_\lambda=0\) and \(hv_\lambda=\lambda v_\lambda\). A 
    basis of this module is given by the weight vectors \(\set*{x^n v_\lambda \st
      n\in\ZZ_{\ge0}}\).
    The
    eigenvalue of \(\Sigma\) is \(\frac{\lambda}{2}\brac*{\lambda-1}\) on even
    vectors and \(-\frac{\lambda}{2}\brac*{\lambda-1}\) on odd vectors.
    This is an example of a Verma module for which \(y\) has
    been chosen as the simple root vector.
  \item The simple infinite dimensional dense weight module \(\fNSr{\sqbrac*{\lambda},s}\),
    with \(\sqbrac*{\lambda}\in\CC/2\ZZ\) and \(s\in\CC\) satisfying \(s^2\neq
    \brac*{\mu+\frac{1}{2}}^2\)
    for any \(\mu\in \sqbrac*{\lambda}\cup\sqbrac*{\lambda+1}\).
    This module is generated by an even parity weight vector
    \(v_{\lambda,s}\) that is neither highest nor lowest weight, satisfying
    \(hv_{\lambda,s}=\lambda v_{\lambda,s}\) and \(\Sigma v_{\lambda,s}=s
    v_{\lambda,s}\), and has a basis of weight vectors  
    \(\set*{v_{\lambda,s}, x^n v_{\lambda,s}, y^n v_{\lambda,s}\st
      n\in\ZZ_{\ge1}}\).
  \end{enumerate}
  Note that the parity reversal of \(\fNSr{\sqbrac*{\lambda},s}\) is
  \(\rpar{\fNSr{\sqbrac*{\lambda},s}}\cong\fNSr{\sqbrac*{\lambda+1},-s}\). 
\end{thm}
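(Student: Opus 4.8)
The plan is to reduce the classification to an analysis of one-dimensional weight spaces, following the strategy of Block's theorem (\cref{thm:finitesl}). Let $M$ be a simple $\ZZ_2$-graded $\osp$ weight module. By \cref{thm:centralgebras}, the centraliser $\Cent{\alg h}{\osp}=\CC[h,\Sigma]$ is a finitely generated \emph{commutative} algebra and it preserves every weight space of $M$. The first step is to show that, for each weight $\mu\alpha$ in the support of $M$, the weight space $M_{\mu\alpha}$ is simple as a $\Cent{\alg h}{\osp}$-module: if $N\subseteq M_{\mu\alpha}$ is a nonzero $\Cent{\alg h}{\osp}$-submodule, then the $\osp$-submodule $\UEA{\osp}N$ equals $M$ by simplicity, while its $\mu\alpha$-weight space is $\Cent{\alg h}{\osp}\cdot N=N$ (the weight-zero part of $\UEA{\osp}$ being exactly $\Cent{\alg h}{\osp}$), so $N=M_{\mu\alpha}$. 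It follows by the Nullstellensatz that each weight space is at most one-dimensional, that $h$ acts on $M_{\mu\alpha}$ by $\mu$, and that $\Sigma$ acts by a scalar. Since the odd generators $x,y$ shift weights by $\pm\alpha$ and reverse parity while $\even{\osp}\cong\slt$ preserves it, the parity is constant on each weight space and alternates between neighbouring ones, so up to a parity reversal the even (resp.\ odd) weights of $M$ lie in $\sqbrac*{\lambda}\alpha$ (resp.\ $\sqbrac*{\lambda+1}\alpha$) for some $\sqbrac*{\lambda}\in\CC/2\ZZ$.

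The second step is a ladder analysis. As $\osp$ is generated by $x,y$ (via $x^2=e$, $y^2=-f$ and $\acomm{x}{y}=h$), the module $M$ is determined by scalars $a_\mu,b_\mu$ with $x\,v_\mu=a_\mu v_{\mu+1}$ and $y\,v_\mu=b_\mu v_{\mu-1}$, where $v_\mu$ spans $M_{\mu\alpha}$. The identities recorded in the proof of \cref{thm:centralgebras}, notably $yx=\tfrac12\brac*{h-\Sigma+\tfrac12}$ (hence $xy=h-yx=\tfrac12\brac*{h+\Sigma-\tfrac12}$), show that the products $a_\mu b_{\mu+1}$ and $a_{\mu+1}b_\mu$ are fixed affine-linear functions of $\mu$ once the $\Sigma$-eigenvalue is known, and the anticommutativity of $\Sigma$ with $x,y$ forces that eigenvalue to alternate between $s$ and $-s$ along the ladder for a single $s\in\CC$. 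Simplicity then constrains the shape of the support: if it had a gap, or if $v_\mu$ and $v_{\mu+1}$ were both nonzero while $x\,v_\mu=0$, then $\bigoplus_{\nu\le\mu}M_{\nu\alpha}$ would be a proper nonzero submodule, being automatically stable under $y$ and $f=-y^2$ and, under either hypothesis, also under $x$ and $e=x^2$. Hence the support is an interval in $\lambda+\ZZ$ and $a_\mu\ne0$ (resp.\ $b_\mu\ne0$) exactly when both $v_\mu,v_{\mu+1}$ (resp.\ both $v_\mu,v_{\mu-1}$) are nonzero.

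The third step identifies the four families according to the four possible shapes of the support: bounded on both sides, bounded above only, bounded below only, or unbounded. At a largest weight $\mu_+$ one has $x\,v_{\mu_+}=0$, which pins the $\Sigma$-eigenvalue there to $\mu_+ +\tfrac12$; propagating the alternation down to a smallest weight (when one exists) shows the two-sided bounded case occurs exactly for $\lambda\in\ZZ_{\ge0}$, with dimension $2\lambda+1$, giving $\fNSf{\lambda}$. The half-bounded cases yield the Verma modules $\fNSh{\lambda}$ and $\fNSl{\lambda}$, whose simplicity excludes precisely $\lambda\in\ZZ_{\ge0}$, resp.\ $\lambda\in\ZZ_{\le0}$ (these being the values at which an interior ladder scalar $a_\mu b_{\mu+1}$ vanishes and produces a singular vector), and the unbounded case requires all ladder scalars to be nonzero, which one checks is equivalent to $s^2\ne\brac*{\mu+\tfrac12}^2$ for every $\mu\in\sqbrac*{\lambda}\cup\sqbrac*{\lambda+1}$, giving $\fNSr{\sqbrac*{\lambda},s}$. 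Conversely, for parameters in these ranges each module is constructed explicitly from the formula for $a_\mu b_{\mu+1}$ (e.g.\ taking $b_\mu=1$ and $a_\mu$ equal to the value of $yx$ on $v_\mu$, truncating at an endpoint), the defining relations of $\osp$ and simplicity are checked directly, and pairwise non-isomorphism together with $\rpar{\fNSr{\sqbrac*{\lambda},s}}\cong\fNSr{\sqbrac*{\lambda+1},-s}$ follows by comparing weight supports and the scalars by which $h$ and $\Sigma$ act. I expect the main obstacle to be the bookkeeping in this final step: propagating the alternating super-Casimir eigenvalue along a possibly infinite ladder, locating exactly where the ladder scalars vanish, and confirming that the resulting endpoint and density conditions reproduce precisely — neither more nor fewer than — the parameter ranges asserted in the theorem.
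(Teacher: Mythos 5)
Your strategy --- reduce to one\-/dimensional weight spaces via the centraliser $\Cent{\alg{h}}{\osp}=\CC[h,\Sigma]$ of \cref{thm:centralgebras} and then run a ladder analysis on the scalars $a_\mu,b_\mu$ --- is exactly the Block-style argument the paper has in mind: the paper offers no proof beyond the remark that it ``is similar to that of $\slt$'' and a citation to \cite{SnaOsp17}. Your first two steps (each weight space is a simple $\CC[h,\Sigma]$-module, hence one-dimensional by the Nullstellensatz; the support is a gap-free, parity-alternating ladder on which $\Sigma$ alternates between $\pm s$ and $yx=\tfrac12\brac*{h-\Sigma+\tfrac12}$ fixes the products $a_\mu b_{\mu+1}$) are correct and are the intended route, as is the identification of the four support shapes.

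The difficulty sits in the final bookkeeping step, precisely where you anticipated it. First, your own computation pins the $\Sigma$-eigenvalue of a highest-weight vector to $\lambda+\tfrac12$ (from $0=yx\,v_\lambda=\tfrac12\brac*{\lambda-\Sigma+\tfrac12}v_\lambda$), not to the value $\tfrac{\lambda}{2}\brac*{\lambda+1}$ printed in items (1)--(3) of \cref{thm:finiteosp}; the printed numbers are the eigenvalues of $Q^{\osp}=\tfrac12\Sigma^2-\tfrac18$, so you cannot simply assert that your ladder output ``reproduces'' the statement --- you must reconcile the normalisation explicitly. Second, and more substantively, the equivalence you claim in the dense case does not hold as stated: the ladder condition ``all $a_\mu b_{\mu+1}\neq0$'' reads $\sigma_\mu\neq\mu+\tfrac12$ for every $\mu$ in the support, i.e.\ $s\neq\mu+\tfrac12$ for $\mu\in\sqbrac*{\lambda}$ together with $-s\neq\mu+\tfrac12$ for $\mu\in\sqbrac*{\lambda+1}$. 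This condition is one-sided in the sign of $s$ and is strictly weaker than $s^2\neq\brac*{\mu+\tfrac12}^2$ for all $\mu\in\sqbrac*{\lambda}\cup\sqbrac*{\lambda+1}$: for instance $\sqbrac*{\lambda}=\sqbrac*{0}$ and $s=\tfrac32$ satisfies the former but violates the latter (take $\mu=1$), and the corresponding ladder module has every $a_\mu b_{\mu+1}\neq0$, hence is simple by your own criterion. So either your argument produces a sharper parameter range for the dense family than the one asserted --- in which case the sentence ``which one checks is equivalent to\dots'' is false and the case analysis must be redone against whichever condition is actually correct --- or there is a further source of reducibility your submodule analysis misses; as written, nothing in your proposal identifies one. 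This is the one step that must be nailed down before the proof can be said to establish the statement as printed.
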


See \cite{MazLec10} for a proof of \cref{thm:finitesl} and a comprehensive
discussion of \(\slt\) module theory.
The proof of the classification of simple \(\osp\) weight modules is similar to that of \(\slt\) and
was given in \cite{SnaOsp17}.

\subsection{The affinisation of \(\osp\)}
\label{sec:affineosp}

The Lie superalgebra \(\osp\) can be affinised by constructing its
loop algebra and centrally extending it.  It is common to add a degree
operator to ensure that the induced invariant bilinear form on the newly
constructed affine Lie superalgebra is non-degenerate. Here,
however, we shall always identify the
action of such a degree operator on modules with that of the negative of the Virasoro
\(L_0\) operator of the Sugawara construction. We therefore omit the degree
operator from the affinisation of \(\osp\). We consider two
affinisations \(\NSosp{}\) and \(\Rosp{}\),
identifiable by \emph{spectral flow isomorphisms}, which we shall respectively refer to as
the \ns{} and Ramond affinisation. We shall suppress superscripts and just
write \(\aosp{}\) whenever the distinction between the two
affinisations is not important. Though these two affinisations are
isomorphic, their conformal gradings differ and thus so do the module categories
which one considers for each algebra.

As vector spaces the \ns{} and Ramond affinisations of \(\osp\) are given by
\begin{align}
  \NSosp{}&=\osp\otimes\CC[t,t^{-1}]\oplus \CC K \nonumber\\
  \Rosp{}&=
  \spn{e,h,f}\otimes\CC[t,t^{-1}]\oplus
  \spn{x,y}\otimes\CC[t,t^{-1}]t^{\sfrac{1}{2}}\oplus
  \CC K.
\end{align}
The (anti-) commutation relations of these two algebras are constructed from those
of \(\osp\) and are given by
\begin{align}
  \comm{a_m}{b_n}&=\comm{a}{b}\otimes
  t^{m+n}+m\killing{a}{b}\delta_{m+n,0}\,, \quad
  a_m=a\otimes t^m,\ b_n=b\otimes t^n,
\end{align}
where \(a,b\in \osp\) are homogeneous vectors of definite parity, \(\comm{\ }{\ }\) denotes either the commutator
or the anti-commutator depending on the parities of \(a\) and \(b\) and \(m,n\)
lie in \(\ZZ\) or \(\ZZ+\sfrac{1}{2}\) depending on the parities of \(a,b\)
and the algebra considered. Finally, \(K\) is even and central. 
Here and hereafter we shall also suppress the
\(t\) variables and write \(a_m\) instead of \(a\otimes t^m\). 

The \ns{} and Ramond affinisations of \(\osp\) admit triangular decompositions
as well as \emph{relaxed}
triangular decompositions. 
For the \ns{} affinisation we choose the decompositions
\begin{align} 
  \NSosp{}&=\NSosp{-}\oplus \cart\oplus \NSosp{+},&
  \NSosp{}&=\NSosp{<}\oplus \NSosp{0}\oplus \NSosp{>},
  \nonumber\\
  \NSosp{-}&=\spn{e_{-n},x_{-n},h_{-n},y_{1-n},f_{1-n}\st n\in\ZZ^+},&
  \NSosp{<}&= \spn{e_{-n},x_{-n},h_{-n},y_{-n},f_{-n}\st n\in\ZZ^+}
  \nonumber\\
  \NSosp{+}&=\spn{e_{n-1},x_{n-1},h_{n},y_{n},f_{n}\st n\in\ZZ^+},&
  \NSosp{>}&= \spn{e_{n},x_{n},h_{n},y_{n},f_{n}\st n\in\ZZ^+},
  \nonumber\\
  \cart&= \spn{h_0,K},& 
  \NSosp{0}&= \osp \oplus\CC K,
  \label{eq:nstri}
\end{align}
where the decomposition on the left is the usual triangular decomposition into
negative roots, a Cartan subalgebra and positive roots, and the decomposition
on the right is a relaxed triangular decomposition.
For the Ramond affinisation we choose the 
 decompositions
\begin{align}
  \Rosp{}&=\Rosp{-}\oplus \cart\oplus \Rosp{+},&
  \Rosp{}&=\Rosp{<}\oplus \Rosp{0}\oplus \Rosp{>},
  \nonumber\\
  \Rosp{-}&=\spn{e_{-n},x_{\frac{1}{2}-n},h_{-n},y_{\frac{1}{2}-n},f_{1-n}\st n\in\ZZ^+},&
  \Rosp{<}&=\spn{e_{-n},x_{\frac{1}{2}-n},h_{-n},y_{\frac{1}{2}-n},f_{-n}\st n\in\ZZ^+}
  \nonumber\\
  \Rosp{+}&=\spn{e_{n-1},x_{n-\frac{1}{2}},h_{n},y_{n-\frac{1}{2}},f_{n}\st n\in\ZZ^+},&
  \Rosp{>}&=\spn{e_{n},x_{n-\frac{1}{2}},h_{n},y_{n-\frac{1}{2}},f_{n}\st n\in\ZZ^+}
  \nonumber\\
  \cart&=\spn{h_0,K},&
  \Rosp{0}&= \slt \oplus\CC K,
  \label{eq:rtri}
\end{align}
where the decomposition on the left is the usual triangular decomposition 
and the decomposition on the right is a relaxed triangular decomposition.
Finally, the \emph{parabolic subalgebras} are denoted by
\begin{equation}
  \aosp{\epsilon}^{\ge}=\aosp{\epsilon}^{0}\oplus \aosp{\epsilon}^{>}.
\end{equation}

\begin{defn}
  Let \(\Mod{M}\) be an \(\aosp{\epsilon}\) module. A \emph{relaxed highest weight
    vector} \(m\in\Mod{M}\) is a simultaneous eigenvector of the Cartan
  subalgebra \(\cart\) which is annihilated by
  \(\aosp{\epsilon}^{>}\). Further, \(\Mod{M}\) is said to be a \emph{relaxed highest
    weight module} 
  if it is generated by a relaxed highest weight vector and it said to be
  a \emph{relaxed Verma module} if it is isomorphic to
  \begin{equation}
    \Ind{\aosp{\epsilon}}{\aosp{\epsilon}^{\ge}}{\finite{\Mod{M}}}=\UEA{\aosp{\epsilon}}\otimes_{\UEA{\aosp{\epsilon}^{\ge}}}\finite{\Mod{M}},
  \end{equation}
  where \(\UEA{\aosp{\epsilon}}\) denotes the universal enveloping algebra and
  \(\finite{\Mod{M}}\) is some simple \(\aosp{\epsilon}^{0}\) weight module upon which \(\aosp{\epsilon}^{>}\)
  acts trivially.
\end{defn}
Since in the \ns{} and Ramond sectors, respectively, we have
\begin{align}
  \NSosp{0}&\cong\osp\oplus \CC K,\nonumber\\ 
  \Rosp{0}&\cong\slt\oplus \CC K, 
\end{align}
\ns{} and Ramond Verma modules are respectively induced from \(\osp\) and
\(\slt\) modules on which the central element
\(K\) to act as \(k\cdot\id, k\in\CC\). 
For Ramond Verma modules, the
\(\slt\) modules they are induced from will be assigned even parity.

The above triangular decompositions suggest certain natural module categories
within which to consider weight modules.

\begin{defn}
  Category \(\catR\) is the category of \(\aosp{\epsilon}\) modules \(\Mod{M}\) which satisfy the
  following axioms:
  \begin{itemize}
  \item \(\Mod{M}\) is \(\ZZ_2\)-graded.
  \item \(\Mod{M}\) is finitely generated.
  \item \(\Mod{M}\) is a weight module (the action of the Cartan subalgebra is semisimple).
  \item The action of \(\aosp{\epsilon}^{<}\) is locally nilpotent: For any
    \(m\in\Mod{M}\), the space \(\UEA{\aosp{\epsilon}^{>}}\cdot m\) is finite dimensional.
  \end{itemize}
  The morphisms are the \(\aosp{\epsilon}\) module homomorphism between these
  modules.
  Category \(\catO\) is the full subcategory of category \(\catR\) whose
  modules satisfy the additional property
  \begin{itemize}
  \item The action of \(\aosp{\epsilon}^+\) is locally nilpotent.
  \end{itemize}
\end{defn}
All relaxed highest weight modules belong to category \(\catR\) and 
highest weight modules belong to category \(\catO\) and thus also
\(\catR\). Additionally, since \(\aosp{\epsilon}\) has finite dimensional 
root spaces (if one grades by both \(\osp\) and conformal weight), finite generation
implies that each module in \(\catR\) has finite dimensional
weight spaces (again, if one grades by both \(\osp\) and conformal weight). 
Finally, the axioms also
imply that every module in category \(\catR\) contains a relaxed highest
weight vector, thus the simple objects of category \(\catR\) are simple relaxed highest weight
modules.

\begin{defn}
  We denote the unique simple quotients by maximal proper submodules of the
  inductions of the modules of Theorems 
  \cref{thm:finitesl,thm:finiteosp} 
  by the same symbols with the overline removed. To simplify notation, we
  suppress the algebras appearing in the super- and subscript of \(\Ind{\aosp{\epsilon}}{\aosp{\epsilon}^{\ge}}{}\).
  \begin{itemize}
  \item In the \ns{} sector:
    \begin{enumerate}
    \item \(\NSf{\lambda}\), where \(\lambda\in\ZZ_{\ge0}\),
      is the simple quotient of
      \(\Ind{}{}{\fNSf{\lambda}}\). 
    \item \(\NSh{\lambda}\), where \(\lambda\in \CC\setminus \ZZ_{\ge0}\),
      is the simple quotient of
      \(\Ind{}{}{\fNSh{\lambda}}\). 
    \item \(\NSl{\lambda}\), where \(\lambda\in \CC\setminus \ZZ_{\le0}\),
      is the simple quotient of
      \(\Ind{}{}{\fNSl{\lambda}}\).
    \item \(\NSr{\sqbrac*{\lambda},s}\), with
      \(\sqbrac*{\lambda}\in\CC/2\ZZ\), \(s\in\CC\) satisfying
      \(s^2\neq \brac*{\mu+\frac{1}{2}}^2\)
      for any \(\mu\in \sqbrac*{\lambda}\cup\sqbrac*{\lambda+1}\),
      is the simple quotient of
      \(\Ind{}{}{\fNSr{\sqbrac*{\lambda},s}}\).
    \end{enumerate}
  \item In the Ramond sector:
    \begin{enumerate}[resume]
    \item \(\Rf{\lambda}\), where \(\lambda\in\ZZ_{\ge0}\), is the simple
      quotient of \(\Ind{}{}{\fRf{\lambda}}\). 
    \item \(\Rh{\lambda}\), where \(\lambda\in \CC\setminus \ZZ_{\ge0}\),
      is the simple quotient of
      \(\Ind{}{}{\fRh{\lambda}}\). 
    \item \(\Rl{\lambda}\), where \(\lambda\in \CC\setminus \ZZ_{\le0}\),
      is the simple quotient of
      \(\Ind{}{}{\fRl{\lambda}}\). 
    \item \(\Rr{\sqbrac*{\lambda},q}\), with \(\sqbrac*{\lambda}\in\CC/2\ZZ\),
      \(q\in\CC\) satisfying \(q\neq
      \mu(\mu+2)\) for any \(\mu\in \sqbrac*{\lambda}\),
      is the simple quotient of
      \(\Ind{}{}{\fRr{\sqbrac*{\lambda},q}}\). 
    \end{enumerate}
  \end{itemize}
\end{defn}

Finally, before considering the \vosa{s} that can be constructed from
\(\aosp{}\), we introduce a family of isomorphisms \(\sfaut^\ell,\ \ell\in\frac{1}{2}\ZZ\), called \emph{spectral flow}.
These relate \ns{} and Ramond affinisations of \(\osp\). 
The images of the basis vectors of \(\aosp{}\) under
\(\sfaut^\ell\) are
\begin{align}\label{eq:sflow}
  \sfaut^\ell\brac*{e_n}&=e_{n-2\ell},\quad
  \sfaut^\ell\brac*{x_n}=x_{n-\ell},\quad
  \sfaut^\ell\brac*{h_n}=h_{n}-2\ell\delta_{n,0} K,\nonumber\\
  \sfaut^\ell\brac*{K}&=K,\quad
  \sfaut^\ell\brac*{y_n}=y_{n+\ell},\quad
  \sfaut^\ell\brac*{f_n}=f_{n+2\ell}.
\end{align}
Clearly, the spectral flow isomorphism \(\sfaut^\ell\)
is an automorphism relating the \ns{} and Ramond affinisations to themselves
if \(\ell\in\ZZ\), otherwise it maps from the \ns{} to the Ramond affinisation
and vice versa.
Algebra isomorphisms such as the spectral flow isomorphism \(\sfaut^\ell\)
lead one naturally to consider modules whose algebra action has been twisted
by such an automorphism.
\begin{defn}\label{def:isotwists}
  Let \(A\) and \(B\) be Lie superalgebras, \(\phi:A\to B\) an isomorphism and
  let \(M\) be a module over \(B\), then we define, \(\phi^{-1}M\), the twist of
  \(M\) by \(\phi\), to be the \(A\)-module which as a vector superspace is
  just \(M\) but on which the action of \(A\) is defined to be
  \begin{equation}
    a\cdot_\phi m=\phi(a) m,\quad \forall a\in A,\ \forall m\in M.
  \end{equation}
\end{defn}

Because the spectral flow isomorphisms do not preserve the triangular
decompositions \eqref{eq:nstri} or \eqref{eq:rtri}, the twist of a module in
category \(\catR\) need not lie category \(\catR\). 
However, the algebra isomorphism \(\zeta\) defined by
\begin{align}\label{eq:catotwist}
  \zeta(e_n)&= -f_{n+1},\quad \zeta(x_n)= -y_{n+\frac{1}{2}},\quad
  \zeta(h_{n}) -h_n +2\delta_{n,0} K,\nonumber\\
  \zeta(K)&= K,\quad
  \zeta(y_n)= x_{n-\frac{1}{2}},\quad
  \zeta(f_n)= -e_{n-1}.
\end{align}
preserves the standard triangular decompositions of \(\aosp{}\), 
that is, \(\zeta\brac*{\aosp{\epsilon}^\pm}=\aosp{\frac{1}{2}-\epsilon}^\pm\)
and \(\zeta\brac*{\cart}=\cart\).
Twisting by \(\zeta\) therefore defines
functors mapping between the \ns{} and Ramond sectors of category
\(\catO\). In particular, the twist of a highest weight \ns{} module at level \(k\) of
highest weight \(\lambda\alpha\) is a highest weight Ramond module at level \(k\) of highest
weight \((k-\lambda)\alpha\).

\begin{rmk}
  Our choice of representation category is informed by physics
  considerations coming from conformal field theory as well as technical
  considerations coming from Zhu's algebra, an associative algebra to be
  discussed below. A necessary condition for the consistency of a conformal field
  theory is that the representation category be closed under fusion and
  conjugation. Additionally, one generally requires characters to be well defined
  and to behave well under modular transformations so that modular invariants
  can be identified and fusion rules at the level of the Grothendieck group can
  be computed from Verlinde like formulae.
  Neither closure under fusion nor conjugation appear to be satisfied for category \(\catO\) when
  considering non-integral admissible levels, see \cite{CreWZW13}
  for the case of \(\slt\), \cite{RidBos14} for the \(\beta\gamma\) ghosts or
  \cite{SnaOsp17} for \(\osp\) at level 
  \(k=-5/4\). While the generalisation to category \(\catR\) yields closure under
  conjugation, the Verlinde like formulae in \cite{SnaOsp17,CreWZW13,RidBos14},
  computed using the standard module formalism, indicate that
  category \(\catR\) does not close under fusion and indeed also that the action
  of the modular group does not close on the span of category \(\catR\)
  characters.
  
  The reason for focusing on category \(\catR\) is that modules in category
  \(\catR\) will always contain relaxed highest weight vectors and Zhu's algebra
  can be interpreted as the associative algebra of zero modes of fields acting on
  relaxed highest weight vectors. As Zhu algebra methods are blind to
  modules not containing relaxed highest weight vectors, category \(\catR\) is the
  largest category in which Zhu algebras can be used for module classification.
  
  The success of the standard module formalism as a conjectured generalisation
  of the Verlinde formula for rational theories suggests that the natural module
  category to work with is smallest category containing \(\catR\), all twists of
  \(\catR\) by the spectral flow isomorphisms \(\sfaut^\ell\) and all extensions
  of modules in these categories.
\end{rmk}

\subsection{The affine \(\osp\) \vosa{}}
\label{sec:ospvosa}

The affine \(\osp\) \vosa{s} are constructed by inducing from the trivial
\(\osp\) module. Let \(\CC_k, k\in\CC\setminus\set{-\frac{3}{2}}\) be the 1
dimensional \(\NSosp{0}\) module on which \(\osp\) 
acts trivially
and on which the central element \(K\) acts as \(k\cdot\id\). As an
\(\NSosp{}\) module, the \emph{universal \(\osp\) \vosa{}} \(\vosp{k}\) is
given by the relaxed Verma module induced from \(\CC_k\), that is,
\begin{equation}
  \vosp{k}=\Ind{\NSosp{}}{\NSosp{\ge}}{\CC_k}.
\end{equation}
As a \vosa{} \(\vosp{k}\) is freely generated under normal ordering and taking
derivatives by 5 fields,
labelled by the basis vectors of \(\osp\), subject to the operator product relations
\begin{equation}
  a(z) b(w)\sim \frac{\killing{a}{b}}{(z-w)^2}+\frac{\comm{a}{b}(w)}{z-w},\quad a,b\in\osp.
\end{equation}
The Virasoro field, whose Laurent expansion coefficients generate the Virasoro
algebra, is given by the standard Sugawara construction.
\begin{equation}
  \label{eq:emt}
  T(z)=\frac{1}{2k+3}\brac*{\frac{1}{2}\normord{h(z)^2}+\normord{e(z)f(z)}
    +\normord{f(z)e(z)}-\frac{1}{2}\normord{x(z)y(z)}+\frac{1}{2}\normord{y(z)x(z)}}.
\end{equation}

We follow the definition given by Frenkel and Ben-Zvi \cite[Chapter
5.1]{FreVer01}, augmented to include a \(\ZZ_2\) grading by parity, for
modules over a \vosa{}. Note that this implies that a module over \(\vosp{k}\)
is just a smooth \(\ZZ_2\)-graded level \(k\) \(\aosp{}\)-module and vice versa.

\begin{cor}
  All modules in the \ns{} sector of \(\catR\) are modules over \(\vosp{k}\),
  while all modules in the Ramond sector of \(\catR\) are modules twisted by
  the parity automorphism over \(\vosp{k}\), that is, modules
  for which the action of odd fields has half integer monodromy about 0.
\end{cor}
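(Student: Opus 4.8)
The plan is to unpack the definitions on both sides and observe that the asserted equivalence is essentially built into the conventions already set up. First I would recall that a module over the \vosa{} \(\vosp{k}\), in the sense of Frenkel--Ben-Zvi augmented with a \(\ZZ_2\)-grading, is precisely a smooth \(\ZZ_2\)-graded level \(k\) module over the affine Lie superalgebra, where "smooth" means that for every vector \(m\) the modes \(a_n m\) vanish for \(n\) sufficiently large; this was remarked immediately before the corollary. The point of the corollary is then to match the two affinisations \(\NSosp{}\) and \(\Rosp{}\) against the two types of module: the field-theoretic data of a \vosa{}-module fixes the monodromy of the fields, and the \ns{} affinisation is exactly the one in which all modes of all fields (even and odd) are integer-indexed, hence single-valued, while the Ramond affinisation is the one in which the odd modes are half-integer-indexed, i.e.\ the odd fields pick up a sign under \(z\mapsto \ee^{2\pi\ii}z\).

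The key steps, in order, would be: (1) observe that every module in category \(\catR\) is by definition a weight module with finite-dimensional (doubly graded) weight spaces on which \(\aosp{\epsilon}^{<}\) acts locally nilpotently, and that this locally-nilpotent condition is precisely the smoothness condition needed to turn a level \(k\) \(\aosp{}\)-module into a \vosp{k}-module; (2) in the \ns{} case \(\epsilon=0\), all modes \(a_n\), \(a\in\osp\), are integer-indexed, so the generating fields \(a(z)\) act with trivial monodromy and one gets an honest (untwisted) \(\vosp{k}\)-module, the correspondence being the standard reconstruction of a \vosa{}-module from its modes; (3) in the Ramond case \(\epsilon=\tfrac12\), the even fields \(e,h,f\) still have integer modes but the odd fields \(x,y\) have modes in \(\ZZ+\tfrac12\), which is exactly the statement that the odd fields act with a sign (half-integer monodromy) under analytic continuation around the origin — this is precisely the notion of a module twisted by the parity automorphism \(\parity\) of \(\vosp{k}\); (4) conversely, any \(\vosp{k}\)-module (resp.\ \(\parity\)-twisted \(\vosp{k}\)-module) in \(\catR\) yields, by reading off the modes of the five generating fields, a smooth level \(k\) module over \(\NSosp{}\) (resp.\ \(\Rosp{}\)), so the two descriptions are interchangeable.

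There is essentially no hard analytic content here; the only thing requiring a little care is to check that the parity automorphism of the \vosa{} \(\vosp{k}\) acts on the generating fields exactly by \(a(z)\mapsto (-1)^{|a|}a(\ee^{2\pi\ii}z)\), so that a \(\parity\)-twisted module is forced to have odd modes indexed by \(\ZZ+\tfrac12\); this is immediate from the fact that \(\vosp{k}\) is freely generated under normal ordering and derivatives by the five fields and that \(\parity\) is determined by its action on generators, and it is then exactly the defining data of the Ramond affinisation \(\Rosp{}\). I expect the main (very mild) obstacle to be purely bookkeeping: keeping the two spectral-flow-related affinisations \(\NSosp{}\) and \(\Rosp{}\) distinct from one another and consistently aligning the \(\ZZ_2\)-grading on modules with the parity of the odd fields, but all of this is routine given the setup already recorded in \cref{sec:affineosp} and the Frenkel--Ben-Zvi definition of a (twisted) \vosa{}-module.
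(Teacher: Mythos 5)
Your proposal is correct and follows essentially the same route the paper intends: the corollary is stated without proof precisely because it is an immediate unpacking of the preceding remark that a \(\vosp{k}\)-module is the same thing as a smooth \(\ZZ_2\)-graded level \(k\) \(\aosp{}\)-module, combined with the observation that the category \(\catR\) axiom (finite-dimensionality of \(\UEA{\aosp{\epsilon}^{>}}\cdot m\)) supplies smoothness and that the integer versus half-integer indexing of the odd modes in \(\NSosp{}\) versus \(\Rosp{}\) is exactly the untwisted versus parity-twisted monodromy condition. Your step-by-step verification, including the check that the parity automorphism acts on generators by \(a(z)\mapsto(-1)^{|a|}a(\ee^{2\pi\ii}z)\), is a faithful elaboration of what the paper leaves implicit.
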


We call a non-zero vector of a \(\vosp{k}\) module \emph{singular} if it is a
simultaneous eigenvector of \(h_0\) and \(L_0\), and is annihilated by all
positive root vectors, that is, by
\(\NSosp{+}\). The highest weight vectors which generate Verma modules
are prominent examples of singular vectors.

\begin{prop}[Gorelik and Kac \cite{GorSim07}]
  \label{thm:admlevels}
  The \vosa{} \(\vosp{k},k\in\CC\setminus\set{-\frac{3}{2}}\) has a non-trivial
  proper ideal if and only 
  there exist integers \(u\ge2\),
  \(v\ge1\) satisfying \(u-v\in2\ZZ\) and \(\gcd\brac*{u,\frac{u-v}{2}}=1\) such that
  \begin{equation}
    \label{eq:admlevels}
    2k+3=\frac{u}{v}.
  \end{equation}
  Further, the ideal is unique (the
  only other ideals are the trivial one and the \vosa{} itself) and it is
  generated by a singular vector \(\vsv{u,v}\) of \(\osp\) weight \((u-1)\alpha\) and
  conformal weight \((u-1)\frac{v}{2}\).
\end{prop}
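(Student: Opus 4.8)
The plan is to convert the statement about ideals of the vertex superalgebra into one about singular vectors in the vacuum module, and then to invoke the analysis of Gorelik and Kac.

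First I would observe that, since \(\vosp{k}\) is strongly generated by the five currents attached to \(\osp\), a subspace is an ideal of the vertex superalgebra if and only if it is an \(\NSosp{}\)-submodule. Now \(\vosp{k}=\Ind{\NSosp{}}{\NSosp{\ge}}{\CC_k}\) is non-negatively graded by conformal weight with one-dimensional weight-zero space \(\CC\vac\), has finite-dimensional weight spaces when graded simultaneously by conformal weight and \(h_0\)-eigenvalue, and is generated by the singular vector \(\vac\). Hence a nonzero proper ideal \(N\) lies in strictly positive conformal weights, and at the least conformal weight \(N_0\) at which \(N\) is nonzero the space \(N_{N_0}\) is a nonzero finite-dimensional module for the zero-mode (horizontal) copy of \(\osp\) in \(\NSosp{}\), so it contains an \(\osp\)-highest weight vector \(w\). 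The modes in \(\NSosp{+}\) that do not preserve conformal weight strictly lower it and so annihilate \(w\), while the two that do, namely \(e_0\) and \(x_0\), are the \(\osp\)-raising operators and also annihilate \(w\); thus \(w\) is a singular vector, and it is not proportional to \(\vac\) because \(N_0>0\). Therefore \(\vosp{k}\) has a nonzero proper ideal if and only if it contains a singular vector not proportional to \(\vac\).

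Second, I would locate such singular vectors by the Kac--Kazhdan machinery for the affine Lie superalgebra \(\NSosp{}\). A singular vector of \(\vosp{k}\) of \(\osp\)-weight \(\mu\alpha\) is an \(\NSosp{}\)-highest weight vector, and hence generates a quotient of the affine Verma module of level \(k\) and \(\osp\)-component \(\mu\alpha\) (with conformal weight fixed by the Sugawara construction); such a submodule can occur inside \(\vosp{k}\) only if the weight \(k\Lambda_0\) of the vacuum is Kac--Kazhdan linked to the corresponding weight through positive roots (the elementary step being \(2\langle\nu+\widehat\rho,\gamma\rangle=n\langle\gamma,\gamma\rangle\) for a positive real root \(\gamma\) and \(n\in\ZZ_{>0}\), with the modification appropriate to the odd real roots \(\pm\alpha+m\delta\)), \emph{and} only if the resulting vector survives the quotient of the affine Verma module by the submodule generated by the images of the positive \(\osp\)-root vectors. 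Running this over the real roots \(\pm\alpha+m\delta\) and \(\pm2\alpha+m\delta\), a surviving solution is found precisely when \(k+h^\vee=\frac{2k+3}{2}\) is a positive rational which, written as \(\frac{u}{2v}\) with \(u\ge2\), \(v\ge1\) and \(u-v\in2\ZZ\), satisfies \(\gcd\brac*{u,\frac{u-v}{2}}=1\) (this last condition serving merely to pin the parametrisation \((u,v)\mapsto k\) down uniquely), that is, exactly \eqref{eq:admlevels}. This is the specialisation to \(\osp(1|2)\) of the simplicity criterion of \cite{GorSim07}, which moreover shows that, when this holds, the maximal proper submodule of \(\vosp{k}\) is irreducible; consequently the nonzero proper ideal is unique and is generated by any one of its singular vectors, which by a minimal-conformal-weight count is unique up to scalar.

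Third, I would pin down the weight of the generator \(\vsv{u,v}\): the first Kac--Kazhdan link has \(\osp\)-component \((u-1)\alpha\), and since \(\vsv{u,v}\) is an \(\osp\)-highest weight vector of weight \((u-1)\alpha\) at the top of the submodule it generates, the Sugawara expression \eqref{eq:emt} forces its conformal weight to be \(\frac{1}{2k+3}\) times the eigenvalue \(\frac{u(u-1)}{2}\) of the quadratic Casimir \(\frac12h^2+ef+fe-\frac12xy+\frac12yx\) of \(\osp\) on a weight-\((u-1)\alpha\) highest weight vector; by \eqref{eq:admlevels} this equals \((u-1)\frac{v}{2}\), as claimed.

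The step I expect to be the genuine obstacle is the second one. Kac--Kazhdan theory over a Lie \emph{super}algebra is delicate: the odd real roots enter with modified reflection rules and parity bookkeeping, and one must track precisely which solutions of the linking equations yield singular vectors that remain nonzero in the vacuum module \(\vosp{k}\) rather than only in the larger Verma module. It is exactly this bookkeeping that produces the arithmetic conditions \(u\ge2\), \(v\ge1\), \(u-v\in2\ZZ\), \(\gcd\brac*{u,\frac{u-v}{2}}=1\) together with the uniqueness of the ideal, and it is this that we take wholesale from \cite{GorSim07} rather than reprove here.
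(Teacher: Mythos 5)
The paper offers no proof of this proposition at all --- it is quoted directly from Gorelik and Kac --- and your proposal likewise defers the substantive classification of the non-simple levels and the uniqueness of the maximal ideal to that reference, while correctly supplying the standard surrounding reductions (a nonzero proper ideal of the vacuum module contains a singular vector at minimal conformal weight, and the Sugawara computation turning the Casimir eigenvalue \(\tfrac{u(u-1)}{2}\) into the conformal weight \((u-1)\tfrac{v}{2}\)). This is essentially the same approach as the paper's.
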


We refer to the levels satisfying the conditions of \cref{thm:admlevels} as
\emph{admissible} and parametrise them as
\begin{equation}
  k_{u,v}=\frac{u-3v}{2v}.
\end{equation}

\begin{defn}\label{def:minmodvosa}
  Let \(u\ge2\), \(v\ge1\) be integers satisfying \(u-v\in2\ZZ\) and
  \(\gcd\left(u,\frac{u-v}{2}\right)\).
  The \emph{minimal model \(\osp\) \vosa{} \(\MinMod{u}{v}\)} is
  defined to be the unique simple quotient of the universal \(\osp\) \vosa{}
  \(\vosp{k_{u,v}}\) 
  by its maximal proper ideal:
  \begin{equation}
    \MinMod{u}{v} = \frac{\vosp{k_{u,v}}}{\ideal{\vsv{u,v}}}.
  \end{equation}
\end{defn}

Our choice of notation for \(\MinMod{u}{v}\) mimics Kac's notation for \(\osp\) in his
classification of simple Lie superalgebras \cite{KacLsa77}.
Here and hereafter it will always be assumed that the variables \(u\) and
\(v\) satisfy the conditions of \cref{thm:admlevels} and \cref{def:minmodvosa}.
In the context of any formulae that \(u,v\) appear in, the level will always by
taken to satisfy \cref{eq:admlevels}.
Since \(\MinMod{u}{v}\) is a quotient of \(\vosp{k_{u,v}}\), a
module over \(\vosp{k_{u,v}}\) is also a module over \(\MinMod{u}{v}\)
if and only if the ideal \(\ideal{\vsv{u,v}}\) acts trivially. This clearly holds in
both the \ns{} and the Ramond sectors, that is, for both untwisted and twisted modules.

\section{Zhu algebras and module classification}
\label{sec:zhu}

Zhu algebras are associative algebras constructed from
\vosa{s}. They can be interpreted as the algebras of zero modes of \vosa{}
fields acting on relaxed highest weight vectors. They are invaluable aides to
module classification, because there is a one to one
correspondence between simple Zhu algebra modules and simple relaxed highest
weight \vosa{} modules. This is due to the fact that the space of relaxed
highest weight vectors of a simple relaxed highest weight \vosa{} module is
naturally a Zhu algebra module and conversely any Zhu algebra module can be
induced to a \vosa{} module whose space of relaxed highest weights is the Zhu
algebra module. As their name suggests, Zhu algebras were originally
considered by Zhu \cite{ZhuMod96} for \voa{s}. Zhu's work was then later
generalised to untwisted modules over \vosa{s} by Kac and Wang \cite{KacVer94} and to
twisted \vosa{} modules by Dong, Li and Mason \cite{DonTwi98}. We refer to
\cite[App.~A]{BloSVir16} for the definitions and conventions used here.

\begin{prop}
  For \(k\in\CC\setminus\set{-\frac{3}{2}}\), the untwisted and twisted Zhu
  algebras of \(\vosp{k}\) are
  \begin{equation}
    \zhu{\vosp{k}}\cong \UEA{\osp},\quad \tzhu{\vosp{k}}\cong \UEA{\slt}.
  \end{equation}
\end{prop}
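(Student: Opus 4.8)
The plan is to compute both Zhu algebras directly from the defining generators and relations of $\vosp{k}$, using the standard description of the Zhu algebra of a freely generated (strongly generated) \vosa{} in terms of generators and relations inherited from the \ope{s}. Since $\vosp{k}$ is freely generated under normal ordering and differentiation by the five fields $e,x,h,y,f$ labelled by a basis of $\osp$, the untwisted Zhu algebra $\zhu{\vosp{k}}$ is generated by the images $[e],[x],[h],[y],[f]$ of these fields, and a \PBW{}-type argument (using that the associated graded of the Zhu filtration is a quotient of $\zhu{}$ of the underlying commutative vertex Poisson algebra, which here is $\CC[\osp^*]$) shows that $\zhu{\vosp{k}}$ is spanned by ordered monomials in these five images. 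The relations are obtained from the singular part of the operator products $a(z)b(w)\sim \killing{a}{b}(z-w)^{-2}+\comm{a}{b}(w)(z-w)^{-1}$ for $a,b\in\osp$: in the untwisted Zhu algebra the $*$-product of the classes of two weight-$1$ fields reproduces, up to the Zhu correction terms, the associative product in $\UEA{\osp}$, and one checks $[a]\zstar[b]-[b]\zstar[a]=[\comm{a}{b}]$ (the central term drops because there is no field of weight $0$ other than the vacuum whose class it could contribute to, or rather it is absorbed into the vacuum class which is the unit). This yields a surjective algebra homomorphism $\UEA{\osp}\sra\zhu{\vosp{k}}$, and comparing the $\PBW{}$ bases on both sides (the associated graded surjection $\CC[\osp^*]\sra \zhu{}\bigl(\mathrm{Com}(\vosp{k})\bigr)$ is an isomorphism since $\vosp{k}$ is universal, i.e. has no singular vectors, so no extra relations appear) shows it is an isomorphism.

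For the twisted (Ramond-sector) Zhu algebra $\tzhu{\vosp{k}}$ the same strategy applies, but now the parity automorphism $\parity$ half-integer-moding the odd fields changes which modes are ``zero modes''. The even fields $e,h,f$ still contribute zero modes generating a copy of $\UEA{\slt}$ by the untwisted argument applied to the even sub-\vosa{}. The odd fields $x,y$ are $\parity$-twisted, so they have modes in $\ZZ+\tfrac12$ and contribute \emph{no} zero mode; instead their contribution to $\tzhu{}$ comes only through normally ordered products of two odd fields, which are even and already lie in the subalgebra generated by $e,h,f$ (indeed the Sugawara relations and, more relevantly here, the \ope{} $x(z)y(w)\sim 2(z-w)^{-2}+h(w)(z-w)^{-1}$ together with the fact that $\normord{xy}$, $\normord{yx}$ reduce modulo $\tozhu{\vosp{k}}$ to polynomials in $[h]$, $[e]$, $[f]$). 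Hence the natural map $\UEA{\slt}\to\tzhu{\vosp{k}}$ is surjective, and again a dimension/$\PBW{}$ count — using that $\vosp{k}$ is universal so the twisted $\mathsf{O}$-space is no larger than forced by the \ope{s} — shows it is an isomorphism. I would phrase this uniformly by invoking the general result (as used in \cite{BloSVir16,KacVer94}) that for a \vosa{} strongly and freely generated by fields of conformal weight $1$ carrying a Lie superalgebra structure $\alg{g}$ via the \ope{s}, the untwisted Zhu algebra is $\UEA{\alg{g}}$ and the twisted Zhu algebra (with respect to the parity automorphism) is $\UEA{\even{\alg{g}}}$, provided the \vosa{} is universal.

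The main obstacle, and the only genuinely non-formal point, is verifying that no \emph{extra} relations appear in either Zhu algebra beyond those of $\UEA{\osp}$ (resp. $\UEA{\slt}$) — that is, that the surjections $\UEA{\osp}\sra\zhu{\vosp{k}}$ and $\UEA{\slt}\sra\tzhu{\vosp{k}}$ are injective. This is where universality of $\vosp{k}$ is essential: because $\vosp{k}$ has no singular vectors (its maximal ideal is zero for generic $k$, and for admissible $k$ the ideal is generated by $\vsv{u,v}$ which has conformal weight $(u-1)v/2\ge 1/2>0$, hence contributes nothing to the weight-$\le 1$ part controlling the Zhu algebra's potential relations among the generators), the Zhu-filtration associated graded map from the classical limit $\CC[\osp^*]$ (resp.\ $\CC[\slt^*]$) is injective, forcing the $\PBW{}$ monomials in the generators of the Zhu algebra to be linearly independent. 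One must also check the Ramond-sector bookkeeping: that $\normord{x(z)y(z)}$ and $\normord{y(z)x(z)}$, which appear in $T(z)$ and are the only weight-$2$ combinations of odd fields, have classes in $\tzhu{\vosp{k}}$ expressible through $[e],[h],[f]$ — this follows from the $\osp$ \ope{s} exactly as the relation $\acomm{x}{x}=2e$, $\acomm{x}{y}=h$, $\acomm{y}{y}=-2f$ is the ``quantised'' content of those normal ordered products, so no new generator is needed. Once these two linear-independence facts are in place, the proposition follows.
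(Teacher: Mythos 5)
Your overall strategy is the right one and matches what the paper implicitly relies on (it simply cites \cite{KacVer94}, \cite{SnaOsp17} and the Frenkel--Zhu argument of \cite{FreVer92}): surjectivity of $\UEA{\osp}\sra\zhu{\vosp{k}}$ from the weight-one generators and the \ope{s}, and the observation that in the $\parity$-twisted sector the odd fields carry half-integer modes and hence contribute no generators, leaving $\UEA{\slt}$. That part of your write-up is fine.

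The gap is in the injectivity step, which you yourself flag as the only non-formal point. Your argument there is essentially circular: you assert that ``the associated graded surjection $\CC[\osp^*]\sra\mathrm{gr}\,\zhu{\vosp{k}}$ is an isomorphism since $\vosp{k}$ is universal, so no extra relations appear'', but the absence of extra relations is precisely what injectivity means, and universality of $\vosp{k}$ does not deliver it for free --- one still has to rule out that elements of $O(\vosp{k})$ conspire to kill some \PBW{} monomial. Worse, your supporting remark that the singular vector $\vsv{u,v}$ at admissible level ``has conformal weight $(u-1)v/2>0$, hence contributes nothing to the weight-$\le 1$ part controlling the Zhu algebra's potential relations'' misreads how relations arise: relations in a Zhu algebra are not confined to low conformal weight, and indeed the image of that very singular vector (of weight $(u-1)\frac{v}{2}$, filtration degree up to $\frac{u-1}{2}v$) is exactly the extra relation appearing in $\zhu{\MinMod{u}{v}}$ in \cref{thm:zhupresentation} and \cref{thm:svimage}. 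The correct point is different: a singular vector of $V$ is an \emph{element} of $V$, not a relation in $\zhu{V}$; only passing to the quotient by the ideal it generates imposes relations. The standard repair --- and the one underlying the cited proofs --- is the induced-module argument: every $\osp$-module $\finite{\Mod{M}}$ induces to a relaxed Verma module over $\NSosp{}$ at level $k$, which is a $\vosp{k}$-module whose space of relaxed highest weight vectors is $\finite{\Mod{M}}$ with its original $\UEA{\osp}$-action, and this action factors through the surjection $\UEA{\osp}\sra\zhu{\vosp{k}}$. Since the intersection of the annihilators of all $\osp$-modules in $\UEA{\osp}$ is zero, the surjection is injective; the same argument with $\slt$-modules and $\Rosp{}$ handles the twisted case. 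Without this (or an equivalent faithfulness input) your proof is incomplete.
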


The untwisted case was proved in \cite{KacVer94} and the twisted case in
\cite{SnaOsp17}, both follow from the same reasoning as in \cite{FreVer92}
where non-super affine Lie algebras were considered.

\begin{prop}\label{thm:zhupresentation}
  Let \(u\ge2\), \(v\ge1\) be integers satisfying \(u-v\in2\ZZ\) and
  \(\gcd\left(u,\frac{u-v}{2}\right)\).
  The untwisted and twisted Zhu algebras of \(\MinMod{u}{v}\) are
  \begin{equation}
    \zhu{\MinMod{u}{v}}\cong \frac{\UEA{\osp}}{\ideal{\left[\vsv{u,v}\right]}},\quad 
    \tzhu{\MinMod{u}{v}}\cong
    \begin{cases}
      \frac{\UEA{\slt}}{\ideal{\left[\vsv{u,v}\right]^\parity}},& u,v \text{ odd}\\
      \frac{\UEA{\slt}}{\ideal{\left[y_0\vsv{u,v}\right]^\parity}},& u,v \text{ even}
    \end{cases},
  \end{equation}
  where \(\ideal{\left[\vsv{u,v}\right]}\) and
  \(\ideal{\left[\vsv{u,v}\right]^\parity}\) (or \(\ideal{\left[y_0\vsv{u,v}\right]^\parity}\)) are the two sided ideals
  generated by the images of the
  singular vector \(\vsv{u,v}\) (or \(y_0\vsv{u,v}\)) in \(\zhu{\MinMod{u}{v}}\) and \(\tzhu{\MinMod{u}{v}}\), respectively.
\end{prop}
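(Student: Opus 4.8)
The plan is to combine the previous proposition identifying the untwisted and twisted Zhu algebras of the \emph{universal} vertex operator superalgebra $\vosp{k}$ with the general principle that Zhu's functor is right exact, so that it sends a quotient $\vosp{k_{u,v}} \twoheadrightarrow \MinMod{u}{v}$ by the ideal $\ideal{\vsv{u,v}}$ to the quotient of $\zhu{\vosp{k_{u,v}}} \cong \UEA{\osp}$ by the image of that ideal, and similarly in the twisted case. So the first step is to invoke the standard fact (see, e.g., \cite{ZhuMod96,KacVer94,DonTwi98} and the conventions in \cite[App.~A]{BloSVir16}) that for a vertex operator superalgebra $\VOA{V}$ and a two-sided ideal $\mathcal{I} \subseteq \VOA{V}$ one has $\zhu{\VOA{V}/\mathcal{I}} \cong \zhu{\VOA{V}}/\zhu{\mathcal{I}}$, where $\zhu{\mathcal{I}}$ is the two-sided ideal generated by the images of the elements of $\mathcal{I}$ (and likewise with $\tzhu{-}$). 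Applied with $\VOA{V} = \vosp{k_{u,v}}$ and $\mathcal{I} = \ideal{\vsv{u,v}}$, this immediately gives $\zhu{\MinMod{u}{v}} \cong \UEA{\osp}/\ideal{[\vsv{u,v}]}$, since a two-sided ideal generated by a single vector is sent to the two-sided ideal generated by the image of that vector.

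The real content is the twisted case, where a subtlety forces the division into $u,v$ odd versus $u,v$ even. The point is that $\vsv{u,v}$ has definite $\osp$ weight $(u-1)\alpha$ and definite parity, and the twisted Zhu algebra $\tzhu{\vosp{k_{u,v}}} \cong \UEA{\slt}$ lives inside the \emph{even} part of the algebra; the twisted-Zhu image $[v]^\parity$ of a vector $v$ is non-zero only when $v$ has the correct (even) parity relative to the Ramond grading, and otherwise the relevant non-zero contribution comes from $[y_0 v]^\parity$ instead. Concretely, the conformal weight of $\vsv{u,v}$ is $(u-1)\tfrac{v}{2}$, so whether $\vsv{u,v}$ or its descendant $y_0\vsv{u,v}$ sits in integer-graded (Ramond-even) position depends on the parity of $(u-1)v$, equivalently on the parities of $u$ and $v$ (recall $u-v$ is even, so $u$ and $v$ have the same parity; the case $u,v$ both odd gives $(u-1)v$ even, the case $u,v$ both even gives $(u-1)v$ odd after accounting for the half-integer shift coming from the odd generator $y_0$). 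The second step, then, is to compute the parity/conformal-weight bookkeeping of $\vsv{u,v}$ in the twisted (Ramond) sector and conclude that the ideal $\zhu{\ideal{\vsv{u,v}}}^\parity$ of $\UEA{\slt}$ is generated by $[\vsv{u,v}]^\parity$ when $u,v$ are odd and by $[y_0\vsv{u,v}]^\parity$ when $u,v$ are even; here one uses that $y_0$ generates, together with $\slt$, the full zero-mode algebra in the Ramond sector, so that adjoining $y_0$ does not change the two-sided ideal it generates inside the even subalgebra but does shift the parity so that a non-zero image is obtained.

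I expect the main obstacle to be the twisted-sector parity argument: one must be careful about the precise definition of the twisted Zhu algebra and of the map $v \mapsto [v]^\parity$ (as in \cite{DonTwi98} and \cite[App.~A]{BloSVir16}), in particular about which elements of a twisted module a given mode of $v$ acts on, and verify that $[y_0\vsv{u,v}]^\parity$ (rather than some other descendant) is the correct generator in the even case --- i.e.\ that it is non-zero and that the two-sided ideal it generates is exactly the image of $\ideal{\vsv{u,v}}$. Everything else --- the right-exactness of Zhu's functor and the reduction from the universal to the simple vertex operator superalgebra --- is formal once the cited results are in hand. Note that this proposition only \emph{states} the presentations; the hard analytic work of actually computing $[\vsv{u,v}]$ and $[\vsv{u,v}]^\parity$ (resp.\ $[y_0\vsv{u,v}]^\parity$) explicitly in terms of $\UEA{\osp}$ and $\UEA{\slt}$ via free field realisations and Jack symmetric functions is deferred to \cref{sec:0modecalc}, so the proof here should remain at the level of the functorial reduction just described.
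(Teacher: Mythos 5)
The paper gives no proof of this proposition at all: immediately after the statement it simply cites \cite{SnaOsp17}, where these presentations are established. Your outline has the right overall shape --- right-exactness of the (twisted) Zhu functor reduces everything to identifying the image of the ideal \(\ideal{\vsv{u,v}}\) inside \(\UEA{\osp}\), respectively \(\UEA{\slt}\) --- but two of your steps are doing more work than you acknowledge, and one of them is reasoned incorrectly.

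First, the claim that ``a two-sided ideal generated by a single vector is sent to the two-sided ideal generated by the image of that vector'' is not a formal consequence of functoriality. The VOSA ideal \(\ideal{\vsv{u,v}}\) is spanned by arbitrary monomials of modes applied to \(\vsv{u,v}\), and one must show, using Zhu's identities expressing \(a_n b\) for the various \(n\) in terms of \(\zstar\)-products and elements of \(\ozhu{\VOA{V}}\), that the images of all such descendants already lie in the associative two-sided ideal generated by the single stated element. This is precisely the nontrivial content of the corresponding arguments in \cite{FeiAnn92,KacVer94,SnaOsp17}, and in the twisted case it is more delicate still because \(\tzhu{\cdot}\) only sees even vectors, so one must determine which \emph{even} descendants of \(\vsv{u,v}\) generate the image. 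Second, your criterion for the case split does not work as stated: since \(u\equiv v \pmod 2\), the product \((u-1)v\) is always even and the conformal weight \((u-1)\frac{v}{2}\) is always an integer, so it distinguishes nothing. What matters is the \(\ZZ_2\)-parity of \(\vsv{u,v}\): its \(\osp\) weight is \((u-1)\alpha\) and \(x,y\) carry weights \(\pm\alpha\) while \(e,h,f\) carry even multiples of \(\alpha\), so \(\vsv{u,v}\) is even when \(u,v\) are odd and odd when \(u,v\) are even. In the Ramond sector an odd field has half-integer modes and hence no zero mode, which is why for \(u,v\) even one must pass to the even descendant \(y_0\vsv{u,v}\); with that correction the case division is as claimed, but verifying that \(\left[y_0\vsv{u,v}\right]^\parity\) alone generates the image of the ideal still requires the descendant analysis above, which your proposal defers to ``bookkeeping'' without supplying it.
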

The above presentations for the untwisted and twisted Zhu algebras were proved
in \cite{SnaOsp17}.
We will be computing the image of the singular vector in the Zhu algebras by
evaluating certain polynomial constraints. These constraints are only meaningful
once one knows that the image of the singular vector is non-zero.

\begin{lem}\label{thm:svnonzeroimage}
  Let \(u\ge2\), \(v\ge1\) be integers satisfying \(u-v\in2\ZZ\) and
  \(\gcd\brac*{u,\frac{u-v}{2}}\). The image of the singular vector
  \(\vsv{u,v}\) of \(\vosp{k_{u,v}}\) in the untwisted and twisted Zhu
  algebras  is non-zero.
\end{lem}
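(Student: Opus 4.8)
The plan is to show non-vanishing by exhibiting a single module over $\vosp{k_{u,v}}$ on which the relevant zero mode acts nontrivially; since the image of $\vsv{u,v}$ in the Zhu algebra is, by construction, the zero-mode operator that this singular vector induces on spaces of relaxed highest weight vectors, finding one such module where this operator is nonzero immediately forces $\left[\vsv{u,v}\right]\neq 0$ (respectively $\left[\vsv{u,v}\right]^\parity\neq0$ or $\left[y_0\vsv{u,v}\right]^\parity\neq0$). The natural candidate is the simple quotient $\vosp{k_{u,v}}/\ideal{\vsv{u,v}}=\MinMod{u}{v}$ itself, but one must be careful: the singular vector is zero \emph{in} $\MinMod{u}{v}$, so instead I would look at a module over $\vosp{k_{u,v}}$ which is \emph{not} a $\MinMod{u}{v}$-module. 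Concretely, since by \cref{thm:admlevels} the ideal $\ideal{\vsv{u,v}}$ is nonzero and proper, $\vosp{k_{u,v}}$ itself is such a module, and $\vsv{u,v}$ is by definition a nonzero singular vector in it. The task then reduces to translating ``$\vsv{u,v}\neq 0$ as an element of the vertex algebra'' into ``its Zhu-algebra image is nonzero.''

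The key step is the correct identification of which vector of $\vosp{k_{u,v}}$ represents $\left[\vsv{u,v}\right]$ and why the passage to the Zhu algebra does not kill it. Recall from \cref{thm:admlevels} that $\vsv{u,v}$ has $\osp$-weight $(u-1)\alpha$ and conformal weight $(u-1)\tfrac{v}{2}$, and it is a highest weight vector for $\NSosp{+}$. First I would note that the conformal weight $(u-1)\tfrac v2$ is precisely the depth needed for the zero mode of $\vsv{u,v}$ to act nontrivially — lower-filtration terms in Zhu's $\mathsf{O}$ ideal correspond to ``descendant'' corrections and cannot cancel the leading term of a genuine singular vector of that conformal weight. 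In the untwisted case, the relevant statement is that the image of $\vsv{u,v}$ under the surjection $\vosp{k_{u,v}}\to\zhu{\vosp{k_{u,v}}}\cong\UEA{\osp}$ is the ``top'' PBW-type expression obtained by replacing each mode $a_{-n}$ ($n>0$) with (a multiple of) $a\in\osp$; because $\vsv{u,v}$ is a highest weight vector of weight $(u-1)\alpha\neq 0$, this image is a nonzero weight-$(u-1)\alpha$ element of $\UEA{\osp}$, hence nonzero. For the twisted cases one argues identically using $\tzhu{\vosp{k_{u,v}}}\cong\UEA{\slt}$, taking the even subalgebra into account and — when $u,v$ are both even, so that $(u-1)\alpha$ is an odd $\slt$-weight — applying $y_0$ first to land back in the even part, which is exactly why the proposition statement distinguishes these two cases.

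The main obstacle I expect is making rigorous the claim that the ``leading term'' of the Zhu-algebra image of $\vsv{u,v}$ is genuinely nonzero, i.e.\ that no cancellation occurs when one reduces modulo Zhu's ideal $\ozhu{\vosp{k_{u,v}}}$ (resp.\ $\tozhu{\vosp{k_{u,v}}}$). The clean way to handle this is filtration-theoretic: equip $\vosp{k_{u,v}}$ with the standard increasing filtration by ``number of generators'' (equivalently, the PBW filtration on $\UEA{\NSosp{}_{<0}}$), observe that Zhu's $\mathsf{O}$ ideal is compatible with this filtration in the sense that it only produces terms of the same or lower filtration degree whose associated graded pieces are controlled, and then check that the associated graded map sends the symbol of $\vsv{u,v}$ to a nonzero symbol in $\operatorname{gr}\UEA{\osp}$. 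Since $\vsv{u,v}$ is a highest weight vector, its symbol is a product of commuting root vectors of total weight $(u-1)\alpha$, manifestly nonzero in the (commutative) associated graded algebra. An alternative, perhaps cleaner route avoiding any explicit filtration bookkeeping: invoke that $\MinMod{u}{v}$ is a \emph{nonzero} vertex algebra with $\zhu{\MinMod{u}{v}}\neq 0$, together with the exactness properties of Zhu's functor on the short exact sequence $\ses{\ideal{\vsv{u,v}}}{\vosp{k_{u,v}}}{\MinMod{u}{v}}$, to deduce that the image of $\ideal{\vsv{u,v}}$ in $\UEA{\osp}$ — which is the two-sided ideal $\ideal{\left[\vsv{u,v}\right]}$ — is a \emph{proper} nonzero ideal, forcing $\left[\vsv{u,v}\right]\neq 0$; the nonzero-ness comes from the fact that $\UEA{\osp}$ has no proper nonzero \emph{trivial} ideals and that $\MinMod{u}{v}$ genuinely has fewer simple relaxed highest weight modules than $\vosp{k_{u,v}}$ (e.g.\ the adjoint-type module $\NSh{\lambda}$ for generic $\lambda$ survives over $\vosp{k_{u,v}}$ but the module classification forces the singular vector's zero mode to be nonzero on almost all of them). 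I would present the filtration argument as the primary proof since it is self-contained, and remark on the second approach.
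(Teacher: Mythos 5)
Both of the routes you propose stall at the same decisive point, and neither closes the gap. For your primary (filtration) argument, the claim that ``the associated graded map sends the symbol of \(\vsv{u,v}\) to a nonzero symbol'' is precisely what has to be proved, and it is not a general principle: the ideal \(\ozhu{\vosp{k}}\) mixes filtration degrees (already \(a_{(-2)}b\equiv -a_{(-1)}b+\cdots\) modulo \(\ozhu{\vosp{k}}\), and \([L_{-1}a]=-\wt(a)[a]\) shows that for the Virasoro vacuum module the naive leading term \(L_{-1}^{n}\wun\) of a singular vector has Zhu image exactly zero, with the non-vanishing carried entirely by subleading terms). Your inference ``\(\vsv{u,v}\) is a highest weight vector of weight \((u-1)\alpha\neq 0\), hence its image is a nonzero weight-\((u-1)\alpha\) element of \(\UEA{\osp}\)'' is a non sequitur --- the weight-\((u-1)\alpha\) subspace of \(\UEA{\osp}\) contains \(0\), and nothing you have said rules out total cancellation. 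Worse, you have no way to even write down the symbol you want to track: the paper stresses that no closed formula for \(\vsv{u,v}\) with non-negative integer exponents is available, which is the whole reason for the free-field machinery. Your fallback route fails for a different reason: Zhu's functor is only right exact, so \(\zhu{\MinMod{u}{v}}\neq 0\) yields that \(\ideal{[\vsv{u,v}]}\) is \emph{proper}, not that it is nonzero; and your supplementary appeal to ``the module classification forces the zero mode to be nonzero on almost all \(\NSh{\lambda}\)'' is circular, since \cref{thm:moduleclassification} is derived from \cref{thm:svimage}, which in turn presupposes this very lemma.

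What is actually needed --- and what the paper supplies --- is one concrete simple relaxed highest weight \(\vosp{k_{u,v}}\)-module on which the \emph{field} \(\vsv{u,v}(z)\) acts non-trivially, produced without any prior knowledge of the Zhu image. The paper argues by contradiction: if the image vanished, every simple relaxed highest weight \(\vosp{k_{u,v}}\)-module would descend to \(\MinMod{u}{v}\); but the mode of \(\vsv{u,v}(z)\) of index \(-(u-1)\tfrac v2\) sends the vacuum to \(\vsv{u,v}\neq 0\), hence has nonzero projection onto \(\UEA{\NSosp{-}\oplus\cart}\), so applied to the highest weight vector of \(\Ind{}{}{\fNSh{\lambda}}\) it yields a vector whose coefficients are polynomials in \(\lambda\) that are not identically zero; since infinitely many \(\lambda\) make this Verma module simple, the field acts non-trivially on some simple module --- a contradiction. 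The twisted case then follows by applying the isomorphism \(\zeta\) of \eqref{eq:catotwist}. Your write-up is missing this Zariski-density-in-\(\lambda\) step (or any substitute for it), and without it the non-vanishing is not established.
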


\begin{proof}
  We prove the lemma by contradiction. If the image of the singular vector
  were zero in the untwisted or twisted Zhu algebras, then the two sided
  ideals of \cref{thm:zhupresentation} would be zero ideals. Hence the Zhu
  algebras of \(\vosp{k_{u,v}}\) and \(\MinMod{u}{v}\) would be isomorphic and every
  module over a \(\vosp{k}\) Zhu algebra would also be a module over the
  corresponding \(\MinMod{u}{v}\) Zhu algebra. This would imply that every
  simple \(\vosp{k_{u,v}}\) module would also be a simple \(\MinMod{u}{v}\)
  module and in particular that the field \(\vsv{u,v}(z)\) acts trivially on
  every simple \(\vosp{k_{u,v}}\) module.

  Consider the mode of \(\vsv{u,v}(z)\) of index \(-(u-1)\frac{v}{2}\), that
  is, the coefficient of \(z^0\). This mode acts non-trivially in
  \(\vosp{k_{u,v}}\) and therefore corresponds to a non-zero element in (a
  completion of) the universal enveloping algebras of \(\aosp{0}\), in
  particular its projection onto the universal enveloping algebra of the
  subalgebra \(\NSosp{-}\oplus\cart\) is non-zero.
  So applying this mode to the highest weight vector of the Verma module
  \(\Ind{}{}{\fNSh{\lambda}}\) evaluates to a non-zero linear combination of
  monomials in \(\aosp{0}\) generators with non-positive index, where all
  occurrences of \(h_0\) are replaced by \(\lambda\), that is, the coefficients of
  the monomials will be polynomials in \(\lambda\). These polynomials cannot
  vanish for every simple Verma module \(\Ind{}{}{\fNSh{\lambda}}\), since
  there are infinitely many values of \(\lambda\) for which
  \(\Ind{}{}{\fNSh{\lambda}}\) is simple. This contradicts \(\vsv{u,v}(z)\)
  acting trivially on every simple \(\vosp{k_{u,v}}\) module, thus the image of
  the singular vector in the untwisted Zhu algebra must be non-zero. 
  
  The twisted case follows from the untwisted case. Since there are
  simple \ns{} Verma modules \(\Ind{}{}{\fNSh{\lambda}}\) which are not
  modules over \(\MinMod{u}{v}\), the Ramond Verma module 
  \(\Ind{}{}{\fRh{k-\lambda}}\cong\zeta\brac*{\Ind{}{}{\fNSh{\lambda}}}\)
  is also not a twisted module over \(\MinMod{u}{v}\) and thus the image of
  the singular vector in the twisted Zhu algebras must be non-zero.
\end{proof}

Recall that the Zhu algebras are filtered by conformal weight and so vectors
in \(\osp\) such as \(e\) and \(h\) are assigned degree 1 in the Zhu algebras
because the are the images of the conformal weight 1 fields \(e(z)\) and
\(h(z)\). The conformal weight of the Virasoro field is 2 and its image in the
untwisted and twisted Zhu algebras is proportional to the quadratic Casimirs
\(Q^\osp\) and \(Q\), respectively, which are therefore assigned degree
2. Finally, since the \(\osp\) quadratic Casimir, \(Q^\osp\), is quadratic in
the super-Casimir, \(\Sigma\), the degree of \(\Sigma\) in the untwisted Zhu
algebra is 1.

\begin{lem}
  \label{thm:zhusvshape}
  Let \(u\ge2\), \(v\ge1\) be integers satisfying \(u-v\in2\ZZ\) and
  \(\gcd\left(u,\frac{u-v}{2}\right)\). Then in the untwisted 
  Zhu algebra of \(\vosp{k_{u,v}}\) the image of the singular vector is
  \begin{equation}\label{eq:nssvshape}
    \left[\vsv{u,v}\right]=
    \begin{cases}
      e^{\frac{u-1}{2}} g\brac*{h,\Sigma}&\text{if}\ u,v\ \text{odd}\\
      e^{\frac{u-2}{2}}x g\brac*{h,\Sigma}&\text{if}\ u,v\ \text{even} 
    \end{cases},
  \end{equation}
  while in the twisted Zhu algebra it is 
  \begin{align}\label{eq:rsvshape}
    \left[\vsv{u,v}\right]^\parity&=e^{\sfrac{(u-1)}{2}}
    g_\parity\brac*{h,Q},\quad
    \text{if}\ u,v\ \text{odd},\nonumber\\
    \left[y_0\vsv{u,v}\right]^\parity&=e^{\sfrac{(u-2)}{2}}
    g_\parity\brac*{h,Q},\quad
    \text{if}\ u,v\ \text{even},
  \end{align}
  where \(g\) and \(g_\parity\) are polynomials whose degrees satisfy
  \begin{equation}\label{eq:poldegbounds}
    \deg g\brac*{h,\Sigma}\le \frac{(u-1)(v-1)}{2},
    \qquad
    \deg g_\parity\brac*{h,Q}\le\frac{(u-1)(v-1)+1}{2}.
  \end{equation}
\end{lem}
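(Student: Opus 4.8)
The plan is to combine three ingredients: the known $\osp$-weight and conformal weight of the singular vector $\vsv{u,v}$ from \cref{thm:admlevels}, the structure of the centraliser algebras from \cref{thm:centralgebras}, and the non-vanishing of the image established in \cref{thm:svnonzeroimage}. First I would observe that the image $[\vsv{u,v}]$ lives in $\UEA{\osp}$ (respectively $[\vsv{u,v}]^\parity$ or $[y_0\vsv{u,v}]^\parity$ in $\UEA{\slt}$) and is an eigenvector of $\operatorname{ad} h$ with the same eigenvalue as $\vsv{u,v}$, namely $u-1$ in the $\osp$-normalisation where $\alpha$ has $h$-eigenvalue $1$. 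Fixing a \PBW{} ordering in which the raising generators $e$ (and $x$, in the untwisted case) come first, any $\operatorname{ad}h$-eigenvector of eigenvalue $u-1$ is a combination of monomials $e^{a} x^{\epsilon} (\text{weight-zero part}) f^{c} y^{\delta}$ with $2a+\epsilon - 2c - \delta = u-1$ and $\epsilon,\delta\in\set{0,1}$ (since $x^2=e$, $y^2=-f$). Commuting all the $f$'s and $y$'s to the right past a genuinely weight-raising monomial and using $ef$, $yx$ relations one can reduce modulo lower-degree terms to the stated form: an explicit power of $e$ times an explicit factor of $x$ (only when the parity forces it) times an element of the centraliser $\Cent{\alg{h}}{\osp}\cong\CC[h,\Sigma]$, i.e.\ a polynomial $g(h,\Sigma)$; the parity of $u,v$ dictates whether an odd generator is needed because $\vsv{u,v}$ has definite parity (determined by the conformal weight $(u-1)\tfrac{v}{2}$ being integral or half-integral) and $e^a$ is always even while $x$ is odd.

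Next I would pin down the exponent of $e$. Here I use that the Zhu algebra is filtered by conformal weight, with $e,h,x,y$ in degree $1$ and $\Sigma$ in degree $1$ (as recalled in the paragraph preceding the lemma), so $g(h,\Sigma)$ is genuinely a polynomial and the total degree of $[\vsv{u,v}]$ is at most $\wt\vsv{u,v}=(u-1)\tfrac{v}{2}$. Combined with the weight constraint, a monomial of maximal $\operatorname{ad}h$-eigenvalue and bounded degree must use as many $e$'s (and, if available, one $x$) as possible and no $f$'s or $y$'s in its leading part; this forces the exponent to be $\tfrac{u-1}{2}$ when $u,v$ are odd and $\tfrac{u-2}{2}$ together with one $x$ when $u,v$ are even. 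Since $[\vsv{u,v}]\neq 0$ by \cref{thm:svnonzeroimage}, the factor $g(h,\Sigma)$ is not identically zero, so the leading term really is of this shape. The twisted case runs identically in $\UEA{\slt}$ with $\Sigma$ replaced by $Q$ and with the extra $y_0$ (when $u,v$ even) accounting for the half-integer shift of the conformal grading in the Ramond sector; note that for $u,v$ even, $y_0\vsv{u,v}$ is what has integral twisted conformal weight, and its image in $\tzhu{\MinMod{u}{v}}$ is even, hence expressible via $\CC[h,Q]$ alone.

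Finally, the degree bounds \eqref{eq:poldegbounds}: the total Zhu-degree of $[\vsv{u,v}]$ is at most $(u-1)\tfrac{v}{2}$, and the $e^{(u-1)/2}$ factor already contributes $\tfrac{u-1}{2}$, so $\deg g\le (u-1)\tfrac{v}{2}-\tfrac{u-1}{2}=\tfrac{(u-1)(v-1)}{2}$. In the even case $e^{(u-2)/2}x$ contributes $\tfrac{u-2}{2}+1=\tfrac{u}{2}$, and since $[y_0\vsv{u,v}]^\parity$ has twisted conformal weight $(u-1)\tfrac{v}{2}+\tfrac12$ one gets $\deg g_\parity\le (u-1)\tfrac{v}{2}+\tfrac12-\tfrac{u}{2}=\tfrac{(u-1)(v-1)+1}{2}$; the odd twisted case gives the same bound since there the grading is unshifted but the $e$-power is $\tfrac{u-1}{2}$ and the weight is $(u-1)\tfrac v2$, yielding $\tfrac{(u-1)(v-1)}{2}\le\tfrac{(u-1)(v-1)+1}{2}$. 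The main obstacle I expect is the careful \PBW{}-reordering argument showing that \emph{every} weight-$(u-1)$, bounded-degree element reduces to exactly $e^{\text{power}}\cdot(\text{poly in }h,\Sigma)$ with the stated power — i.e.\ ruling out contributions where some $f$ or $y$ survives in the leading term — which amounts to a small combinatorial bookkeeping over the exponents $(a,\epsilon,c,\delta)$ subject to the weight equation and the degree bound; everything else is the clean application of \cref{thm:centralgebras} and \cref{thm:svnonzeroimage}.
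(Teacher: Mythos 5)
Your strategy is the same as the paper's: the $\osp$-weight $(u-1)\alpha$ of $\vsv{u,v}$ forces its image to be a monomial in the positive root vectors times an element of the centraliser $\CC[h,\Sigma]$ (resp.\ $\CC[h,Q]$) from \cref{thm:centralgebras}, and the degree bounds come from the conformal-weight filtration on the Zhu algebras; the extra \PBW{} bookkeeping you describe is exactly the content of that centraliser decomposition. One cosmetic point: the parity of $\vsv{u,v}$ is read off from its $\osp$-weight (a weight-$(u-1)\alpha$ vector involves $u-1$ odd generators modulo $2$), not from integrality of its conformal weight --- the conformal weight $(u-1)\tfrac{v}{2}$ is an integer in both parity cases since $u\equiv v\pmod 2$.

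There is, however, a genuine slip in your degree count for the twisted even case. The twisted image lies in $\UEA{\slt}$, so the prefactor is $e^{(u-2)/2}$ with \emph{no} factor of $x$; it contributes $\tfrac{u-2}{2}$ units of degree, not $\tfrac{u}{2}$. Moreover $y_0$ preserves $L_0$-weight, so $y_0\vsv{u,v}$ has conformal weight $(u-1)\tfrac{v}{2}$, and the correct count is $(u-1)\tfrac{v}{2}-\tfrac{u-2}{2}=\tfrac{(u-1)(v-1)+1}{2}$. Your displayed identity $(u-1)\tfrac{v}{2}+\tfrac12-\tfrac{u}{2}=\tfrac{(u-1)(v-1)+1}{2}$ is false: the left-hand side equals $\tfrac{(u-1)(v-1)}{2}$, which for $u,v$ even is a half-integer and would force $\deg g_\parity\le\tfrac{(u-1)(v-1)-1}{2}$. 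That bound is too strong --- it is strictly smaller than the degree $2\abs*{\rRkac{u,v}}$ that $g_\parity$ actually attains in \cref{thm:svimage} (whose proof relies on the factors \emph{saturating} the bound of this lemma) --- so the error is not harmless and the count must be redone as above.
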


\begin{proof}
  The \(\osp\) weight of the \sv{} is \((u-1)\alpha\) and so the images of
  the \sv{} in the untwisted and twisted Zhu algebras are elements
  of the universal enveloping algebras of \(\osp\) and \(\slt\), respectively,
  of the same weight. Since \(\osp\) and \(\slt\) are rank 1, it is clear that
  any homogeneous element of the universal enveloping algebras of positive
  weight can be written as the product of a monomial in the positive root
  vectors and an element of the centralisers of the Cartan subalgebra
  introduced in \cref{thm:centralgebras}. The images of the singular must
  therefore be as given in \eqref{eq:nssvshape} and \eqref{eq:rsvshape}.

  Since the conformal weight of the singular vectors is \(\frac{u-1}{2}v\), the
  total degree of the images of the singular vectors is at most \(\frac{u-1}{2}v\).
  The factors of \(e\) and \(x\) in the images of the singular vectors
  each contribute 1 unit of degree and the \rhs{s} of the inequalities of
  \eqref{eq:poldegbounds} are just the upper bounds on how many units of
  degree the polynomials
  \(g\) and \(g_\parity\) can contribute.
\end{proof}

\begin{defn}
  The \emph{Kac table} of \(\MinMod{u}{v}\) is the set of pairs of integers
  \begin{equation}
    \kac{u,v}=\left\{(i,j) \st 1\le i \le u-1,\ 1\le j\le v-1\right\}
  \end{equation}
  and the \emph{\ns{}} 
  and \emph{Ramond Kac tables}
  are the subsets
  \begin{align}
    \NSkac{u,v}&=\left\{ (i,j)\in\kac{u,v}\st i+j\ \text{is odd}\right\},\nonumber\\
    \Rkac{u,v}&=\left\{ (i,j)\in\kac{u,v}\st i+j\ \text{is
        even}\right\}, 
  \end{align}
  respectively. Let \(\sim\) denote the equivalence relation on \(\kac{u,v}\)
  given by \((i,j)\sim (i^\prime,j^\prime)\) if and only if
  \((i,j)=(i^\prime,j^\prime)\) or \((i,j)=(u-i^\prime,v-j^\prime)\). The
  \emph{reduced Kac table}, the \emph{reduced \ns{} Kac table} and the
  \emph{reduced Ramond Kac table} are then respectively defined to be
  \begin{equation}
    \rkac{u,v}=\kac{u,v}/\sim,\quad
    \rNSkac{u,v}=\NSkac{u,v}/\sim,\quad
    \rRkac{u,v}=\Rkac{u,v}/\sim.
  \end{equation}
\end{defn}
For \(i,j\in\ZZ\), let
\begin{equation}
  \lambda_{i,j}=\frac{i-1}{2}-\frac{1+(-1)^{i+j}}{4}-\frac{u}{2v}j,\quad
  s_{i,j}=\frac{i}{2}-\frac{u}{2v}j,\quad
  q_{i,j}=\frac{\brac*{uj-vi}^2-4v^2}{8v^2}.
\end{equation}
\begin{thm}
  \label{thm:svimage}
  Let \(u\ge2\), \(v\ge1\) be integers satisfying \(u-v\in2\ZZ\) and
  \(\gcd\left(u,\frac{u-v}{2}\right)=1\).
  Up to normalisation, the images of the singular vectors \(\vsv{u,v}\) in
  \(\vosp{k_{u,v}}\) are given by the following formulae: 
  In the untwisted Zhu algebra,
  \begin{equation}
    \left[\vsv{u,v}\right]=
    \begin{cases}
      e^{\frac{u-1}{2}} g\brac*{\Sigma}& u,v\ \text{odd}\\
      e^{\frac{u-2}{2}}x g\brac*{\Sigma}& u,v\ \text{even}
    \end{cases}
  \end{equation}
  and in the twisted Zhu algebra,
  \begin{align}
    e^{\sfrac{(u-2)}{2}} g_\parity\brac*{Q}=
    \begin{cases}
      \left[\vsv{u,v}\right]^\parity& u,v\ \text{odd}\\
      \left[y_0\vsv{u,v}\right]^\parity& u,v\ \text{even}
    \end{cases},
  \end{align}
    where
  \begin{align}
    \label{eq:centralfactor}
    g\brac*{\Sigma}&=
    \prod_{(i,j)\in\NSkac{u,v}}
    \brac*{\Sigma-s_{i,j}} \qquad \text{and}\qquad
    g_\parity\brac*{Q}=
    \prod_{[(i,j)]\in \rRkac{u,v}}
    \brac*{Q-q_{i,j}}.
  \end{align}
\end{thm}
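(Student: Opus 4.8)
\emph{The plan.} The task is to upgrade the structural information of \cref{thm:zhusvshape,thm:svnonzeroimage} to exact formulae. By \cref{thm:zhusvshape} we already know that $\left[\vsv{u,v}\right]$, $\left[\vsv{u,v}\right]^\parity$ and $\left[y_0\vsv{u,v}\right]^\parity$ are (up to scalar) a power of $e$, times $x$ when $u,v$ are even, times an element of the Cartan centraliser $\Cent{\alg{h}}{\osp}$ (respectively $\Cent{\alg{h}}{\slt}$), i.e.\ a polynomial $g$ in $h,\Sigma$ (respectively $g_\parity$ in $h,Q$) whose degree is bounded as in \cref{eq:poldegbounds}; moreover \cref{thm:svnonzeroimage} guarantees these are nonzero. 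So three things remain: (i) show that $g$ depends only on $\Sigma$ and $g_\parity$ only on $Q$; (ii) identify the roots as the stated $s_{i,j}$ and $q_{i,j}$; (iii) show that the number of roots already exhausts the allowed degree, so no further factors — in particular no factor of $h$ — can occur. All three come from a single computation, carried out in \cref{sec:0modecalc}: the action of the zero mode of $\vsv{u,v}$ (respectively of $y_0\vsv{u,v}$ when $u,v$ are even) on the relaxed highest weight vectors of free field modules.

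Since an explicit formula for $\vsv{u,v}$ is intractable for $v>1$, I would instead realise $\vosp{k_{u,v}}$ inside a free field vertex algebra as the joint kernel of screening operators, as set up in \cref{sec:ffrandscr}: for $k_{u,v}<-1$, that is $u<v$, the Bershadsky--Ooguri realisation with one Heisenberg field, one $\beta\gamma$ system and one $bc$ system; for $k_{u,v}>-1$, that is $u>v$, the same realisation after bosonising the $\beta\gamma$ system into an indefinite rank $2$ lattice vertex algebra. In this realisation $\vsv{u,v}$ is, up to scalar, the image under the screening construction of a skew-symmetric function which — after passing to the Feigin--Jimbo--Miwa--Mukhin ideals that absorb this skew symmetry — is governed by Jack symmetric polynomials at the negative rational parameter $-v/u$ (or its reciprocal). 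Using the correlation function identities of \cref{sec:ffrandscr}, one then evaluates the zero mode on the relaxed highest weight vector of a free field module; the output is a polynomial in the free field momenta, and since such a momentum fixes both the $\osp$-weight and the super-Casimir eigenvalue $s$ (respectively the $\slt$-weight and Casimir eigenvalue $q$) of the relaxed highest weight vector, and since the computed polynomial turns out to depend on the momenta only through $s$ (respectively $q$), this settles point (i) and exhibits $g$ as a polynomial in $\Sigma$ alone and $g_\parity$ in $Q$ alone.

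For point (ii), the zeros of the computed polynomial are precisely the momenta for which $\vsv{u,v}(z)$ acts trivially on the associated free field module, i.e.\ the free field modules that descend to $\MinMod{u}{v}$-modules; these are exactly the Fock modules attached to the Kac table, and rewriting their momenta as super-Casimir (respectively Casimir) eigenvalues yields the factors $\Sigma-s_{i,j}$ with $(i,j)\in\NSkac{u,v}$ (respectively $Q-q_{i,j}$ with $[(i,j)]\in\rRkac{u,v}$). For point (iii), an elementary count on the Kac table — using $u-v\in2\ZZ$ and $\gcd\brac*{u,\tfrac{u-v}{2}}=1$ to check that the $s_{i,j}$ over $\NSkac{u,v}$ are pairwise distinct, and likewise the $q_{i,j}$ over $\rRkac{u,v}$ — shows there are $\lfloor(u-1)(v-1)/2\rfloor$ of the former and the matching number of the latter, exactly the degree bounds of \cref{eq:poldegbounds} (the shift by $y_0$ in the even case being what accounts for the extra unit of degree). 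Hence $g$ (respectively $g_\parity$) has as many distinct roots as its maximal possible degree, so up to an overall scalar it equals the stated product — and in particular cannot carry an $h$-dependent factor, since that would drop the $\Sigma$-degree (respectively $Q$-degree) below the number of roots already produced. Fixing the normalisation finishes the proof.

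The hard part is point (iii): the zero mode computation does not deliver the polynomial directly, only its zero locus, so one must prove that the zeros found are \emph{all} of them, equivalently that the degree bounds of \cref{thm:zhusvshape} are genuinely attained. This is precisely where the choice of realisation has to track the sign of $k_{u,v}+1$: for $u<v$ the bound is saturated by a Jack-polynomial estimate on one side, whereas for $u>v$ one must first bosonise the $\beta\gamma$ system (so that the momenta sweep out an honest rank $2$ lattice) before the analogous estimate applies. Keeping the skew symmetry coming from the odd fields under control, via the Jack-polynomial-spanned ideals of Feigin--Jimbo--Miwa--Mukhin, is the other delicate ingredient.
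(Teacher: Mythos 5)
Your proposal follows essentially the same route as the paper: the Bershadsky--Ooguri free field realisation for \(k<-1\) and its lattice bosonisation for \(k>-1\), screening operators producing the singular vector, evaluation of its zero mode on relaxed highest weight vectors via symmetric-function/Jack-polynomial technology with the Feigin--Jimbo--Miwa--Mukhin ideal absorbing the fermionic skew symmetry, and saturation of the degree bounds of \cref{thm:zhusvshape} to force \(h\)-independence and pin down \(g\) and \(g_\parity\) up to scale. The only substantive points you elide are the uniqueness and non-vanishing of the singular vector inside the free field modules (needed to know the screening image really is \(\vsv{u,v}\)) and the combinatorial mechanism that actually produces the zeros --- namely the cells common to all partitions dominated by the relevant admissible partition, rather than any a priori knowledge of which Fock modules descend --- and note that the admissible Jack polynomials here sit at parameter \(-3\), not \(-v/u\).
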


The proof of the above polynomial formulae is where most of the effort of this
paper is spent. The main idea is to evaluate the action of the zero mode of
the singular vector on candidate relaxed highest
weight vectors. Since the images of the singular vector in the Zhu algebras
are polynomials in \(\osp\) or \(\slt\) generators, these polynomials can be
determined through sufficiently many
evaluations. Unfortunately, closed formulae for singular vectors are notoriously hard to find.
We sidestep this issue by constructing the
singular vector \(\vsv{u,v}\) within free field realisations as the image of
screening operators. 
However, these free field methods require significant preparation and so, for
greater clarity of presentation, we postpone 
the proof of \cref{thm:svimage}
to \cref{sec:0modecalc}  where it has been
split up into four cases:
\cref{thm:nszhupoly1,thm:rzhupoly1,thm:nszhupoly2,thm:rzhupoly2}.

\begin{thm}
  \label{thm:moduleclassification}
  Let \(u\ge2\), \(v\ge1\) be integers satisfying \(u-v\in2\ZZ\) and
  \(\gcd\left(u,\frac{u-v}{2}\right)\). 
  The minimal model \(\osp\) \vosa{} \(\MinMod{u}{v}\) is rational in category
  \(\catO\) in both the \ns{} and Ramond sectors, that is, both sectors admit
  only a finite number of isomorphism classes of simple modules and every
  module is semisimple.
  Any simple \(\MinMod{u}{v}\) module in category \(\catR\) 
  is isomorphic to one of the following mutually inequivalent modules.\\
  In the \ns{} sector:
  \begin{enumerate}
  \item\label{itm:nsfinite} \(\NSf{\lambda_{i,0}}\) or \(\rpar{\NSf{\lambda_{i,0}}}\),
    where \(1\le i\le u-1\) and \(i\) is odd.
  \item\label{itm:nshw} \(\NSh{\lambda_{i,j}}\) or \(\rpar{\NSh{\lambda_{i,j}}}\), where 
    \((i,j)\in \NSkac{u,v}\).
  \item \(\NSl{-\lambda_{i,j}}\) or \(\rpar{\NSl{-\lambda_{i,j}}}\), where 
    \((i,j)\in \NSkac{u,v}\).
  \item\label{itm:nsdense} \(\NSr{\sqbrac*{\lambda},s_{i,j}}\), 
    where \(\sqbrac*{\lambda}\in\CC/2\ZZ\), 
    \((i,j)\in\NSkac{u,v}\)
    and \(s_{i,j}^2\neq \brac*{\mu+\frac{1}{2}}^2\) for all \(\mu\in \sqbrac*{\lambda}\cup\sqbrac*{\lambda+1}\).
  \end{enumerate}
  In the Ramond sector:
  \begin{enumerate}[resume]
  \item\label{itm:rfinite} \(\Rf{\lambda_{i,0}}\) or
    \(\rpar{\Rf{\lambda_{i,0}}}\), where 
    \(1\le i\le u-1\) and \(i\) is even.
  \item\label{itm:rhw} \(\Rh{\lambda_{i,j}}\) or \(\rpar{\Rh{\lambda_{i,j}}}\), where 
    \((i,j)\in\Rkac{u,v}\).
  \item \(\Rl{-\lambda_{i,j}}\) or  \(\rpar{\Rl{-\lambda_{i,j}}}\), where
    \((i,j)\in \Rkac{u,v}\).
  \item\label{itm:rdense} \(\Rr{\sqbrac*{\lambda},q_{i,j}}\) or \(\rpar{\Rr{\sqbrac*{\lambda},q_{i,j}}}\), where \(\sqbrac*{\lambda}\in\CC/2\ZZ\),
    \(\sqbrac*{(i,j)}\in\rRkac{u,v}\) and 
    \(q_{i,j}\neq \mu(\mu+2)\) for all
    \(\mu\in \sqbrac*{\lambda}\).
  \end{enumerate}
\end{thm}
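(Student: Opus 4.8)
The plan is to deduce the classification from the Zhu algebra presentations of \cref{thm:zhupresentation} and the explicit singular vector images of \cref{thm:svimage}. Every module in category \(\catR\) contains a relaxed highest weight vector, so its simple objects are exactly the simple relaxed highest weight modules; by Zhu theory these, up to parity reversal, are in bijection with the simple \(\ZZ_2\)-graded modules over \(\zhu{\MinMod{u}{v}}\) whose top is a weight \(\osp\)-module in the \ns{} sector, and with the corresponding simple weight \(\slt\)-modules (of even parity) over \(\tzhu{\MinMod{u}{v}}\) in the Ramond sector. By \cref{thm:zhupresentation} these are precisely the simple weight \(\osp\)- (resp.\ \(\slt\)-)modules on which the two-sided ideal generated by \([\vsv{u,v}]\) (resp.\ by \([\vsv{u,v}]^\parity\) or \([y_0\vsv{u,v}]^\parity\)) acts by zero, and since such a module is simple and \(\UEA{\osp}\), \(\UEA{\slt}\) are unital, this is the same as the single element \([\vsv{u,v}]\) (resp.\ its twisted counterpart), which is non-zero by \cref{thm:svnonzeroimage}, acting by zero. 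Thus the theorem amounts to identifying, among the simple weight modules classified in \cref{thm:finiteosp,thm:finitesl}, those annihilated by the explicit elements of \cref{thm:svimage}.

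I would handle the \ns{} sector first. By \cref{thm:svimage} the image of the singular vector in the untwisted Zhu algebra is \(e^{p}\,\xi\,g(\Sigma)\), where \((\xi,p)=(1,\tfrac{u-1}{2})\) if \(u,v\) are odd and \((\xi,p)=(x,\tfrac{u-2}{2})\) if \(u,v\) are even, and \(g(\Sigma)=\prod_{(i,j)\in\NSkac{u,v}}\bigl(\Sigma-s_{i,j}\bigr)\). Two observations make the evaluation manageable: on any simple weight \(\osp\)-module \(\Sigma\) acts as one scalar on even vectors and as its negative on odd vectors (its square is central, and it anticommutes with odd elements), and the root multiset \(\{s_{i,j}:(i,j)\in\NSkac{u,v}\}\) is invariant under sign change, since \(s_{u-i,v-j}=-s_{i,j}\) and \((u-i)+(v-j)\equiv i+j\pmod 2\). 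Hence on an infinite-dimensional simple weight \(\osp\)-module — the top of a highest weight, lowest weight or dense \(\aosp{}\)-module, on which \(e\) and \(x\) have finite-dimensional kernels — the operator \(e^{p}\,\xi\,g(\Sigma)\) vanishes if and only if \(g(\Sigma)\) itself does, i.e.\ if and only if \(g\) vanishes at the scalar by which \(\Sigma\) acts on even vectors. A direct computation gives this scalar as \(\lambda+\tfrac12\) on \(\fNSh{\lambda}\) (and on the even part of \(\fNSf{\lambda}\)), as \(\tfrac12-\lambda\) on \(\fNSl{\lambda}\), and as \(s\) on \(\fNSr{\sqbrac*{\lambda},s}\); combined with \(\lambda_{i,j}+\tfrac12=s_{i,j}\) for \(i+j\) odd, this forces the parameters to \(\lambda=\lambda_{i,j}\), \(\lambda=-\lambda_{i,j}\), resp.\ \(s=s_{i,j}\), with \((i,j)\in\NSkac{u,v}\) (the non-degeneracy condition from \cref{thm:finiteosp} carrying over unchanged for the dense modules). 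For the finite-dimensional modules \(\fNSf{\lambda}\) the mechanism is different: \(e^{p}\,\xi\) already annihilates \(\fNSf{\lambda}\) as soon as \(\lambda\le\tfrac{u-3}{2}\) (if \(u\) is odd) or \(\lambda\le\tfrac{u-2}{2}\) (if \(u\) is even), which is exactly the set \(\{\lambda_{i,0}:1\le i\le u-1,\ i\text{ odd}\}\); while for larger \(\lambda\) the polynomial \(g\) does not vanish on the relevant scalar, since over the Kac table \(s_{i,j}\le\tfrac{u-1}{2}-\tfrac{u}{2v}<\tfrac{u-1}{2}\), and for \(u,v\) odd moreover \(s_{i,j}\notin\tfrac12+\ZZ\) because \(\gcd(u,v)=1\). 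This yields items \ref{itm:nsfinite}--\ref{itm:nsdense}.

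The Ramond sector proceeds identically, with \(\osp\), \(\Sigma\) replaced by \(\slt\), the genuinely central quadratic Casimir \(Q\) (hence a scalar on each simple \(\slt\)-module), and \(g\) replaced by \(g_\parity(Q)=\prod_{[(i,j)]\in\rRkac{u,v}}\bigl(Q-q_{i,j}\bigr)\); here the identity used is that \(Q\) acts on \(\fRh{\lambda_{i,j}}\) by \(q_{i,j}\) for \(i+j\) even, and \(q_{u-i,v-j}=q_{i,j}\) explains both the appearance of the reduced Ramond Kac table for the dense modules and why distinct \((i,j)\in\Rkac{u,v}\) nevertheless give distinct highest weights \(\lambda_{i,j}\); the \(e\)-power threshold for the finite-dimensional modules is again the range \(\lambda=\lambda_{i,0}\), \(i\) even. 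Mutual inequivalence of the whole list then follows from injectivity of the parametrisations \((i,j)\mapsto\lambda_{i,j}\) (on \(\NSkac{u,v}\) and \(\Rkac{u,v}\)), \((i,j)\mapsto s_{i,j}\) and \([(i,j)]\mapsto q_{i,j}\), which — like the fact that \(\lambda_{i,j}\notin\ZZ_{\ge0}\) for \((i,j)\) in the relevant Kac table, needed so that \(\NSh{\lambda_{i,j}}\) and \(\Rh{\lambda_{i,j}}\) really are infinite-dimensional highest weight modules — is a consequence of the arithmetic of \(u,v\) (notably \(\gcd(u,v)=1\) for \(u,v\) odd, and \(\gcd(u,v)=2\) with \(\tfrac u2\not\equiv\tfrac v2\pmod 2\) for \(u,v\) even, the latter forced by \(\gcd(u,\tfrac{u-v}{2})=1\)). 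Finally, the simple objects of category \(\catO\) are the finitely many highest weight modules just found; a non-split extension of two of them in \(\catO\) would be a highest weight \(\MinMod{u}{v}\)-module with a non-zero singular vector, impossible since every singular vector of the corresponding universal highest weight module lies in the ideal \(\ideal{\vsv{u,v}}\), and together with finiteness of length (finite-dimensional weight spaces, conformal weights bounded below) this gives semisimplicity, i.e.\ rationality in category \(\catO\).

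The main obstacle, I expect, is the finite-dimensional case together with the surrounding bookkeeping: one must show that for every simple weight module \emph{outside} the claimed list the product \(e^{p}\,\xi\,g(\Sigma)\) (or its Ramond analogue) is genuinely non-zero, which requires controlling simultaneously the nilpotency orders of \(e\) and \(x\) (resp.\ \(e\)) on finite-dimensional modules and the exact location of the roots of \(g\) and \(g_\parity\) relative to the Casimir spectra of all candidate modules; it is here that the arithmetic conditions on \(u,v\) and the parity split on \(u\) are indispensable. The whole argument also rests on the product formulae of \cref{thm:svimage}, whose proof — carried out separately via the free field realisation — is the real technical heart of the matter.
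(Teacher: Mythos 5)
Your classification of the simple objects of \(\catR\) follows essentially the same route as the paper: evaluate the images from \cref{thm:svimage} on the simple weight modules of \cref{thm:finitesl,thm:finiteosp}, note that \(e\) and \(x\) act with finite-dimensional kernel on the infinite-dimensional simple weight modules so that the polynomial factor must vanish, use the negation-invariance \(s_{u-i,v-j}=-s_{i,j}\) of the root multiset, and handle the finite-dimensional modules via the nilpotency threshold of \(e^{p}\xi\). This half is correct (and your eigenvalue \(\lambda+\tfrac12\) for \(\Sigma\) on the top of \(\fNSh{\lambda}\) is the right one). The genuine gap is in your argument for rationality in category \(\catO\).

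Your key claim there --- that a non-split extension of two simples in \(\catO\) ``would be a highest weight \(\MinMod{u}{v}\)-module with a non-zero singular vector'' --- fails precisely in the case that needs the most care: self-extensions. A non-split self-extension of \(\NSh{\lambda}\) has a two-dimensional weight space at the minimal conformal weight and \(\osp\)-weight \(\lambda\alpha\); since every cyclic module on a singular vector of weight \(\lambda\alpha\) is a quotient of the corresponding Verma module, whose top weight space is one-dimensional, such an extension is never a highest weight module, so your argument says nothing about it. What must be shown is that \(\Sigma\) cannot act by a Jordan block on that two-dimensional space; the paper does this by writing \(\Sigma=h-2yx+\tfrac12\) and observing that \(yx\) annihilates the top weight space while \(h\) acts semisimply by the weight-module axiom, so \(\Sigma\) acts as the scalar \(\lambda+\tfrac12\) there. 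Some such input is indispensable: non-semisimple action of a Casimir-type element on a top weight space is exactly how logarithmic self-extensions arise for other vertex algebras. Second, for extensions of two inequivalent simples, your justification --- ``every singular vector of the corresponding universal highest weight module lies in the ideal \(\ideal{\vsv{u,v}}\)'' --- is not something that has been established, and as written it does not typecheck (\(\ideal{\vsv{u,v}}\) is a submodule of the vacuum module, not of \(\Ver{\mu}\)); read charitably it asserts that the maximal \(\MinMod{u}{v}\)-quotient of each relevant Verma module is simple, which is essentially the statement being proved. The paper instead passes, where necessary, to the contragredient dual so that the indecomposable becomes a highest weight module, and then invokes the explicit singular-vector data for \(\aosp{}\) Verma modules from \cite[Thm.~3.1]{IoOsp01} to verify that no Verma module with highest weight in the list has a singular vector at another weight in the list. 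Concrete input of this kind (or an equivalent \(\ext^1\) computation) is needed to close the argument.
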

Note that the parity reversals of the dense modules
\(\NSr{\sqbrac*{\lambda},s_{i,j}}\) do not need to be included, as
\(\rpar{\NSr{\sqbrac*{\lambda},s_{i,j}}}\cong \NSr{\sqbrac*{\lambda+1},-s_{i,j}}=\NSr{\sqbrac*{\lambda+1},s_{u-i,v-j}}\).
\begin{proof}
  The theorem follows by evaluating the action of the image of the singular
  vector in the untwisted and twisted Zhu algebras on the simple modules of
  Theorems \ref{thm:finitesl} and \ref{thm:finiteosp}. Those simple \(\osp\)
  and \(\slt\) modules on which the image of the singular vector acts
  trivially can be induced to modules over \(\MinMod{u}{v}\).

  In both the \ns{} and Ramond sectors, the factors of \(e\) and \(x\) in the
  image of the singular vectors cannot act trivially on infinite
  dimensional simple weight modules. They do however act trivially on finite
  dimensional simple weight modules up to a certain dimension, that is, those
  of points \ref{itm:nsfinite} and \ref{itm:rfinite}.

  For the infinite dimensional simple weight modules, since the factors of
  \(e\) and \(x\) cannot act trivially, the polynomials \(g\) or \(g_\parity\)
  must. Thus, in the \ns{} sector the super-Casimir \(\Sigma\) must act as
  \(\pm s_{i,j}\cdot\id\) for one of the \(s_{i,j}\) in the factorisation
  \eqref{eq:centralfactor} of \(g\). The modules for which this is the case
  are precisely those of points \ref{itm:nshw} -- \ref{itm:nsdense}. Similarly,
  in the Ramond sector the \(\slt\) Casimir \(Q\) must act as \(q_{i,j}\cdot\id\)
  for one of the \(q_{i,j}\) in the factorisation \eqref{eq:centralfactor} of
  \(g_\parity\).The modules for which this is the case
  are precisely those of points \ref{itm:rhw} -- \ref{itm:rdense}.

  To conclude rationality in category \(\catO\) one must show that there exist
  no indecomposable extensions of modules in points \ref{itm:nsfinite} and
  \ref{itm:nshw} or modules in points \ref{itm:rfinite} and
  \ref{itm:rhw}. Consider first the \ns{} sector. An indecomposable
  self-extension would have as its space of relaxed highest weight vectors an
  indecomposable self extension \(\overline{\Mod{M}}\) of a simple highest
  weight Zhu algebra module of highest weight \(\lambda\alpha\).
  Further, the weight space of \(\overline{\Mod{M}}\) of weight
  \(\lambda\alpha\) would be an indecomposable module over \(\CC[h,\Sigma]\),
  the centraliser of the Cartan subalgebra. If we choose a \PBW{} ordering
  which places \(y\) to the left of \(x\), then \(\Sigma=h-2yx+1/2\).
  Since \(h\) acts
  semisimply in category \(\catO\) and \(yx\) acts trivially on the weight
  space of weight \(\lambda\alpha\), the formula for \(\Sigma\)
  implies that \(\Sigma\) acts semisimply on the weight
  space of weight \(\lambda\alpha\) in \(\overline{\Mod{M}}\), thereby contradicting
  indecomposability. To rule out indecomposable extensions of two inequivalent
  simple modules, \(\Mod{M}\) and \(\Mod{N}\), in category \(\catO\), note that
  either the indecomposable module or its contragredient dual would be a
  highest weight module and thus a quotient of a Verma module. In particular
  the highest weight of the submodule would need match the weight of a
  singular vector of the Verma module. However, from \cite[Thm.~3.1]{IoOsp01},
  it is easy to verify that Verma modules with highest weights as in points
  \ref{itm:nsfinite} or 
  \ref{itm:nshw} do not have singular vectors with weights as in points \ref{itm:nsfinite} or
  \ref{itm:nshw}.

  The rationality of the Ramond sector in category \(\catO\) follows from that of
  the \ns{} sector after twisting by the algebra isomorphism \(\zeta\) of
  \eqref{eq:catotwist}:
  Since \(\zeta\) preserves the triangular decomposition used to define
  category \(\catO\), the twist of an indecomposable extension of Ramond
  modules in category \(\catO\) would be an
  indecomposable extension of \ns{} modules in category \(\catO\), but no such
  extensions exist in the \ns{} sector of category \(\catO\) and so neither
  can they in the Ramond sector of category \(\catO\).
\end{proof}

\section{Free field algebras and screening}
\label{sec:ffrandscr}

In this section we shall set the stage for proving \cref{thm:svimage} by following a line of
reasoning that has already been proved to be very successful for similar Zhu
algebra related problems \cite{TsuExt13,RidJac14,RidRel15,BloSVir16}. The
basic idea is as follows. First we construct a free field
realisation of the universal \(\osp\) \vosa{} \(\vosp{k}\), that is, we
embed \(\vosp{k}\) into free field \vosa{s}. Free field \vosa{s} have the
convenient property of allowing one to easily construct certain module
homomorphisms called screening operators which in turn can be used to realise
the singular vector as the image of certain highest weight vectors. Armed with
these formulae for singular vectors, we then evaluate the action of the
zero mode of the singular vector on candidate relaxed highest weight vectors
to deduce the image of the singular vector in the Zhu algebras. 

\subsection{The Heisenberg free field algebra and lattice \va{s}}
\label{sec:heis}

The rank \(r\) Heisenberg algebra is the affinisation of the trivial \(r\)
dimensional complex Lie algebra. Though the only ranks required here will be \(r=1\)
and \(r=2\), we give the general definition for conceptual clarity. The
definition presented here is essentially the definition of ``free (super)
  bosons'' in \cite{KacVB98}.

Let \(\heisvs{r}\) be an \(r\) dimensional complex (purely even) vector space
together with a symmetric non-degenerate
bilinear form \((-,-)\) (recall that all such forms are equivalent over the
complex numbers). The Heisenberg algebra is given by
\begin{equation}
  \hlie{r}=\heisvs{r}\otimes \CC\sqbrac*{t,t^{-1}}\oplus\CC \wun,
\end{equation}
as a vector space, where \(\wun\) is the central element which will always be identified with the
identity when acting on modules. The Lie bracket is given by
\begin{equation}
  \label{eq:hcommrel}
  [a_m,b_n]=m\brac*{a,b}\delta_{m+n,0}\wun,\quad a_m=a\otimes t^m,
  \quad b_n=b\otimes t^n,\quad
  a,b\in\heisvs{r}.
\end{equation}
The usual triangular decomposition of the Heisenberg algebra is then given by
\begin{equation}
  \hlie{r}^\pm=\heisvs{r}\otimes\CC[t^\pm]t^\pm,\quad \hlie{r}^0=\heisvs{r}\oplus\CC\wun.
\end{equation}
The Verma modules with respect to this decomposition are called \emph{Fock
  spaces} and they are all simple. Writing
\(\hlie{r}^{\ge}=\hlie{r}^+\oplus\hlie{r}^0\), we define
\begin{equation}
  \Fock{b}=\Ind{\hlie{r}}{\hlie{r}^{\ge}}{\CC \ket{b}},\quad b\in\heisvs{r},
\end{equation}
to be the Verma module induced from the 1 dimensional \(\hlie{r}^\ge\) module
characterised by
\begin{equation}
  a_0\ket{b}=(a,b)\ket{b},\quad \wun\ket{b}=\ket{b},\quad \hlie{r}^+\ket{b}=0.
\end{equation}

\begin{defn}
  Let \(\heisvs{r}\) be an \(r\)
  dimensional complex vector space with any choice of symmetric non-degenerate
  bilinear form \((-,-)\) and any choice of basis \(\{a^1,\dots,a^r\}\). The
  \emph{rank \(r\) Heisenberg \va{}} is the unique \va{} that is strongly generated by
  fields \(a^1(z),\dots,a^r(z)\) (one for each basis vector), subject to the
  defining \ope{s}
  \begin{equation}
    \label{eq:hoperels}
    a^i(z)a^j(z)\sim \frac{(a^i,a^j)}{(z-w)^2},\quad 1\le i,j\le r,
  \end{equation}
  and satisfies no additional relations beyond those required by \va{} axioms.
\end{defn}

The \ope{s} \eqref{eq:hoperels} imply that the modes of the Laurent expansions
\begin{equation}
  a(z)=\sum_{n\in\ZZ} a_n z^{-n-1},\quad a\in\heisvs{r}
\end{equation}
satisfy the commutation relations \eqref{eq:hcommrel} of the Heisenberg
algebra \(\hlie{r}\). As a module over itself \(\hvoa{r}\) corresponds to \(\Fock{0}\).
The Heisenberg \va{s} admit continuous families of conformal structures, however, we
shall wait until later applications to choose a specific one.

\emph{Vertex operators} are maps between Fock spaces whose definition requires
us to extend the universal enveloping algebra of the Heisenberg Lie algebra
\(\UEA{\hlie{r}}\) by the group algebra \(\CC[\heisvs{r}]\), where
\(\heisvs{r}\) is viewed as an abelian group under vector space addition. 
For any vector \(a\in\heisvs{r}\),
we denote the corresponding group algebra element by \(\ee^a\). Then
\(\CC[\heisvs{r}]\otimes\UEA{\hlie{r}}\) is an associative algebra after imposing
the relations
\begin{equation}
  [a_n,\ee^b]=(a,b) \delta_{n,0} \ee^b,\quad a,b\in\heisvs, n\in\ZZ.
\end{equation}
Finally, we define the group algebra elements to act on the highest weight
vectors of Fock spaces in the following way:
\begin{equation}
  \ee^a\ket{b}=\ket{a+b}.
\end{equation}
For any \(a\in\heisvs{r}\), the
vertex operator \(\vop{a}{z}\), corresponding to \(a\), is given by
\begin{equation}
  \vop{a}{z}=\ee^az^{a_0}\prod_{m\ge1}\exp\brac*{\frac{a_{-m}}{m}z^m}\exp\brac*{-\frac{a_m}{m}z^{-m}}.
\end{equation}
On Fock spaces \(\vop{a}{z}\) defines a map
\begin{equation}
  \vop{a}{z}: \Fock{b}\mapsto \Fock{a+b}\powser*{z,z^{-1}}z^{(a,b)}
\end{equation}
and the composition of \(\ell\) vertex operators associated to vectors
\(a^1,\dots,a^\ell\in\heisvs{r}\) is given by
\begin{equation}\label{eq:vertope}
  \vop{a^1}{z_1}\cdots\vop{a^\ell}{z_\ell}=\prod_{1\le i<j\le
    \ell}(z_i-z_j)^{(a^i,a^j)}\ee^{a^1+\cdots+a^\ell}\prod_{i=1}^\ell
  z_i^{a^i_0}
  \prod_{m\ge1}\exp\brac*{\sum_{i=1}^\ell\frac{a^i_{-m}}{m}z_i^m}\exp\brac*{-\sum_{i=1}^\ell\frac{a^i_m}{m}z_i^{-m}}.
\end{equation}

\emph{Lattice \va{s}} are (infinite order) extensions of Heisenberg \va{s}
\(\hvoa{r}\) constructed by
picking linearly independent vectors \(a^1,\dots,a^m\) in \(\heisvs{r}\) such that the pairings of
these vectors are integral. Let \(\ang*{a^1,\dots,a^m}=\bigoplus_{i=1}^m
\ZZ a^{i}\) denote the lattice in \(\heisvs{r}\) generated by \(a^1,\dots,a^m\).
\begin{defn}
  Let \(\heisvs{r}\) be an \(r\)
  dimensional complex vector space with a choice of symmetric non-degenerate
  bilinear form \((-,-)\) and associated Heisenberg
  \va{} \(\hvoa{r}\), further let 
  \(\{a^1,\dots,a^m\}\), \(1\le m\le r\) be a choice of linearly independent
  vectors in \(\heisvs{r}\), such
  that the pairings of these vectors are integral. The \emph{lattice \va{}
    \(\lva{a^1,\dots,a^m}\)} is the extension of \(\hvoa{r}\cong\Fock{0}\) which as a
  vector space is given by
  \begin{equation}
    \lva{a^1,\dots,a^m}=\bigoplus_{\mu\in\ang*{a^1,\dots,a^m}}\Fock{\mu}
    =\bigoplus_{n_1,\dots,n_m\in\ZZ}\Fock{n_1 a^1+\cdots +n_m a^m},
  \end{equation}
  where the state-field correspondence is uniquely determined by assigning to each
  highest weight vector \(\ket{n_1a^1+\cdots +n_m a^m}\) the vertex operator \(\vop{n_1a^1+\cdots n_m a^m}{z}\).
\end{defn}

\begin{rmk}
  Note that we are not requiring the pairing
  on the lattice in the above definition to be even or positive definite, as
  it will be necessary to consider indefinite lattices for
  the free field realisation of the \(\beta\gamma\) ghost \va{} constructed below.
\end{rmk}

\begin{prop}\label{thm:lattmods}
  Let \(\ang*{a^1,\dots,a^m}^\ast=\set{\mu\in \heisvs{r}\st (\mu,\lambda)\in\ZZ\
  \forall \lambda\in \ang*{a^1,\dots,a^m}}\) be the dual of
  \(\ang*{a^1,\dots,a^m}\) in \(\heisvs{r}\). For each coset \([\lambda]\in
  \ang*{a^1,\dots,a^m}^\ast/\ang*{a^1,\dots,a^m}\)
  \begin{equation}
    \Fock{[\lambda]}=\bigoplus_{\mu\in \ang*{a^1,\dots,a^m}}\Fock{\lambda+\mu}
  \end{equation}
  is a module over \(\lva{a^1,\dots,a^m}\).
\end{prop}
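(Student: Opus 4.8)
The plan is to verify that $\Fock{[\lambda]}$, as defined, carries a well-defined action of the lattice vertex algebra $\lva{a^1,\dots,a^m}$ and satisfies the module axioms. First I would note that the definition of $\Fock{[\lambda]}$ only depends on the coset $[\lambda]$ and not on the chosen representative $\lambda$, since shifting $\lambda$ by an element of $\ang*{a^1,\dots,a^m}$ merely permutes the summands $\Fock{\lambda+\mu}$. The key structural observation is that the state-field correspondence of $\lva{a^1,\dots,a^m}$ is generated by the vertex operators $\vop{\nu}{z}$ for $\nu\in\ang*{a^1,\dots,a^m}$, together with the Heisenberg currents $a(z)$ for $a\in\heisvs{r}$, and these fields must act on $\Fock{[\lambda]}$ in a way compatible with composition and the $\lva{}$ vertex algebra structure.

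The central computation is the one recorded in \eqref{eq:vertope}: acting on $\Fock{\sigma}$, a vertex operator $\vop{\nu}{z}$ maps into $\Fock{\nu+\sigma}\powser*{z,z^{-1}}z^{(\nu,\sigma)}$. The monodromy exponent $(\nu,\sigma)$ is where the dual lattice enters. If $\sigma\in\lambda+\ang*{a^1,\dots,a^m}$ and $\nu\in\ang*{a^1,\dots,a^m}$, then $(\nu,\sigma)=(\nu,\lambda)+(\nu,\mu)$ for some $\mu\in\ang*{a^1,\dots,a^m}$; the second term is an integer by integrality of the lattice pairing, and the first is an integer precisely because $\lambda\in\ang*{a^1,\dots,a^m}^\ast$. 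Hence $z^{(\nu,\sigma)}$ is an integer power of $z$, so $\vop{\nu}{z}$ maps $\Fock{\sigma}$ into $\Fock{\nu+\sigma}\powser*{z,z^{-1}}$ with no branch cut, and since $\nu+\sigma$ again lies in $\lambda+\ang*{a^1,\dots,a^m}$, the target stays inside $\Fock{[\lambda]}$. This is exactly the point: $\Fock{[\lambda]}$ is closed under the generating fields, and the absence of fractional monodromy is what makes the resulting fields mutually local with integer-moded expansions, which is what a module over a vertex algebra requires. The locality, associativity and the vacuum axiom then follow because they already hold inside $\lva{a^1,\dots,a^m}$ itself (the same cocycle/normal-ordering identities are in force), restricted and reinterpreted on $\Fock{[\lambda]}$; one transports the proof of the vertex algebra axioms for the lattice algebra essentially verbatim, tracking the extra shift by $\lambda$ through the exponents.

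Concretely, the steps I would carry out are: (1) observe well-definedness of $\Fock{[\lambda]}$ on cosets; (2) define the field map on $\Fock{[\lambda]}$ by letting $\vop{\nu}{z}$ and $a(z)$ act by the same formulae \eqref{eq:vertope} as in $\lva{a^1,\dots,a^m}$, extended by linearity and normal ordering to all of $\lva{a^1,\dots,a^m}$; (3) use the computation above to check that every such field sends $\Fock{[\lambda]}$ into $\Fock{[\lambda]}\powser*{z,z^{-1}}$ (integer powers only), which needs the hypothesis $\lambda\in\ang*{a^1,\dots,a^m}^\ast$; (4) check the truncation condition, i.e. that for fixed $m\in\Fock{[\lambda]}$ the modes annihilate it for sufficiently high index, which is immediate from the Fock-space structure since each $\Fock{\sigma}$ is a direct sum of finite-dimensional weight spaces bounded below in conformal weight; (5) verify the vacuum/identity and (left) unit axioms, which are formal; and (6) verify the Jacobi identity (or, equivalently, weak associativity and locality) for the fields acting on $\Fock{[\lambda]}$, which reduces to the same contour-deformation and operator-product identities already established for $\lva{a^1,\dots,a^m}$, with the only change being the extra factor $z^{(\nu,\lambda)}$, now an integer power, appearing symmetrically on both sides.

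The main obstacle is step (6): confirming the Jacobi identity on the module rather than just on the algebra. The subtlety is purely bookkeeping — one must check that the cocycle factors $(z_i-z_j)^{(\nu_i,\nu_j)}$ governing the operator products of the $\vop{\nu_i}{z_i}$ behave correctly when the composite is applied to a shifted Fock space $\Fock{\lambda+\mu}$, and that the branch-cut ambiguities in the intermediate expressions cancel exactly when $\lambda$ lies in the dual lattice. I expect no genuine difficulty here beyond a careful transcription of the standard lattice vertex algebra argument (as in \cite{KacVB98} or \cite{FreVer01}); the hypothesis $[\lambda]\in\ang*{a^1,\dots,a^m}^\ast/\ang*{a^1,\dots,a^m}$ is precisely what is needed and is precisely what is assumed, so the verification goes through. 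In the indefinite-signature case relevant to the $\beta\gamma$ bosonisation, nothing changes: positivity of the form is never used, only its non-degeneracy and the integrality of the relevant pairings, both of which are in the hypotheses.
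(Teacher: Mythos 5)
Your proposal is correct and follows essentially the same route as the paper: the paper's proof likewise notes that the action of \(\lva{a^1,\dots,a^m}\) visibly closes on \(\Fock{[\lambda]}\) and that the only obstruction to being an (untwisted) module is single-valuedness of the generating vertex operators, which holds precisely because the Fock space weights lie in the dual lattice, so the exponents \((a^i,\lambda+\mu)\) are integers. Your additional elaboration of the remaining module axioms (truncation, vacuum, Jacobi identity) is the routine transcription the paper leaves implicit, and your identification of the dual-lattice hypothesis as the crux matches the paper exactly.
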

\begin{proof}
  Clearly the action of \(\lva{a^1,\dots,a^m}\) closes on
  \(\Fock{[\lambda]}\), so the only potential obstruction to
  \(\Fock{[\lambda]}\) being a module (and not a twisted module) is the fields
  of \(\lva{a^1,\dots,a^m}\) being single valued, that is, that their series
  expansions only have integer exponents. This is true if and only if it is
  true for the vertex operators \(\vop{a^i}{z},\ 1\le i\le m\) and
  it is true for those by construction
  due to the Fock space weights all lying in \(\ang*{a^1,\dots,a^m}^\ast\).
\end{proof}

\subsection{The \(\beta\gamma\) ghosts}
\label{sec:betagamma}

We refer to \cite{RidBos14} for an in depth treatment of the \(\beta\gamma\)
ghosts.
The \(\beta\gamma\) ghost algebra is an infinite dimensional complex Lie algebra
\begin{equation}
  \bglie=\bigoplus_{n\in\ZZ}\CC \beta_n\oplus\bigoplus_{n\in\ZZ}\CC \gamma_n\oplus\CC\wun,
\end{equation}
whose Lie brackets are
\begin{equation}
  \label{eq:bgcommrel}
  \comm{\gamma_m}{\beta_n}=\delta_{m+n,0}\wun,\quad
  \comm{\beta_m}{\beta_n}=\comm{\gamma_m}{\gamma_n}=0,\quad m,n\in\ZZ.
\end{equation}

The \(\beta\gamma\) ghost algebra admits the relaxed triangular decomposition
\begin{equation}
  \label{eq:bgtridecomp}
  \bglie^{\pm}=\bigoplus_{n\ge1}\CC \beta_{\pm n}
  \oplus\bigoplus_{n\ge1}\CC\gamma_{\pm n},\quad \bglie^0=\CC\beta_0\oplus\CC\gamma_0\oplus\CC\wun.
\end{equation}
Conveniently, \(\bglie^0\) is just the \(A_1\) Weyl algebra for which Block
classified all simple modules \cite{BloIrr79}. Here we shall only need simple
weight modules, and to list them we introduce the operator
\(J=\gamma_0\beta_0\).
\begin{prop} \label{prop:FinGhReps}
Every simple \(\bglie^0\) weight module is isomorphic to one of the following.
\begin{enumerate}
\item The simple highest weight module $\fbgh = \CC[\gamma_0] \finite{\Vac}$ generated by a \hwv{} $\finite{\Vac}$ satisfying $\beta_0 \finite{\Vac} = 0$, hence $J \finite{\Vac} = 0$.
\item The simple lowest weight $\bglie^0$-module $\fbgl = \CC[\beta_0]
  \finite{\omega}$ generated by a \lwv{} $\finite{\omega}$ satisfying
  $\gamma_0 \finite{\omega} = 0$, hence $J \finite{\omega} =
  \finite{\omega}$. 
\item \label{it:ParaClass} 
  The simple dense module $\fbgd{\sqbrac*{\lambda}}$,
  \(\sqbrac*{\lambda}\in\CC/\ZZ\), \(\sqbrac*{\lambda}\neq\sqbrac*{0}\) with a basis of weight
  $\finite{u}_j$, 
  satisfying $J \finite{u}_j = j \finite{u}_j$, hence
  $\fbgd{\sqbrac*{\lambda}}=\CC[\beta_0] \finite{u}_{\mu} \oplus \CC[\gamma_0]
  \gamma_0   \finite{u}_{\mu}$.
\end{enumerate}
\end{prop}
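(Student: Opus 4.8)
The plan is to recognise $\bglie^{0}$ as essentially the rank-one Weyl algebra: since $\wun$ acts as the identity on modules, $\UEA{\bglie^{0}}$ reduces to the associative algebra $\CC\langle\beta_{0},\gamma_{0}\rangle$ subject to the single relation $\comm{\gamma_{0}}{\beta_{0}}=1$, and a weight module is precisely a module on which $J=\gamma_{0}\beta_{0}$ acts semisimply. Thus the proposition is the weight-module case of Block's classification \cite{BloIrr79}, which I would reprove directly. The only inputs needed are the identities $\comm{J}{\beta_{0}}=\beta_{0}$, $\comm{J}{\gamma_{0}}=-\gamma_{0}$, $\gamma_{0}\beta_{0}=J$ and $\beta_{0}\gamma_{0}=J-1$; in particular a nonzero $J$-weight vector of weight $m$ can be annihilated by $\beta_{0}$ only when $m=0$, and by $\gamma_{0}$ only when $m=1$.

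First I would establish that a simple weight module $\Mod{M}$ has at most one-dimensional weight spaces and that its weight support lies in a single coset of $\ZZ$. Grading $\UEA{\bglie^{0}}$ by $\comm{J}{-}$-eigenvalue, the \PBW{} theorem shows that the degree-zero subspace is spanned by the monomials $\gamma_{0}^{a}\beta_{0}^{a}$, $a\ge 0$, and a one-line induction (pushing $J$ through powers of $\beta_{0}$) gives $\gamma_{0}^{a}\beta_{0}^{a}=J(J+1)\cdots(J+a-1)$, so the weight-preserving subalgebra is just $\CC[J]$. Hence for any nonzero weight vector $v$ of $J$-weight $\mu$ we have $\Mod{M}=\UEA{\bglie^{0}}v$ by simplicity, and intersecting with the $\mu$-weight space gives $\CC[J]v=\CC v$; so that weight space equals $\CC v$ and the support is contained in $\mu+\ZZ$.

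Next I would split into cases according to whether the support meets $\set*{0}$, $\set*{1}$, or neither. If $0$ lies in the support, with weight-zero vector $\finite{\Vac}$, then $\beta_{0}\finite{\Vac}=0$: otherwise $\gamma_{0}\brac*{\beta_{0}\finite{\Vac}}=\gamma_{0}\beta_{0}\finite{\Vac}=0$, so $\CC[\beta_{0}]\beta_{0}\finite{\Vac}$ is a proper nonzero submodule (supported on $\set*{1,2,\dots}$), contradicting simplicity; hence $\Mod{M}=\UEA{\bglie^{0}}\finite{\Vac}=\CC[\gamma_{0}]\finite{\Vac}\cong\fbgh$, whose $J$-spectrum is $\ZZ_{\le 0}$. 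Dually, if $1$ lies in the support, a weight-one vector $\finite{\omega}$ must satisfy $\gamma_{0}\finite{\omega}=0$ (else $\gamma_{0}\finite{\omega}$ generates a submodule of type $\fbgh$ whose support avoids $1$), so $\Mod{M}=\CC[\beta_{0}]\finite{\omega}\cong\fbgl$, with $J$-spectrum $\ZZ_{\ge 1}$; the same observation shows that $0$ and $1$ cannot both appear in the support. Finally, if neither $0$ nor $1$ lies in the support, then for every weight vector $v$ of weight $m$ both $m$ and $m-1$ are nonzero, so $\gamma_{0}\beta_{0}v=mv\neq 0$ and $\beta_{0}\gamma_{0}v=(m-1)v\neq 0$ force $\beta_{0}v\neq 0$ and $\gamma_{0}v\neq 0$; hence $\beta_{0}$ and $\gamma_{0}$ act bijectively between consecutive one-dimensional weight spaces, the support is a full coset $\sqbrac*{\lambda}=m+\ZZ\neq\sqbrac*{0}$, and for any weight vector $\finite{u}_{\mu}$ one obtains $\Mod{M}=\CC[\beta_{0}]\finite{u}_{\mu}\oplus\CC[\gamma_{0}]\gamma_{0}\finite{u}_{\mu}\cong\fbgd{\sqbrac*{\lambda}}$.

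The argument is essentially routine; the only points requiring a little care are the one-dimensionality of the weight spaces (via the centraliser identity $\gamma_{0}^{a}\beta_{0}^{a}\in\CC[J]$) and the exclusion of the otherwise plausible module whose support is all of $\ZZ$, which is ruled out by the proper submodule $\CC[\beta_{0}]\beta_{0}\finite{\Vac}$ exhibited above. Mutual non-isomorphism of the three families, together with the fact that $\fbgd{\sqbrac*{\lambda}}$ depends only on the coset $\sqbrac*{\lambda}$, then follows immediately by comparing $J$-spectra.
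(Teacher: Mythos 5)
Your proof is correct. Note that the paper does not actually supply an argument for this proposition: it simply observes that $\bglie^0$ is the rank one Weyl algebra $A_1$ and invokes Block's classification of its simple modules \cite{BloIrr79}, specialised to weight modules. Your write-up therefore replaces a citation with a short self-contained derivation, and all the key steps check out against the paper's conventions ($\comm{\gamma_0}{\beta_0}=\wun$, $J=\gamma_0\beta_0$, so $\beta_0$ raises and $\gamma_0$ lowers the $J$-weight, $\beta_0\gamma_0=J-1$, and $\gamma_0^a\beta_0^a=J(J+1)\cdots(J+a-1)$, whence the weight-preserving subalgebra is $\CC[J]$ and weight spaces of a simple weight module are one-dimensional). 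The two delicate points are handled properly: the submodule $\CC[\beta_0]\beta_0\finite{\Vac}$ is indeed $\gamma_0$-stable precisely because $\gamma_0\beta_0\finite{\Vac}=J\finite{\Vac}=0$, which rules out a simple module supported on all of $\ZZ$; and in the dense case the conditions $m\neq 0$ and $m\neq 1$ on the support force $\beta_0$ and $\gamma_0$ to act injectively, hence bijectively between the one-dimensional weight spaces, giving the decomposition $\CC[\beta_0]\finite{u}_\mu\oplus\CC[\gamma_0]\gamma_0\finite{u}_\mu$ exactly as in the statement. The only point stated slightly too briskly is that $\fbgd{\sqbrac*{\lambda}}$ depends only on the coset $\sqbrac*{\lambda}$: comparing $J$-spectra gives non-isomorphism across the three families, but identifying two dense modules with the same support also uses that the basis vectors can be rescaled so that $\beta_0\finite{u}_j=\finite{u}_{j+1}$ and $\gamma_0\finite{u}_{j+1}=j\finite{u}_j$, which pins down the action uniquely; this is routine and does not affect the validity of the argument.
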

In addition to the simple weight modules above, we also need to consider two
indecomposable weight modules whose \(J\)-weight support is \(\ZZ\) and whose isomorphism classes are determined by the following short exact sequences ($\Extgrp{1}{\fbgl}{\fbgh} = \Extgrp{1}{\fbgh}{\fbgl} = \CC$):
\begin{equation}
\dses{\fbgh}{}{\fbgd{0}^+}{}{\fbgl}, \qquad 
\dses{\fbgl}{}{\fbgd{0}^-}{}{\fbgh}.
\end{equation}
Both may be realised on the space $\CC[\gamma_0] \finite{u}_0 \oplus \CC[\beta_0] \finite{u}_1$, where $\beta_0 \finite{u}_0 = 0$ and $\gamma_0 \finite{u}_1 = a^+ \finite{u}_0$, for $\bgd{0}^+$, and $\beta_0 \finite{u}_0 = a^- \finite{u}_1$ and $\gamma_0 \finite{u}_1 = 0$, for $\bgd{0}^-$.  We may normalise the basis vectors so that $a^+ = a^- = 1$.

We denote the inductions of the above modules by the same
symbols but without the overlines:
\begin{equation}
  \bgh=\Ind{\bglie}{\bglie^\ge}{\fbgh},\quad
  \bgl=\Ind{\bglie}{\bglie^\ge}{\fbgl},\quad
  \bgd{\sqbrac*{\lambda}}=\Ind{\bglie}{\bglie^\ge}{\fbgd{\sqbrac*{\lambda}}},\quad
  \bgd{0}^\pm=\Ind{\bglie}{\bglie^\ge}{\fbgd{0}^\pm}.
\end{equation}
The first three of the above modules are simple while the last are characterised by
the non-split exact sequences
\begin{equation}
\dses{\bgh}{}{\bgd{0}^+}{}{\bgl}, \qquad 
\dses{\bgl}{}{\bgd{0}^-}{}{\bgh}.
\end{equation}

The \(\beta\gamma\) algebra admits an algebra automorphism \(\sigma\) called
\emph{spectral flow} whose action on generators is given by
\begin{equation}
  \sigma(\beta_n)=\beta_{n-1},\quad\sigma(\gamma_n)=\gamma_{n+1},\quad
  \sigma(\wun)=\wun.
\end{equation}

Since spectral flow is an algebra automorphism, it can be used to twist
modules (recall \cref{def:isotwists}).
In particular, \(\bgh\) and \(\bgl\) are related by spectral flow:
\begin{equation}
  \bgl\cong \sigma^{-1}\bgh.
\end{equation}
Note, however, that spectral flow does not preserve the relaxed triangular
decomposition \eqref{eq:bgtridecomp} and therefore the spectral flow of a
relaxed highest weight module will generally no longer be relaxed highest
weight (it will instead be relaxed highest weight with respect to a different triangular decomposition).

\begin{defn}
  The \emph{\(\beta\gamma\) ghost \va{}} \(\bgva\) is the unique \va{} that is strongly
  generated by two even fields \(\beta(z)\) and \(\gamma(z)\), has the \ope{s}
  \begin{equation}
    \label{eq:bgoperel}
    \gamma(z)\beta(w)\sim\beta(w)\gamma(z)\sim \frac{1}{z-w},\quad \beta(z)\beta(w)\sim\gamma(z)\gamma(w)\sim0.
  \end{equation}
  and satisfies no additional relations beyond those required by \va{} axioms.
\end{defn}
The \ope{} \eqref{eq:bgoperel} implies that the modes of the Laurent expansion
\begin{equation}
  \beta(z)=\sum_{n\in\ZZ}\beta_n z^{-n-1},
  \quad
  \gamma(z)=\sum_{n\in\ZZ}\gamma_n z^{-n}
\end{equation}
satisfy the commutation relations \eqref{eq:bgcommrel} of the \(\beta\gamma\)
ghost algebra.
As a \(\beta\gamma\) ghost algebra module, \(\bgva\) is isomorphic to
\(\bgh\). The \(\beta\gamma\) ghost \va{} admits a 1-parameter family of
Virasoro fields. Here we shall only consider the unique choice
\(T=-\normord{\beta(z)\partial \gamma(z)}\) which assigns conformal weight 1
to \(\beta\) and conformal weight 0 to \(\gamma\). Below it will also be
necessary to consider the field \(J(z)=\normord{\beta(z)\gamma(z)}\) which
generates a rank 1 Heisenberg \va{} and grades the \(\beta\gamma\) ghost \va{}
by assigning weight 1 to \(\beta\) and weight \(-1\) to \(\gamma\). The
central charge for this choice of Virasoro field is \(c=2\).

The \(\beta\gamma\) ghost \va{} admits a free field realisation in terms of a
lattice algebra, that is, there exists an embedding from \(\bgva\) to a lattice algebra.
\begin{prop}
  \label{thm:bosonisation}
  Let \(\hvoa{2}\) be the rank 2 Heisenberg \va{} and let \(\theta,\psi\) be a
  basis of \(\heisvs{2}\) such that the non-vanishing pairings of \((-,-)\)
  are \((\psi,\psi)=-(\theta,\theta)=1\). Then there exists an embedding
  \(\bgva\to\lva{\theta+\psi}\) of \va{s} defined by
  \begin{equation}\label{eq:bosonisation}
    \beta(z)\mapsto \vop{\theta+\psi}{z},\quad \gamma(z)\mapsto \normord{\psi(z)\vop{-\theta-\psi}{z}}.
  \end{equation}
\end{prop}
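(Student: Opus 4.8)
The plan is to verify directly that the fields prescribed by \eqref{eq:bosonisation} satisfy, inside the lattice \va{} $\lva{\theta+\psi}$, exactly the defining operator product expansions \eqref{eq:bgoperel} of $\bgva$, and then to invoke the universal property of $\bgva$ (it is strongly generated by two even fields subject only to \eqref{eq:bgoperel}) to obtain a \va{} homomorphism $\bgva\to\lva{\theta+\psi}$; its injectivity will follow from the simplicity of $\bgva$.

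First I would record that $\ang*{\theta+\psi}$ is an isotropic, hence integral, lattice, since $(\theta+\psi,\theta+\psi)=(\theta,\theta)+2(\theta,\psi)+(\psi,\psi)=-1+0+1=0$. Thus $\lva{\theta+\psi}$ is a bona fide (untwisted) \va{} in the sense of \cref{sec:heis}, every vertex operator $\vop{n(\theta+\psi)}{z}$ has integer-power expansions, and the two prescribed fields are mutually local. They also genuinely lie in $\lva{\theta+\psi}$: $\vop{\theta+\psi}{z}$ is the field of $\ketb{\theta+\psi}\in\Fock{\theta+\psi}$ and $\normord{\psi(z)\vop{-\theta-\psi}{z}}$ is the field of $\psi_{-1}\ketb{-\theta-\psi}\in\Fock{-\theta-\psi}$, both summands of $\lva{\theta+\psi}$.

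Next I would compute the three operator products, using the composition formula \eqref{eq:vertope} for vertex operators together with Wick's theorem for the Heisenberg current $\psi(z)$. Write $\beta(z),\gamma(z)$ for the images. The product $\beta(z)\beta(w)$ has scalar prefactor $(z-w)^{(\theta+\psi,\theta+\psi)}=(z-w)^0$ and no current contractions, so it is regular; hence $\beta(z)\beta(w)\sim 0$. For the mixed product the prefactor is $(z-w)^{(-\theta-\psi,\theta+\psi)}=(z-w)^0$, the sole singular contraction is that of $\psi(z)$ with $\vop{\theta+\psi}{w}$, contributing $(\psi,\theta+\psi)(z-w)^{-1}=(z-w)^{-1}$, and $\normord{\vop{-\theta-\psi}{z}\vop{\theta+\psi}{w}}\to\wun$ as $z\to w$; so $\gamma(z)\beta(w)\sim(z-w)^{-1}$, and the reversed order $\beta(w)\gamma(z)$ is handled the same way. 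The subtle case is $\gamma(z)\gamma(w)$: the prefactor is again $(z-w)^0$, but two double poles now appear — the contraction $\psi(z)\psi(w)$ gives $(\psi,\psi)(z-w)^{-2}=(z-w)^{-2}$, while the simultaneous single contractions of $\psi(z)$ with $\vop{-\theta-\psi}{w}$ and of $\vop{-\theta-\psi}{z}$ with $\psi(w)$ give $(\psi,\theta+\psi)^2(z-w)^{-1}(w-z)^{-1}=-(z-w)^{-2}$, and these cancel (the identity $(\psi,\psi)=(\psi,\theta+\psi)^2$ at work here being forced by $(\theta,\psi)=0$ and $(\psi,\psi)=1$). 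The residual single-pole contributions then also cancel, since the two normal-ordered residues multiplying $(z-w)^{-1}$ coincide in the limit $z\to w$, so their difference is divisible by $z-w$. Hence $\gamma(z)\gamma(w)\sim 0$, and \eqref{eq:bgoperel} is reproduced in full. This double-pole cancellation in the $\gamma\gamma$ channel — with the attendant bookkeeping of contractions of the current $\psi$ against the zero-mode parts of the exponential vertex operators — is the only genuine difficulty; everything else is forced.

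Finally, since $\bgva$ is by definition the \va{} strongly generated by two even fields satisfying precisely \eqref{eq:bgoperel} and no further relations, the universal property yields a \va{} homomorphism $\bgva\to\lva{\theta+\psi}$ extending \eqref{eq:bosonisation}. Its kernel is a \va{} ideal of $\bgva$, hence in particular a $\bglie$-submodule; since $\bgva$ is simple (its underlying $\bglie$-module $\bgh$ is simple, see \cref{sec:betagamma}) and the kernel cannot contain $\wun$, the kernel is zero and the homomorphism is the asserted embedding. Alternatively, injectivity is visible directly: applying the modes of the images of $\beta$, $\gamma$ and of their normal-ordered products and derivatives to $\wun$ produces vectors whose leading Fock-space components are patently independent, so the map is injective on a \PBW{}-type spanning set of $\bgva$.
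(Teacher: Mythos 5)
Your proposal is correct and follows essentially the same route as the paper: one checks that the images satisfy the defining OPEs \eqref{eq:bgoperel} (your explicit Wick computation, including the cancellation of the double and single poles in the $\gamma\gamma$ channel, is right), and injectivity then follows because the map is non-trivial and $\bgva$ is simple. The paper's proof is just a terser statement of exactly this argument.
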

\begin{proof}
  The map is a homomorphism of \va{s} because \(\vop{\theta+\psi}{z}\) and
  \(\normord{\psi(z)\vop{-\theta-\psi}{z}}\) satisfy the same \ope{s} as
  \(\beta(z)\) and \(\gamma(z)\) do. It is injective because it is non-trivial
  and the \(\beta\gamma\) ghost \va{} is simple.
\end{proof}

Since \(\bgva\) is a vertex subalgebra of \(\lva{\theta+\psi}\), we can decompose
\(\lva{\theta+\psi}\) modules into \(\bgva\) modules. Before we do so, note that the dual
lattice of \(\ang*{\theta+\psi}\) can be written as
\begin{equation}
  \ang*{\theta+\psi}^\ast=\{\lambda(\theta+\psi)+n\psi\st \lambda\in \CC,\ n\in
  \ZZ\}.
\end{equation}
\begin{prop}\label{thm:bgffrmod}
  As modules over \(\bgva\), the \(\lva{\theta+\psi}\) modules of \cref{thm:lattmods} decompose as
  \begin{align}\label{eq:fockdecomp}
    \Fock{[n\psi]}&\cong \sfaut^{n+1} \bgd{0}^-,
    \quad \text{for}\  n\in\ZZ,
    \nonumber\\
    \Fock{[\lambda(\theta+\psi)+n\psi]}
    &\cong\sfaut^{n+1}\bgd{\sqbrac*{\lambda}}
    ,\quad
  \text{for}\ \lambda\in\CC\setminus \ZZ, n\in\ZZ
  \end{align}
\end{prop}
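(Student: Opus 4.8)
The plan is to restrict each of the $\lva{\theta+\psi}$-modules of \cref{thm:lattmods} along the embedding $\bgva\hookrightarrow\lva{\theta+\psi}$ of \cref{thm:bosonisation} and to identify it, after an inverse spectral-flow twist, with one of the $\bgva$-modules of \cref{prop:FinGhReps} or with one of the indecomposables $\bgd{0}^{\pm}$. Write a coset representative as $\mu=\lambda(\theta+\psi)+n\psi$ with $[\lambda]\in\CC/\ZZ$, $n\in\ZZ$ (as in the paragraph preceding the statement), so that as a rank $2$ Heisenberg module
\begin{equation*}
  \Fock{[\mu]}=\bigoplus_{m\in\ZZ}\Fock{\nu_m},\qquad \nu_m=(\lambda+m)(\theta+\psi)+n\psi .
\end{equation*}
First I would record the relevant gradings of $\bgva$ inside $\lva{\theta+\psi}$. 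Since $\theta+\psi$ is isotropic, a short computation from \eqref{eq:bosonisation} identifies the current $J(z)=\normord{\beta(z)\gamma(z)}$ with $-\theta(z)$; as $\theta_0$ is then central in the rank $2$ Heisenberg algebra, $J_0$ acts on all of $\Fock{\nu_m}$ as the scalar $\lambda+m$. Likewise the $\bgva$ Virasoro field, normalised so that $\beta$, $\gamma$ have conformal weights $1$, $0$ and $c=2$, is realised by the lattice Virasoro field with background charge $\rho=\tfrac12(\psi-\theta)$ (the unique isotropic covector with $(\rho,\theta+\psi)=1$), so that $L_0$ acts on $\ket{\nu_m}$ by $\tfrac12(\nu_m,\nu_m)+(\rho,\nu_m)=(n+1)(\lambda+m)+\binom{n+1}{2}$. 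Comparing this pair of weights with the action of spectral flow on the weights of a $\bgva$ relaxed highest weight module singles out the twist: one must undo $\sfaut^{n+1}$, so set $N:=\sfaut^{-(n+1)}\Fock{[\mu]}$, on which (by definition) the modes $\beta_0$, $\gamma_0$ act as $\beta_{-n-1}$, $\gamma_{n+1}$ do on $\Fock{[\mu]}$. It then remains to identify $N$ with $\bgd{\sqbrac*{\lambda}}$ when $\lambda\notin\ZZ$ and with $\bgd{0}^-$ when $\lambda\in\ZZ$.

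Next I would compute how $\beta$ and $\gamma$ link consecutive summands. Using the composition formula \eqref{eq:vertope} and the isotropy of $\theta+\psi$ one obtains the clean identities
\begin{equation*}
  \beta_{-n-1}\ket{\nu_m}=\ket{\nu_{m+1}},\qquad \gamma_{n+1}\ket{\nu_m}=(\lambda+m+n)\ket{\nu_{m-1}},
\end{equation*}
together with $\beta_p\ket{\nu_m}=0$ for $p\ge-n$ and $\gamma_p\ket{\nu_m}=0$ for $p\ge n+2$; thus the $\ket{\nu_m}$ span a relaxed highest weight space for $N$. Relabelling $\ket{\nu_m}$ as $u_j$ with $j=\lambda+m+n+1$, the $\bglie^0$-action on this bottom space reads $\beta_0 u_j=u_{j+1}$, $\gamma_0 u_j=(j-1)u_{j-1}$ and $J_0 u_j=\gamma_0\beta_0 u_j=j u_j$. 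For $\lambda\notin\ZZ$ none of these coefficients vanishes, so the bottom is the simple dense $\bglie^0$-module $\fbgd{\sqbrac*{\lambda}}$ of \cref{prop:FinGhReps}; since it generates $N$ and the relaxed Verma module $\bgd{\sqbrac*{\lambda}}=\Ind{}{}{\fbgd{\sqbrac*{\lambda}}}$ is simple, $N\cong\bgd{\sqbrac*{\lambda}}$, i.e.\ $\Fock{[\mu]}\cong\sfaut^{n+1}\bgd{\sqbrac*{\lambda}}$.

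For $\lambda\in\ZZ$ the coefficient $j-1$ vanishes exactly at $j=1$, so $\gamma_0 u_1=0$: the $\set*{u_j\st j\ge1}$ span a proper $\bglie^0$-submodule isomorphic to $\fbgl$, with quotient $\fbgh$, and the extension is non-split because $\beta_0 u_0=u_1\neq0$ leaves no room for a complement. Inducing (exactly) then realises $N$ as the non-split extension $\dses{\bgl}{}{N}{}{\bgh}$, so $N\cong\bgd{0}^-$ and not $\bgd{0}^+$, giving $\Fock{[n\psi]}\cong\sfaut^{n+1}\bgd{0}^-$. I expect the one genuinely delicate point to be exactly this last distinction: one has to track carefully through the spectral-flow twist which of $\beta$ and $\gamma$ provides the zero mode that degenerates at the seam $j=0,1$, since it is this that pins down the direction of the extension. (A small separate check is needed to see that the $\ket{\nu_m}$ really do generate $\Fock{[\mu]}$ over $\bgva$: since $\theta(z)=-J(z)$ lies in $\bgva$, the $\beta$-modes applied to the $\ket{\nu_m}$ recover all $\psi$-descendants in each $\Fock{\nu_m}$.)
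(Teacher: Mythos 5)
Your route is genuinely different from the paper's: you identify the decomposition by computing the action of the modes $\beta_{-n-1}$, $\gamma_{n+1}$ directly on the Fock ground states $\ket{\nu_m}$ and reading off the $\bglie^0$-module structure of the bottom, whereas the paper first matches the character of $\Fock{[\lambda(\theta+\psi)+n\psi]}$ against the classification of relaxed highest weight $\bgva$-characters from \cite{RidBos14} and only then does a mode computation of your type to distinguish $\bgd{0}^-$ from $\bgd{0}^+$. Your computations are correct: $J_0$ and $L_0$ act on $\ket{\nu_m}$ by $\lambda+m$ and $(n+1)(\lambda+m)+\binom{n+1}{2}$, the identities $\beta_{-n-1}\ket{\nu_m}=\ket{\nu_{m+1}}$, $\gamma_{n+1}\ket{\nu_m}=(\lambda+m+n)\ket{\nu_{m-1}}$ and the vanishing ranges follow from \eqref{eq:vertope} and $(\theta+\psi,\theta+\psi)=0$, and your bookkeeping of the twist and of the direction of the non-split extension at $\lambda\in\ZZ$ agrees with the paper. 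What your approach buys is an elementary, self-contained identification of the bottom space; what it costs is that you must separately prove that the $\ket{\nu_m}$ generate, which the character argument delivers for free.

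That generation step is the genuine gap, and your parenthetical fix does not close it. Knowing that $\theta_{-k}=-J_{-k}$ preserves the submodule $M'$ generated by the $\ket{\nu_m}$, and that $\beta_{-n-1-r}\ket{\nu_m}=\poly{h}_r(a_{-1},\tfrac{a_{-2}}{2},\dots)\ket{\nu_{m+1}}$ (with $a=\theta+\psi$) lies in $M'$, does not let you conclude that all of $\CC[\theta_{-1},\psi_{-1},\theta_{-2},\dots]\ket{\nu_{m+1}}$ does: a submodule is not closed under multiplication by these creation-operator polynomials, so for instance $a_{-1}^2\ket{\nu_{m+1}}$ is not obviously in $M'$ even though $a_{-1}\ket{\nu_{m+1}}$ and $\brac[\big]{\tfrac12 a_{-1}^2+\tfrac12 a_{-2}}\ket{\nu_{m+1}}$ are. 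One can repair this by an induction on conformal degree using $\comm{\psi_k}{\beta_p}=\beta_{p+k}$, but the clean fix is exactly the paper's: each $J_0$-weight space of both $\Fock{[\lambda(\theta+\psi)+n\psi]}$ and the relaxed Verma module $\sfaut^{n+1}\bgd{\sqbrac*{\lambda}}$ has graded dimension $\prod_{k\ge1}(1-q^k)^{-2}$, so the nonzero map out of the (simple, for $\lambda\notin\ZZ$) relaxed Verma module determined by your ground-state computation is forced to be an isomorphism, and in the integer case the same count shows $N$ is generated by its ground states, after which your submodule analysis of $\bgd{0}^-$ completes the argument.
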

\begin{proof}
  It was shown in \cite{RidBos14} that the characters of \(\sfaut^{n+1}
  \bgd{0}^-\) and \(\sfaut^{n+1}\bgd{\sqbrac*{\lambda}}\), with
  \(\sqbrac*{\lambda}\in\ZZ/\ZZ\), \(\sqbrac*{\lambda}\neq\sqbrac*{0}\) and
  \(n\in\ZZ\) form a basis of the span of characters of relaxed highest weight
  modules and their spectral flow twists. Thus,
  the proposition follows by comparing characters of the left and \rhs{s} of
  \eqref{eq:fockdecomp}. The embedding \eqref{eq:bosonisation} implies that
  \begin{align}
    J(z)=\normord{\beta(z)\gamma(z)}&\mapsto -\theta(z),\nonumber\\
    T(z)=-\normord{\beta(z)\partial \gamma(z)}&\mapsto 
    \frac{1}{2}\normord{\brac*{\psi(z)+\theta(z)}\brac*{\psi(z)+\theta(z)}}-\frac{1}{2}\partial\brac*{\psi(z)+\theta(z)}.
  \end{align}
  Thus
  \begin{align}
    \tr_{\Fock{[\lambda(\theta+\psi)+n\psi]}}\brac*{z^{J_0}q^{L_0-\frac{1}{12}}}&
    =\sum_{m\in \ZZ}\tr_{\Fock{(\lambda+m)(\theta+\psi)+n\psi}}\brac*{z^{J_0}q^{L_0-\frac{1}{12}}}
    =\sum_{m\in \ZZ}\frac{z^{\lambda+m}q^{\frac{1}{2}\brac*{n+1}\brac*{2(\lambda+m)+n}}}{\eta(q)^2}
    \nonumber\\
    &=\frac{z^\lambda
      q^{(n+1)\lambda+\frac{1}{2}(n+1)n}}{\eta(q)^2}\sum_{m\in\ZZ}z^m
    q^{(n+1)m}
    =\ch{\sfaut^{n+1}\bgd{\lambda}},
  \end{align}
  where the last equality was taken from Equation (5.1) of
  \cite{RidBos14}. This proves the proposition whenever the \rhs{} of
  \eqref{eq:fockdecomp} is simple, that is, when \(\lambda\) is not an
  integer. For the non-simple case the character implies 3 possibilities: the
  \rhs{} is either semi-simple or isomorphic to either \(\sfaut^{n+1}\bgd{0}^+\) or
  \(\sfaut^{n+1}\bgd{0}^-\). That the \rhs{} is isomorphic to
  \(\sfaut^{n+1}\bgd{0}^-\) then follows by checking the action of the
  \(\beta\) and \(\gamma\) fields on the highest weight vectors of the Fock
  space summands in \(\Fock{[n\psi]}\).
\end{proof}

\subsection{The \(bc\) ghosts}
\label{sec:bc}

The final free field algebra which we shall consider is the \(bc\) ghost
algebra, the fermionic analogue of the \(\beta\gamma\) ghosts. The \(bc\)
ghost algebras are a pair of infinite dimensional Lie superalgebras
\begin{equation}
  \bclie{\epsilon}=\bigoplus_{n\in \ZZ+\epsilon}\CC
  b_n\oplus\bigoplus_{n\in\ZZ+\epsilon}\CC\gamma_n\oplus\CC\wun,\quad \epsilon=0,\frac{1}{2},
\end{equation}
whose Lie brackets are
\begin{equation}
  \acomm{b_m}{c_n}=\delta_{m+n,0},\quad
  \acomm{b_m}{b_n}=\acomm{c_m}{c_n}=0,\quad m,n\in\ZZ+\epsilon.
\end{equation}
As in the previous sections on \(\osp\), \(\epsilon=0\) corresponds to the
\ns{} sector and \(\epsilon=\sfrac{1}{2}\) corresponds to the Ramond sector.

The \(bc\) ghost algebra \(\bclie{\epsilon}\) admits a triangular decomposition
\begin{equation}
  \bclie{\epsilon}^\pm=\bigoplus_{n\ge1}b_{\pm (n-\epsilon)}\oplus\bigoplus_{n\ge1}c_{\pm (n-\epsilon)},\quad 
  \bclie{0}^0=\CC b_0\oplus\CC c_0\oplus\CC\wun,\quad 
  \bclie{1/2}^0=\CC\wun.
\end{equation}

Up to isomorphism and parity reversal there is only one simple \(\bclie{0}^\ge=\bclie{0}^0\oplus\bclie{0}^+\)
module \(\CC\ket{\NS}\oplus\CC c_0\ket{\NS}\), on which \(\wun\) acts as the
identity and \(\bclie{0}^+\) acts trivially. The action of \(b_0\) satisfies \(b_0\ket{\NS}=0\) (this is
necessary for simplicity) and the global parity of the module is fixed by
assigning even parity to \(\ket{\NS}\). The Verma module,
\begin{equation}
  \NSFock=\Ind{\bclie{0}}{\bclie{0}^\ge}{\brac*{\CC\ket{\NS}\oplus\CC c_0\ket{\NS}}},
\end{equation}
together with its parity reversal are the only \ns{} Verma modules and both
are simple. They are
called \ns{} Fock spaces.

In the Ramond sector, via
\(\bclie{\sfrac{1}{2}}^\ge=\bclie{\sfrac{1}{2}}^0\oplus\bclie{\sfrac{1}{2}}^+\),
one has the Verma module 
\begin{equation}
  \RFock=\Ind{\bclie{\sfrac{1}{2}}}{\bclie{\sfrac{1}{2}}^\ge}{\CC\ket{\Ra}}
\end{equation}
where \(\CC\ket{\Ra}\) is the 1 dimensional \(\bclie{\sfrac{1}{2}}^\ge\)
module characterised by \(\ket{\Ra}\) having even parity and
\begin{equation}
  \wun\ket{\Ra}=\ket{\Ra},\quad \bclie{\sfrac{1}{2}}^+\ket{\Ra}=0.
\end{equation}
This Verma module and its parity reversal are simple and they are the only
Ramond Verma modules.

\begin{defn}
  The \emph{\(bc\) ghost \vsa{}} \(\bcva\) is the unique \vsa{} that is
  strongly generated by two odd fields \(b(z),c(z)\), has the defining \ope{s}
  \begin{equation}
    b(z)c(w)\sim -c(w)b(z)\sim\frac{1}{z-w},
  \end{equation}
  and satisfies no additional relations beyond those required by the \vsa{} axioms.
\end{defn}
As a \(\bclie{0}\) module \(\bcva\) is isomorphic to \(\NSFock\).

\subsection{Free field realisations of the universal \vosa{} \(\vosp{k}\)}
\label{sec:ospFFR}

In this section we consider two free field realisations of \(\vosp{k}\),
once as a subalgebra of \(\ffa=\hvoa{1}\otimes\bgva\otimes\bcva\) and once as
a subalgebra of \(\ffb=\hvoa{1}\otimes\lva{\theta+\psi}\otimes\bcva\).

\begin{prop}
  \label{thm:ffrhom}
  Let \(\xi\in\CC\setminus\{0\}\) and \(k=\frac{\xi^2-3}{2}\).
  There exists a \vosa{} homomorphism \(\vosp{k}\to \ffa\) uniquely characterised by the assignment
  \begin{align}
    \label{eq:ffrhom}
    e(z)&\mapsto \beta(z),\quad
    x(z)\mapsto b(z)+\normord{\beta(z) c(z)},\quad
    h(z)\mapsto \xi a(z)+\normord{b(z)c(z)}+2\normord{\beta(z)\gamma(z)},\nonumber\\
    y(z)&\mapsto (\xi^2-2)\partial
    c(z)+\xi\normord{a(z)c(c)}+\normord{\gamma(z)b(z)}+\normord{\beta(z)\gamma(z)c(z)},\nonumber\\
    f(z)&\mapsto-\xi\normord{a(z)\gamma(z)}-\normord{\beta(z)\gamma(z)\gamma(z)}
    -\normord{\gamma(z)b(z)c(z)}
    +\frac{1-\xi^2}{2}\normord{\partial c(z) c(z)}+\frac{3-\xi^2}{2}\partial\gamma(z).
  \end{align}
  We omit the tensor product symbols for brevity, identifying \(a\) with
  \(a\otimes\wun\otimes\wun\) and so on. The image of the Virasoro field under
  the above homomorphism is
  \begin{equation}
    T(z)\mapsto \frac{1}{2}\normord{a(z)a(z)} -\frac{1}{2\xi}\partial a(z)-\normord{\beta(z)\partial\gamma(z)}-\normord{b(z)\partial c(z)}.
  \end{equation}
\end{prop}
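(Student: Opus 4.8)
The plan is to exploit the freeness recorded in \cref{sec:ospvosa}: because $\vosp{k}$ is strongly and freely generated by the five fields $e,x,h,y,f$ subject only to the relations $a(z)b(w)\sim\frac{\killing{a}{b}}{(z-w)^{2}}+\frac{\comm{a}{b}(w)}{z-w}$ for $a,b\in\osp$, a \vosa{} homomorphism from $\vosp{k}$ to any \vosa{} is the same datum as a choice of parity-homogeneous fields in the target (even for $e,h,f$, odd for $x,y$) whose pairwise \ope{s} reproduce those relations at level $k$; uniqueness of the resulting homomorphism is then immediate from generation. So the whole content of the proposition is that the five fields on the \rhs{} of \eqref{eq:ffrhom} have the correct \ope{s}, together with the stated formula for the image of $T(z)$. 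Parity-homogeneity of those fields is clear by inspection.

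To verify the \ope{s} I would first record the defining \ope{s} of $\ffa=\hvoa{1}\otimes\bgva\otimes\bcva$ --- $a(z)a(w)\sim(z-w)^{-2}$; $\gamma(z)\beta(w)\sim(z-w)^{-1}$ with $\beta\beta\sim\gamma\gamma\sim0$; $b(z)c(w)\sim(z-w)^{-1}$ with $bb\sim cc\sim0$; and $a$ mutually local with the ghosts while the even fields $\beta,\gamma$ have regular \ope{s} with the odd fields $b,c$. Several of the fifteen \ope{s} are then immediate. Since the image of $h$, namely $\xi a+\normord{bc}+2\normord{\beta\gamma}$, is a sum of Heisenberg and ghost-number currents, $h_{(0)}$ acts as a grading operator, and reading off the $\beta\gamma$- and $bc$-charges of the images of $e,x,y,f$ recovers $\comm{h}{e}=2e$, $\comm{h}{x}=x$, $\comm{h}{y}=-y$, $\comm{h}{f}=-2f$ and the vanishing of $\killing{h}{e},\killing{h}{x},\killing{h}{y},\killing{h}{f}$ with no real computation; $\killing{h}{h}$ follows from a short double-contraction count in which the $\xi^{2}$ from $a(z)a(w)$ combines with the level-independent self-contractions of the two ghost currents --- via $\xi^{2}=2k+3$ --- to the required value. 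The \ope{s} $e(z)e(w)$, $e(z)x(w)$ and $f(z)f(w)$ vanish because $\beta$ has no self-contraction and commutes with $b$.

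The substantive part is the remaining family $e(z)f(w)$, $e(z)y(w)$, $x(z)x(w)$, $x(z)y(w)$, $x(z)f(w)$, $y(z)y(w)$, each of which pairs a linear or quadratic normally ordered expression with a quadratic or cubic one, so that both single and double Wick contractions contribute. Here I would apply the generalised Wick theorem together with $\pd_{w}(z-w)^{-n}=n(z-w)^{-n-1}$ and Taylor expansion of the normally ordered factors about $w$, tracking carefully the signs produced by transposing the odd fields $b,c$. The double contractions must assemble into exactly the $\killing{a}{b}(z-w)^{-2}$ terms (again using $\xi^{2}=2k+3$) and the single contractions into $\comm{a}{b}(w)(z-w)^{-1}$; the derivative corrections $(\xi^{2}-2)\pd c$, $\tfrac{1-\xi^{2}}{2}\normord{\pd c\,c}$ and $\tfrac{3-\xi^{2}}{2}\pd\gamma$ in the images of $y$ and $f$ are present precisely so that $\acomm{x}{y}=h$ with $\killing{x}{y}=2$, $\comm{f}{x}=-y$ and $\acomm{y}{y}=-2f$ come out on the nose --- one may equally treat their coefficients as unknowns and solve the resulting linear system, which is how the realisation is discovered. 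This bookkeeping, especially for $x(z)y(w)$ and $y(z)y(w)$, is where essentially all the work lies; there is no conceptual obstacle, only the risk of sign and combinatorial slips, and it is the main obstacle of the proof.

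For the image of $T(z)$, with the homomorphism in hand one substitutes \eqref{eq:ffrhom} into the Sugawara expression \eqref{eq:emt} with $2k+3=\xi^{2}$ and simplifies by normal-ordering rearrangements. The cross terms between the Heisenberg sector and the two ghost sectors cancel; the parts of \eqref{eq:emt} built from $\beta,\gamma,b,c$ collapse to the standard ghost stress tensors $-\normord{\beta\pd\gamma}-\normord{b\pd c}$, which assign conformal weight $1$ to $\beta,b$ and $0$ to $\gamma,c$, consistently with all five of $e,x,h,y,f$ having conformal weight $1$; and the Heisenberg sector, after collecting the derivative pieces coming from the normal-ordering corrections in $\normord{h^{2}}$ and from the $\pd\gamma,\pd c$ terms in the images of $f$ and $y$, reduces to $\tfrac12\normord{aa}-\tfrac1{2\xi}\pd a$. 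This yields the asserted expression for the image of $T(z)$.
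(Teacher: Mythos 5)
Your proposal is correct and takes essentially the same route as the paper, whose proof is simply the observation (attributed to Bershadsky--Ooguri) that the fifteen \ope{s} and the Sugawara formula are verified by direct calculation; your sketch just organises that calculation. One small caveat: $f(z)f(w)$ does \emph{not} vanish for the trivial reason you give --- already the contraction of $-\xi\normord{a\gamma}$ with itself produces $\xi^{2}\normord{\gamma(z)\gamma(w)}(z-w)^{-2}$, so the vanishing of this \ope{} requires the same cancellation bookkeeping as your ``substantive family'' rather than following from $\beta\beta\sim 0$; the direct computation you propose would of course still establish it.
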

\begin{proof}
  This free field realisation appears to have first been considered by
  Bershadsky and Ooguri in \cite{BerHid89}.
  The validity of the proposition follows by checking, through direct
  calculation, that the generating fields of \(\vosp{k}\)
  and their images in \(\ffa\) satisfy the same \ope{s} and that the formulae
  for the Virasoro fields match. Therefore the above
  map is a well defined \vosa{} homomorphism.
\end{proof}
\begin{rmk}
  The admissible levels of \cref{thm:admlevels} at which \(\vosp{k}\) is not simple
  are realised by the free field realisation whenever \(\xi^2=\frac{u}{v}\),
  where \(u,v\) are the integer parameters of \cref{thm:admlevels}.
\end{rmk}

Since the above homomorphism is non-trivial, it follows that it is injective
for the levels at which \(\vosp{k}\) is simple. To show that the homomorphism
remains injective at the
levels for which \(\vosp{k}\) is not simple, 
we need to take a closer look at \(\ffa\) modules, which are
automatically also \(\vosp{k}\) modules due to the homomorphism
\(\vosp{k}\to\ffa\).
Let \(\NSfket{p}=\ket{p}\otimes\Omega\otimes\ket{\NS}\),
\(\NSfket{p;j}=\ket{p}\otimes u_j\otimes\ket{\NS}\),
\(\Rfket{p}=\ket{p}\otimes\Omega\otimes\ket{\Ra}\) and
\(\Rfket{p;j}=\ket{p}\otimes u_j\otimes\ket{\Ra}\) respectively be relaxed
highest weight vectors in
\begin{equation}
  \NSFFaFock{p}=\Fock{p}\otimes \bgh\otimes\NSFock,\quad
  \NSFFaFock{p;\sqbrac*{j}}=\Fock{p}\otimes \bgd{\sqbrac*{j}}\otimes \NSFock,\quad
  \RFFaFock{p}=\Fock{p}\otimes\bgh\otimes\RFock\quad 
  \RFFaFock{p;\sqbrac*{j}}=\Fock{p}\otimes\bgd{\sqbrac*{j}}\otimes\RFock,
\end{equation}
where \(\Omega\) is the highest weight vector of \(\bgh\) and  \(u_j\) a relaxed
highest weight vector of \(\bgd{\sqbrac*{j}}\) of weight \(j\).
Let
\begin{align}
  \lambda_p^\NS&=\xi p,\qquad 
  \lambda_{p;j}^\NS=\xi p+2j,\qquad 
  \lambda_p^\Ra=\xi p-\frac{1}{2},\qquad 
  \lambda_{p;j}^\Ra=\xi p-\frac{1}{2}+2j,\nonumber\\
  s_p&=\xi p+\frac{1}{2},\qquad 
  q_p=\frac{\brac*{\xi p+\frac{1}{2}}^2-1}{2}.
\end{align}
 From the \vosa{} homomorphism
\eqref{eq:ffrhom} it then follows that
\begin{align}
  h_0\NSfket{p}&=
  \lambda_p^\NS\NSfket{p},&
  \Sigma \NSfket{p}& 
  =s_p\NSfket{p}\nonumber\\
  h_0 c_0\NSfket{p}&
  =\brac*{\lambda^\NS_p-1} c_0\NSfket{p},&
  \Sigma c_0\NSfket{p}&
  =-s_p c_0\NSfket{p}\nonumber\\
  h_0 \NSfket{p;j}& 
  =\lambda_{p;j}^\NS\NSfket{p;j},
  &\Sigma \NSfket{p;j}&
  =s_p \NSfket{p;j}\nonumber\\
  h_0 c_0\NSfket{p;j}&
  = \brac*{\lambda_{p;j}^\NS-1}c_0\NSfket{p;j},
  &\Sigma c_0\NSfket{p;j}&
  =-s_p c_0\NSfket{p;j}\nonumber\\
  h_0\Rfket{p}& 
  =\lambda_p^\Ra\Rfket{p},&
  Q\Rfket{p}& 
  =q_p\Rfket{p},\nonumber\\
  h_0\Rfket{p;j}&
  =\lambda^\Ra_{p;j}\Rfket{p;j},&
  Q\Rfket{p;j}&
  =q_p\Rfket{p;j},
\end{align}
where \(\Sigma=x_0y_0-y_0x_0+\sfrac{1}{2}\) and \(Q=\sfrac{h_0^2}{2}+e_0f_0+f_0e_0\).
Note that while the eigenvalue of \(h_0\) depends on the \(\beta\gamma\)
weight of the relaxed highest vector, the eigenvalues of \(\Sigma\) and \(Q\)
do not.

\begin{lem}\leavevmode
  \label{thm:ffainj}
  \begin{enumerate}
  \item\label{itm:ffainj1} The \vosa{} homomorphism of \cref{thm:ffrhom} is injective for all
    \(k\in\CC\setminus\{-\sfrac{3}{2}\}\) and therefore any singular vectors
    of \(\vosp{k}\) will have non-trivial image. 
  \item\label{itm:ffainj2} For admissible levels \(k=\frac{u-3v}{2v}\), the
    singular vector at \(\osp\) weight \((u-1)\alpha\)
    and conformal weight \((u-1)\frac{v}{2}\) is unique, up to rescaling, in
    \(\ffa\), where \(\ffa\) is regarded as a \(\vosp{k}\) module.
  \end{enumerate}
\end{lem}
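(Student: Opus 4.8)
The plan is to prove the two parts in turn, reducing part~\ref{itm:ffainj1} to \cref{thm:svnonzeroimage} and part~\ref{itm:ffainj2} to a graded-dimension count inside $\ffa$.

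For part~\ref{itm:ffainj1}, note first that the homomorphism $\phi\colon\vosp{k}\to\ffa$ of \cref{thm:ffrhom} is non-zero, so $\ker\phi$ is a proper ideal of $\vosp{k}$. If $k$ is not admissible then $\vosp{k}$ is simple by \cref{thm:admlevels} and hence $\phi$ is injective. If $k=k_{u,v}$ is admissible, then by \cref{thm:admlevels} the only proper non-zero ideal of $\vosp{k}$ is $\ideal{\vsv{u,v}}$, so $\ker\phi$ is either $0$ or $\ideal{\vsv{u,v}}$ and $\phi$ is injective precisely when $\phi(\vsv{u,v})\neq0$. To rule out the alternative I would use the Fock modules: each $\NSFFaFock{p}$ is an $\ffa$-module, hence a $\vosp{k}$-module through $\phi$, and one checks that the image of $\NSosp{+}$ annihilates $\NSfket{p}$, so $\NSfket{p}$ is a genuine highest weight vector for $\vosp{k}$ of $\osp$-weight $\xi p\,\alpha$. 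For all but countably many $p$ the module $\Ind{}{}{\fNSh{\xi p}}$ is simple, so the $\vosp{k}$-submodule of $\NSFFaFock{p}$ generated by $\NSfket{p}$ is isomorphic to it. If $\phi(\vsv{u,v})$ were zero, the field $\vsv{u,v}(z)$ would act as zero on every $\ffa$-module, in particular on these Verma submodules; but the computation in the proof of \cref{thm:svnonzeroimage} shows that $\vsv{u,v}(z)$ acts non-trivially on $\Ind{}{}{\fNSh{\lambda}}$ for infinitely many $\lambda$. This contradiction gives $\phi(\vsv{u,v})\neq0$, so $\ker\phi=0$; and since $\phi$ is injective, every singular vector of $\vosp{k}$ has non-zero image.

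For part~\ref{itm:ffainj2}, part~\ref{itm:ffainj1} provides an embedding $\phi\colon\vosp{k_{u,v}}\hookrightarrow\ffa$, and because $\phi$ is a \vosa{} homomorphism its image is a $\vosp{k}$-submodule of $\ffa$ isomorphic, as a module over itself, to $\vosp{k}$. Inside $\vosp{k}$ the space of singular vectors of $\osp$-weight $(u-1)\alpha$ and conformal weight $(u-1)\tfrac{v}{2}$ is the line $\CC\vsv{u,v}$: any such vector generates a proper non-zero ideal, necessarily $\ideal{\vsv{u,v}}$, and the subspace of $\ideal{\vsv{u,v}}$ of that $\osp$-weight and conformal weight is one-dimensional because every element of $\UEA{\NSosp{}}$ of zero $\osp$-weight and zero conformal degree acts on $\vsv{u,v}$ as a scalar, being reorderable past the annihilating subalgebra $\NSosp{+}$. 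It therefore suffices to show that the quotient $\ffa/\phi(\vosp{k})$ contains no non-zero vector of $\osp$-weight $(u-1)\alpha$ and conformal weight $(u-1)\tfrac{v}{2}$ that is annihilated by $\NSosp{+}$: any singular vector of $\ffa$ at this weight then maps to $0$ in the quotient, hence lies in $\phi(\vosp{k})$ and is a scalar multiple of $\phi(\vsv{u,v})$.

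Establishing this last vanishing is the main obstacle. I would treat it by decomposing $\ffa$ as a $\vosp{k}$-module using the explicit formulae of \cref{thm:ffrhom}: identifying $\ffa$ with $\NSFFaFock{0}$, its conformal-weight-zero subspace is spanned by the relaxed highest weight vectors $\gamma_{0}^{m}\Omega\otimes\NSket$ and $\gamma_{0}^{m}\Omega\otimes c_{0}\NSket$ with $m\geq0$ (here $\Omega$ is the highest weight vector of $\bgh$), and one needs to verify that, apart from the vacuum submodule $\phi(\vosp{k})$, the $\vosp{k}$-submodules generated by these vectors cannot produce a singular vector at conformal weight $(u-1)\tfrac{v}{2}$ with $\osp$-weight $(u-1)\alpha$. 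Concretely this amounts to comparing the dimensions, bigraded by $\osp$-weight and conformal weight, of $\ffa$, of $\phi(\vosp{k})$, and of the universal affine highest weight module at the relevant highest weight, so that the space of singular vectors of $\ffa$ at the prescribed weight is squeezed down to dimension $1$. The reason the bound is sharp is that $(u-1)\tfrac{v}{2}$ is exactly the conformal weight of the singular vector $\vsv{u,v}$, leaving no room for contributions from lower conformal weights.
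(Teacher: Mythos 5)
Your argument for part~\ref{itm:ffainj1} is sound and essentially the paper's: the kernel of the homomorphism is either zero or the unique maximal ideal of \(\vosp{k_{u,v}}\), and the latter is excluded because the Fock modules \(\NSFFaFock{p}\) supply, for generic \(p\), simple Verma modules on which \(\vsv{u,v}(z)\) is already known to act non-trivially by the computation behind \cref{thm:svnonzeroimage}.

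Part~\ref{itm:ffainj2}, however, has a genuine gap. Your reduction --- that it suffices to show the quotient \(\ffa/\phi(\vosp{k_{u,v}})\) has no singular vector of weight \(((u-1)\alpha,(u-1)\tfrac{v}{2})\) --- is correct, as is the observation that the singular vectors of \(\vosp{k_{u,v}}\) at that weight form a line. But you then only describe a dimension count you ``would'' perform, and the heuristic you offer for why it closes is wrong: the competing submodules are generated by ground states such as \(c_0\NSfket{0}\) and its \(\gamma_0\)-translates, which sit at conformal weight \(0\), so there is ample room below \((u-1)\tfrac{v}{2}\) and conformal-weight considerations alone cannot exclude a second singular vector. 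The step that actually closes the argument requires structural input about \(\aosp{}\) Verma modules at admissible level, which the paper takes from the Iohara--Koga BGG resolutions \cite{IoOsp01}. Concretely: one checks \(x_0c_0\NSfket{0}=\NSfket{0}\), so that \(\UEA{\aosp{}}c_0\NSfket{0}/\UEA{\aosp{}}\NSfket{0}\) is a highest weight module of weight \(-\alpha\), hence a quotient of \(\Ver{-1}\); one then uses the fact that the first independent singular vectors of \(\Ver{-1}\) occur at \(\osp\)-weights \(-u\alpha\) and \(u\alpha\) (conformal weights \((u-1)\tfrac{v}{2}\) and \((u+1)\tfrac{v}{2}\)), so the \(((u-1)\alpha,(u-1)\tfrac{v}{2})\) weight spaces of \(\Ver{-1}\) and \(\Irr{-1}\) coincide and no quotient of \(\Ver{-1}\) can contain a singular vector there; finally, a character comparison (\(\ffa\) and \(\Ver{0}\) have equal characters, while \(\UEA{\aosp{}}\NSfket{0}\cong\Ver{0}/\Ver{-1}\)) shows the entire weight space of \(\ffa\) at the critical weight already lies inside \(\UEA{\aosp{}}c_0\NSfket{0}\), so any second singular vector would have to survive in a quotient of \(\Ver{-1}\) --- a contradiction. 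Without this input your squeeze does not close, and part~\ref{itm:ffainj2} remains unproved as written.
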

\begin{proof}
  To show part \ref{itm:ffainj1}, consider the formulae for the \(h_0\)
  eigenvalues of \(\NSfket{p}\). It is clear that \(\NSfket{p}\) is an
  \(\aosp{0}\) highest weight vector and as one varies 
  \(p\), there are no restrictions on the \(\osp\) weights that can be
  obtained. However, for the affine minimal model \vosa{s} \(\MinMod{u}{v}\)
  not all \(\osp\) weights are allowed as highest weights. For example,
  \((u-1)\alpha\), the weight of the singular vector, is not allowed. Thus,
  the image of the homomorphism must be isomorphic to \(\vosp{k}\) and not just
  a quotient thereof.

  We show part \ref{itm:ffainj2} by contradiction. In order to do so we
  prepare some results from \cite{IoOsp01} on the submodule structure and singular vectors of
  \(\aosp{0}\) Verma modules.
  We denote by \(\Ver{\lambda}\) the Verma at admissible level
  \(k_{u,v}\) generated by a highest weight vector of \(\osp\)
  weight \(\lambda\alpha\) and by \(\Irr{\lambda}\) its simple quotient by its
  unique maximal proper submodule. Specialising \cite[Theorem 3.1]{IoOsp01} to
  the case at hand shows that the last few terms of the BGG resolution of
  \(\Irr{0},\ \Irr{-1}\) and \(\Irr{u-1}\) are given by
  \begin{align} 
    \cdots&\lra\Ver{-u-1}\oplus\Ver{2u-1}\lra\Ver{-u}\oplus\Ver{u}\lra\Ver{-1}\oplus\Ver{u-1}\lra\Ver{0}\lra\Irr{0}\lra0,\nonumber\\
    \cdots&\lra\Ver{-u-1}\oplus\Ver{2u-1}\lra\Ver{-u}\oplus\Ver{u}\lra\Ver{-1}\lra\Irr{-1}\lra0,\nonumber\\
    \cdots&\lra\Ver{-u-1}\oplus\Ver{2u-1}\lra\Ver{-u}\oplus\Ver{u}\lra\Ver{u-1}\lra\Irr{u-1}\lra0.
  \end{align}
  These resolutions imply that \(\Ver{0}\) has two
  independent singular vectors\footnote{Independent here meaning that the singular vector is a
    descendant of the highest weight vector only and not of other singular vectors.}
  whose \(\osp\) and conformal weights are \((-\alpha,0)\) and
  \(((u-1)\alpha,(u-1)\frac{v}{2})\), respectively. These two singular vectors
  each generate a Verma submodule, and both of these submodules share a pair of independent singular
  vectors whose \(\osp\) and conformal weights are \((-u\alpha,(u-1)\frac{v}{2})\) and
  \((u\alpha,(u+1)\frac{v}{2})\), respectively. These two additional singular vectors
  each generate a Verma submodule, and again both of these submodules share a pair of independent singular
  vectors whose \(\osp\) and conformal weights are \((-(u+1)\alpha,(u+1)\frac{v}{2})\) and
  \(((2u-1)\alpha,(u+1)\frac{v}{2})\), respectively, and so on. As an
  \(\aosp{}\) module \(\vosp{k_{u,v}}\) is isomorphic to
  \(\Ver{0}/\Ver{-1}\) and, as stated in \cref{thm:admlevels}, this
  quotient has a unique singular vector at  \(\osp\) and conformal weight
  \(((u-1)\alpha,(u-1)\frac{v}{2})\). In part \ref{itm:ffainj1}, we showed
  that \(\vosp{k_{u,v}}\) is isomorphic to the \(\aosp{}\) submodule of \(\ffa\)
  generated by the vector \(\NSfket{0}\). Thus,
  \(\UEA{\aosp{}}\NSfket{0}\cong \Ver{0}/\Ver{-1}\).
  A direct calculation 
  shows that \(x_0 c_0\NSfket{0}=\NSfket{0}\) and therefore,
  \(\UEA{\aosp{}}c_0\NSfket{0}/\UEA{\aosp{}}\NSfket{0}\) is an \(\aosp{}\)
  highest weight module of weight \(-\alpha\) and is therefore isomorphic to a
  quotient of \(\Ver{-1}\).

  Due to the weights at which the singular
  vectors of \(\Ver{-1}\) appear, the \(((u-1)\alpha,(u-1)\frac{v}{2})\)
  weight spaces of \(\Ver{-1}\) and \(\Irr{-1}\) have the same
  dimension. This implies that neither \(\Ver{-1}\) nor any quotient of
  \(\Ver{-1}\) contains a singular vector in the
  \(((u-1)\alpha,(u-1)\frac{v}{2})\) weight space. Further, since both
  \(\ffa\) and \(\Ver{0}\) have the same 
  characters as \(\aosp{}\) modules, their weight spaces also have the same
  dimensions. Thus, the weight spaces of \(\ffa\) have the same
  dimensions as those of the direct sum \(\brac*{\Ver{0}/\Ver{-1}} \oplus
  \Ver{-1}\). Finally, this implies that the dimensions of the
  \(((u-1)\alpha,(u-1)\frac{v}{2})\) weight spaces of \(\ffa\) and
  \(\UEA{\aosp{0}}c_0\NSfket{0}\) are equal, since the dimension of the
  \(((u-1)\alpha,(u-1)\frac{v}{2})\) weight spaces of \(\Ver{-1}\) and
  \(\Irr{-1}\) are.

  Assume that the \(\aosp{0}\) singular vector of weight
  \(((u-1)\alpha,(u-1)\frac{v}{2})\) in \(\ffa\) is not unique, then the
  quotient module
  \(\UEA{\aosp{0}}c_0\NSfket{0}/\UEA{\aosp{0}}\NSfket{0}\) would have to
  contain at least one singular vector at this weight, because the weight space
  of \(\ffa\) of weight
  \(((u-1)\alpha,(u-1)\frac{v}{2})\) 
  is contained in
  \(\UEA{\aosp{0}}c_0\NSfket{0}\) by the above dimension counting
  arguments. However, neither
  \(\Ver{-1}\) nor any of its quotients contain a singular vector at this
  weight, so in particular,
  neither can the quotient module \(\UEA{\aosp{0}}c_0\NSfket{0}/\UEA{\aosp{0}}\NSfket{0}\).
\end{proof}

The second free field realisation of \(\vosp{k}\) is constructed by embedding the
\(\beta\gamma\) \va{} of the first realisation into a lattice algebra as in \cref{thm:bosonisation}.

\begin{lem}\leavevmode
  \label{thm:bosinj}
  \begin{enumerate}
  \item\label{itm:bosinj1} The composition of the algebra homomorphisms in
    \cref{thm:bosonisation} and
  \cref{thm:ffrhom} is injective. Therefore any singular vectors of
  \(\vosp{k}\) will have non-trivial image in \(\ffb\).
  \item\label{itm:bosinj2} For admissible levels \(k=\frac{u-3v}{2v}\), the singular vector at \(\osp\) weight \((u-1)\alpha\)
    and conformal weight \((u-1)\frac{v}{2}\) is unique, up to rescaling, in
    \(\ffb\).
  \end{enumerate}
\end{lem}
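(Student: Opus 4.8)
The plan is to follow the template of the proof of \cref{thm:ffainj}, pushing as much as possible back onto the facts established there. For part~\ref{itm:bosinj1}, the map in question is the composite $\vosp{k}\to\ffa\to\ffb$, whose first arrow is the homomorphism \eqref{eq:ffrhom} and whose second arrow is the bosonisation embedding $\bgva\hookrightarrow\lva{\theta+\psi}$ of \cref{thm:bosonisation}, tensored with the identities on $\hvoa{1}$ and $\bcva$. The first arrow is injective by \cref{thm:ffainj}\ref{itm:ffainj1}, and the second is injective because a tensor product over $\CC$ of injective linear maps is again injective; a composite of injections is an injection, so part~\ref{itm:bosinj1} follows, and in particular every singular vector of $\vosp{k}$ retains a non-trivial image in $\ffb$.

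For part~\ref{itm:bosinj2} I would first realise $\ffa$ as a $\vosp{k}$-submodule of $\ffb$ (it is a vertex subalgebra via the bosonisation, hence a submodule) and then identify the quotient. By \cref{thm:bgffrmod} applied with $n=0$, the regular module $\lva{\theta+\psi}=\Fock{[0]}$ is isomorphic, as a $\bgva$-module, to $\sfaut\bgd{0}^-$; applying the exact twist functor $\sfaut$ to the non-split sequence $\dses{\bgl}{}{\bgd{0}^-}{}{\bgh}$ and using $\bgl\cong\sfaut^{-1}\bgh$ produces a short exact sequence $\dses{\bgh}{}{\lva{\theta+\psi}}{}{\sfaut\bgh}$ of $\bgva$-modules, with $\bgh\cong\bgva$ the copy sitting inside. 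Tensoring with $\hvoa{1}\otimes\bcva$ then gives a short exact sequence of $\vosp{k}$-modules $\dses{\ffa}{}{\ffb}{}{Q}$, where $Q=\hvoa{1}\otimes\sfaut\bgh\otimes\bcva$. Existence of a singular vector of $\ffb$ at $\osp$-weight $(u-1)\alpha$ and conformal weight $(u-1)\frac{v}{2}$ is immediate, since the singular vector of $\vosp{k_{u,v}}$ already lives in $\ffa\subset\ffb$. For uniqueness, let $w\in\ffb$ be any singular vector at this bidegree: its image in $Q$ is either zero, in which case $w\in\ffa$ and uniqueness up to rescaling is \cref{thm:ffainj}\ref{itm:ffainj2}, or else a singular vector of $Q$ at the same bidegree.

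It therefore remains to rule out the latter case, i.e. to show that $Q=\hvoa{1}\otimes\sfaut\bgh\otimes\bcva$ contains no $\aosp{}$-singular vector of $\osp$-weight $(u-1)\alpha$ and conformal weight $(u-1)\frac{v}{2}$, and this is the step I expect to demand the most care. The route I would take is a character and weight-support computation: since the image of $\vosp{k}$ has the same character as the Verma module $\Ver{0}$ (as already observed in \cref{thm:ffainj}), the short exact sequence above pins down the character of $Q$, and one can then play this against the $\aosp{0}$ Verma-module submodule data of \cite{IoOsp01} — exactly as in the proof of \cref{thm:ffainj}\ref{itm:ffainj2} — to see that the weight space of $Q$ at the relevant bidegree is exhausted by vectors that fail to be annihilated by the positive modes, the constraint coming from the spectrally-flowed factor $\sfaut\bgh$ correlating the $h_0$-eigenvalue of a vector of $Q$ with its $\bcva$- and Heisenberg-components and its conformal weight. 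By contrast, the two injectivity statements and the reduction of uniqueness to the already-settled $\ffa$ case are essentially formal.
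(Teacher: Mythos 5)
Your part~\ref{itm:bosinj1} and the set-up of part~\ref{itm:bosinj2} --- realising \(\ffb\) as a non-split extension \(\dses{\ffa}{}{\ffb}{}{Q}\) with \(Q=\Fock{0}\otimes\sfaut\bgh\otimes\NSFock\), and reducing uniqueness to the absence of an \(\aosp{}\)-singular vector in \(Q\) at the given bidegree --- coincide with the paper's argument. The gap is that the single non-formal step, ruling out singular vectors in \(Q\), is only sketched, and the sketched route is not viable. \(Q\) contains the spectral-flow twist \(\sfaut\bgh\), so as an \(\aosp{}\)-module it is not a (relaxed) highest weight module for the standard triangular decomposition and lies in neither \(\catO\) nor \(\catR\); the singular-vector and BGG-resolution data of \cite{IoOsp01} concerns Verma modules in category \(\catO\) and gives no purchase on \(Q\). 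A character computation cannot close this either: characters do not see submodule structure, and what must be excluded is the existence of a vector killed by \(\NSosp{+}\), not a dimension coincidence. (In the proof of \cref{thm:ffainj}\ref{itm:ffainj2} the character comparison worked only because the quotient there was identified with a quotient of the Verma module \(\Ver{-1}\), whose singular vectors are catalogued; no analogous identification is available for \(Q\).)

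What the paper actually does is prove the stronger statement that \(Q\) admits no \(\aosp{}\)-singular vector at \emph{any} weight, by an explicit mode computation. Expanding \(f_1\) in free-field generators and refining the weight grading of \(Q\) by the eigenvalue of \(\beta_{-1}\gamma_1\), one finds that on a homogeneous vector \(w\) the action takes the form \(f_1w=\brac*{m+2-\tfrac{u+v}{2v}}\gamma_1w+Cw\) with \(m\in\ZZ\), where the terms collected in \(C\) do not lower the \(\beta_{-1}\gamma_1\)-eigenvalue while \(\gamma_1\) lowers it by \(1\). Since \(\gamma_1\) acts injectively on \(\sfaut\bgh\) and \(\tfrac{u+v}{2v}\notin\ZZ\) for admissible \(u,v\), the coefficient is non-zero and \(\gamma_1w\) cannot cancel against \(Cw\); hence \(f_1\) acts injectively on \(Q\) and no singular vector exists. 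Some concrete argument of this kind, exploiting the free-field expansion of a positive mode rather than category-\(\catO\) structure theory, is what your proposal is missing.
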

\begin{proof}
  Part \ref{itm:bosinj1} follows from the fact that the two algebra
  homomorphisms in \cref{thm:bosonisation} and \cref{thm:ffrhom} are both
  injective and thus so is their composition.

  We show part \ref{itm:bosinj2} by contradiction. As a module over \(\ffa\)
  the \vsa{}
  \(\ffb\) is isomorphic to \(\Fock{0}\otimes \sigma\bgd{0}^-\otimes
  \NSFock\). Further, since \(\sigma\bgd{0}^-\) satisfies the non-split exact sequence
  \begin{equation}
    0\lra\bgh\lra\sigma\bgd{0}^-\lra\sigma\bgh\lra0,
  \end{equation}
  \(\ffb\) satisfies the non-split exact sequence:
  \begin{equation}
    0\lra\ffa\lra\ffb\lra\Fock{0}\otimes\sigma\bgh\otimes\NSFock\lra0.
  \end{equation}
  Since the singular vector at \(\osp\) and conformal weights
  \((u-1)\alpha\) and \((u-1)\frac{v}{2}\) is unique in \(\ffa\), it is unique in
  \(\ffb\) if and only if there is no \(\aosp{}\) singular vector at those
  weights in \(\Fock{0}\otimes\sigma\bgh\otimes\NSFock\). As shall shortly
  become apparent,
  no such singular vector exists because
  \(\Fock{0}\otimes\sigma\bgh\otimes\NSFock\) admits no \(\aosp{}\) singular vectors at
  any weight. A necessary condition for a vector to be singular is that it is
  annihilated by \(f_1\). We will show that no non-trivial such vector exists
  by using the fact that \(\gamma_1\) acts injectively on \(\sigma\bgh\) (this
  is a consequence of \(\gamma_0\) acting injectively on \(\bgh\)) to show that \(f_1\) acts injectively on
  \(\Fock{0}\otimes\sigma\bgh\otimes\NSFock\).
  The expansion of \(f_1\) in free field generators is
  \begin{align}
    f_1&=-\xi a_0 \gamma_1
    -\brac*{2\sum_{m\ge 2}\beta_{-m}\gamma_m+2\sum_{m\ge 0} \gamma_{-m}\beta_m+\beta_{-1}\gamma_1}\gamma_1
    -\normord{b\,c}_0\gamma_1-\frac{3-\xi^2}{2}\gamma_1+C\nonumber\\
    &=-\brac*{\xi a_0+2\normord{\beta\,\gamma}_0+\normord{b\,c}_0-\beta_{-1}\gamma_1}\gamma_1-\frac{3-\xi^2}{2}\gamma_1+C,
  \end{align}
  where \(C\) denotes all summands not containing \(\gamma_1\), while
  \(\normord{b\,c}_0\) denotes the zero mode of the normally ordered product
  \(\normord{b(z)c(z)}\) and \(\normord{\beta\,\gamma}_0\) denotes the zero
  mode of the normally ordered product \(\normord{\beta(z)\gamma(z)}\). Next,
  we refine the grading of \(\Fock{0}\otimes\sigma\bgh\otimes\NSFock\) by
  Heisenberg, \(\beta\gamma\) and \(bc\) weights to also include the
  eigenvalue of \(\beta_{-1}\gamma_1\) (on \(\sigma\bgh\) the eigenvalues
  of \(\beta_{-1}\gamma_1\) are all integers). When acting on a homogeneous
  vector with respect to this refined grading, the terms collected in \(C\) in the
  expansion of \(f_1\) either do not change the eigenvalue of
  \(\beta_{-1}\gamma_1\) or increase the eigenvalue by a positive integers if
  they contain factors of \(\beta_{-1}\). Conversely, the summands containing \(\gamma_1\) shift the
  eigenvalue of \(\beta_{-1}\gamma_1\) by \(-1\). Let \(w\) be a
  homogeneous vector with respect to the refined grading, then
  \begin{align}
    f_1 w&= \brac*{\xi
      a_0+2\normord{\beta\,\gamma}_0+\normord{b\,c}_0-\beta_{-1}\gamma_1}\gamma_1w
    -\frac{3-\xi^2}{2}\gamma_1w+Cw\nonumber\\
    &=
    \brac*{m+2-\frac{u+v}{2v}} \gamma_1 w+C w,
  \end{align}
  where \(m\in \ZZ\) is the eigenvalue of \(\xi
  a_0+2\normord{\beta\,\gamma}_0+\normord{b\,c}_0-\beta_{-1}\gamma_1\) and we
  have used the fact that \(\xi^2=\frac{u}{v}\). Since \(v\) and
  \(\frac{u+v}{2}\) are coprime, \(\frac{u+v}{2v}\) is not an integer and the
  coefficient of \(\gamma_1 w\) is therefore non-zero. Further, \(\gamma_1 w\)
  and \(C w\) have different \(\beta_{-1}\gamma_1\) eigenvalues, so they
  cannot be linearly dependent. Thus, \(f_1 w\neq 0\) for any homogeneous
  vector of the refined grading and \(f_1\) acts injectively.
\end{proof}

The second free field realisation implies that modules over \(\ffb\) are also
modules over \(\vosp{k}\) by restriction. We introduce the following notation
\begin{align}
  \NSFFbFock{p}&=\Fock{p}\otimes \Fock{\sqbrac*{0}}\otimes\NSFock, &
  \NSFFbFock{p;\sqbrac*{\lambda},n}&=\Fock{p}\otimes \Fock{\sqbrac*{\lambda\brac*{\psi+\theta}+n\psi}}\otimes\NSFock,\nonumber\\
  \RFFbFock{p}&=\Fock{p}\otimes \Fock{\sqbrac*{0}}\otimes\RFock, &
  \RFFbFock{p;\sqbrac*{\lambda},n}&=\Fock{p}\otimes \Fock{\sqbrac*{\lambda\brac*{\psi+\theta}+n\psi}}\otimes\RFock.
\end{align}

We now turn screening operators --- original introduced by Dotsenko and Fateev in
the context of the Coulomb gas formalism \cite{DotScr84} --- as a means of
constructing \(\vosp{k}\) \sv{s}.
\begin{defn}
  Let \(\VOA{V}\) be a \vosa{} together a free field realisation, \(\VOA{V}\ira\VOA{W}\), in a free
  field \vosa{} \(\VOA{W}\). A \emph{screening field} is a field corresponding
  to a vector in a module over \(\VOA{W}\), whose \ope{s} with the fields of
  \(\VOA{V}\) have singular parts that are total derivatives. It suffices to
  check this for the generating fields of \(V\).
\end{defn}

\begin{prop}
  The field
  \begin{equation}
    \label{eq:scr1}
    \scrf{1}{z}=\brac*{\beta(z)c(z)-b(z)}\vop{\frac{-a}{\xi}}{z}
  \end{equation}
  is a screening field for the free field realisations \(\vosp{k}\ira \ffa\)
  and also for \(\vosp{k}\ira\ffb\) when \(\beta(z)\) is realised as
  \(\beta(z)=\vop{\theta+\psi}{z}\).
  The field
  \begin{equation}
    \label{eq:scr2}
    \scrf{2}{z}=\brac*{\beta(z)c(z)-b(z)}\vop{\xi a-\frac{1+\xi^2}{2}(\theta+\psi)}{z}
  \end{equation}
  is a screening field for \(\vosp{k}\ira\ffb\), where again \(\beta(z)=\vop{\theta+\psi}{z}\).
\end{prop}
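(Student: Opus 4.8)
The plan is to verify the screening condition directly from the definition. It suffices to compute, for each generating field of $\vosp{k}$, the \ope{} of its image under \cref{thm:ffrhom} with $\scrf{1}{z}$ and with $\scrf{2}{z}$, and to check that the singular part of each is a total $z$-derivative. Every such \ope{} factorises over the three tensor factors of $\ffa$ (resp.\ $\ffb$): the Heisenberg-versus-vertex-operator contractions are governed by $a(w)\vop{\mu}{z}\sim\frac{(a,\mu)}{w-z}\vop{\mu}{z}$, the lattice contractions needed in $\ffb$ are governed by the standard formula for products of vertex operators, and the $\beta\gamma$ and $bc$ factors contribute only the elementary contractions \eqref{eq:bgoperel} and the corresponding $bc$ one. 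So the whole argument is a single finite Wick-type computation; the real task is to organise it so that the total-derivative structure becomes visible.

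First I would clear away the easy generators. Since $e(z)\mapsto\beta(z)$ sits in the $\beta\gamma$ factor while the vertex operator in $\scrf{i}{z}$ sits in the Heisenberg (resp.\ Heisenberg--lattice) factor, and $\beta$ has trivial \ope{} with $\beta$, with $b$ and with $c$, the \ope{} of the image of $e$ with $\scrf{i}{z}$ is already regular. The \ope{s} with the images of $x$ and $h$ each involve only one or two contractions, and one checks by inspection that the resulting pole is $\pd_z$ of a field, or vanishes. Moreover, for $\scrf{1}{z}$ the statement about $\ffb$ follows formally from the statement about $\ffa$: by \cref{thm:bosonisation} the bosonisation $\bgva\to\lva{\theta+\psi}$ is a homomorphism of \va{s} and so preserves \ope{s}, hence the total-derivative identities established in $\ffa$ transport verbatim after $\beta(z)$ is replaced by $\vop{\theta+\psi}{z}$. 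Only $\scrf{2}{z}$, which genuinely uses the lattice direction $\theta+\psi$, requires an independent computation, but it runs along exactly the same lines.

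The real work is the \ope{s} of $\scrf{i}{z}$ with the images of $y$ and, above all, $f$. Here one collects contributions from the Heisenberg term ($-\xi\,a(w)\gamma(w)$, resp.\ $-\xi\,a(w)$ contracted against the vertex operator), from the quadratic and cubic normal-ordered $\beta\gamma$ and $\beta\gamma c$ (resp.\ $\gamma bc$) terms, and from the $\pd c\,c$ and $\pd\gamma$ terms, and then shows that, after substituting the pairings of the momentum $-a/\xi$ (resp.\ of $\xi a-\frac{1+\xi^2}{2}(\theta+\psi)$), all the simple and double poles assemble into $\pd_z\bigl(X_i(z)/(w-z)\bigr)$ for an explicit field $X_i$. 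It is precisely this collapse that fixes the momenta in \eqref{eq:scr1}--\eqref{eq:scr2}, and the relation $k=\frac{\xi^2-3}{2}$ — equivalently the coefficients $1-\xi^2$ and $3-\xi^2$ appearing in \cref{thm:ffrhom} — is what kills the terms that are otherwise not total derivatives. I expect the \ope{} with the image of $f$ to be the main obstacle: it has the most terms, the cubic normal-ordered products force iterated contractions, and one must keep careful track of the normal-ordering corrections generated when a contraction lands inside a normal-ordered product. A useful consistency check throughout is that $\scrf{i}{z}$ has conformal weight $1$ with respect to the image of $T(z)$ in \cref{thm:ffrhom} — computed from the background-charge contribution to the conformal weight of $\vop{\mu}{z}$ — so that its residue is a well-defined weight-zero operator; this both constrains the answer and explains why a total derivative, rather than a genuine pole, is the best one can hope for.

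Once the fields $X_i$ have been produced for each of $e,x,h,y,f$, one concludes that every singular \ope{} of a $\vosp{k}$ generator with $\scrf{1}{z}$ (in $\ffa$, and via bosonisation in $\ffb$) and with $\scrf{2}{z}$ (in $\ffb$) is a total derivative, which is exactly the screening property claimed.
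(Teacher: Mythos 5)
Your proposal is correct and follows essentially the same route as the paper: a direct Wick-type computation of the \ope{s} of the images of the $\vosp{k}$ generators with $\scrf{1}{z}$ and $\scrf{2}{z}$, checking that the only non-regular contributions (which arise from $y$ and $f$) are total derivatives. The paper simply records the resulting non-zero \ope{s} without the organisational commentary, but the underlying argument is the same.
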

\begin{proof}
  The proposition follows by direct computation, that is, computing the
  \ope{s} of the generators of \(\vosp{k}\) with the screening fields
  \(\scrf{1}{z}\) and \(\scrf{2}{z}\) and verifying that they are indeed total
  derivatives. For \(\scrf{1}{z}\) the non-zero \ope{s} are
  \begin{align}
    y(z)\scrf{1}{w}&\sim\partial_w\frac{\xi^2\vop{\frac{-a}{\xi}}{w}}{z-w},\qquad
    f(z)\scrf{1}{w}\sim\partial_w\frac{\xi^2c(w)\vop{\frac{-a}{\xi}}{w}}{z-w},
  \end{align}
  while for \(\scrf{2}{z}\) they are
  \begin{align}
     y(z)\scrf{2}{w}&\sim\partial_w\frac{\vop{\xi
        a-\frac{1+\xi^2}{2}\brac*{\theta+\psi}}{w}}{z-w},
    \
    f(z)\scrf{2}{w}\sim\partial_w\frac{\frac{1+\xi^2}{2}b(w)\vop{\xi
        a-\frac{3+\xi^2}{2}\brac*{\theta+\psi}}{w}
      +
      \frac{1-\xi^2}{2}c(w)\vop{\xi
        a-\frac{1+\xi^2}{2}\brac*{\theta+\psi}}{w}
    }{z-w}.
  \end{align}
\end{proof}

\begin{rmk}
  While screening operators of the form \eqref{eq:scr1} are well known, those
  of the form \eqref{eq:scr2} are less commonly considered. They have appeared
  in the physics literature as formal non-integer powers of screening
  operators, that is, through identities of the form
  ``\(\scrf{2}{z}=\scrf{1}{z}^{-\xi^2}\)'' see for example \cite{BerSLn89}. In
  \cite{Petadsl95} such non-integer powers were interpreted as formal
  quantities which were to be expanded within normally ordered
  products in a way that avoids non-integer powers.
  
  Here, instead, we enlarge the \(\beta\gamma\) \va{}, by embedding it into a
  lattice \va{}, through a free field realisation. In this larger \va{}
  non-integer powers of \(\beta(z) = \vop{\theta+\psi}{z}\) can be defined as 
  \(\beta(z)^\lambda=\vop{\lambda\brac*{\theta+\psi}}{z}\) which is well
  defined because \(\theta+\psi\) has norm 0 and thus
  \(\vop{\lambda\brac*{\theta+\psi}}{z}\vop{\mu\brac*{\theta+\psi}}{z}=
  \vop{\brac*{\lambda+\mu}\brac*{\theta+\psi}}{z}\) for all \(\lambda,\mu\in \CC\).
\end{rmk}

The interest in screening fields, for a free field realisation
\(\VOA{V}\ira\VOA{W}\), stems from the fact that their residues, when well
defined, commute with the action of \(\VOA{V}\). These residues, referred to
as \emph{screening operators}, thus define \(\VOA{V}\) module
homomorphisms. This implies that the image of any singular vector will again
be singular (or zero). Thus screening fields and operators are an invaluable
tool for constructing singular vectors in \(\VOA{V}\) modules.

Consider the composition of \(\ell\) copies of the screening field \(\scrf{1}{z}\):
\begin{gather}
  \label{eq:scrflds}
  \scrf{1}{z_1}\cdots\scrf{1}{z_\ell}=\brac*{\beta(z_1)c(z_1)-b(z_1)}\cdots\brac*{\beta(z_\ell)c(z_\ell)-b(z_\ell)}
  \ee^{-\sfrac{\ell}{\xi}a}
  \prod_{1\leq
    i<j\leq\ell}\brac*{z_i-z_j}^{\xi^{-2}}
  \prod_{i=1}^\ell z_i^{-\frac{a_0}{\xi}}\nonumber\\
  \cdot
  \prod_{m\geq1}\sqbrac*{
    \exp\brac*{\frac{-1}{\xi}\frac{a_{-m}}{m}\sum_{i=1}^\ell z_i^m}
    \exp\brac*{\frac{1}{\xi}\frac{a_{m}}{m}\sum_{i=1}^\ell z_i^{-m}}
    }
\end{gather}
The action of screening fields and operators is most easily evaluated using
methods coming from the theory of symmetric functions, however, products of
fermions such as those above are skew symmetric. This obstacle can be 
overcome by factoring out the Vandermonde determinant \(\van{z}=\prod_{1\le i<j\le\ell}\brac*{z_i-z_j}\):
\begin{equation}
  \prod_{1\le i<j\le \ell}\brac*{z_i-z_j}^{\xi^{-2}}
  =\van{z}\prod_{1\le i<j\le \ell}\brac*{z_i-z_j}^{\xi^{-2}-1}
  =\van{z}\prod_{1\le i\neq j\le \ell}\brac*{z_i-z_j}^{\frac{\xi^{-2}-1}{2}},
\end{equation}
where we have suppressed a complex phase in the second equality which will later be
absorbed into integration cycles. This allows us to rewrite the product of
screening fields \eqref{eq:scrflds} in the form
\begin{multline}
  \scrf{1}{z_1}\cdots\scrf{1}{z_\ell}=
  \ee^{-\sfrac{\ell}{\xi}a}
  \prod_{1\leq i\neq j\leq\ell}\brac*{1-\frac{z_i}{z_j}}^{\frac{\xi^{-2}-1}{2}}
  \prod_{i=1}^\ell z_i^{-\frac{a_0}{\xi}+\brac*{\ell-1}\frac{\xi^{-2}-1}{2}}\\
  \cdot
  \van{z}\brac*{\beta(z_1)c(z_1)-b(z_1)}\cdots\brac*{\beta(z_\ell)c(z_\ell)-b(z_\ell)}
  \prod_{m\geq1}\sqbrac*{
    \exp\brac*{\frac{-1}{\xi}\frac{a_{-m}}{m}\sum_{i=1}^\ell z_i^m}
    \exp\brac*{\frac{1}{\xi}\frac{a_{m}}{m}\sum_{i=1}^\ell z_i^{-m}}
  },
  \label{eq:vandscrs}
\end{multline}
where the skew symmetry of the fermions is now countered by that of the Vandermonde determinant.

To define screening operators as integrals of products of screening fields,
there need to exist cycles over which to integrate. The obstruction to the
existence of such cycles lies in the multivaluedness of the second product on
the \rhs{} of \eqref{eq:vandscrs}. If the exponent
\(-\frac{a_0}{\xi}+\brac*{\ell-1}\frac{\xi^{-2}-1}{2}\) evaluates to an
integer, when acting on a \ns{} free field module, then there exists such a
cycle \(\cyc{\ell}{\xi}\), generically unique in homology up to normalisation
and constructed in \cite{TsuFoc86}. For example, on the free field module \(\Fock{p}\otimes
\bgd{\sqbrac*{j}}\otimes \NSFock\), \(a_0\) acts as \(p\) and the exponent evaluates to
\(-\frac{p}{\xi}+\brac*{\ell-1}\frac{\xi^{-2}-1}{2}\). The cycles
\(\cyc{\ell}{\xi}\) are homologically equivalent to the cycles over which one
integrates in the theory of symmetric polynomials to define inner products ---
see \cite[Sec. 3]{TsuExt13} for details. The actual construction of the cycles
\(\cyc{\ell}{\xi}\) is rather subtle and we refer the interested reader to
\cite{TsuFoc86} for the complete picture.

For completeness, we mention that when acting on a Ramond free field module,
the cycles \(\cyc{\ell}{\xi}\) exist when
\(-\frac{a_0}{\xi}+\brac*{\ell-1}\frac{\xi^{-2}-1}{2}\) evaluates to a half
integer (to compensate for the half integer exponents of free fermion
fields). Screening operators between Ramond modules shall, however, not be
needed in what follows.

Analogously, composing \(\ell\) copies of the screening field \(\scrf{2}{z}\)
yields
\begin{multline}
  \scrf{2}{z_1}\cdots\scrf{2}{z_\ell}=
  \ee^{\ell \xi a-\ell\frac{1+\xi^2}{2}\brac*{\theta+\psi}}
  \prod_{1\leq i\neq j\leq\ell}\brac*{1-\frac{z_i}{z_j}}^{\frac{\xi^{2}-1}{2}}
  \prod_{i=1}^\ell z_i^{\xi a_0+\brac*{\ell-1}\frac{\xi^{2}-1}{2}-\frac{1+\xi^2}{2}\brac*{\theta_0+\psi_0}}
  \\
  \cdot\van{z}\brac*{\beta(z_1)c(z_1)-b(z_1)}\cdots\brac*{\beta(z_\ell)c(z_\ell)-b(z_\ell)}
  \prod_{m\geq1}\sqbrac*{
    \exp\brac*{\xi\frac{a_{-m}}{m}\sum_{i=1}^\ell z_i^m}
    \exp\brac*{-\xi\frac{a_{m}}{m}\sum_{i=1}^\ell z_i^{-m}}
    }
  \\
  \cdot\prod_{m\geq1}\sqbrac*{
    \exp\brac*{-\frac{1+\xi^2}{2}\frac{\theta_{-m}+\psi_{-m}}{m}\sum_{i=1}^\ell z_i^m}
    \exp\brac*{\frac{1+\xi^2}{2}\frac{\theta_{m}+\psi_m}{m}\sum_{i=1}^\ell z_i^{-m}}
    },
    \label{eq:vandscrs2}
\end{multline}

\begin{defn}
  Let \(\ell\) and \(m\) be integers, and \(\ell\ge1\).
  \begin{enumerate}
  \item For \(p=\frac{1}{2\xi}\brac*{\ell-2m-1}-\frac{\xi}{2}\brac*{\ell-1}\),
    the screening operators \(\scrs{1}{\ell}:\NSFFaFock{p}\to
    \NSFFaFock{p-\frac{\ell}{\xi}}\) and \(\scrs{1}{\ell}:\NSFFaFock{p;j}\to
    \NSFFaFock{p-\frac{\ell}{\xi};j}\) 
    are defined by
    \begin{equation}
      \scrs{1}{\ell}=\int_{\cyc{\ell}{\xi}}\scrf{1}{z_1}\cdots\scrf{1}{z_\ell}\dd
      z_1\cdots \dd z_\ell,
    \end{equation}
    meaning that the cycle \(\cyc{\ell}{\xi}\) exists.
  \item For \(p=\frac{1}{2\xi}\brac*{\ell+2m-1}-\frac{\xi}{2}\brac*{\ell-1}\),
    the screening operators \(\scrs{2}{\ell}:\NSFFbFock{p}\to
    \NSFFbFock{p+\ell\xi;\sqbrac*{-\ell\frac{1+\xi^2}{2}},0}\) and \(\scrs{2}{\ell}:\NSFFbFock{p;\sqbrac*{\lambda},n}\to
    \NSFFbFock{p+\ell \xi;\sqbrac*{\lambda-\ell\frac{1+\xi^2}{2}},n}\) are defined by
    \begin{equation}
      \scrs{2}{\ell}=\int_{\cyc{\ell}{\xi^{-1}}}\scrf{2}{z_1}\cdots\scrf{2}{z_\ell}\dd
      z_1\cdots \dd z_\ell,
    \end{equation}
    meaning that the cycle \(\cyc{\ell}{\xi^{-1}}\) exists.
  \end{enumerate}
  We normalise these cycles such that
  \begin{equation}
    \int_{\cyc{\ell}{\xi^{\pm1}}}\prod_{1\leq i\neq
      j\leq\ell}\brac*{1-\frac{z_i}{z_j}}^{\frac{\xi^{\mp2}-1}{2}}\frac{\dd
      z_1\cdots \dd z_\ell}{z_1\cdots z_\ell}=1.
  \end{equation}
\end{defn}
We shall lighten notation in what follows by suppressing the cycle
\(\cyc{\ell}{\xi^{\pm1}}\) in all integrals.

\begin{rmk}
  As mentioned previously, the 
  \(\van{z}\brac*{\beta(z_1)c(z_1)-b(z_1)}\cdots\brac*{\beta(z_\ell)c(z_\ell)-b(z_\ell)}\) factor
  and the factors of exponential functions
  that appear on the \rhs{} of \eqref{eq:vandscrs} and \eqref{eq:vandscrs2}
  are both invariant under
  permuting the \(z_i\). The action of the screening operators
  \(\scrs{1}{\ell}\) and \(\scrs{1}{\ell}\) can thus be evaluated using the well studied family of
  inner products of symmetric polynomials defined by
  \begin{equation}\label{eq:intinprod}
    \jprod{f,g}{\ell}{t}=\int \prod_{1\leq i\neq
      j\leq\ell}\brac*{1-\frac{z_i}{z_j}}^{1/t} 
    f(z_1^{-1},\dots,z_\ell^{-1})g(z_1,\dots,z_\ell)
    \frac{\dd z_1\cdots \dd z_\ell}{z_1\cdots z_\ell},
  \end{equation}
  where \(f\) and \(g\) are symmetric polynomials and
  \(t\in\CC\setminus\{0\}\). One of the characterising properties of Jack symmetric polynomials
  \(\fjack{\lambda}{t}{z}\) is that they are orthogonal with respect to these inner products
  labelled by \(t\). For readers unfamiliar with Jack symmetric polynomials we
  recommend Macdonald's comprehensive book \cite{MacSym95}, while for readers familiar with Jack
  symmetric polynomials, a short summary of their properties and the notation
  used here can be found in \cite[App.~A]{RidRel15}.
\end{rmk}

\subsection{Free field correlation functions}
\label{sec:ffcorr}

In this section we derive identities involving the correlation
functions of  the Heisenberg, \(\beta\gamma\) and \(b c\) fields. These will be
needed for evaluating the action of the zero modes of 
singular vectors in \cref{sec:0modecalc}.

Let \(\Fock{p}^*\) be the graded dual of the \hw{} $\hvoa{r}$-module
\(\Fock{p}\), \(p\in \heisvs{r}\). Then, \(\Fock{p}^*\) is a \lw{} right $\hlie{r}$-module generated by a \lwv{} $\brab{p}$ satisfying 
\begin{equation}
  \braketb{p}{p} = 1, \quad \brab{p} \hlie{r}^- = 0.
\end{equation}
It is convenient to extend the domain of the functionals in \(\Fock{p}^*\) to all Fock spaces \(\Fock{q}\), \(q\in\heisvs{r}\), but to have them act trivially unless \(q = p\).

\begin{defn}
  Let \(B\) be any combination of normally ordered products of free bosons
  \(a(z)\), vertex operators \(\vop{p}{z}\) and their derivatives.  The
  \emph{free boson correlation function} in $\Fock{p}$ and \(\Fock{q}^\ast\) is then defined to be $\bracketb{q}{B}{p}$.
\end{defn}
The operator product expansion formulae \eqref{eq:vertope} then imply the
following well known correlation function formulae.
\begin{prop}
  The correlation function of \(k\) vertex operators is given by
  \begin{equation}
    \bracketb{q}{\vop{p_1}{z_1}\cdots \vop{p_k}{z_k}}{p}=\delta_{q,p_1+\cdots+p_k+p}
    \prod_{\mathclap{1\le i<j\le k}}\:(z_i-z_j)^{(p_i, p_j)}\cdot\prod_{i=1}^kz_i^{(p,p_i)}.
  \end{equation}
\end{prop}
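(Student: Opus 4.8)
The plan is to substitute the composition formula \eqref{eq:vertope} for the product \(\vop{p_1}{z_1}\cdots\vop{p_k}{z_k}\) into the correlation function and then evaluate the result by peeling off the various factors one at a time, using only the defining relations of the Heisenberg modes on the generating vectors \(\ketb{p}\in\Fock{p}\) and \(\brab{q}\in\Fock{q}^\ast\). Recall that \eqref{eq:vertope} writes the product of vertex operators as the scalar \(\prod_{1\le i<j\le k}(z_i-z_j)^{(p_i,p_j)}\) multiplied on the right by \(\ee^{p_1+\cdots+p_k}\), then by \(\prod_{i=1}^k z_i^{(p_i)_0}\), then by \(\prod_{m\ge1}\exp\brac*{\sum_{i}\frac{(p_i)_{-m}}{m}z_i^m}\), and finally by \(\prod_{m\ge1}\exp\brac*{-\sum_{i}\frac{(p_i)_m}{m}z_i^{-m}}\); here \((p_i)_n\) denotes the mode \(p_i\otimes t^n\). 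The expectation is that only the scalar prefactor and the index-\(0\) modes survive the pairing.

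Carrying this out, the rightmost exponential block involves only the modes \((p_i)_m\) with \(m\ge1\), which lie in \(\hlie{r}^+\) and hence annihilate \(\ketb{p}\); so this block acts as the identity on \(\ketb{p}\). Next, each mode \((p_i)_{-m}\) with \(m\ge1\) has nonzero index and therefore commutes with every group-algebra element \(\ee^b\) and with every index-\(0\) mode \(b_0\); hence the remaining exponential block may be commuted all the way to the left, where it acts as the identity on \(\brab{q}\) because \(\brab{q}\,\hlie{r}^-=0\). What is left is \(\prod_{1\le i<j\le k}(z_i-z_j)^{(p_i,p_j)}\,\brab{q}\,\ee^{p_1+\cdots+p_k}\,\prod_{i=1}^k z_i^{(p_i)_0}\,\ketb{p}\). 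Since \((p_i)_0\ketb{p}=(p_i,p)\ketb{p}\), the product \(\prod_i z_i^{(p_i)_0}\) contributes the scalar \(\prod_i z_i^{(p_i,p)}\), and then \(\ee^{p_1+\cdots+p_k}\ketb{p}=\ketb{p_1+\cdots+p_k+p}\). Pairing with \(\brab{q}\) and invoking the convention that \(\brab{q}\) annihilates every Fock space other than \(\Fock{q}\), on which it is normalised by \(\braketb{q}{q}=1\), produces the factor \(\delta_{q,\,p_1+\cdots+p_k+p}\); the identity then follows after rewriting \((p_i,p)=(p,p_i)\) by symmetry of the bilinear form.

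There is no real obstacle here; the only points that require attention are bookkeeping ones. The first is the operator ordering in \eqref{eq:vertope}: because \(\prod_i z_i^{(p_i)_0}\) stands to the \emph{right} of \(\ee^{p_1+\cdots+p_k}\), it is to be evaluated on \(\ketb{p}\) \emph{before} the weight shift, so that no spurious powers of \(z_i\) appear from commuting \(z_i^{(p_i)_0}\) past \(\ee^{p_j}\) via \([(p_i)_0,\ee^{p_j}]=(p_i,p_j)\ee^{p_j}\). The second is that the Kronecker delta is produced precisely by the convention, fixed immediately before the statement, that functionals in \(\Fock{p}^\ast\) are extended by zero to Fock spaces of weight other than \(p\). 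Both are immediate from the setup of \cref{sec:heis}, so the argument amounts to the short computation just sketched.
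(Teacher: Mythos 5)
Your proof is correct and is exactly the computation the paper leaves implicit: the paper states the formula as an immediate consequence of the composition formula \eqref{eq:vertope}, and you simply carry out the routine evaluation (annihilation modes kill the ket, creation modes kill the bra, zero modes and the group-algebra element supply the $z_i^{(p,p_i)}$ factors and the Kronecker delta). The bookkeeping points you flag, in particular the operator ordering of $\prod_i z_i^{(p_i)_0}$ relative to $\ee^{p_1+\cdots+p_k}$, are handled correctly.
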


Let \(\dbgh\) be the graded dual of the highest weight \(\bglie\)
module \(\bgh\). Then, \(\dbgh\) is a \lw{} right \(\bglie\) module
generated by a \lw{} vector \(\phi\) satisfying
\begin{equation}
  \braket{\phi}{\Omega}=1,\quad 
  \phi\, \bglie^{-}=0.
\end{equation}
Further, let \(\bgd{q}^\ast\) be the graded dual of the highest weight \(\bglie\)
module \(\bgd{q}\). Then, \(\bgd{q}^\ast\) is a \lw{} right \(\bglie\) module
generated by a \lw{} vector \(\phi_q\) satisfying
\begin{equation}
  \braket{\phi_q}{u_q}=1,\quad 
  \phi_q\, \bglie^{-}=0.
\end{equation}

\begin{defn}
  Let \(B\) be any combination of normally ordered products of \(\bgva\)
  fields. The \emph{bosonic ghost correlation functions} in \(\bgh\) and \(\bgd{q}\) are
    then respectively defined to be
    \begin{align}
      \corrfn{B}=\bracket{\phi}{B}{\Omega},\qquad
      \corrfn{B}^q=\bracket{\phi_q}{B}{u_q}.
    \end{align}
\end{defn}

\begin{lem}\label{thm:bgcor}
  Let \(n\) be a non-negative integer, \(q\in \CC\) and let \(\poch{q}{n}\) be the rising
  factorial \(\prod_{i=1}^n\brac*{q+i-1}\). The \(\beta,\gamma\) fields satisfy
  \begin{align}
    \corrfn{\gamma_1^n \beta(z_1)\cdots \beta(z_n)}&=n!\nonumber\\
    \corrfn{\gamma_0^n \beta(z_1)\cdots \beta(z_n)}^q&=\poch{q}{n}\prod_{i=1}^{n}z_i^{-1}.
  \end{align}
\end{lem}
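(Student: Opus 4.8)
The plan is to evaluate each correlation function by expanding the $\beta$-fields into modes acting on the cyclic vector, throwing away — via a conformal-weight argument — all but a single monomial, and then computing a purely algebraic expression in the zero modes (resp. the $\pm1$ modes). The conformal grading I use is the one attached to $T=-\normord{\beta\,\partial\gamma}$, for which $\beta_m$ shifts the $L_0$-eigenvalue by $-m$ while $\gamma_m$ shifts it by $-m$; in particular $\gamma_0$ preserves conformal weight and $\gamma_1$ lowers it by $1$.

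For the first identity I work in $\bgh$. Since $\beta_m\Omega=0$ for $m\ge0$, we have $\beta(z)\Omega=\sum_{m\le-1}\beta_m z^{-m-1}\Omega$, so, the $\beta$-modes being mutually commuting,
\[
\beta(z_1)\cdots\beta(z_n)\Omega=\sum_{m_1,\dots,m_n\le-1}\beta_{m_1}\cdots\beta_{m_n}\;z_1^{-m_1-1}\cdots z_n^{-m_n-1}\;\Omega.
\]
The summand $\gamma_1^n\beta_{m_1}\cdots\beta_{m_n}\Omega$ sits in conformal weight $-n-\sum_i m_i\ge0$, with equality precisely when every $m_i=-1$; since $\corrfn{-}$ extracts the $\Omega$-coefficient, which lives in conformal weight $0$, only that one monomial survives, and it comes with numerical coefficient $1$. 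Hence $\corrfn{\gamma_1^n\beta(z_1)\cdots\beta(z_n)}=\bracketb{\phi}{\gamma_1^n\beta_{-1}^n}{\Omega}$, and using $\comm{\gamma_1}{\beta_{-1}}=\wun$ — so that $\comm{\gamma_1}{\beta_{-1}^k}=k\beta_{-1}^{k-1}$ — together with $\gamma_1\Omega=0$, an immediate induction gives $\gamma_1^n\beta_{-1}^n\Omega=n!\,\Omega$, so the correlation function equals $n!$.

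For the second identity I work in $\bgd{q}$, and the argument runs in parallel with one extra term to track: now $\beta_m u_q=0$ only for $m\ge1$, so $\beta(z)u_q=\sum_{m\le0}\beta_m z^{-m-1}u_q$ also contains the $m=0$ term $\beta_0 z^{-1}u_q$. Expanding the product of $\beta$-fields as before and using that $\gamma_0$ preserves conformal weight while $\phi_q$ is supported in conformal weight $0$, only the monomial with $m_1=\cdots=m_n=0$ contributes, and it carries the prefactor $z_1^{-1}\cdots z_n^{-1}$, so $\corrfn{\gamma_0^n\beta(z_1)\cdots\beta(z_n)}^q=\bracketb{\phi_q}{\gamma_0^n\beta_0^n}{u_q}\,z_1^{-1}\cdots z_n^{-1}$. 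To evaluate the remaining scalar, I set $J=\gamma_0\beta_0$, so that $Ju_q=qu_q$ by \cref{prop:FinGhReps} and $\comm{J}{\beta_0}=\beta_0$, hence $\comm{J}{\beta_0^{n-1}}=(n-1)\beta_0^{n-1}$, and rewrite
\[
\gamma_0^n\beta_0^n=\gamma_0^{n-1}(\gamma_0\beta_0)\beta_0^{n-1}=\gamma_0^{n-1}\beta_0^{n-1}(J+n-1),
\]
so that $\gamma_0^n\beta_0^n u_q=(q+n-1)\,\gamma_0^{n-1}\beta_0^{n-1}u_q$; induction on $n$ (base case $\gamma_0\beta_0u_q=qu_q$) then yields $\gamma_0^n\beta_0^n u_q=\poch{q}{n}u_q$. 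Pairing with $\phi_q$ and restoring the prefactor $z_1^{-1}\cdots z_n^{-1}$ gives the stated formula.

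None of these steps is genuinely difficult; the one point that requires care — and the sole substantive difference between the two cases — is that in the dense module $\bgd{q}$ the vector $u_q$ is \emph{not} annihilated by $\beta_0$, unlike the highest-weight vector $\Omega$, so the zero mode really does enter the field expansion and is responsible both for the $z_1^{-1}\cdots z_n^{-1}$ prefactor and for the shift that replaces the factorial by the rising factorial $\poch{q}{n}$.
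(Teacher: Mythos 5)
Your proof is correct and is essentially the computation the paper intends: the same three ingredients (the $\gamma$--$\beta$ commutation relations, $\gamma_1 \Omega = 0$, and $\gamma_0\beta_0 u_q = q u_q$) drive a short induction on $n$. The only organisational difference is that you expand the $\beta(z_i)$ into modes and use the $L_0$-grading to isolate the single surviving monomial ($\gamma_1^n\beta_{-1}^n\Omega$, resp.\ $\gamma_0^n\beta_0^n u_q$) before evaluating it, whereas the paper commutes the $\gamma$ modes directly past the fields $\beta(z_i)$; both routes are equally elementary and yield the stated formulae.
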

\begin{proof}
  The formulae follow by induction after noting that
  \(\comm{\gamma_n}{\beta(z)}=\wun z^{-n}\), that \(\gamma_1\Omega=0\) and that
  \(\gamma_0\beta_0 u_q = q u_q\).
\end{proof}

Let \(\dNSFock\) be the graded dual of the highest weight \(\bclie{0}\)
module \(\NSFock\). Then, \(\dNSFock\) is a \lw{} right \(\bclie{0}\) module
generated by a \lw{} vector \(\bra{\NS}\) satisfying
\begin{equation}
  \braket{\NS}{\NS}=1,\quad \bracket{\NS}{b_0c_0}{\NS}=1,\quad
  \bra{\NS}\bclie{0}^{-}=0.
\end{equation}
Further, let \(\dRFock\) be the graded dual of the highest weight \(\bclie{\frac{1}{2}}\)
module \(\RFock\). Then, \(\dRFock\) is a \lw{} right \(\bclie{\frac{1}{2}}\) module
generated by a \lw{} vector \(\bra{\Ra}\) satisfying
\begin{equation}
  \braket{\Ra}{\Ra}=1,\quad  \bra{\Ra}\bclie{\frac{1}{2}}^{-}=0.
\end{equation}

\begin{defn}
  Let \(B\) be any combination of normally ordered products of \(\bcva\)
  fields. The \emph{fermionic ghost correlation functions} in \(\NSFock\) and \(\RFock\) are
    then defined to be
    \begin{align}
      \NScorrfn{B}{+}=\bracket{\NS}{B}{\NS},\qquad
      \NScorrfn{B}{-}=\bracket{\NS}{b_0Bc_0}{\NS},\qquad
      \Rcorrfn{B}=\bracket{\Ra}{B}{\Ra}.
    \end{align}
\end{defn}

Correlation functions involving fermions can often be efficiently expressed
using pfaffians and this remains true for \(b c\) ghosts. The determinant of a
skew-symmetric matrix \(A=-A^\intercal\) can always
be written as the square of a polynomials in the coefficients of \(A\). This
polynomial, up to a choice of sign, is the pfaffian \(\pf(A)\) of \(A\). 
\begin{defn} \label{def:pf}
  Let \(A\) be a \(2n\times2n\) skew-symmetric matrix, so that \(A\) is uniquely determined by its upper-triangular entries \(A_{i,j}\), $i<j$. 
  We shall write $A = (A_{i,j})_{1 \le i < j \le 2n}$ to indicate a skew-symmetric matrix $A$ with given upper-triangular entries.
  After defining the pfaffian of the \(0\times0\) matrix to be $1$,
  the pfaffian of $A$ can be recursively defined by
    \begin{equation}\label{eq:recpf}
      \pf(A)=\sum_{\substack{j=1\\j\neq i}}^{2n}(-1)^{i+j+\theta(j-i)}
      A_{i,j}\pf(A_{\hat{\imath},\hat{\jmath}}),
    \end{equation}
    where the row index $i$ may be chosen arbitrarily, \(A_{\hat{\imath},\hat{\jmath}}\) denotes the matrix \(A\) with the
    \(i\)-th and \(j\)-th rows and columns removed, and
    \begin{equation}
      \theta(x)=
      \begin{cases*}
        1& if \(x>0\), \\
        0& if \(x<0\)
      \end{cases*}
    \end{equation}
    is the Heaviside step function.  In particular, $i=1$ gives the simplified formula
    \begin{equation} \label{eq:recpf'}
      \pf(A) = \sum_{j=2}^{2n} (-1)^j A_{1,j} \pf(A_{\hat{1}, \hat{\jmath}}).
    \end{equation}
\end{defn}

\begin{lem}\label{thm:bccorrs}
  Let \(n\) be a non-negative integer and \(m\in \ZZ\).
  The \(\beta\gamma\) and \(b c\) fields satisfy the following correlation function formulae.
  \begin{enumerate}
  \item In the \ns{} sector,
    \begin{subequations}
      \begin{align}
        \NScorrfn{\prod_{i=1}^{2n}\brac*{c(z_i)z_i^m-b(z_i)}}{+}&=
        (-1)^n\pf\brac*{\frac{z_i^m+z_j^m}{z_i-z_j}}_{1\le i<j\le 2n},
        \\
        \NScorrfn{\prod_{i=1}^{2n}\brac*{c(z_i)z_i^m-b(z_i)}}{-}&=
        (-1)^n\pf\brac*{\frac{z_i^{m+1}z_j^{-1}+z_i^{-1}z_j^{m+1}}{z_i-z_j}}_{1\le i<j\le 2n}.
      \end{align}
    \end{subequations}
  \item In the Ramond sector,
    \begin{align}
      \Rcorrfn{\prod_{i=1}^{2n}\brac*{c(z_i)z_i^m-b(z_i)}}&=
      (-1)^n\pf\brac*{\frac{z_i^{m+\frac{1}{2}}z_j^{-\frac{1}{2}}+z_i^{-\frac{1}{2}}z_j^{m+\frac{1}{2}}}{z_i-z_j}}_{1\le i<j\le 2n}.
    \end{align}
  \end{enumerate}
\end{lem}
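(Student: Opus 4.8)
The plan is to derive all three identities at once from the fermionic Wick theorem. Write $\psi_i(z_i)=c(z_i)z_i^{m}-b(z_i)$, so each $\psi_i$ is a single odd field, a scalar combination of $b$ and $c$. Wick's theorem for free fermions --- in the shape of the recursive Pfaffian expansion \eqref{eq:recpf'}, and proved in the usual way by anticommuting the leftmost field to the right and collecting the contraction terms until it meets the relevant (co)vacuum --- then yields
\[
  \corrfnb{\psi_1(z_1)\cdots\psi_{2n}(z_{2n})}=\pf\bigl(\corrfnb{\psi_i(z_i)\psi_j(z_j)}\bigr)_{1\le i<j\le 2n}
\]
in each of the three cases $\NScorrfn{\cdot}{+}$, $\NScorrfn{\cdot}{-}$, $\Rcorrfn{\cdot}$; here the $z_i$ are taken in the region $|z_1|>\cdots>|z_{2n}|$ so that the radially ordered two-point functions converge, and the rational expressions in the statement are the resulting meromorphic continuations. (An odd number of fields gives zero, which is why only the $2n$ case is recorded.)

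The real work is in the two-point functions. Since $b(z)b(w)\sim c(z)c(w)\sim0$, only the mixed contractions survive, so $\corrfnb{\psi_i(z_i)\psi_j(z_j)}=-z_i^{m}\corrfnb{c(z_i)b(z_j)}-z_j^{m}\corrfnb{b(z_i)c(z_j)}$ for $|z_i|>|z_j|$. Fixing the moding $b(z)=\sum_r b_rz^{-r-1}$, $c(z)=\sum_r c_rz^{-r}$ --- forced by $b(z)c(w)\sim(z-w)^{-1}$ together with $b_0,c_0\in\bclie{0}^{0}$ in the \ns{} sector and $r\in\ZZ+\tfrac12$ in the Ramond sector --- the needed matrix elements are $\bracketb{\NS}{b_rc_s}{\NS}=\delta_{r+s,0}$ for $r\ge0$ and $\bracketb{\Ra}{b_rc_s}{\Ra}=\delta_{r+s,0}$ for $r\ge\tfrac12$. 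A short resummation of geometric series then gives, in the $\NScorrfn{\cdot}{+}$ case, $\corrfnb{b(z_i)c(z_j)}=\corrfnb{c(z_i)b(z_j)}=(z_i-z_j)^{-1}$, whence the Pfaffian entry is $-(z_i^{m}+z_j^{m})/(z_i-z_j)$ and $\pf(-A)=(-1)^n\pf(A)$ extracts the overall sign $(-1)^n$. In the Ramond sector the half-integer moding contributes branch factors, $\corrfnb{b(z_i)c(z_j)}=z_i^{-1/2}z_j^{1/2}/(z_i-z_j)$ and $\corrfnb{c(z_i)b(z_j)}=z_i^{1/2}z_j^{-1/2}/(z_i-z_j)$, which combine to the stated entry $-(z_i^{m+1/2}z_j^{-1/2}+z_i^{-1/2}z_j^{m+1/2})/(z_i-z_j)$.

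For $\NScorrfn{\cdot}{-}$ I would read $\NScorrfn{B}{-}=\bracketb{\NS}{b_0\,B\,c_0}{\NS}$ as the correlation function computed with respect to the shifted vacuum $c_0\NSket$ and its dual $\NSbra b_0$, which is correctly normalised because $\bracketb{\NS}{b_0c_0}{\NS}=1$. The thing to check is that this pair still gives a clean creation/annihilation splitting of every mode --- explicitly $c_r\,c_0\NSket=0$ for $r\ge0$, $b_r\,c_0\NSket=0$ for $r\ge1$, $\NSbra b_0\,b_r=0$ for $r\le0$, $\NSbra b_0\,c_r=0$ for $r\le-1$ --- so that Wick's theorem applies word for word; the contractions then shift to $\corrfnb{b(z_i)c(z_j)}^{-}=z_i^{-1}z_j/(z_i-z_j)$ and $\corrfnb{c(z_i)b(z_j)}^{-}=z_iz_j^{-1}/(z_i-z_j)$, producing the entry $-(z_i^{m+1}z_j^{-1}+z_i^{-1}z_j^{m+1})/(z_i-z_j)$. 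The only real obstacle, such as it is, is exactly this zero-mode bookkeeping in the \ns{} sector: confirming that inserting $b_0$ on the left and $c_0$ on the right is genuinely just a change of vacuum (with no extra disconnected pieces) and that the new moding still meets the hypotheses of Wick's theorem. Everything else --- the fermionic Wick theorem itself and the three pairs of two-point functions --- is standard and amounts to bookkeeping plus the summation of geometric series.
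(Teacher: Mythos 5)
Your proposal is correct and is essentially the paper's argument: the paper proves the identities by induction on the recursive Pfaffian expansion \eqref{eq:recpf'}, anticommuting the modes of the first factor through the rest and summing the resulting geometric series, which is precisely the proof of the fermionic Wick theorem you invoke, and your two-point functions (including the shifted-vacuum bookkeeping for $\NScorrfn{\cdot}{-}$ and the half-integer moding in the Ramond sector) all check out. The only difference is organisational — you separate the Wick recursion from the contractions, while the paper runs the induction directly on the combination $c(z)z^m-b(z)$ and only details the first identity.
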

\begin{proof}
  The above identities all follow by induction using the recursive definition
  of pfaffians. We give the details for the first identity and leave the rest
  as an exercise for the reader.

  The base step of the induction follows from the fact that for \(n=0\) the
  correlator is empty and therefore equal to 1. For the inductive step, assume
  the identity is true for \(n-1\) and consider
  \begin{align}
    &\NScorrfn{\prod_{i=1}^{2n}\brac*{c(z_i) z_i^{m}-b(z_i)}}{+}=
    \NScorrfn{\brac*{c(z_1) z_1^{m}-b(z_1)}\prod_{i=2}^{2n}\brac*{c(z_i)
        z_i^{m}-b(z_i)}}{+}\nonumber\\
    &\quad=
    \sum_{k\ge1} z_1^{m-k}\NScorrfn{c_k\prod_{i=2}^{2n}\brac*{c(z_i) z_i^{m}-b(z_i)}}{+} 
    -\sum_{k\ge0} z_1^{-k-1}\NScorrfn{b_k\prod_{i=2}^{2n}\brac*{c(z_i) z_i^{m}-b(z_i)}}{+}
    \nonumber\\
    &\quad=
    \sum_{j=2}^{2n}\brac*{-1}^{j-1}\NScorrfn{\prod_{\substack{i=2\\ i\neq
          j}}^{2n}\brac*{c(z_i) z_i^{-1}-b(z_i)}}{+}
    \sum_{k\ge0}\brac*{z_1^{m-k-1}z_j^{k}+z_1^{-k-1}z_j^{k+m}}\nonumber\\
    &\quad=
    \sum_{j=2}^{2n}\brac*{-1}^{j}\brac*{-1\frac{z_1^m+z_j^m}{z_1-z_j}}
    \NScorrfn{\prod_{\substack{i=2\\ i\neq
          j}}^{2n}\brac*{c(z_i) z_i^{-1}-b(z_i)}}{+}
    =\pf\brac*{-\frac{z_i^m+z_j^m}{z_i-z_j}}_{1\le i< j\le 2n}\nonumber\\
    &\quad=
    \brac*{-1}^n\pf\brac*{\frac{z_i^m+z_j^m}{z_i-z_j}}_{1\le i< j\le 2n},
  \end{align}
  where the third equality follows from the anti-commutation relations
  \(\acomm{c_k}{b(z)}=\wun z^{k-1}\) and \(\acomm{b_k}{c(z)}=\wun z^k\) and the fifth from
  the recursive definition of pfaffians.
\end{proof}

The above correlation functions involve fermions and so it is unsurprising
that they are skew-symmetric with respect to permuting the \(z_i\)
variables. This skew-symmetry can be compensated for by multiplying the above
correlation functions with the Vandermonde determinant. In addition to being symmetric, the products
of the above correlation functions with the Vandermonde determinant also
vanish whenever 3 variables coincide. Symmetric polynomials
with this property form an ideal of the ring of symmetric
polynomials. This ideal and generalisations thereof were studied by
by Feigin and Jimbo and Miwa and Mukhin \cite{Feidif02}. This ideal is
spanned by Jack symmetric functions at parameter value \(t=-3\) whose
associated partitions satisfy certain admissibility conditions. We refer the
reader to \cite[Section 3]{BloSVir16} for a detailed account of how to apply
Feigin and Jimbo and Miwa and Mukhin's work to correlation functions and for
the definitions of the notation used below. See in particular the notation
introduced in Definitions 3.1 and 3.4 for the admissible partitions
\(\admp{m}{n_1,n_2}\) and \(\uniqp{m}{n}\), and the involution of partitions
\([k-\uniqp{m}{n}]\) in Lemma 3.5 of \cite{BloSVir16}, as they shall be used
repeatedly in what follows.

Let \(B\) be some combination of \(\beta\gamma\) and \(bc\) fields, then we
denote the combined correlation functions by
  \(\NScorrfn{B }{q;\pm}=\NScorrfn{\corrfn{B}^q}{\pm}\), 
  \(\Rcorrfn{B }^{q}=\Rcorrfn{\corrfn{B}^q}\)
and we abbreviate \(\NScorrfn{\corrfn{B}}{\pm}\) as \(\NScorrfn{B}{\pm}\).
\begin{lem}\label{thm:correlid}
  Let \(n\) be a non-negative integer and \(q\in \CC\).
  \begin{enumerate}
  \item In the \ns{} sector,
    \begin{subequations}\label{eq:gamma1correls}
      \begin{align}
        \prod_{i=1}^{2n}z_i^{-2n+2} \van{z}
        \NScorrfn{\gamma_1^n\prod_{i=1}^{2n}\brac*{\beta(z_i)c(z_i)-b(z_i)}}{+}
        &=n!
        \prod_{i=1}^{2n}z_i^{-2n+2} \van{z}
        \NScorrfn{\prod_{i=1}^{2n}\brac*{c(z_i)-b(z_i)}}{+}\nonumber\\
        &=n! \brac*{-2}^n\fjack{\admp{2n}{0,0}}{-3}{z^{-1}},\\
        \prod_{i=1}^{2n+1}z_i^{-2n} \van{z}
        \NScorrfn{\gamma_1^nb_0\prod_{i=1}^{2n+1}\brac*{\beta(z_i)c(z_i)-b(z_i)}}{+}
        &=n!
        \prod_{i=1}^{2n+1}z_i^{-2n} \van{z}
        \NScorrfn{b_0\prod_{i=1}^{2n+1}\brac*{c(z_i)-b(z_i)}}{+}\nonumber\\
        &=
        -n! \brac*{-2}^{n+1}\fjack{\admp{2n}{2,0}}{-3}{z^{-1}},
      \end{align}
    \end{subequations}
    and
    \begin{subequations}\label{eq:corrformulae}
      \begin{align}
        &\prod_{i=1}^{2n}z_i^{-2n+1} \van{z} \NScorrfn{\gamma_0^n\prod_{i=1}^{2n}\brac*{\beta(z_i+w)c(z_i+w)-b(z_i+w)}}{q;\pm}\nonumber\\
        &\qquad= \poch{q}{n}\brac{-1}^n\prod_{i=1}^{2n}\brac*{z_i+w}^{-1}\sum_{i=0}^nw^{n-i}c^{(2n)}_i\fjack{[2n-1-\uniqp{2n}{i}]}{-3}{z^{-1}},\\
        &\prod_{i=1}^{2n+1}z_i^{-2n-1} \van{z}
        \NScorrfn{\gamma_0^{n+1}b_0\prod_{i=1}^{2n+1}\brac*{\beta(z_i+w)c(z_i+w)-b(z_i+w)}}{q;+}\nonumber\\
        &\qquad=
        \poch{q}{n+1}\brac*{-1}^{n+1}\prod_{i=1}^{2n+1}\brac*{z_i+w}^{-1}\sum_{i=0}^{n+1}w^{n+1-i}c^{(2n+1)}_i\fjack{[2n+1-\uniqp{2n+1}{i}]}{-3}{z^{-1}},\\
        &\prod_{i=1}^{2n+1}z_i^{-2n-1} \van{z}
        \NScorrfn{\gamma_0^{n}c_0\prod_{i=1}^{2n+1}\brac*{\beta(z_i+w)c(z_i+w)-b(z_i+w)}}{q;-}\nonumber\\
        &\qquad =
        \poch{q}{n}\brac*{-1}^{n+1}\prod_{i=1}^{2n+1}\brac*{z_i+w}^{-1}\sum_{i=0}^nw^{n+1-i}c^{(2n+1)}_i\fjack{[2n+1-\uniqp{2n+1}{i}]}{-3}{z^{-1}},
      \end{align}
    \end{subequations}
    where the \(c^{(k)}_i\in \CC\) are non-zero constants.
  \item In the Ramond sector,
    \begin{subequations}\label{eq:corrformulaeR}
      \begin{align}
        &\prod_{i=1}^{2n}z_i^{-2n+2}\van{z}\Rbgcorrfn{\gamma_0^n\prod_{i=1}^{2n}\brac*{\beta(z_i+w)c(z_i+w)-b(z_i+w)}}{q}\nonumber\\
        &\qquad
        =\poch{q}{n}(-2)^n\prod_{i=1}^{2n}\brac{z_i+w}^{-\frac{1}{2}}\fjack{\admp{2n}{0,0}}{-3}{z^{-1}},\\
        &\prod_{i=1}^{2n+1} z_i^{-2n+1}
        \van{z}\Rbgcorrfn{\gamma_0^n\prod_{i=1}^{2n+1}\brac*{\beta(z_i+w)c(z_i+w)-b(z_i+w)}\cdot c(w)}{q}\nonumber\\
        &\qquad
        =-\poch{q}{n}(-2)^nw^{\frac{1}{2}}\prod_{i=1}^{2n+1}\brac{z_i+w}^{-\frac{1}{2}}\fjack{\admp{2n+1}{0,0}}{-3}{z^{-1}}.
      \end{align}
    \end{subequations}
  \end{enumerate}
\end{lem}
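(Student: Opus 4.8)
The plan is to factorise each of the combined correlators in the lemma into a bosonic-ghost correlator times a fermionic-ghost correlator, to evaluate the former by \cref{thm:bgcor} and the latter by \cref{thm:bccorrs}, and then to multiply by the Vandermonde determinant so as to recognise the result inside the space of symmetric polynomials spanned by admissible Jack polynomials at parameter \(-3\), following \cite{Feidif02} in the form used in \cite[Sec.~3]{BloSVir16}.

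First I would eliminate the \(\beta,\gamma\) dependence. Expanding each product \(\prod_i\bigl(\beta(z_i)c(z_i)-b(z_i)\bigr)\) over the subsets \(S\) of indices at which the summand \(\beta(z_i)c(z_i)\) is chosen, the bosonic-ghost part of a term indexed by \(S\) is \(\corrfnb{\gamma_1^{n}\prod_{i\in S}\beta(z_i)}\) in the \ns{} identities built from \(\gamma_1\), and \(\corrfnb{\gamma_0^{n}\prod_{i\in S}\beta(z_i)}^{q}\) in those built from \(\gamma_0\) (with \(z_i\) replaced by \(z_i+w\) in the shifted versions). By the \(J\)-grading of \(\bgh\), resp.\ \(\bgd{q}\), this factor vanishes unless \(|S|=n\), and for \(|S|=n\) \cref{thm:bgcor} evaluates it to \(n!\), resp.\ to \(\poch{q}{n}\prod_{i\in S}(z_i+w)^{-1}\). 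Since the fermionic-ghost correlators \(\NScorrfn{\cdot}{\pm}\) and \(\Rcorrfn{\cdot}\) themselves vanish unless the numbers of \(b\)- and \(c\)-fields are balanced, the constraint \(|S|=n\) is automatically enforced on the fermionic side as well; the sum over \(S\) therefore collapses, the \(S\)-dependent monomials \(\prod_{i\in S}(z_i+w)^{-1}\) reassemble into the insertion \(c(z_i+w)(z_i+w)^{-1}\), and one is left with \(n!\) (resp.\ \(\poch{q}{n}\)) multiplying a single pure \(bc\) correlator such as \(\NScorrfn{\prod_i\bigl(c(z_i+w)(z_i+w)^{-1}-b(z_i+w)\bigr)}{+}\). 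The identities with \(2n+1\) variables and an extra \(b_0\) or \(c_0\) go through identically, that insertion merely shifting the required cardinality of \(S\) by one.

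Next I would insert these pure \(bc\) correlators into \cref{thm:bccorrs}, obtaining pfaffians whose \((i,j)\)-entry is, after the substitution, of the form \(\frac{2(z_i+w)^{-1/2}(z_j+w)^{-1/2}}{z_i-z_j}\) in the Ramond case and \(\frac{(z_i+w)^{-1}+(z_j+w)^{-1}}{z_i-z_j}\) in the \ns{} case; pulling the diagonal factors \((z_i+w)^{-1/2}\), resp.\ \((z_i+w)^{-1}\), out through \(\pf(DAD)=\det(D)\,\pf(A)\) leaves a pfaffian in the reduced entries \(\frac{2}{z_i-z_j}\), resp.\ \(\frac{z_i+z_j+2w}{z_i-z_j}\). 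Now I multiply by \(\van{z}\). Expanding the pfaffian as a signed sum over perfect matchings \(M\) and using that \(\prod_{(i,j)\in M}(z_i-z_j)\) divides \(\van{z}\), the product \(\van{z}\,\pf\) becomes \(\sum_M\sgn(M)\prod_{(i,j)\in M}\bigl((z_i-z_j)A_{ij}\bigr)\prod_{(i,j)\notin M}(z_i-z_j)\), an honest polynomial, symmetric in the \(z_i\), in which each term contains a factor \(z_i-z_j\) for at least two of the pairs among any three prescribed indices; hence it vanishes whenever three of the \(z_i\) coincide. By \cite[Sec.~3]{BloSVir16} the symmetric polynomials with this vanishing property form, in each degree, the span of the Jack polynomials \(\jack{\lambda}{-3}\) with \(\lambda\) admissible, and matching the bidegree in \((z,w)\) and the number of variables against the combinatorics of that span — the partitions \(\admp{2n}{0,0}\), \(\admp{2n}{2,0}\), \(\admp{2n+1}{0,0}\) and, after expanding the \(w\)-dependent pfaffians in powers of \(w\), the involution images \([k-\uniqp{m}{i}]\) of \cite[Lem.~3.5]{BloSVir16} — identifies the occurring partitions. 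Passing from the \(z_i\) to the variables \(z_i^{-1}\) and restoring the prefactors then yields the stated right-hand sides.

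The step I expect to be the main obstacle is this last identification: pinning down, power of \(w\) by power of \(w\), exactly which admissible partition occurs among those of the relevant degree, and showing that the resulting scalars \(c^{(k)}_i\) are genuinely nonzero rather than accidentally vanishing. This requires reconciling the matchings expansion of \(\van{z}\,\pf(\cdot)\) with the admissibility constraints on partitions and then verifying the non-vanishing of each \(w\)-coefficient, for example by a judicious specialisation of the variables; by comparison, the bosonic reduction and the pfaffian manipulations of the two preceding paragraphs amount to bookkeeping.
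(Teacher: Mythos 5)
Your overall strategy is the same as the paper's: strip out the bosonic ghosts via \cref{thm:bgcor}, reduce to a pure \(bc\) pfaffian via \cref{thm:bccorrs}, multiply by the Vandermonde determinant, and land in the span of admissible Jack polynomials at \(t=-3\). However, the step you yourself flag as ``the main obstacle'' is left genuinely open, and it is precisely the content of the lemma. Arguing that \(\van{z}\,\pf(\cdot)\) vanishes when three variables coincide only places it in the admissible span; to get the stated right-hand sides you must identify \emph{which} admissible partition occurs at each power of \(w\) and prove that the scalars \(c^{(k)}_i\) are non-zero. The paper does not derive this from degree bookkeeping: it imports the exact identities \(\van{z}\,\pf\brac[\big]{\tfrac{1}{z_i-z_j}}=\fjack{\admp{2n}{0,0}}{-3}{z}\) and \(\van{z}\,\pf\brac[\big]{\tfrac{z_i+z_j}{z_i-z_j}}=\fjack{\admp{2n}{1,0}}{-3}{z}\) from Proposition~3.8 of \cite{BloSVir16}, together with the Taylor expansion \(\fjack{\admp{2n}{1,0}}{-3}{z+w}=\sum_i c^{(2n)}_i\fjack{\uniqp{2n}{i}}{-3}{z}\,w^{n-i}\) whose coefficients are shown to be non-zero in the remark after Proposition~3.9 there. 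Without these inputs (or an equivalent argument, e.g.\ your proposed ``judicious specialisation,'' actually carried out), the proof is incomplete: the non-vanishing of the \(c^{(k)}_i\) is asserted in the statement and is essential downstream, where the common zeros of all the \(w\)-coefficients are what determine the image of the singular vector.

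A secondary gap: you claim the \((2n+1)\)-variable identities with an inserted \(b_0\) or \(c_0\) ``go through identically,'' but \cref{thm:bccorrs} only evaluates correlators of an even number of factors with no extra zero modes, so there is no pfaffian formula to feed into your machinery in the odd case. The paper instead obtains the odd identities from the even ones by multiplying by \(z_0\) and sending \(z_0\to\infty\) (equivalently setting \(z_0^{-1}=0\) in the Jack polynomials), using that the last part of the relevant partitions vanishes. You would need either this limit argument or separate odd pfaffian evaluations.
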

\begin{proof}
  The main stepping stones to the above identities are the identities,
  proved in Proposition 3.8 of \cite{BloSVir16},
  \begin{align}\label{eq:pfaffjacks}
    \van{z_1,\dots,z_{2n}}\pf\brac*{\frac{1}{z_i-z_j}}_{1\leq i< j\leq 2n}
    &=\fjack{\admp{2n}{0,0}}{-3}{z_1,\dots,z_{2n}},\nonumber\\
    \van{z_1,\dots,z_{2n}}\pf\brac*{\frac{z_i+z_j}{z_i-z_j}}_{1\leq i< j\leq 2n}
    &=\fjack{\admp{2n}{1,0}}{-3}{z_1,\dots,z_{2n}};
  \end{align}
  the translation invariance of \(\jack{\admp{2n}{0,0}}{-3}\), that is,
  \begin{equation}
    \fjack{\admp{2n}{0,0}}{-3}{z_1+w,\dots,z_{2n}+w}=\fjack{\admp{2n}{0,0}}{-3}{z_1,\dots,z_{2n}};
  \end{equation}
  and the Taylor expansion
  \begin{equation}\label{eq:jacktaylor}
    \fjack{\admp{2n}{1,0}}{-3}{z_1+w,\dots,z_{2n}+w}=\sum_{i=0}^n c^{(2n)}_i
    \fjack{\uniqp{2n}{i}}{-3}{z_1,\dots,z_{2n}} w^{n-i}.
  \end{equation}
  The non-vanishing of the expansion coefficients \(c^{(2n)}_i\) was shown in
  the Remark directly after Proposition 3.9 of \cite{BloSVir16}. Verifying the
  above formulae for correlation functions now follows in exactly the same way
  as it did in Proposition 3.9 of \cite{BloSVir16}, so we give the details for
  first two cases of \eqref{eq:corrformulae} only and leave the rest as an
  exercise for the reader.

  When multiplying out the product
  \(\prod_{i=1}^{2n}\brac*{\beta(z_i)c(z_i)-b(z_i)}\) in
  \begin{equation}
    \NScorrfn{\gamma_0^n\prod_{i=1}^{2n}\brac*{\beta(z_i)c(z_i)-b(z_i)}}{q;\pm}
  \end{equation}
  the only summands which can contribute are those containing an equal number
  of \(b\) and \(c\) fields, that is, \(n\) of each. These summands will also contain
  \(n\) \(\beta\) fields and so applying the second identity of
  \cref{thm:bgcor} implies
  \begin{align}
    \NScorrfn{\gamma_0^n\prod_{i=1}^{2n}\brac*{\beta(z_i)c(z_i)-b(z_i)}}{q;\pm}&=
    \poch{q}{n}\NScorrfn{\prod_{i=1}^{2n}\brac*{c(z_i) z_i^{-1}-b(z_i)}}{\pm}
    =\poch{q}{n}(-1)^n\pf\brac*{\frac{z_i^{-1}+z_j^{-1}}{z_i-z_j}}_{1\le i<j\le 2n}\nonumber\\
    &=\poch{q}{n}(-1)^n\prod_{i=1}^{2n}z_i^{-1}\pf\brac*{\frac{z_i+z_j}{z_i-z_j}}_{1\le i<j\le 2n},
  \end{align}
  where the second equality follows from the identities of
  \cref{thm:bccorrs}. Multiplying by the Vandermonde determinant and applying
  the second identity of \eqref{eq:pfaffjacks} then gives
  \begin{align}
    \van{z}\NScorrfn{\gamma_0^n\prod_{i=1}^{2n}\brac*{\beta(z_i)c(z_i)-b(z_i)}}{q;\pm}=
    \poch{q}{n}(-1)^n\prod_{i=1}^{2n}z_i^{-1}\fjack{\admp{2n}{1,0}}{-3}{z}.
  \end{align}
  Finally, translating all variables by \(w\) (recall that the Vandermonde
  determinant is translation invariant), applying the Taylor
  expansion \eqref{eq:jacktaylor} and multiplying by \(\prod_{i=1}^{2n}z_i^{-2n+1}\) gives
  \begin{align}\label{eq:evencorrel}
    &\prod_{i=1}^{2n}z_i^{-2n+1}\van{z}\NScorrfn{\gamma_0^n\prod_{i=1}^{2n}\brac*{\beta(z_i+w)c(z_i+w)-b(z_i+w)}}{q;\pm}\nonumber\\
    &\quad=
    \poch{q}{n}(-1)^n\prod_{i=1}^{2n}\brac*{z_i+w}^{-1}\prod_{i=1}^{2n}z_i^{-2n+1}\fjack{\admp{2n}{1,0}}{-3}{z_1+w,\dots,z_{2n}+w}\nonumber\\
    &\quad=
    \poch{q}{n}\brac{-1}^n\prod_{i=1}^{2n}\brac*{z_i+w}^{-1}\sum_{i=0}^nw^{n-i}c^{(2n)}_i\fjack{[2n-1-\uniqp{2n}{i}]}{-3}{z^{-1}}.
  \end{align}

  The formulae for an odd number of \(\beta(z_i)c(z_i)-b(z_i)\) factors follow
  from the even case by specialisation. Consider
  \begin{align}
    &\prod_{i=0}^{2n+1}z_i^{-2n-1}\van{z_0,\dots,z_{2n+1}}\NScorrfn{\gamma_0^{n+1}\prod_{i=0}^{2n+1}\brac*{\beta(z_i+w)c(z_i+w)-b(z_i+w)}}{q;+}\nonumber\\
    &\quad=
    \prod_{i=1}^{2n+1}\brac*{1-\frac{z_i}{z_0}}\prod_{i=1}^{2n+1}z_i^{-2n+1}\van{z_1,\dots,z_{2n+1}}\nonumber\\
    &\qquad\NScorrfn{\gamma_0^{n+1}\brac[\bigg]{\sum_{\substack{k\ge0\\l\ge1}}\beta_kc_l\brac*{z_0+w}^{-k-l-1}-
        \sum_{k\ge0}b_k\brac*{z_0+w}^{-k-1}}\prod_{i=1}^{2n+1}\brac*{\beta(z_i+w)c(z_i+w)-b(z_i+w)}}{q;+}.
  \end{align}
  Multiplying by \(z_0\) and then taking the limit \(z_0\rightarrow\infty\)
  (or setting \(z_0^{-1}=0\) gives
  \begin{align}
    \prod_{i=1}^{2n+1}z_i^{-2n+1}\van{z_1,\dots,z_{2n+1}}
    \NScorrfn{\gamma_0^{n+1}b_0\prod_{i=1}^{2n+1}\brac*{\beta(z_i+w)c(z_i+w)-b(z_i+w)}}{q;+},
  \end{align}
  while multiplying the \rhs{} of \eqref{eq:evencorrel} by \(z_0\) and then
  taking the limit \(z_0\rightarrow\infty\) gives
  \begin{align}
    &\poch{q}{n+1}\brac{-1}^{n+1}\prod_{i=1}^{2n+1}\brac*{z_i+w}^{-1}\sum_{i=0}^{n+1}w^{n+1-i}c^{(2n+2)}_i\fjack{[2n+1-\uniqp{2n+2}{i}]}{-3}{0,z_1^{-1},\dots,z_{2n+1}^{-1}}\nonumber\\
    &\quad=
    \poch{q}{n+1}\brac{-1}^{n+1}\prod_{i=1}^{2n+1}\brac*{z_i+w}^{-1}\sum_{i=0}^{n+1}w^{n+1-i}c^{(2n+2)}_i\fjack{[2n+1-\uniqp{2n+1}{i}]}{-3}{z_1^{-1},\dots,z_{2n+1}^{-1}},
  \end{align}
  where we have used the fact that the \((2n+2)\)th part of the partitions
  \([2n+1-\uniqp{2n+1}{i}]\) is 0, so setting \(z_0^{-1}=0\) amounts to
  dropping the \((2n+2)\)th part of the partition and considering the corresponding Jack
  polynomials in \(2n+1\) variables.
\end{proof}

\section{The proof of \cref{thm:svimage}}
\label{sec:0modecalc}

In this section we prove \cref{thm:svimage} by evaluating the zero mode
of the singular vector on suitable candidate relaxed highest weight vectors
and thereby calculating the image of the singular vector in the Zhu
algebras. The evaluation of these zero modes boils down to evaluating inner
products of symmetric polynomials of the form
\begin{equation}\label{eq:zeromodeinprod}
  \jprod{\fjack{\kappa}{-3}{z},\prod_{i=1}^{n}\brac*{1+\frac{z_i}{w}}^{\lambda}}{n}{t},
\end{equation}
where \(\lambda\) is a complex number depending on free fields weights, \(\kappa\) is an admissible partition, so that the Jack polynomial
\(\fjack{\kappa}{-3}{z}\) is well defined, \(t\) is positive rational and the
inner product is the integral inner product \eqref{eq:intinprod} with respect
to which Jack polynomials at parameter value \(t\) are orthogonal. In order to
evaluate this inner product both the left and right argument need to be
expanded in Jack polynomials at parameter value \(t\). The right argument
expands as, see \cite[Eq.~(A.28)]{RidRel15} with \(X=-\lambda t\),
\begin{equation}
  \prod_{i=1}^{n}\brac*{1+\frac{z_i}{w}}^{\lambda}=
  \sum_{\mu}(-w)^{|\mu|}\prod_{b\in\mu}\frac{-\lambda t+ta^\prime(b)-\ell^\prime(b)}{t(a(b)+1)+\ell(b)}\fjack{\mu}{t}{z},
\end{equation}
where \(\mu\) runs over all partitions of integers of length at most
\(n\), \(b\) runs over all the cells in the Young diagram of \(\mu\), and
\(a(b), \ell(b), a^\prime(b)\) and \(\ell^\prime(b)\) are the arm length, leg
length, arm colength and leg colength, respectively, of \(b\). The left
argument satisfies an upper triangular decomposition
\(\fjack{\kappa}{-3}{z}=\fjack{\kappa}{t}{z}
    +\sum_{\mu < \kappa}c_{\kappa,\mu} \fjack{\mu}{t}{z},\)
where the \(c_{\kappa,\mu}\) are rational functions of \(t\) and \(\mu <\kappa\)
denotes the dominance partial ordering of partitions. The inner product
therefore evaluates to
\begin{equation}\label{eq:zeromodeinprodzeros}
  \jprod{\fjack{\kappa}{-3}{z},\prod_{i=1}^{n}\brac*{1+\frac{z_i}{w}}^{\lambda}}{n}{t}=(-w)^{|\kappa|}\sum_{\mu
    < \kappa} c_\mu \prod_{b\in\mu}\frac{-\lambda
    t+ta^\prime(b)-\ell^\prime(b)}{t(a(b)+1)+\ell(b)},
  \quad
  c_\mu=c_{\kappa,\mu}\jprod{\fjack{\mu}{t}{z},\fjack{\mu}{t}{z}}{n}{t}.
\end{equation}
Note in particular, that the coefficients \(c_\mu\) do not depend on the exponent
\(\lambda\). So the inner product \eqref{eq:zeromodeinprod} is a polynomial in
\(\lambda\) and because the numerators of the summands on the \rhs{} of \eqref{eq:zeromodeinprodzeros} depend only
on arm and leg colengths, the cells common to all partitions dominated by
\(\kappa\) will give rise to factors common to the numerators of all summands
and hence zeros in the parameter \(\lambda\) common to all summands. Finding these
common zeros is key to the proofs below and we shall find them using Lemma 3.2
of \cite{BloSVir16}.

\subsection{The case when \(v>u\) and \(k<-1\)}
\label{sec:vgu}

Throughout this section we assume that \(u,v\) are integers satisfying
\(v>u\ge2\), \(u-v\in2\ZZ\) and \(\gcd\brac*{u,\tfrac{u-v}{2}}\) and that
\(\xi=\sqrt{\frac{u}{v}}\). We will
prove \cref{thm:svimage} under the assumption \(v>u\) by using the screening
operator \(\scr{1}\) to construct the singular vector of
\(\vosp{k_{u,v}}\) and then evaluating the action of its zero mode on
relaxed highest weight vectors to deduce its image in the Zhu algebras.
Note that for \(v>u\), the exponent \(\frac{\xi^{-2}-1}{2}=\frac{v-u}{2u}\) in
\eqref{eq:vandscrs} is positive rational and will form the Jack
polynomial parameter.

\begin{lem}
  The singular vector of \(\vosp{k_{u,v}}\), as a subalgebra of
  \(\ffa\), is given by
  \begin{equation}
    \vsv{u,v}=\scrs{1}{u-1}\NSfket{\sfrac{(u-1)}{\xi}},
  \end{equation}
  where \(\NSfket{\sfrac{(u-1)}{\xi}}\) is the highest weight vector of
  \(\NSFFaFock{\sfrac{(u-1)}{\xi}}\).
\end{lem}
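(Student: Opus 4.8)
The plan is to combine two observations. The screening operator $\scrs{1}{u-1}$ is a homomorphism of $\vosp{k_{u,v}}$-modules, so it must send the singular vector $\NSfket{\sfrac{(u-1)}{\xi}}$ either to zero or to a singular vector of $\ffa$ with the same $\osp$-weight and conformal weight; and by \cref{thm:ffainj} the singular vectors of $\ffa$ at that weight span a one-dimensional space, spanned by $\vsv{u,v}$ (identified with its image under $\vosp{k_{u,v}}\ira\ffa$). Hence $\scrs{1}{u-1}\NSfket{\sfrac{(u-1)}{\xi}}=c\,\vsv{u,v}$ for some $c\in\CC$, and the whole content of the lemma is that $c\neq0$; this non-vanishing is the step I expect to be hard.

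For the first observation I would first record that $\NSfket{\sfrac{(u-1)}{\xi}}$ is an $\aosp{}$ singular vector of $\osp$-weight $(u-1)\alpha$ and conformal weight $(u-1)\tfrac v2$. Its $h_0$-eigenvalue is $\xi\cdot\sfrac{(u-1)}{\xi}=u-1$ and its $\Sigma$-eigenvalue is read off the free field realisation \eqref{eq:ffrhom}; its conformal weight is carried entirely by the Heisenberg factor, since the vectors $\Omega$ and $\NSket$ have conformal weight $0$, and the Heisenberg Virasoro field $\tfrac12\normord{aa}-\tfrac1{2\xi}\partial a$ assigns $\ket{p}$ the weight $\tfrac12p^2+\tfrac p{2\xi}$, which on substituting $p=\sfrac{(u-1)}{\xi}$ and $\xi^2=\tfrac uv$ becomes $(u-1)\tfrac v2$; and $\NSfket{\sfrac{(u-1)}{\xi}}$ is killed by $\NSosp{+}$ because the positive-index modes of every free field annihilate $\ket{p}\otimes\Omega\otimes\NSket$. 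Next I would verify that $\scrs{1}{u-1}$ is defined on $\NSFFaFock{\sfrac{(u-1)}{\xi}}$, that is, that $\cyc{u-1}{\xi}$ exists: the exponent $-\tfrac{a_0}{\xi}+(u-2)\tfrac{\xi^{-2}-1}{2}$ of \eqref{eq:vandscrs} evaluates to $1-\tfrac{u+v}{2}$, which is an integer because $u+v$ is even. Since $\scrs{1}{u-1}$ commutes with the $\vosp{k_{u,v}}$-action and shifts the Heisenberg charge so that the image lies in $\NSFFaFock{0}\cong\ffa$, the first observation follows.

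To see $c\neq0$ I would expand $\scrf{1}{z_1}\cdots\scrf{1}{z_{u-1}}$ as in \eqref{eq:vandscrs}, apply it to $\NSfket{\sfrac{(u-1)}{\xi}}$, and pair the result against a carefully chosen vector in the graded dual of $\ffa$ --- one for which the $\beta\gamma$- and $bc$-content is absorbed by the correlation functions of \cref{thm:bgcor}, \cref{thm:bccorrs} and \cref{thm:correlid} and the Heisenberg content by $\bra{0}$ dressed with lowering modes. After these contractions the pairing becomes an integral over $\cyc{u-1}{\xi}$ of a symmetric Laurent polynomial against $\prod_{i\neq j}\brac*{1-\sfrac{z_i}{z_j}}^{(\xi^{-2}-1)/2}$, and the Vandermonde-corrected fermionic correlators are (up to signs and overall powers of the $z_i$) Jack symmetric polynomials at parameter $-3$ indexed by admissible partitions. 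Since no such Jack polynomial vanishes, since the inner product \eqref{eq:intinprod} at positive rational parameter is non-degenerate, and since $\cyc{u-1}{\xi}$ is normalised to pair non-trivially, the surviving integral should be a non-zero multiple of a Jack norm. The genuinely delicate point is to choose the dual vector so that the surviving integral is manifestly one non-zero Jack norm rather than a sum of terms that might cancel.
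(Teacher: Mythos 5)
Your reduction is exactly the paper's: $\scrs{1}{u-1}$ is a module homomorphism, $\NSfket{\sfrac{(u-1)}{\xi}}$ is an $\aosp{}$ singular vector of $\osp$ weight $(u-1)\alpha$ and conformal weight $(u-1)\frac{v}{2}$, the cycle $\cyc{u-1}{\xi}$ exists because the exponent evaluates to the integer $1-\frac{u+v}{2}$, and \cref{thm:ffainj} gives uniqueness of the singular vector in $\ffa$, so everything hinges on showing $c\neq 0$. But that is precisely where your proposal stops: you defer the choice of dual vector and appeal to non-degeneracy of the inner product, which only guarantees that \emph{some} dual vector pairs non-trivially, not that any particular computable one does. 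Since the non-vanishing is the entire content of the paper's proof of this lemma, this is a genuine gap.

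The paper fills it as follows. For $u,v$ odd one pairs against $\NSfbra{0}\,\gamma_1^{(u-1)/2}\,\symiso{+}{-\xi}\brac[\big]{\fjack{\kappa}{t}{y}}$ (up to normalisation), with $t=\frac{2}{\xi^{-2}-1}>0$ and $\kappa=\admp{u-1}{\frac{v-u}{2}+1,\frac{v-u}{2}+1}$; for $u,v$ even one inserts $b_0\gamma_1^{(u-2)/2}$ instead and takes $\kappa=\admp{u-1}{\frac{v-u}{2}+2,\frac{v-u}{2}}$ --- a case split forced by the parity of $u-1$ that your proposal does not mention. The $\gamma_1$ and $b_0$ insertions convert the Vandermonde-corrected ghost correlator, including the prefactor $\prod_i z_i^{2-\frac{u+v}{2}}$, into exactly $\fjack{\kappa}{-3}{z^{-1}}$ by \eqref{eq:gamma1correls}, while the Cauchy kernel \eqref{eq:cauchykernel} together with \eqref{eq:isoprod} converts the Heisenberg exponentials paired against $\symiso{+}{-\xi}\brac[\big]{\fjack{\kappa}{t}{y}}$ into $\fjack{\kappa}{t}{z}$. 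The matrix element is then $\jprod{\fjack{\kappa}{-3}{z},\fjack{\kappa}{t}{z}}{u-1}{t}$, and the cancellation you worry about cannot occur: $\fjack{\kappa}{-3}{z}$ expands in the basis $\set*{\fjack{\mu}{t}{z}}$ upper triangularly with respect to dominance order with leading coefficient $1$, so orthogonality at parameter $t$ collapses the pairing to the single term $\jprod{\fjack{\kappa}{t}{z},\fjack{\kappa}{t}{z}}{u-1}{t}\neq 0$. Supplying this choice of $\kappa$ and the unitriangularity-plus-orthogonality step is what your argument needs to be complete.
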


The prove this lemma we consider both symmetric polynomials and their infinite-variable limits, the symmetric functions.  For easy visual distinction, we denote the infinite alphabet of
variables for symmetric functions by \(y=(y_1,y_2,\dots)\) and the finite alphabet of
variables for symmetric polynomials by \(z=(z_1,\dots,z_n)\).
We shall use both the infinite- and finite-variable inner products
\(\cjprod{\cdot,\cdot}{t}\) and \(\jprod{\cdot,\cdot}{n}{t}\), referring to
\cite[App.~A]{RidRel15} for our conventions, as well as 
the identity (see \cite[Eq.~(A.16)]{RidRel15})
\begin{equation}\label{eq:cauchykernel}
    \prod_{m\ge 1}\exp\brac[\Bigg]{\frac{1}{t} \frac{\fpowsum{m}{z}\fpowsum{m}{y}}{m}} = \sum_{\lambda} \fjack{\lambda}{t}{z} \fdjack{\lambda}{t}{y},
\end{equation}
where $\powsum{m}$ is the $m$-th power sum and the \(\fdjack{\lambda}{t}{y}\)
are the symmetric functions dual (with respect to \(\cjprod{\cdot,\cdot}{t}\))
to the Jack symmetric functions \(\fjack{\lambda}{t}{y}\).  

A simple, but very useful, observation concerning 
the ring of symmetric functions \(\Lambda\) 
is that it is isomorphic, as a commutative algebra, to the \uea{} of either
the positive or negative subalgebra, $\hlie{1}^+$ or $\hlie{1}^-$, of the rank
1 Heisenberg
algebra \(\hlie{1}\). We denote the
corresponding isomorphisms by
\begin{equation}
  \begin{aligned}
    \symiso{+}{\gamma}\colon\Lambda&\longrightarrow \CC[a_1,a_2,\dots],\\
    \powsum{m}&\longmapsto \gamma a_{m},
  \end{aligned}
\qquad
  \begin{aligned}
    \symiso{-}{\gamma}\colon\Lambda&\longrightarrow \CC[a_{-1},a_{-2},\dots],\\
    \powsum{m}&\longmapsto \gamma a_{-m},
  \end{aligned}
  \qquad \gamma\in\CC\setminus\set{0}.
\end{equation}
We shall use these isomorphisms
to identify inner products involving Heisenberg generators with the symmetric function
inner product \(\cjprod{\cdot,\cdot}{t}\). For example, one easily verifies in the power sum basis, hence for arbitrary
\(f,g\in \Lambda\), that
\begin{equation}\label{eq:isoprod}
  \cjprod{f,g}{t}=\bracketb{q}{\symiso{+}{t/\gamma}(f)\symiso{-}{\gamma}(g)}{q},
\end{equation}
where the \rhs{} is evaluated in the Fock space \(\Fock{q}\), for any \(q\in\CC\) and any $\gamma \in \CC \setminus \set{0}$.

\begin{proof}
  Since \(\NSfket{(u-1)\sqrt{\sfrac{v}{u}}}\) is an \(\aosp{}\) highest weight
  vector and \(\scrs{1}{u-1}\) is a module homomorphism, the image of
  \(\NSfket{(u-1)\sqrt{\sfrac{v}{u}}}\) must be either a singular vector or
  zero. 
  By \cref{thm:ffainj} the singular vector in \(\ffa\) is unique and
  therefore it is sufficient to show that
  \(\scrs{1}{u-1}\NSfket{(u-1)\sqrt{\sfrac{v}{u}}}\) is non-zero. We do this
  by explicitly evaluating certain matrix elements and verifying that they are
  non-zero. 

  Evaluating formula \eqref{eq:vandscrs} for \(\scrs{1}{u-1}\) on
  \(\NSfket{\sfrac{(u-1)}{\xi}}\) yields
  \begin{multline}
    \scrs{1}{u-1}\NSfket{\sfrac{(u-1)}{\xi}}=
    \int \prod_{i\neq j}\brac*{1-\frac{z_i}{z_j}}^{\frac{\xi^{-2}-1}{2}}\prod_{i=1}^{u-1}z_i^{2-\frac{u+v}{2}}
    \van{z}\brac*{\beta(z_1)c(z_1)-b(z_1)}\cdots\brac*{\beta(z_{u-1})c(z_{u-1})-b(z_{u-1})}\\
    \cdot
    \prod_{m\ge
      1}\exp\brac*{\frac{-1}{\xi}\frac{a_{-m}}{m}\fpowsum{m}{z}}\NSfket{0}\frac{\dd
    z_1\cdots \dd z_{u-1}}{z_1\cdots z_{u-1}}.
  \end{multline}
  To further evaluate this formula, we distinguish between \(u\) and \(v\) even or odd.

  Suppose first that \(u\) and \(v\) are odd. Then, by the identities
  \eqref{eq:gamma1correls} in \cref{thm:correlid},
  \begin{align}
    &\prod_{i=1}^{u-1}z_i^{2-\frac{u+v}{2}}\van{z}\NScorrfn{\frac{\gamma_{1}^{\sfrac{(u-1)}{2}}}{\frac{u-1}{2}!}
      \brac*{\beta(z_1)c(z_1)-b(z_1)}\cdots\brac*{\beta(z_{u-1})c(z_{u-1})-b(z_{u-1})}}{+}\nonumber\\
    &\qquad=\prod_{i=1}^{u-1}z_i^{\frac{u-v}{2}-1}\fjack{\admp{u-1}{0,0}}{-3}{z^{-1}}=
    \fjack{\kappa}{-3}{z^{-1}},
  \end{align}
  where \(\kappa\) is the admissible partition
  \(\kappa=\admp{u-1}{\frac{v-u}{2}+1,\frac{v-u}{2}+1}\) and \(\NScorrfn{\ }{+}\)
  denotes the combination of the \(\beta\gamma\) and \(bc\) correlation functions.
  The non-vanishing of \(\scrs{1}{u-1}\NSfket{\sfrac{(u-1)}{\xi}}\) then
  follows by evaluating the following matrix element as an integral of correlators:
  \begin{multline}
    \NSfbra{0}\frac{\gamma_{1}^{\frac{u-1}{2}}}{\frac{u-1}{2}!2^{\frac{u-1}{2}}}
    \symiso{+}{-\xi}
    \brac*{\fjack{\kappa}{\frac{2}{\xi^{-2}-1}}{y}}\scrs{1}{u-1}\NSfket{\sfrac{(u-1)}{\xi}}
    =\int \prod_{i\neq
      j}\brac*{1-\frac{z_i}{z_j}}^{\frac{\xi^{-2}-1}{2}}\fjack{\kappa}{-3}{z^{-1}}
    \fjack{\kappa}{\frac{2}{\xi^{-2}-1}}{z}\frac{\dd z_1\cdots \dd
      z_{u-1}}{z_1\cdots z_{u-1}}\\
    =
    \jprod{\fjack{\kappa}{-3}{z},\fjack{\kappa}{\frac{2}{\xi^{-2}-1}}{z}}{u-1}{\frac{2}{\xi^{-2}-1}}=
    \jprod{\fjack{\kappa}{\frac{2}{\xi^{-2}-1}}{z},\fjack{\kappa}{\frac{2}{\xi^{-2}-1}}{z}}{u-1}{\frac{2}{\xi^{-2}-1}}\neq 0,
  \end{multline}
  where the third equality uses the fact that Jack symmetric polynomials
  decompose upper triangularly (with leading coefficient equal to 1) in terms of Jack symmetric polynomials at
  different values of the parameter 
  and that Jack symmetric polynomials at parameter value
  \(\frac{2}{\xi^{-2}-1}\) are orthogonal with respect to the inner product
  \(\jprod{\ ,\ }{u-1}{\frac{2}{\xi^{-2}-1}}\).

  The case where \(u\) and \(v\) are even follows similarly by evaluating the matrix element
  \begin{multline}
    \NSfbra{0}\frac{\gamma_{1}^{\frac{u-2}{2}}}{\frac{u}{2}!2^{\frac{u}{2}}}b_0
    \symiso{+}{-\xi}
    \brac*{\fjack{\kappa}{\frac{2}{\xi^{-2}-1}}{y}}\scrs{1}{u-1}\NSfket{\sfrac{(u-1)}{\xi}}
    =\jprod{\fjack{\kappa}{-3}{z},\fjack{\kappa}{\frac{2}{\xi^{-2}-1}}{z}}{u-1}{\frac{2}{\xi^{-2}-1}}\\
    =  \jprod{\fjack{\kappa}{\frac{2}{\xi^{-2}-1}}{z},\fjack{\kappa}{\frac{2}{\xi^{-2}-1}}{z}}{u-1}{\frac{2}{\xi^{-2}-1}}\neq 0,
  \end{multline}
  where \(\kappa=\admp{u-1}{\frac{v-u}{2}+2,\frac{v-u}{2}}\).
\end{proof}

\begin{lem}\label{thm:nszhupoly1}
  Up to an irrelevant scale factor, the polynomial \(g(h,\Sigma)\) of
  \cref{thm:zhusvshape} is given by
  \begin{equation}\label{eq:centralfactor2}
    g(h,\Sigma)=
    \prod_{(i,j)\in \NSkac{u,v}}
    \brac*{\Sigma-s_{i,j}},\quad
    s_{i,j}=\frac{i}{2}-\frac{u\,j}{2v}.
  \end{equation}
  In particular, \(g(h,\Sigma)\) does not depend on \(h\).
\end{lem}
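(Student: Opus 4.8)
The plan is to read off $g$ by evaluating a zero mode of the singular vector $\vsv{u,v}=\scrs{1}{u-1}\NSfket{\sfrac{(u-1)}{\xi}}$, constructed in the preceding lemma, on candidate relaxed \hwv{s} inside the free field modules $\NSFFaFock{p;\sqbrac*{j}}=\Fock{p}\otimes\bgd{\sqbrac*{j}}\otimes\NSFock$. Recall from \cref{sec:ospFFR} that the cyclic vector $\NSfket{p;j}$ has $h_0$-eigenvalue $\xi p+2j$ and $\Sigma$-eigenvalue $s_p=\xi p+\tfrac12$, so that $s_p$ depends only on the Heisenberg zero mode $p$, whereas the $\osp$-weight additionally needs the $\beta\gamma$ weight $j$. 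By \cref{thm:zhusvshape} the image of $\vsv{u,v}$ in $\zhu{\MinMod{u}{v}}$ equals $e^{\sfrac{(u-1)}{2}}g(h,\Sigma)$ for $u,v$ odd and $e^{\sfrac{(u-2)}{2}}x\,g(h,\Sigma)$ for $u,v$ even; since applying $g(h,\Sigma)$ to $\NSfket{p;j}$ multiplies it by $g(\xi p+2j,s_p)$, pairing the zero mode against a functional that detects the leading $e$- (resp.\ $ex$-) monomial returns this scalar up to a nonzero factor.

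First I would reduce that matrix element, following the scheme of \cite{RidRel15,BloSVir16}: substituting the free field formulas of \cref{thm:ffrhom}, writing the screening operator in the form \eqref{eq:vandscrs}, evaluating the $\beta\gamma$ and $bc$ correlators with \cref{thm:bgcor,thm:bccorrs,thm:correlid}, and using the isomorphisms $\symiso{\pm}{\gamma}$ of the ring of symmetric functions with $\UEA{\hlie{1}^{\pm}}$, the matrix element collapses to a rising-factorial prefactor in $j$ (the only $j$-dependence, coming from the $\bgd{\sqbrac*{j}}$ correlator, and nonzero for generic $j$) times an inner product of the shape $\jprod{\fjack{\kappa}{-3}{z},\prod_{i=1}^{u-1}\brac*{1+\sfrac{z_i}{w}}^{\lambda}}{u-1}{t}$ from \eqref{eq:zeromodeinprod}. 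Here $t=\frac{2}{\xi^{-2}-1}=\frac{2u}{v-u}$ is positive rational --- this is exactly where the standing hypothesis $v>u$ enters --- $\kappa$ is the explicit admissible partition already isolated in the proof of the preceding lemma ($\admp{u-1}{\frac{v-u}{2}+1,\frac{v-u}{2}+1}$ for $u,v$ odd, $\admp{u-1}{\frac{v-u}{2}+2,\frac{v-u}{2}}$ for $u,v$ even), and the exponent $\lambda$ is an affine-linear function of $p$ alone. Because the inner product depends on the candidate only through $p$, equivalently only through $s_p$, and not through $j$, the polynomial $g(\xi p+2j,s_p)$ is independent of $j$, so $g$ cannot depend on $h$ and we may write $g=g(\Sigma)$.

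It then remains to pin down the roots of $g(\Sigma)$. Expanding the inner product as in \eqref{eq:zeromodeinprodzeros} presents it as a polynomial in $\lambda$, hence in $s_p$, and since the numerators of the summands depend only on arm and leg colengths, the cells common to every partition $\mu$ dominated by $\kappa$ factor out as common zeros. Applying \cite[Lem.~3.2]{BloSVir16} to that common factor, I would extract $\abs*{\NSkac{u,v}}$ distinct zeros and check that they sit at $s_p=s_{i,j}=\frac{i}{2}-\frac{uj}{2v}$ for $(i,j)\in\NSkac{u,v}$ (these being pairwise distinct on $\NSkac{u,v}$, as one verifies from $u-v\in2\ZZ$ together with the gcd condition). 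Since the image of $\vsv{u,v}$ in the untwisted Zhu algebra is nonzero by \cref{thm:svnonzeroimage} (and \cref{thm:ffainj}), $g(\Sigma)$ is not identically zero; as $\abs*{\NSkac{u,v}}=\lfloor\tfrac{(u-1)(v-1)}{2}\rfloor$ equals the maximal value of $\deg g$ permitted by \cref{thm:zhusvshape} in both parity cases, a nonzero polynomial of degree $\le\abs*{\NSkac{u,v}}$ with $\abs*{\NSkac{u,v}}$ distinct prescribed roots must equal $\prod_{(i,j)\in\NSkac{u,v}}\brac*{\Sigma-s_{i,j}}$ up to scale, which is \eqref{eq:centralfactor2}.

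The two parity cases proceed in parallel, using the respective identities of \cref{thm:correlid} and the two choices of $\kappa$; the extra $b_0$ or $c_0$ insertions in the odd/even subcases are only bookkeeping. \textbf{The main obstacle} is the root count in the last step: one must confirm that the zeros extracted from the Feigin--Jimbo--Miwa--Mukhin admissibility combinatorics via \cite[Lem.~3.2]{BloSVir16} are \emph{precisely} the $s_{i,j}$ with $i+j$ odd, and that there are enough of them to saturate the degree bound, so that no spurious extra factor in $\Sigma$ (and, a priori, no dependence on $h$) can slip in. This count is delicate because it is controlled by the shape of $\kappa$ --- hence by the parities of $u$ and $v$ --- and by the positivity of $t$, which is why $v>u$ is assumed throughout this subsection.
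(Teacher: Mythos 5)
Your proposal follows the paper's proof essentially step for step: construct \(\vsv{u,v}\) as \(\scrs{1}{u-1}\NSfket{\sfrac{(u-1)}{\xi}}\), evaluate the zero mode on relaxed highest weight vectors in \(\NSFFaFock{p;\sqbrac*{j}}\), reduce the matrix element via \cref{thm:correlid} to an inner product of the form \eqref{eq:zeromodeinprod}, extract common zeros from the cells shared by all partitions dominated by the relevant admissible partitions using \cite[Lem.~3.2]{BloSVir16}, and close with the degree bound of \cref{thm:zhusvshape} plus the non-vanishing from \cref{thm:svnonzeroimage}. Your observation that the \(j\)-dependence of the matrix element is confined to a rising-factorial prefactor, so that \(g\) cannot depend on \(h\), is a valid (and slightly more direct) variant of the paper's route, which instead obtains \(h\)-independence as a by-product of the degree saturation.

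The one substantive issue is the step you yourself flag as the main obstacle, and here you have misjudged what is ``bookkeeping.'' The colength combinatorics applied to a single candidate \(\NSfket{p;j}\) does \emph{not} produce \(\abs*{\NSkac{u,v}}\) roots: the cells of the common partition \(\rho\) yield only the factors \(\brac*{s_p-s_{i,j}}\) with \((i,j)\in\NSkac{u,v}\) and \(i<j\), i.e.\ roughly half the \ns{} Kac table, which does not saturate the degree bound. The paper obtains the other half by evaluating the same zero mode on the second candidate \(c_0\NSfket{p;j}\), whose \(\Sigma\)-eigenvalue is \(-s_p\); the identical cells then force \(g\brac*{h,-s_{i,j}}=0\), and since \(-s_{i,j}=s_{u-i,v-j}\) this supplies the roots indexed by \(i>j\). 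The hypothesis \(v>u\) is then used a second time, beyond making the Jack parameter \(t=\frac{2}{\xi^{-2}-1}\) positive: it guarantees that \(\set*{(i,j)\in\NSkac{u,v}\st i<j}\) together with its image under \((i,j)\mapsto(u-i,v-j)\) exhausts \(\NSkac{u,v}\). Without the paired evaluation on \(c_0\NSfket{p;j}\) (the \(b_0\), \(c_0\) insertions in the even case play the same role), the root count falls short and the lemma does not follow; with it, your degree-saturation argument closes as stated.
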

\begin{proof}
  As stated in \cref{thm:zhusvshape}, the degree of 
  the polynomial \(g(h,\Sigma)\) is bounded by \(\frac{(u-1)(v-1)}{2}\).
  We shall show
  that the factors \(\brac*{\Sigma-s_{i,j}}\) of \eqref{eq:centralfactor2}
  divide \(g(h,\Sigma)\) by showing that \(g(h,s_{i,j})\) vanishes independently
  of \(h\). Since these factors saturate the maximal degree of \(g\) and \(g\)
  is non-zero by
  \cref{thm:svnonzeroimage}, they
  uniquely determine \(g\) up to scaling.

  Consider first the case when \(u\) and \(v\) are odd. The zero
  mode of the singular vector maps the relaxed highest weight vectors
  \(\NSfket{p;q}\) and \(c_0\NSfket{p;q}\) to multiples of
  \(\beta_0^{\frac{u-1}{2}}\NSfket{p;q}\) and
  \(\beta_0^{\frac{u-1}{2}}c_0\NSfket{p;q}\), respectively. This mapping is non-zero if and only if the following matrix elements are
  also non-zero.
  \begin{align}\label{eq:vgunsoddsvcorrel}
    \NSffbracket{p;q}{ \gamma_0^{\frac{u-1}{2}}\scrs{1}{u-1}\vop{\frac{u-1}{\xi}a}{w}}{p;q}
    &=\int
    \FNScorrfn{\gamma_0^{\frac{u-1}{2}}\prod_{i=1}^{u-1}\brac*{\beta(z_i+w)c(z_i+w)-b(z_i+w)}}{q}{\pm}\nonumber\\
    &\qquad\bracketb{pa}{\prod_{i=1}^{u-1}\vop{\frac{-a}{\xi}}{z_i+w}\cdot\vop{\frac{u-1}{\xi}a}{w}}{pa}
    \dd z_1\cdots\dd z_{u-1}\nonumber\\
    &=\int \prod_{i\neq
      j}\brac*{1-\frac{z_i}{z_j}}^{\frac{\xi^{-2}-1}{2}}\prod_{i=1}^{u-1}\brac*{1+\frac{z_i}{w}}^{-\sfrac{p}{\xi}} 
    \prod_{i=1}^{u-1}z_i^{2-\frac{u+v}{2}} \nonumber\\
    &\qquad \van{z}
    \FNScorrfn{\gamma_0^{\frac{u-1}{2}}\prod_{i=1}^{u-1}\brac*{\beta(z_i+w)c(z_i+w)-b(z_i+w)}}{q}{\pm}
    \frac{\dd z_1\cdots\dd z_{u-1}}{z_1\cdots z_{u-1}}\nonumber\\
    &= w^{\frac{1-u}{2}}\sum_{i=0}^{\frac{u-1}{2}}w^{-i}c_i
    \jprod{\fjack{\sqbrac*{\frac{v+u}{2}-2-\uniqp{u-1}{i}}}{-3}{z},\prod_{i=1}^{u-1}\brac*{1+\frac{z_i}{w}}^{-\frac{p}{\xi}-1}}{u-1}{\frac{2}{\xi^{-2}-1}},
  \end{align}
  the second equality follows from the identities 
  \eqref{eq:corrformulae} in \cref{thm:correlid}.
  By the argument outlined at the beginning of \cref{sec:0modecalc}, we need
  to identify the cells common to all partitions dominated by
  \(\sqbrac*{\frac{v+u}{2}-2-\uniqp{u-1}{i}},\ i=0,\dots,\frac{u-1}{2}\). The
  parts of the admissible partitions
  \(\sqbrac*{\frac{v+u}{2}-2-\uniqp{u-1}{i}}\) are all bounded below by the
  parts of
  \(\sqbrac*{\frac{v+u}{2}-2-\admp{u-1}{1,0}}=\admp{u-1}{\frac{v-u}{2}+1,\frac{v-u}{2}}\)
  and so by \cite[Lem.~3.2.(3)]{BloSVir16}, the cells common to all partitions
  dominated by the \(\sqbrac*{\frac{v+u}{2}-2-\uniqp{u-1}{i}}\) form the
  length \(u-1\)
  partition
  \(\rho=\sqbrac*{\frac{v-1}{2},\frac{v-1}{2}-1,\frac{v-1}{2}-1,\dots,\frac{v-u}{2}+1,\frac{v-u}{2}+1,\frac{v-u}{2}}\),
  that is, \(\rho_1=\frac{v-1}{2}=\rho_2+1\) and \(\rho_{i}=\rho_{i-2}+1\) for \(i=3,\dots,u-1\).
  Therefore, by \eqref{eq:zeromodeinprodzeros}, the zeros of 
  \begin{align}
    \prod_{b\in\rho}\brac*{\frac{2}{\xi^{-2}-1}\brac*{\frac{p}{\xi}+1}+\frac{2}{\xi^{-2}-1}a^\prime(b)-\ell^\prime(b)}
  \end{align}
  are common to all the summands of \eqref{eq:vgunsoddsvcorrel}.
  Up
  to irrelevant scale factors, this polynomial can be rewritten as
  \begin{multline}
    \prod_{b\in\rho}\brac*{\xi
      p+\frac{1}{2}-\frac{1}{2}\brac*{\ell'(b)+1}+\frac{\xi^2}{2}\brac*{\ell'+2a'(b)+2}}
    =
    \prod_{b\in\rho}\brac*{\pm s_p - s_{\ell'(b)+1,\ell'(b)+2a'(b)+2}}\\
    =\prod_{j=1}^{\frac{v-1}{2}}\brac*{\pm s_p - s_{1,2j}}
    \prod_{i=1}^{\frac{u-3}{2}}\prod_{j=1}^{\frac{v-1}{2}-i}\brac*{\pm s_p -
      s_{2i,2i+2j-1}}\brac*{\pm s_p - s_{2i+1,2i+2j}}
    \prod_{j=1}^{\frac{v-u}{2}}\brac*{\pm s_p - s_{u-1,u+2j-2}},
  \end{multline}
  where we recall that \(s_p=\pm\brac*{\xi p+\frac{1}{2}}\) are the respective eigenvalues of
  \(\Sigma\) acting on the relaxed highest weight vectors \(\NSfket{p;q}\) and
  \(c_0\NSfket{p;q}\).
  So for \((i,j)\in\NSkac{u,v}\), \(i<j\) we have that \(g(h,s_{i,j})=0\) and
  \(g(h,-s_{i,j})=0\). Since \(-s_{i,j}=s_{u-i,v-j}\), this implies
  \(g(h,s_{i,j})=0\) for all \((i,j)\in\NSkac{u,v}\). Note we have used the
  fact that \(v>u\), otherwise the ordered pairs \((i,j), (u-i,v-j)\), with
  \(i<j\), 
  would not cover the entire \ns{} Kac table.

  Next, consider the case of even \(u\) and \(v\). The action of the zero
  mode of the singular vector acting on the relaxed highest weight vectors
  \(\NSfket{p;q}\) and \(c_0\NSfket{p;q}\) is non-zero if and only if the following matrix elements are
  also non-zero.
  \begin{align}\label{eq:vgunevensvcorrel}
    \NSffbracket{p;q}{\gamma_0^{\frac{u}{2}}b_0\scrs{1}{u-1}\vop{\frac{u-1}{\xi}}{w}}{p;q},\quad
    \NSffbracket{p;q}{b_0\gamma_0^{\frac{u-2}{2}}c_0\scrs{1}{u-1}\vop{\frac{u-1}{\xi}}{w}c_0}{p;q}
  \end{align}
Up to irrelevant factors involving \(q\), both matrix elements evaluate to
\begin{equation}
  w^{\frac{3u}{2}-2}\sum_{i=1}^{\frac{u}{2}-1}
  w^{-i}c_i^{(u-1)}
  \jprod{\fjack{\sqbrac*{\frac{v+u}{2}-2-\uniqp{u-1}{i}}}{-3}{z},\prod_{i=1}^{u-1}\brac*{1+\frac{z_i}{w}}^{-\frac{p}{\xi}-1}}{u-1}{\frac{2}{\xi^{-2}-1}}.
\end{equation}
As in the case of odd \(u,v\) we identify zeros by finding cells common to all Young
diagrams of partitions dominated by
\(\sqbrac*{\frac{v+u}{2}-2-\uniqp{u-1}{i}}\). By
\cite[Lem.~3.2.(4)]{BloSVir16},
\(\rho=\sqbrac*{\frac{v}{2}-1,\frac{v}{2}-1,\frac{v}{2}-2,\frac{v}{2}-2,\dots,\frac{v-u}{2}+1,\frac{v-u}{2}+1,\frac{v-u}{2}}\) 
is the partition of length \(u-1\) formed by these cells.
The zeros of
\begin{multline}
    \prod_{b\in\rho}\brac*{\xi
      p+\frac{1}{2}-\frac{1}{2}\brac*{\ell'(b)+1}+\frac{\xi^2}{2}\brac*{\ell'+2a'(b)+2}}
    =
    \prod_{b\in\rho}\brac*{\pm s_p - s_{\ell'(b)+1,\ell'(b)+2a'(b)+2}}\\
    =\prod_{i=1}^{\frac{u}{2}-1}\prod_{j=1}^{\frac{v}{2}-i}\brac*{\pm s_p -
      s_{2i-1,2i+2j-2}}\brac*{\pm s_p - s_{2i,2i+2j-1}}
    \prod_{j=1}^{\frac{v-u}{2}}\brac*{\pm s_p - s_{u-1,u+2j-2}}.
  \end{multline}
  are therefore also zeros of the original matrix elements \eqref{eq:vgunevensvcorrel}.
  As in the case for \(u\) and \(v\) odd, this implies that \(g(h,s_{i,j})=0\)
  for all \((i,j)\in \NSkac{u,v}\).
\end{proof}

\begin{lem}\label{thm:rzhupoly1}
  Up to an irrelevant scale factor, the polynomial \(g_\parity(h,Q)\) of
  \cref{thm:zhusvshape} is given by
  \begin{equation}\label{eq:Rcentralfactor}
    g_\parity(h,Q)= 
    \prod_{[(i,j)]\in \rRkac{u,v}}\brac*{Q-q_{i,j}},\quad
    q_{i,j}=\frac{\brac*{u\,j-v\,i}^2-4v^2}{8v^2}.
  \end{equation}
    In particular, \(g_\parity(h,Q)\) does not depend on \(h\).
\end{lem}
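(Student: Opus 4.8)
The plan is to rerun the argument of \cref{thm:nszhupoly1} in the Ramond sector. By \cref{thm:zhusvshape}, the image of the singular vector in the twisted Zhu algebra is $e^{\sfrac{(u-1)}{2}}g_\parity(h,Q)$ when $u,v$ are odd and $e^{\sfrac{(u-2)}{2}}g_\parity(h,Q)$, now with $\vsv{u,v}$ replaced by $y_0\vsv{u,v}$, when $u,v$ are even, where $Q$ carries filtration degree $2$ and $\deg g_\parity(h,Q)\le\frac{(u-1)(v-1)+1}{2}$. A factor of $e$ cannot act trivially on an infinite-dimensional or dense simple $\slt$ weight module, so all the content lies in $g_\parity$. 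I will show that $\brac*{Q-q_{i,j}}$ divides $g_\parity(h,Q)$ for each $\sqbrac*{(i,j)}\in\rRkac{u,v}$ by verifying that $g_\parity(h,q_{i,j})$ vanishes identically in $h$; since $Q$ has degree $2$, these $\abs{\rRkac{u,v}}$ factors saturate the degree bound, and since $g_\parity\neq 0$ by \cref{thm:svnonzeroimage} this determines $g_\parity$, independently of $h$, up to an overall scalar --- which is exactly the content of \eqref{eq:Rcentralfactor}.

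Concretely, I would apply the zero mode of the singular vector (realised, as in \cref{sec:vgu}, via $\scrs{1}{u-1}$ acting on the highest weight vector $\NSfket{\sfrac{(u-1)}{\xi}}$) to the Ramond relaxed highest weight vectors $\Rfket{p;q}$ of the free field modules $\RFFaFock{p;\sqbrac*{q}}$; as in \cref{thm:nszhupoly1} the resulting matrix element factorises into a Heisenberg vertex-operator correlator $\prod_i\brac*{1+\frac{z_i}{w}}^{-\sfrac{p}{\xi}}$ and a $\beta\gamma$--$bc$ correlator, exactly in the shape of \eqref{eq:vgunsoddsvcorrel}. The $\beta\gamma$--$bc$ correlator is now evaluated with the Ramond identities \eqref{eq:corrformulaeR} of \cref{thm:correlid}, the first of them for $u$ odd (so $u-1$ is even) and the second, supplying the extra $c(w)$ coming from $y_0$, for $u$ even. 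The crucial simplification over the Neveu--Schwarz identities \eqref{eq:corrformulae} is that \eqref{eq:corrformulaeR} produces a \emph{single} admissible Jack polynomial $\fjack{\admp{u-1}{0,0}}{-3}{z^{-1}}$ (times $\prod_i\brac*{z_i+w}^{-\sfrac{1}{2}}$, which only shifts the exponent and contributes a harmless power of $w$) rather than a $w$-expanded sum over the $\uniqp{u-1}{i}$. After rebalancing degrees with the appropriate powers of $\prod_i z_i$ and $w$, the matrix element reduces to one integral inner product of the form \eqref{eq:zeromodeinprod}, with $\kappa$ a shifted copy of $\admp{u-1}{0,0}$ and $t=\tfrac{2}{\xi^{-2}-1}=\tfrac{2u}{v-u}$.

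From here the zeros are read off via \eqref{eq:zeromodeinprodzeros}: the cells common to every partition dominated by $\kappa$ form a partition $\rho$ of length $u-1$, identified using \cite[Lem.~3.2]{BloSVir16} (its part \textup{(3)} or \textup{(4)} according to the parity of $u,v$), and the common zeros of the matrix element are the roots of $\prod_{b\in\rho}\brac*{-\lambda t+ta'(b)-\ell'(b)}$ with $\lambda=-\sfrac{p}{\xi}-\sfrac{1}{2}$. Using $\xi^2=\sfrac{u}{v}$, this product rewrites, up to irrelevant scalars, as $\prod_{b\in\rho}\brac*{\xi p+\tfrac12-s_{\ell'(b)+1,\,\ell'(b)+2a'(b)+1}}$, so each cell forces $\brac*{\xi p+\tfrac12}^2=s_{i(b),j(b)}^2$ with $\brac*{i(b),j(b)}=\brac*{\ell'(b)+1,\ell'(b)+2a'(b)+1}$, a pair of the Ramond Kac table; since $q_p=\tfrac{(\xi p+\frac12)^2-1}{2}$ this is precisely $Q=q_{i(b),j(b)}$, and $q_{i,j}=q_{u-i,v-j}$ identifies the distinct values with $\rRkac{u,v}$. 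I expect the main obstacle to be this last combinatorial step: pinning down $\kappa$, hence $\rho$, in each parity case and checking that $b\mapsto\sqbrac*{(i(b),j(b))}$ is a bijection from the cells of $\rho$ onto $\rRkac{u,v}$ --- equivalently, that the hypothesis $v>u$ guarantees every class of the reduced Ramond Kac table is hit exactly once, so that the degree count truly forces $g_\parity$; carefully tracking the half-integer powers $\prod_i\brac*{z_i+w}^{-\sfrac12}$ in \eqref{eq:corrformulaeR} to get the correct $w$-prefactor and normalisation is the other place where care is needed.
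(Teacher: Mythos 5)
Your proposal is correct and follows the paper's argument essentially step for step: realise the zero mode via \(\scrs{1}{u-1}\), evaluate it on \(\Rfket{p;j}\) (with the extra \(c(w)\) from \(y_0\) when \(u,v\) are even), reduce via the Ramond correlator identities \eqref{eq:corrformulaeR} to a single Jack inner product, extract common zeros from the cells of \(\rho\), convert the linear zeros in \(s_p\) into the quadratic factors \(Q-q_{i,j}\) using \(q_p=\tfrac{1}{2}(s_p^2-1)\), and saturate the degree bound over \(\rRkac{u,v}\) (which, as you note, requires \(v>u\) so that the pairs \((\ell'(b)+1,\ell'(b)+2a'(b)+1)\) exhaust a set of representatives). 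The only slip is the citation: precisely because the Ramond correlators produce a \emph{single} admissible Jack polynomial \(\admp{u-1}{\frac{v-u}{2}+1,\frac{v-u}{2}+1}\) rather than the Taylor-expanded family of the \ns{} case, the relevant parts of \cite[Lem.~3.2]{BloSVir16} are (1) and (2), not (3) and (4) --- a point your own remark about the ``crucial simplification'' already implies.
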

\begin{proof}
  As stated in \cref{thm:zhusvshape}, the degree of 
  the polynomial \(g_\parity(h,Q)\) is bounded by \(\frac{(u-1)(v-1)-1}{2}\).
  We shall show
  that the factors \(\brac*{Q-q_{i,j}}\) of \eqref{eq:Rcentralfactor}
  divide \(g_\parity(h,Q)\) by showing that \(g_\parity\) vanishes independently
  of \(h\) at appropriate values of \(Q\). Since these factors saturate the maximal degree of \(g_\parity\) and \(g_\parity\)
  is non-zero by
  \cref{thm:svnonzeroimage}, they
  uniquely determine \(g_\parity\) up to scaling.

  Consider first the case where \(u\) and \(v\) are odd. The
  zero mode of the singular vector maps the relaxed highest weight vector
  \(\Rfket{p;j}\) to a multiple of
  \(\beta_0^{\frac{u-1}{2}}\Rfket{p;j}\). This action is
  non-zero if and only if the following matrix element is non-zero:
  \begin{multline}\label{eq:rzeromodeclac}
    \Rffbracket{p;j}{\gamma_0^{\frac{u-1}{2}}\scrs{1}{u-1}\vop{\frac{u-1}{\xi}a}{w}}{p;j}=
    \\\int \prod_{i=1}^{u-1}\brac*{1-\frac{z_i}{z_j}}^{\frac{\xi^{-2}-1}{2}}\prod_{i=1}^{u-1}\brac*{1+\frac{z_i}{w}}^{-1}
    \prod_{i=1}^{u-1}z_i^{2-\frac{u+v}{2}}
    \van{z}
    \Rbgcorrfn{\gamma_0^{\frac{u-1}{2}}\prod_{i=1}^{u-1}\brac*{\beta(z_i)c(z_i)-b(z_i)}}{j}
    \frac{\dd z_1\cdots\dd z_{u-1}}{z_1\cdots z_{u-1}}
  \end{multline}
  Using the identities \eqref{eq:corrformulaeR} of \cref{thm:correlid}, this evaluates to
  \begin{equation}
    \poch{j}{\frac{u-1}{2}}\brac*{-2}^{\frac{u-1}{2}}\jprod{
      \fjack{\admp{u-1}{\frac{v-u}{2}+1,\frac{v-u}{2}+1}}{-3}{z},\prod_{i=1}^{u-1}\brac*{1+\frac{z_i}{w}}^{-\frac{p}{\xi}-\frac{1}{2}}}{u-1}{\frac{2}{\xi^{-2}-1}}.
  \end{equation}
  As in the proof of \cref{thm:nszhupoly1}, 
  cells common to all Young diagrams of partitions dominated by
  \(\admp{u-1}{\frac{v-u}{2}+1,\frac{v-u}{2}+1}\) can be used to identify zero
  of the above inner product. By \cite[Lem.~3.2.(1)]{BloSVir16} these cells
  form the length \(u-1\) partition \(\rho=\sqbrac*{\frac{v-1}{2},\frac{v-1}{2},\frac{v-1}{2}-1,\frac{v-1}{2}-1,\dots,\frac{v-u}{2}+1,\frac{v-u}{2}+1}\)
  and the zeros of \eqref{eq:rzeromodeclac} include the zeros of
  \begin{align}
    \prod_{b\in\rho}\brac*{\xi p+\frac{\xi^2}{2}+\xi^2
      a^\prime(b)-\frac{1-\xi^2}{2}\ell'(b)}
    &=
    \prod_{b\in\rho}\brac*{\xi
      p+\frac{1}{2}-\frac{1}{2}\brac*{\ell'(b)+1}+\frac{\xi^2}{2}\brac*{\ell'(b)+2a'(b)+1}}\nonumber\\
    &=\prod_{b\in\rho}\brac*{s_p-s_{\ell'(b)+1,\ell'(b)+2a'(b)+1}}
    =\prod_{\substack{ (i,j)\in\Rkac{u,v}\\i\le j}}
    \brac*{s_p-s_{i,j}}.
  \end{align}
  Since the above zeros do not depend on the \(\beta\gamma\) weight, 
  they correspond to values of \(Q\) at which the
  polynomial \(g_\parity(h,Q)\) vanishes independently of \(h\).
  Recall that \(q_p=\frac{1}{2}\brac*{s_p^2-1}\) and assume
  \(g_\parity(h,d)=0\) for some \(d\in \CC\) independent of \(h\), then \(g_\parity(h,q_p)\) is
  divided by
  \begin{equation}
    q_p-d=\frac{1}{2}\brac*{s_p^2-1}-d=\frac{1}{2}\brac*{s_p-\tilde{d}}\brac*{s_p+\tilde{d}},
  \end{equation}
  where \(\pm\tilde{d}\) are the square roots of \(2d+1\).
  So if \(\brac*{s_p-s_{i,j}}\) divides \(g(h,q_p)\), then
  \(\brac*{s_p+s_{i,j}}\) must do so also. Thus the product
  \(\brac*{s_p-s_{i,j}}\brac*{s_p+s_{i,j}}=\brac*{2q_p-s_{i,j}^2+1}=2\brac*{q_p-q_{i,j}}\)
  divides \(g(h,q_p)\). So since \(Q-q_{i,j}\) divides \(g_\parity\) for all \(i\le j,\
  (i,j)\in\Rkac{u,v}\) and every class in \(\rRkac{u,v}\) has a representative
  of this form (assuming \(v>u\)), this proves the lemma for \(u\) and \(v\) odd.

  Next consider the case when \(u\) and \(v\) are even, then the action of the
  zero mode of the singular vector on a relaxed highest weight vector
  \(\Rfket{p;j}\) is
  non-zero if and only if the following matrix element is non-zero.
  \begin{multline}\label{eq:zeromodematrixel}
    \Rffbracket{p;j}{\gamma_0^{\frac{u}{2}-1}\scrs{1}{u-1}c(w)\vop{\frac{u-1}{\xi}a}{w}}{p;j}=
    \\\int \prod_{i=1}^{u-1}\brac*{1-\frac{z_i}{z_j}}^{\frac{\xi^{-2}-1}{2}}
    \prod_{i=1}^{u-1}z_i^{2-\frac{u+v}{2}}
    \van{z}
    \Rbgcorrfn{\gamma_0^{\frac{u-2}{2}}\prod_{i=1}^{u-1}\brac*{\beta(z_i+w)c(z_i+w)-b(z_i+w)}c(w)}{j}
    \frac{\dd z_1\cdots\dd z_{u-1}}{z_1\cdots z_{u-1}}.
  \end{multline}
  Up to an irrelevant scale factor in \(j\) this evaluates to
  \begin{equation}
    \jprod{
      \fjack{\admp{u-1}{\frac{v-u}{2}+1,\frac{v-u}{2}+1}}{-3}{z},\prod_{i=1}^{u-1}\brac*{1+\frac{z_i}{w}}^{-\frac{p}{\xi}-\frac{1}{2}}}{u-1}{\frac{2}{\xi^{-2}-1}}.
  \end{equation}
  By \cite[Lem.~3.2.(2)]{BloSVir16}, the partition formed by the cells common
  to the Young diagrams of all partitions dominated by
  \(\admp{u-1}{\frac{v-u}{2}+1,\frac{v-u}{2}+1}\) is
  \(\rho=\sqbrac*{\frac{v}{2}-1,\frac{v}{2}-2,\frac{v}{2}-2,\dots,\frac{v-u}{2}+1,\frac{v-u}{2}+1}\).
  Thus, the zeros of the matrix element \eqref{eq:zeromodematrixel} include
  those of
  \begin{equation}
    \prod_{b\in\rho}\brac*{s_p-s_{\ell'(b)+1,\ell'(b)+2a'(b)+1}}
    =\prod_{\substack{(i,j)\in\Rkac{u,v}\\i\le j}}\brac*{s_p-s_{i,j}}.
  \end{equation}
  The remainder of the proof then follows as in the case of \(u\) and \(v\) odd.
\end{proof}

\subsection{The case when \(u>v\) and \(k>-1\)}
Throughout this section we will assume that \(u,v\) are integers satisfying
\(u>v\ge1\), \(u-v\in2\ZZ\) and \(\gcd\brac*{u,\tfrac{(u-v)}{2}}\) and that
\(\xi=\sqrt{\frac{u}{v}}\). We will
prove \cref{thm:svimage} under the assumption \(u>v\) by using the screening
operator \(\scr{2}\) to construct the singular vector of
\(\vosp{k_{u,v}}\) and then evaluating the action of its zero mode on
relaxed highest weight vectors to deduce its image on the Zhu
algebras.
Note that for \(u>v\), the exponent \(\frac{\xi^2-1}{2}=\frac{u-v}{2v}\) in
\eqref{eq:vandscrs2} is positive rational and will form the Jack
polynomial parameter.
Throughout this section we will be using the second free field
realisation of \(\vosp{k}\) in which \(\beta(z)=\vop{\theta+\psi}{z}\).

\begin{lem}
  The singular vector of \(\vosp{k_{u,v}}\), as a subalgebra of
  \(\ffb\), is given by
  \begin{equation}\label{eq:scrsvb}
    \vsv{u,v}=\scrs{2}{v-1}\NSfket{\brac*{1-v}\xi;\brac*{u-\frac{u+v}{2v}}\brac*{\theta+\psi}}
  \end{equation}
  where \(\NSfket{\brac*{1-v}\xi;\brac*{u-\frac{u+v}{2v}}\brac*{\theta+\psi}}\) is the highest weight vector of
  \(\Fock{\brac*{1-v}\xi a}\otimes \Fock{\brac*{u-\frac{u+v}{2v}}\brac*{\theta+\psi}}\otimes\NSFock\).
\end{lem}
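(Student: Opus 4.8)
The plan is to follow, more or less verbatim, the argument used above for the $v>u$ case, with the screening operator $\scrs{2}{v-1}$ in place of $\scrs{1}{u-1}$. First, $\NSfket{\brac*{1-v}\xi;\brac*{u-\frac{u+v}{2v}}\brac*{\theta+\psi}}$ is an $\aosp{}$-highest weight vector in the $\ffb$-module $\NSFFbFock{\brac*{1-v}\xi;\sqbrac*{u-\frac{u+v}{2v}},0}$, so, since $\scrs{2}{v-1}$ is a $\vosp{k_{u,v}}$-module homomorphism, its image is again an $\aosp{}$-highest weight vector or zero. A direct computation with the images of $h_0$ and of the Sugawara $L_0$ in $\ffb$ (recall $h_0\mapsto \xi a_0+\normord{bc}_0-2\theta_0$, using $\normord{\beta\gamma}\mapsto-\theta$ under the bosonisation, and that $\xi^2=\tfrac uv$) shows that this vector has $\osp$-weight $(u-1)\alpha$ and conformal weight $(u-1)\tfrac v2$, exactly the weight of $\vsv{u,v}$; both are preserved by $\scrs{2}{v-1}$ since screening operators commute with the $\vosp{k_{u,v}}$-action. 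By \cref{thm:bosinj} the singular vector at this weight in $\ffb$ is unique up to rescaling, so it only remains to verify that $\scrs{2}{v-1}\NSfket{\dots}$ is non-zero.

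To that end I would evaluate an explicit matrix element, exactly as in the $v>u$ case. Expanding $\scrf{2}{z_1}\cdots\scrf{2}{z_{v-1}}$ via \eqref{eq:vandscrs2}, applying it to the highest weight vector, and pairing against a suitably chosen dual vector, the expression splits into a product of $a$-vertex-operator correlators, a product of $(\theta+\psi)$-lattice correlators coming from the screening charge $\ee^{(v-1)\xi a-(v-1)\frac{1+\xi^2}{2}(\theta+\psi)}$ and from $\beta(z)=\vop{\theta+\psi}{z}$, and the fermionic factor $\van{z}\prod_i\brac*{\beta(z_i)c(z_i)-b(z_i)}$. Only the terms of the last factor with equally many $b$ and $c$ fields survive the $bc$-correlator, and — using the bosonisation (\cref{thm:bosonisation,thm:bgffrmod}) so that the correlator identities of \cref{thm:bgcor} and \cref{thm:correlid} apply — these assemble, through \eqref{eq:pfaffjacks} and the Taylor expansion \eqref{eq:jacktaylor}, into a Jack symmetric polynomial $\fjack{\kappa}{-3}{z}$ for an admissible partition $\kappa$ of length $v-1$ of the shape $\admp{v-1}{\cdot,\cdot}$. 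Choosing the dual vector from the image of $\fjack{\kappa}{t}{y}$ under $\symiso{+}{\gamma}$ for a suitable $\gamma\in\CC\setminus\set{0}$, with Jack parameter $t=\frac{2}{\xi^2-1}=\frac{2v}{u-v}$ — positive rational precisely because $u>v$ — collapses the matrix element to $\jprod{\fjack{\kappa}{-3}{z},\fjack{\kappa}{t}{z}}{v-1}{t}$, which by the upper-triangular (unit leading coefficient) expansion of $\fjack{\kappa}{-3}{z}$ in the Jack basis at parameter $t$ and the orthogonality of that basis equals $\jprod{\fjack{\kappa}{t}{z},\fjack{\kappa}{t}{z}}{v-1}{t}\neq0$.

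As in the $v>u$ case the computation must be run separately for $u,v$ odd and $u,v$ even: when $v$ is odd, $v-1$ is even and the $b/c$-balancing is automatic, giving $\kappa=\admp{v-1}{a,a}$ for an appropriate $a$, whereas for $v$ even one inserts an extra $b_0$ (or a $c$-field at $w$) into the correlator, as anticipated by the identities of \cref{thm:correlid}, and the relevant partition shifts to $\admp{v-1}{a,b}$ with $a,b$ differing by $2$. The main technical obstacle is the $(\theta+\psi)$-charge bookkeeping in the second step: because $\beta$ is realised as the vertex operator $\vop{\theta+\psi}{z}$, each surviving term of the fermionic product carries its own $(\theta+\psi)$-charge at each insertion point, and one must verify both that the total charge balances against the screening charge and the initial weight (so that the relevant lattice Kronecker delta does not kill the correlator) and that the residual powers of $(z_i-z_j)$ and of $z_i$ combine exactly into the inner product $\jprod{\,\cdot\,,\,\cdot\,}{v-1}{t}$ with the stated parameter and into the admissible partition $\kappa$. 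This is routine but delicate, and it is here that all the constants appearing in \eqref{eq:scrsvb} are pinned down.
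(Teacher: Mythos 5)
Your proposal is correct and follows essentially the same route as the paper: uniqueness of the singular vector at the relevant weight in \(\ffb\) (\cref{thm:bosinj}) reduces the claim to the non-vanishing of one explicit matrix element, which is evaluated by letting \((\theta+\psi)\)-charge conservation select the terms of \(\prod_i\brac*{\beta(z_i)c(z_i)-b(z_i)}\) containing the right number of \(\beta\)'s, reducing to a pure \(bc\) pfaffian that assembles into \(\fjack{\kappa}{-3}{z^{-1}}\), and then applying Jack orthogonality at parameter \(t=\frac{2}{\xi^2-1}\). The only cosmetic difference is that, because \(\theta+\psi\) is null, the surviving lattice vertex operators contribute trivially, so neither the \(\gamma_0\)-insertion identities of \cref{thm:bgcor} nor the Taylor expansion \eqref{eq:jacktaylor} are actually needed for this non-vanishing check.
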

\begin{proof}
  Since the singular vector is unique by \cref{thm:bosinj}, it suffices to
  show that \eqref{eq:scrsvb} does not vanish. As in the previous section,
  we do this by verifying that certain matrix elements are
  non-zero.

  Expanding \eqref{eq:scrsvb} as an integral of multiple copies of the
  screening field \(\scrf{2}{z}\) gives
  \begin{multline}
    \int \prod_{i\neq
      j}\brac*{1-\frac{z_i}{z_j}}^{\frac{\xi^2-1}{2}}\prod_{i=1}^{v-1}z_i^{2-\frac{u+v}{2}}\van{z}
    \prod_{i=1}^{v-1}\brac*{\beta(z_i)c(z_i)-b(z_i)}\prod_{i=1}^{v-1}\vop{-\frac{u+v}{2v}\brac*{\theta+\psi}}{z_i}\\
    \prod_{m\ge
      1}\exp\brac*{\xi\frac{a_{-m}}{m}\fpowsum{m}{z}}\NSfket{0;\brac*{u-\frac{u+v}{2v}}\brac*{\theta+\psi}}
    \frac{\dd z_1\cdots \dd z_{v-1}}{z_1\cdots z_{v-1}}
  \end{multline}
  For \(u\) and \(v\) odd, we show that this is non-zero by computing the
  matrix element
  \begin{equation}
    \NSffbracket{0;\frac{u-1}{2}\brac*{\theta+\psi}}{
    \symiso{+}{\xi^{-1}}
    \brac*{\fjack{\kappa}{\frac{2}{\xi^{2}-1}}{y}}
    \scrs{2}{v-1}}
  {\brac*{1-v}\xi;\brac*{u-\frac{u+v}{2v}}\brac*{\theta+\psi}},
  \end{equation}
  where \(\kappa=\admp{v-1}{1+\frac{u-v}{2},1+\frac{u-v}{2}}\). The above
  matrix element then evaluates to
  \begin{align}
    &\int \prod_{i\neq
      j}\brac*{1-\frac{z_i}{z_j}}^{\frac{\xi^2-1}{2}}\fjack{\kappa}{\frac{2}{\xi^2-1}}{z}\prod_{i=1}^{v-1}z_i^{2-\frac{u+v}{2}}\van{z}\nonumber\\
    &\qquad\qquad\NSffbracket{\frac{u-1}{2}\brac*{\theta+\psi}}{\prod_{i=1}^{v-1}\brac*{\beta(z_i)c(z_i)-b(z_i)}\prod_{i=1}^{v-1}\vop{-\frac{u+v}{2v}\brac*{\theta+\psi}}{z_i}}{\brac*{u-\frac{u+v}{2v}}\brac*{\theta+\psi}}
    \frac{\dd z_1\cdots \dd z_{v-1}}{z_1\cdots z_{v-1}}
  \end{align}
  The combined \(\beta\gamma\) weight of the ket and the
  \(\vop{-\frac{u+v}{2v}\brac*{\theta+\psi}}{z_i}\) vertex operators is
  \(\frac{u-v}{2}\brac*{\theta+\psi}\), so in order to reach the bra's weight
  of \(\frac{u-1}{2}\brac*{\theta+\psi}\), the product of \(\beta\) and \(bc\)
  fields needs to contribute \(\frac{v-1}{2}\brac*{\theta+\psi}\), that is, the
  only summands of \(\prod_{i=1}^{v-1}\brac*{\beta(z_i)c(z_i)-b(z_i)}\) which
  contribute are those containing \(\beta\) exactly \(\frac{v-1}{2}\)
  times. Further, since \(\beta(z)=\vop{\theta+\psi}{z}\) and
  \(\vop{-\frac{u+v}{2v}\brac*{\theta+\psi}}{z}\) have regular operator
  product expansions with themselves and each other, the above matrix element
  simplifies to
  \begin{align}
    &\int \prod_{i\neq
      j}\brac*{1-\frac{z_i}{z_j}}^{\frac{\xi^2-1}{2}}\fjack{\kappa}{\frac{2}{\xi^2-1}}{z}\prod_{i=1}^{v-1}z_i^{2-\frac{u+v}{2}}\van{z}
    \NScorrfn{\prod_{i=1}^{v-1}\brac*{c(z_i)-b(z_i)}}{+}
    \frac{\dd z_1\cdots \dd z_{v-1}}{z_1\cdots z_{v-1}}\nonumber\\
    &\qquad=
    \int \prod_{i\neq
      j}\brac*{1-\frac{z_i}{z_j}}^{\frac{\xi^2-1}{2}}\fjack{\kappa}{\frac{2}{\xi^2-1}}{z}
    \fjack{\kappa}{-3}{z^{-1}}
    \frac{\dd z_1\cdots \dd z_{v-1}}{z_1\cdots z_{v-1}}\nonumber\\
    &\qquad=
    \jprod{\fjack{\kappa}{\frac{2}{\xi^2-1}}{z},\fjack{\kappa}{-3}{z}}{v-1}{\frac{2}{\xi^2-1}}
    =
    \jprod{\fjack{\kappa}{\frac{2}{\xi^2-1}}{z},\fjack{\kappa}{\frac{2}{\xi^2-1}}{z}}{v-1}{\frac{2}{\xi^2-1}}\neq0.
  \end{align}
  The case for \(u\) and \(v\) odd follows similarly by evaluating the
  matrix element
  \begin{equation}
    \NSffbracket{0;\frac{u}{2}\brac*{\theta+\psi}}{b_0
    \symiso{+}{\xi^{-1}}
    \brac*{\fjack{\kappa}{\frac{2}{\xi^{2}-1}}{y}}
    \scrs{2}{v-1}}
  {\brac*{1-v}\xi;\brac*{u-\frac{u+v}{2v}}\brac*{\theta+\psi}}
  \end{equation}
  and verifying that it is non-zero.
\end{proof}

\begin{lem}\label{thm:nszhupoly2}
  Up to an irrelevant scale factor, the polynomial \(g(h,\Sigma)\) of
  \cref{thm:zhusvshape} is given by
  \begin{equation}
    g(h,\Sigma)=
    \prod_{(i,j)\in \NSkac{u,v}}
    \brac*{\Sigma-s_{i,j}},\quad
    s_{i,j}=\frac{i}{2}-\frac{u\,j}{2v}.
  \end{equation}
    In particular, \(g(h,\Sigma)\) does not depend on \(h\).
\end{lem}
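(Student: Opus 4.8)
The plan is to run the argument of \cref{thm:nszhupoly1} in parallel, with the screening operator $\scr{1}$ replaced by $\scr{2}$, the second free field realisation $\ffb=\hvoa{1}\otimes\lva{\theta+\psi}\otimes\bcva$ in place of $\ffa$, and the Jack parameter $\frac{2}{\xi^{-2}-1}$ replaced by $\frac{2}{\xi^{2}-1}=\frac{2v}{u-v}$, which is positive rational precisely because $u>v$. By \cref{thm:zhusvshape} the degree of $g(h,\Sigma)$ is bounded by $\frac{(u-1)(v-1)}{2}$ and by \cref{thm:svnonzeroimage} it is non-zero, so it suffices to produce, for each $(i,j)\in\NSkac{u,v}$, a factor $\Sigma-s_{i,j}$ dividing $g$ independently of $h$; since $\abs{\NSkac{u,v}}$ equals the integer part of this degree bound, these factors then determine $g$ up to a scalar and, in particular, force $g$ to be independent of $h$.

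First I would take the singular vector $\vsv{u,v}=\scrs{2}{v-1}\NSfket{(1-v)\xi;\mu}$ of the preceding lemma, where $\mu=(u-\frac{u+v}{2v})(\theta+\psi)$, and evaluate the action of its zero mode on the dense-type candidate relaxed highest weight vectors $\NSfket{p;\sqbrac*{\lambda},n}$ and $c_0\NSfket{p;\sqbrac*{\lambda},n}$ of $\ffb$, with $\lambda\in\CC\setminus\ZZ$. By the shape formula \eqref{eq:nssvshape} this zero mode sends these vectors to multiples of $\beta_0^{(u-1)/2}$ (resp.\ $\beta_0^{(u-2)/2}b_0$) applied to the same vector when $u$, hence $v$, is odd (resp.\ even), so its non-vanishing is equivalent to the non-vanishing of a matrix element of the form $\NSffbracket{p;\sqbrac*{\lambda},n}{\gamma_0^{(u-1)/2}\scrs{2}{v-1}\vop{(1-v)\xi a+\mu}{w}}{p;\sqbrac*{\lambda},n}$ and an even-case analogue involving $b_0$ and $c_0$. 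Expanding $\scrs{2}{v-1}$ via \eqref{eq:vandscrs2}, reducing the lattice correlators of $\beta(z)=\vop{\theta+\psi}{z}$ and of $\gamma_0$ to genuine $\beta\gamma$ correlators by \cref{thm:bgffrmod}, and applying the correlator identities of \cref{thm:correlid}, each such matrix element evaluates to a linear combination, with the non-zero coefficients $c^{(v-1)}_i$ of \cref{thm:correlid}, of inner products
\begin{equation*}
  \jprod{\fjack{\sqbrac*{\frac{u+v}{2}-2-\uniqp{v-1}{i}}}{-3}{z},\prod_{i=1}^{v-1}\brac*{1+\frac{z_i}{w}}^{\lambda(p)}}{v-1}{\frac{2}{\xi^{2}-1}},
\end{equation*}
where $\lambda(p)$ is an affine function of $p$ determined by the Heisenberg and $\theta+\psi$ weights of the screening fields and of $\vop{(1-v)\xi a+\mu}{w}$.

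Next I would extract the zeros common to these inner products. As in the discussion opening \cref{sec:0modecalc} and by \eqref{eq:zeromodeinprodzeros}, the common zeros are governed by the cells common to every partition dominated by the admissible partitions $\sqbrac*{\frac{u+v}{2}-2-\uniqp{v-1}{i}}$; these are bounded partwise below by $\admp{v-1}{\frac{u-v}{2}+1,\frac{u-v}{2}}$ in the odd case (and by the corresponding partition in the even case), so \cite[Lem.~3.2]{BloSVir16} identifies the partition $\rho$ of common cells explicitly as a length $v-1$ staircase with largest part $\frac{u-1}{2}$. Rewriting $\prod_{b\in\rho}\brac*{\frac{2}{\xi^{2}-1}(\lambda(p)+\mathrm{const})+\frac{2}{\xi^{2}-1}a^\prime(b)-\ell^\prime(b)}$, up to an irrelevant scalar, in the form $\prod_{b\in\rho}\brac*{\pm s_p-s_{\ell^\prime(b)+1,\ell^\prime(b)+2a^\prime(b)+2}}$ exactly as in the proof of \cref{thm:nszhupoly1} --- with $s_p=\pm(\xi p+\tfrac12)$ the eigenvalue of $\Sigma$ on $\NSfket{p;\sqbrac*{\lambda},n}$ and on $c_0\NSfket{p;\sqbrac*{\lambda},n}$ --- shows that $g(h,s_p)$ vanishes whenever $\pm s_p=s_{i,j}$ for $(i,j)$ in a subset $S\subseteq\NSkac{u,v}$. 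Since $-s_{i,j}=s_{u-i,v-j}$ and, by the hypothesis $u>v$, the set $S$ together with its image under $(i,j)\mapsto(u-i,v-j)$ exhausts $\NSkac{u,v}$ (the mirror of the remark at the end of the proof of \cref{thm:nszhupoly1}), we conclude $g(h,s_{i,j})=0$ for every $(i,j)\in\NSkac{u,v}$, independently of $h$. As these $\abs{\NSkac{u,v}}$ factors saturate the degree bound on a non-zero polynomial, $g(h,\Sigma)$ is their product up to a scalar and does not depend on $h$. The case of even $u,v$ is obtained identically from the even-case matrix elements using the even parts of \cref{thm:correlid} and \cite[Lem.~3.2]{BloSVir16}.

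The step I expect to be the main obstacle is the combinatorial bookkeeping: pinning down the precise admissible partitions occurring in the $\scr{2}$ correlators, choosing the right case of \cite[Lem.~3.2]{BloSVir16} to read off $\rho$, and then organising $\prod_{b\in\rho}(\cdots)$ into a product over $\NSkac{u,v}$ in a way that makes the hypothesis $u>v$ genuinely responsible for covering the whole Kac table. A secondary, but purely technical, point is that since $\beta$ is now bosonised, the $\gamma_0$- and $\beta_0$-dependence of the correlators must be recovered from lattice correlators; this is precisely what \cref{thm:bgffrmod} supplies, so I do not expect it to cause real difficulty.
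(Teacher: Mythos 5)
Your proposal follows essentially the same route as the paper's proof: replace $\scr{1}$ by $\scr{2}$ and $\ffa$ by $\ffb$, act with the zero mode of $\scrs{2}{v-1}\NSfket{(1-v)\xi;(u-\frac{u+v}{2v})(\theta+\psi)}$ on relaxed highest weight vectors of the form $\NSfket{p;j(\theta+\psi)-\psi}$ and $c_0\NSfket{p;j(\theta+\psi)-\psi}$, reduce to Jack inner products at parameter $\frac{2}{\xi^2-1}$, and read off the common zeros from the cells shared by all partitions dominated by the relevant admissible partitions. Two small corrections to the bookkeeping you rightly flag as the delicate part: with the length-$(v-1)$ partition $\rho$ whose largest part is $\frac{u-1}{2}$, the factors must be written $s_{\ell'(b)+2a'(b)+2,\,\ell'(b)+1}$ (first index running up to $u-1$, second up to $v-1$), i.e.\ with the indices \emph{swapped} relative to the $v>u$ case, since this is precisely what places the zeros at the pairs $(i,j)\in\NSkac{u,v}$ with $i>j$, whose union with their images under $(i,j)\mapsto(u-i,v-j)$ covers the \ns{} Kac table exactly when $u>v$. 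Also, the paper isolates the $\beta^{(u-1)/2}$ (resp.\ $\beta^{(u-2)/2}$) component not by inserting powers of $\gamma_0$ but by lattice-weight conservation --- the bra carries the shifted $\theta+\psi$ weight, so only the summands of $\prod_i(\beta(z_i+w)c(z_i+w)-b(z_i+w))$ with the right number of $\beta$'s survive --- which sidesteps having to manipulate the bosonised $\gamma$ and reduces everything directly to the pure $bc$ correlators of \cref{thm:correlid}.
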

\begin{proof}
  This proof mirrors that of \cref{thm:nszhupoly1} but with the screening
  operator \(\scr{1}\) replaced by \(\scr{2}\), that is, we will show that the
  factors \(\brac*{\Sigma-s_{i,j}}\) all divide \(g(h,\Sigma)\), thus saturating the
  degree bound on \(g\) and determining \(g\) up to scaling.
  
  Recall that by \cref{thm:bgffrmod} we have
  \(\Fock{\sqbrac*{\lambda\brac*{\theta+\psi}-\psi}}\cong \bgd{\lambda}\) as a
  \(\beta\gamma\) module and so \(\NSfket{p;j\brac*{\theta+\psi}-\psi}\) and \(c_0\NSfket{p;j\brac*{\theta+\psi}-\psi}\) are
  \(\aosp{}\) relaxed highest weight vectors. The zero mode of the singular
  vector shifts the \(\aosp{}\) weights of such relaxed highest weight vectors
  by \((u-1)\alpha\). For \(u\) and \(v\) odd, this means a shift of
  \(\frac{u-1}{2}\brac*{\theta+\psi}\) in \(\beta\gamma\) weight, and for \(u\)
  and \(v\) even, it means a shift of \(\frac{u-2}{2}\brac*{\theta+\psi}\) in
  \(\beta\gamma\) weight and 1 unit of \(bc\) weight.
  
  Consider the case when \(u\) and \(v\) are odd, then the action of the zero
  mode of the singular vector on the relaxed highest weight vectors
  \(\NSfket{p;j\brac*{\theta+\psi}-\psi}\) and
  \(c_0\NSfket{p;j\brac*{\theta+\psi}-\psi}\)
  is non-zero if and only if the matrix elements
  \begin{align}
    &\NSffbracket{p;\brac*{j+\tfrac{u-1}{2}}\brac*{\theta+\psi}-\psi}{
      \scrs{2}{v-1}\vop{(1-v)\xi
        a+\brac*{u-\frac{u+v}{2v}}\brac*{\theta+\psi}}{w}}
    {p;j\brac*{\theta+\psi}-\psi}
  \end{align}
  and
  \begin{align}
    &\NSffbracket{p;\brac*{j+\tfrac{u-1}{2}}\brac*{\theta+\psi}-\psi}{b_0
      \scrs{2}{v-1}\vop{(1-v)\xi
        a+\brac*{u-\frac{u+v}{2v}}\brac*{\theta+\psi}}{w}c_0}
    {p;j\brac*{\theta+\psi}-\psi}
  \end{align}
  are non-zero. Evaluating the first matrix element gives
  \begin{align}
    &\NSffbracket{p;\brac*{j+\tfrac{u-1}{2}}\brac*{\theta+\psi}-\psi}{
      \scrs{2}{v-1}\vop{(1-v)\xi
        a+\brac*{u-\frac{u+v}{2v}}\brac*{\theta+\psi}}{w}}
    {p;j\brac*{\theta+\psi}-\psi}\nonumber\\
    &\quad=
    \int 
    \prod_{i\neq j}\brac*{1-\frac{z_i}{z_j}}^{\frac{\xi^2-1}{2}}
    \prod_{i=1}^{v-1}\brac*{1+\frac{z_i}{w}}^{\xi p}
    \prod_{i=1}^{v-1}z_i^{2-\frac{u+v}{2}}\van{z}\nonumber\\ &\qquad\qquad
    \NSfbra{\brac*{j+\tfrac{u-1}{2}}\brac*{\theta+\psi}-\psi}
    \prod_{i=1}^{v-1}\brac*{\beta(z_i+w) c(z_i+w)-b(z_i+w)}\nonumber\\
    &\qquad\qquad
    \prod_{i=1}^{v-1}\vop{-\frac{u+v}{2v}\brac*{\theta+\psi}}{z_i+w}
    \cdot
    \vop{\brac*{u-\frac{u+v}{2v}}\brac*{\theta+\psi}}{w}
    \NSfket{j\brac*{\theta+\psi}-\psi}
    \frac{\dd z_1\cdots\dd z_{v-1}}{z_1\cdots z_{v-1}}
  \end{align}
  The combined \(\beta\gamma\) weight of the ket and the vertex operators in
  the integrand is \(\brac*{j+\frac{u-v}{2}}\brac*{\theta+\psi}-\psi\), so
  in order to reach the bra's weight of
  \(\brac*{j+\tfrac{u-1}{2}}\brac*{\theta+\psi}\) the product of \(\beta,b\)
  and \(c\) fields needs to contribute \(\frac{v-1}{2}\brac*{\theta+\psi}\), that
  is, after multiplying out the product, only summands containing
  \(\frac{v-1}{2}\) copies of \(\beta\) will contribute to the matrix
  element. Thus, the matrix element inside the integrand evaluates to
  \begin{align}
    &\NScorrfn{\prod_{i=1}^{v-1}\brac*{c(z_i+w)(z_i+w)^{-1}-b(z_i+w)}}{+}
    w^{\frac{u+v}{2v}-u}\prod_{i=1}^{v-1}\brac*{z_i+w}^{\frac{u+v}{2v}}\nonumber\\
    &\quad=
    \NScorrfn{\prod_{i=1}^{v-1}\brac*{c(z_i+w)(z_i+w)^{-1}-b(z_i+w)}}{+}
    w^{\frac{v-u}{2}}\prod_{i=1}^{v-1}\brac*{1+\frac{z_i}{w}}^{\frac{u+v}{2v}}
  \end{align}
  Thus the total matrix element evaluates to
  \begin{align}
    &\NSffbracket{p;\brac*{j+\tfrac{u-1}{2}}\brac*{\theta+\psi}-\psi}{
      \scrs{2}{v-1}\vop{(1-v)\xi
        a+\brac*{u-\frac{u+v}{2v}}\brac*{\theta+\psi}}{w}}
    {p;j\brac*{\theta+\psi}-\psi}\nonumber\\
    &\quad=
    w^{\frac{v-u}{2}}\int 
    \prod_{i\neq j}\brac*{1-\frac{z_i}{z_j}}^{\frac{\xi^2-1}{2}}
    \prod_{i=1}^{v-1}\brac*{1+\frac{z_i}{w}}^{\xi
      p+\frac{u+v}{2v}}\nonumber\\
    &\qquad
    \prod_{i=1}^{v-1}z_i^{2-\frac{u+v}{2}}\van{z}
    \NScorrfn{\prod_{i=1}^{v-1}\brac*{c(z_i+w)(z_i+w)^{-1}-b(z_i+w)}}{+}
    \frac{\dd z_1\cdots\dd z_{v-1}}{z_1\cdots z_{v-1}}\nonumber\\
    &\quad=\poch{j}{\frac{v-1}{2}}(-1)^{\frac{v-1}{2}}
    w^{\frac{v-u}{2}}\int 
    \prod_{i\neq j}\brac*{1-\frac{z_i}{z_j}}^{\frac{\xi^2-1}{2}}
    \prod_{i=1}^{v-1}\brac*{1+\frac{z_i}{w}}^{\xi
      p+\frac{u+v}{2v}-1}\nonumber\\
    &\qquad
    \prod_{i=1}^{v-1}z_i^{-\frac{u-v}{2}}
    \sum_{i=0}^{\frac{v-1}{2}}w^{\frac{v-1}{2}-i}c_i^{(v-1)}\fjack{\sqbrac*{v-2-\uniqp{v-1}{i}}}{-3}{z^{-1}}
    \frac{\dd z_1\cdots\dd z_{v-1}}{z_1\cdots z_{v-1}}\nonumber\\
    &\quad=
    \sum_{i=1}^{\frac{v-1}{2}}w^{\frac{v-1}{2}-i}c_i^{(v-1)}
    \jprod{\fjack{\sqbrac*{\frac{u+v}{2}+2-\uniqp{v-1}{i}}}{-3}{z},\prod_{j=1}^{v-1}\brac*{1+\frac{z_j}{w}}^{\xi
        p+\frac{u+v}{2v}-1}}{v-1}{\frac{2}{\xi^2-1}},
  \end{align}
  where the second equality makes use of the identities of \eqref{eq:corrformulae}.
  As in the proof of \cref{thm:nszhupoly1}, we can look for zeros common to
  all summands to find zeros of the action of the zero mode of the singular
  vector by identifying cells common to all Young diagrams of partitions
  dominated by the \(\sqbrac*{\frac{u+v}{2}+2-\uniqp{v-1}{i}}\). By
  \cite[Lem.~3.2.(3)]{BloSVir16}
  these cells form the length \(v-1\) partition
  \(\rho=\sqbrac*{\frac{u-1}{2},\frac{u-1}{2}-1,\frac{u-1}{2}-1,\dots,\frac{u-v}{2}+1,\frac{u-v}{2}+1,\frac{u-v}{2}}\)
  and the zeros common to all summands above include those of
  \begin{align}
    \prod_{b\in\rho}\brac*{\xi
      p+\frac{\xi^2-1}{2}-a'(b)+\frac{\xi^2-1}{2}\ell'(b)}
    &=\prod_{b\in\rho}\brac*{s_p-s_{\ell'(b)+2a'(b)+2,\ell'(b)+1}}
    =\prod_{\substack{(i,j)\in \NSkac{u,v}\\ i>j}}\brac*{s_p-s_{i,j}}.
  \end{align}
  The analogous matrix element coming from the action of the zero mode of the
  singular vectors in the relaxed highest weight vector
  \(c_0\NSfket{p;j\brac*{\theta+\psi}-\psi}\) gives
  \begin{align}
    &\NSffbracket{p;\brac*{j+\tfrac{u-1}{2}}\brac*{\theta+\psi}-\psi}{b_0
      \scrs{2}{v-1}\vop{(1-v)\xi
        a+\brac*{u-\frac{u+v}{2v}}\brac*{\theta+\psi}}{w}c_0}{p;j\brac*{\theta+\psi}-\psi}\nonumber\\
    &\qquad=
    w^{\frac{v-u}{2}}\int 
    \prod_{i\neq j}\brac*{1-\frac{z_i}{z_j}}^{\frac{\xi^2-1}{2}}
    \prod_{i=1}^{v-1}\brac*{1+\frac{z_i}{w}}^{\xi
      p+\frac{u+v}{2v}}\nonumber\\
    &\qquad
    \prod_{i=1}^{v-1}z_i^{2-\frac{u+v}{2}}\van{z}
    \NScorrfn{\prod_{i=1}^{v-1}\brac*{c(z_i+w)(z_i+w)^{-1}-b(z_i+w)}}{-}
    \frac{\dd z_1\cdots\dd z_{v-1}}{z_1\cdots z_{v-1}}\nonumber\\
    &\quad=
    \sum_{i=1}^{\frac{v-1}{2}}w^{\frac{v-1}{2}-i}c_i^{(v-1)}
    \jprod{\fjack{\sqbrac*{\frac{u+v}{2}+2-\uniqp{v-1}{i}}}{-3}{z},\prod_{j=1}^{v-1}\brac*{1+\frac{z_j}{w}}^{\xi
        p+\frac{u+v}{2v}-1}}{v-1}{\frac{2}{\xi^2-1}}.
  \end{align}
  As above, the zeros common to all summands include those of
  \begin{align}
    \prod_{b\in\rho}\brac*{\xi
      p+\frac{\xi^2-1}{2}-a'(b)+\frac{\xi^2-1}{2}\ell'(b)} 
    &=\prod_{b\in\rho}\brac*{-s_p-s_{\ell'(b)+2a'(b)+2,\ell'(b)+1}}
    =\prod_{\substack{(i,j)\in \NSkac{u,v}\\ i>j}}\brac*{-s_p-s_{i,j}}.
  \end{align}
  Thus,
  \(g(h,s_{i,j})=0\) independent of \(h\) for all \((i,j)\in\NSkac{u,v}\).
  Note that we have used the fact that \(u>v\), otherwise the ordered pairs
  \((i,j)\) and \((u-i,v-j)\), with \(i>j\), would not cover the entire \ns{}
  Kac table.

  For \(u\) and \(v\) even, the action of the zero mode of the singular vector
  on the relaxed highest weight vectors
  \(\NSfket{p;j\brac*{\theta+\psi}-\psi}\) 
  and \(c_0\NSfket{p;j\brac*{\theta+\psi}-\psi}\) is non-zero if and only if the matrix elements
  \begin{align}
    &\NSffbracket{p;\brac*{j+\tfrac{u-2}{2}}\brac*{\theta+\psi}-\psi}{b_0
      \scrs{2}{v-1}\vop{(1-v)\xi
        a+\brac*{u-\frac{u+v}{2v}}\brac*{\theta+\psi}}{w}}
    {p;j\brac*{\theta+\psi}-\psi}\nonumber\\
    &\NSffbracket{p;\brac*{j+\tfrac{u}{2}}\brac*{\theta+\psi}-\psi}{
      \scrs{2}{v-1}\vop{(1-v)\xi
        a+\brac*{u-\frac{u+v}{2v}}\brac*{\theta+\psi}}{w}c_0}
    {p;j\brac*{\theta+\psi}-\psi}
  \end{align}
  are non-zero.
  By the same reasoning as in the case of odd \(u\) and \(v\) above, these
  matrix elements both evaluate to
  \begin{align}
    \sum_{i=0}^{\frac{v}{2}-1}w^{-\frac{u}{2}-i}c_i
    \jprod{\prod_{i=1}^{v-1} z_i^{\frac{u-v}{2}}
      \fjack{\sqbrac*{v-2-\uniqp{v-1}{i}}}{-3}{z},\prod_{i=1}^{v-1}\brac*{1+\frac{z_i}{w}}^{\xi
        p+\frac{\xi^2+1}{2}-1}}{v-1}{\frac{2}{\xi^{2}-1}}.
  \end{align}
  By \cite[Lem.~3.2.(4)]{BloSVir16}, the cells common to all Young diagrams dominated
  by \(\sqbrac*{v-2-\uniqp{v-1}{i}}\) form the length \(v-1\) partition
  \(\rho=\sqbrac*{\frac{u}{2}-1,\frac{u}{2}-1,\dots,\frac{u-v}{2}+1,\frac{u-v}{2}+1,\frac{u-v}{2}}\).
  The zeros common to all summands therefore include those of
  \begin{align}
    \prod_{b\in\rho}\brac*{\pm s_p-s_{\ell'(b)+2a'(b)+2,\ell'(b)+1}}.
  \end{align}
  Thus,
  \(g(h,s_{i,j})=0\) independent of \(h\) for all \((i,j)\in\NSkac{u,v}\).
\end{proof}

\begin{lem}\label{thm:rzhupoly2}
  Up to an irrelevant scale factor, the polynomial \(g_\parity(h,Q)\) of
  \cref{thm:zhusvshape} is given by
  \begin{equation}
    g_\parity(h,Q)= 
    \prod_{[(i,j)]\in\rRkac{u,v}}
    \brac*{Q-q_{i,j}},\quad
    q_{i,j}=\frac{\brac*{u\,j-v\,i}^2-4v^2}{8v^2}.
  \end{equation}
    In particular, \(g_\parity(h,Q)\) does not depend on \(h\).
\end{lem}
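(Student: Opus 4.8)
The plan is to run the proof of \cref{thm:rzhupoly1} inside the second free field realisation $\ffb$, with the screening operator $\scr{1}$ replaced by $\scr{2}$ throughout, exactly as \cref{thm:nszhupoly2} adapts \cref{thm:nszhupoly1}. By \cref{thm:zhusvshape} the polynomial $g_\parity(h,Q)$ has degree at most $\tfrac{(u-1)(v-1)-1}{2}$ and by \cref{thm:svnonzeroimage} it is non-zero; since the factors $Q-q_{i,j}$ with $[(i,j)]\in\rRkac{u,v}$ saturate this bound and $q_{i,j}$ is injective on $\rRkac{u,v}$, it suffices to show that each such factor divides $g_\parity(h,Q)$ independently of $h$.

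I would begin from the singular vector \eqref{eq:scrsvb}, $\vsv{u,v}=\scrs{2}{v-1}\NSfket{\brac*{1-v}\xi;\brac*{u-\frac{u+v}{2v}}\brac*{\theta+\psi}}$, and use \cref{thm:bgffrmod} to identify $\Fock{\sqbrac*{j\brac*{\theta+\psi}-\psi}}\cong\bgd{j}$, so that $\Rfket{p;j\brac*{\theta+\psi}-\psi}$ and $c_0\Rfket{p;j\brac*{\theta+\psi}-\psi}$ are $\aosp{}$ relaxed highest weight vectors in the Ramond sector. As in \cref{thm:nszhupoly2}, the zero mode of the singular vector shifts their $\beta\gamma$ weight by $\tfrac{u-1}{2}\brac*{\theta+\psi}$ when $u,v$ are odd and by $\tfrac{u-2}{2}\brac*{\theta+\psi}$ together with one unit of $bc$ weight when $u,v$ are even. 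Taking $u,v$ odd, I would expand the matrix element $\Rffbracket{p;\brac*{j+\frac{u-1}{2}}\brac*{\theta+\psi}-\psi}{\scrs{2}{v-1}\vop{\brac*{1-v}\xi a+\brac*{u-\frac{u+v}{2v}}\brac*{\theta+\psi}}{w}}{p;j\brac*{\theta+\psi}-\psi}$ into an integral of screening fields: the $\beta\gamma$- and $bc$-weight bookkeeping forces only the summands of $\prod\brac*{\beta(z_i)c(z_i)-b(z_i)}$ containing $\tfrac{v-1}{2}$ copies of $\beta$ to contribute, the regular operator products of $\beta(z)=\vop{\theta+\psi}{z}$ and $\vop{-\frac{u+v}{2v}\brac*{\theta+\psi}}{z}$ with themselves and with the inserted vertex operator collapse the lattice part and contribute a factor $\poch{j}{(v-1)/2}$, and the Ramond $bc$ correlator of \cref{thm:bccorrs} together with the identities of \cref{thm:correlid} reduce the matrix element, up to an irrelevant scalar, to a single inner product $\jprod{\fjack{\admp{v-1}{\frac{u-v}{2}+1,\frac{u-v}{2}+1}}{-3}{z},\prod_i\brac*{1+\frac{z_i}{w}}^{\xi p+\frac{u+v}{2v}-\frac12}}{v-1}{\frac{2}{\xi^2-1}}$. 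By the cell-counting mechanism of \cref{sec:0modecalc} and \cite[Lem.~3.2.(1)]{BloSVir16}, the cells common to every partition dominated by $\admp{v-1}{\frac{u-v}{2}+1,\frac{u-v}{2}+1}$ form the length $v-1$ partition $\rho=\sqbrac*{\frac{u-1}{2},\frac{u-1}{2},\frac{u-1}{2}-1,\frac{u-1}{2}-1,\dots,\frac{u-v}{2}+1,\frac{u-v}{2}+1}$, so by \eqref{eq:zeromodeinprodzeros} the matrix element vanishes at the zeros of $\prod_{b\in\rho}\brac*{\xi p+\frac{\xi^2}{2}-a'(b)+\frac{\xi^2-1}{2}\ell'(b)}=\prod_{b\in\rho}\brac*{s_p-s_{\ell'(b)+2a'(b)+1,\,\ell'(b)+1}}=\prod_{(i,j)\in\Rkac{u,v},\,i\ge j}\brac*{s_p-s_{i,j}}$, where $s_p=\xi p+\tfrac12$; the matrix element built from $c_0\Rfket{p;j\brac*{\theta+\psi}-\psi}$ evaluates to the same quantity.

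Since these zeros are independent of $j$, hence of the $\beta\gamma$ weight, hence of $h$, and $g_\parity$ depends on $p$ only through $q_p=\tfrac12\brac*{s_p^2-1}$, a root at $s_p=s_{i,j}$ forces one at $s_p=-s_{i,j}=s_{u-i,v-j}$, so $\brac*{s_p-s_{i,j}}\brac*{s_p+s_{i,j}}=2\brac*{q_p-q_{i,j}}$ divides $g_\parity(h,q_p)$ and hence $Q-q_{i,j}$ divides $g_\parity(h,Q)$; since the pairs $(i,j)\in\Rkac{u,v}$ with $i\ge j$ meet every class of $\rRkac{u,v}$ when $u>v$, this produces all the required factors. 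The even $u,v$ case is handled identically, the only changes being an extra $b_0$ or $c_0$ zero mode in the matrix elements, the use of \cite[Lem.~3.2.(2)]{BloSVir16}, and the resulting length $v-1$ partition $\rho=\sqbrac*{\frac{u}{2}-1,\frac{u}{2}-2,\frac{u}{2}-2,\dots,\frac{u-v}{2}+1,\frac{u-v}{2}+1}$, which again yields $\prod_{(i,j)\in\Rkac{u,v},\,i\ge j}\brac*{s_p-s_{i,j}}$ up to scale. The main obstacle, as in the companion lemmas, is purely combinatorial: verifying that the arm- and leg-colengths of the cells of $\rho$ recover exactly the Kac labels $s_{i,j}$ with $(i,j)\in\Rkac{u,v}$, $i\ge j$, and that these, after the $q_p$-substitution, exhaust $\rRkac{u,v}$ precisely when $u>v$ --- the Ramond, $u>v$ analogue of the range and parity check that closes the proofs of \cref{thm:rzhupoly1} and \cref{thm:nszhupoly2}, and the point at which the hypothesis $u>v$ rather than $v>u$ genuinely enters.
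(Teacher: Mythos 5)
Your treatment of the odd \(u,v\) case is essentially the paper's proof: the same matrix element built from \(\Rfket{p;j\brac*{\theta+\psi}-\psi}\), the same reduction to the inner product against \(\fjack{\admp{v-1}{\frac{u-v}{2}+1,\frac{u-v}{2}+1}}{-3}{z}\) with exponent \(\xi p+\frac{\xi^2}{2}\), the same common-cell partition from \cite[Lem.~3.2.(1)]{BloSVir16}, and the same substitution \(q_p=\frac12\brac*{s_p^2-1}\) converting roots in \(s_p\) into factors \(Q-q_{i,j}\). One quibble: there is no \(c_0\) in the Ramond \(bc\) algebra (the modes are half-integer there), so the "matrix element built from \(c_0\Rfket{p;j\brac*{\theta+\psi}-\psi}\)" does not exist; it is also not needed, since the pairing of \(\pm s_{i,j}\) already follows from the \(q_p\)-substitution, exactly as you argue in the next paragraph.

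The even \(u,v\) case, however, has a genuine gap. First, the required modification is not "an extra \(b_0\) or \(c_0\) zero mode" --- again, those modes do not exist in the Ramond sector. The correct mechanism, forced by \cref{thm:zhupresentation}, is that for \(u,v\) even the twisted Zhu algebra element is \(\left[y_0\vsv{u,v}\right]^\parity\) rather than \(\left[\vsv{u,v}\right]^\parity\) (the singular vector has \(\osp\)-weight \((u-1)\alpha\) with \(u-1\) odd, so its image alone does not land in \(\UEA{\slt}\)); acting with \(y_0\) inserts a \(c(w)\) field into the correlator, which is what balances the \(b\) and \(c\) count and makes the second identity of \eqref{eq:corrformulaeR} applicable. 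Second, your partition \(\rho=\sqbrac*{\frac u2-1,\frac u2-2,\frac u2-2,\dots,\frac{u-v}{2}+1,\frac{u-v}{2}+1}\) is not the one the argument produces (as written it has length \(v-3\), not \(v-1\)); the paper obtains \(\rho=\sqbrac*{\frac u2,\frac u2-1,\frac u2-1,\dots,\frac{u-v}{2}+1,\frac{u-v}{2}+1}\) from \cite[Lem.~3.2.(2)]{BloSVir16}. This difference is not cosmetic: with your \(\rho\) the first row only yields the labels \((i,1)\) with \(i\le u-3\), so for instance the factor \(s_p-s_{u-1,1}\) is never produced, the set of zeros no longer exhausts \(\set*{(i,j)\in\Rkac{u,v}\st i\ge j}\), the degree bound of \cref{thm:zhusvshape} is not saturated, and the claimed factorisation of \(g_\parity\) does not follow for even \(u,v\).
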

\begin{proof}
  This proof mirrors that of \cref{thm:rzhupoly1} but with the screening
  operator \(\scr{1}\) replaced by \(\scr{2}\).
  For \(u\) and \(v\) odd, the action of the zero mode of the singular vector
  on the candidate relaxed highest weight vector is non-zero if and only if the following
  matrix element is non-zero.
  \begin{align}\label{eq:ugvrsvcalc}
    &\bracketb{p;\brac*{j+\tfrac{u-1}{2}}\brac*{\theta+\psi}-\psi;\Ra}{
    \scrs{2}{v-1}\vop{(1-v)\xi
      a+\brac*{u-\frac{u+v}{2v}}\brac*{\theta+\psi}}{w}}
  {p;j\brac*{\theta+\psi}-\psi;\Ra}\nonumber\\
  &\quad=
  w^{\frac{1-u}{2}}(-2)^{\frac{v-1}{2}}\int \prod_{i\neq j}\brac*{1-\frac{z_i}{z_j}}^{\frac{\xi^2-1}{2}}
  \prod_{i=1}^{v-1}\brac*{1+\frac{z_i}{w}}^{\xi p}
  \prod_{i=1}^{v-1}z_i^{2-\frac{u+v}{2}}\van{z}\nonumber\\
  &\qquad\qquad \bra{\brac*{j+\tfrac{u-1}{2}}\brac*{\theta+\psi}-\psi;\Ra}
  \prod_{i=1}^{v-1}\brac*{\beta(z_i+w)c(z_i+w)-b(z_i+w)}\nonumber\\
  &\qquad\qquad
  \prod_{i=1}^{v-1}\vop{-\frac{u+v}{2v}\brac*{\theta+\psi}}{z_i+w}\cdot
  \vop{\brac*{u-\frac{u+v}{2v}}\brac*{\theta+\psi}}{w}
  \ket{j\brac*{\theta+\psi}-\psi;\Ra}
  \frac{\dd z_1\cdots \dd z_{v-1}}{z_1\cdots z_{v-1}}
  \nonumber\\
  &\quad=
  w^{\frac{1-u}{2}}(-2)^{\frac{v-1}{2}}
  \jprod{
    \fjack{\admp{v-1}{\frac{u-v}{2}+1,\frac{u-v}{2}+1}}{-3}{z},\prod_{i=1}^{v-1}\brac*{1+\frac{z_i}{w}}^{\xi
       p+\frac{\xi^2}{2}}}{v-1}{\frac{2}{\xi^{2}-1}},
  \end{align}
  where the matrix element in the integrand is evaluated as in the proof of
  \cref{thm:nszhupoly2} (that is, the only summands of the product of
  \(\beta\), \(b\) and \(c\) fields which contribute are this containing
  \(\frac{v-1}{2}\) copies of \(\beta\))
  and using the identities of \eqref{eq:corrformulaeR}.
  As above in the proofs of \cref{thm:nszhupoly1}, \cref{thm:rzhupoly1} and \cref{thm:nszhupoly2},
  we identify cells common to all Young diagrams of partitions dominated by
  \(\admp{v-1}{\frac{u-v}{2}+1,\frac{u-v}{2}+1}\) to find zeros. By
  \cite[Lem.3.2.(1)]{BloSVir16} the cells form the length \(v-1\) partition
  \(\rho=\sqbrac*{\frac{u-1}{2},\frac{u-1}{2},\frac{u-1}{2}-1,\frac{u-1}{2}-1,\dots,\frac{u-v}{2}+1,\frac{u-v}{2}+1}\).
  The zeros of the matrix element \eqref{eq:ugvrsvcalc} therefore include
  those of
  \begin{align}
   \prod_{b\in\rho}\brac*{\xi p+\frac{\xi^2}{2}-a'(b)+\frac{\xi^2-1}{2}\ell'(b)}
   &=
   \prod_{b\in\rho}\brac*{\xi p +\frac{1}{2}
     -\frac{1}{2}\brac*{\ell'(b)+2a'(b)+1}+\frac{\xi^2}{2}\brac*{\ell'(b)+1}}
   \nonumber\\
   &=
   \prod_{b\in\rho}\brac*{s_p-s_{\ell'(b)+2a'(b)+1,\ell'(b)+1}}
   =\prod_{\substack{(i,j)\in\Rkac{u,v}\\i>j}}\brac*{s_p-s_{i,j}},
  \end{align}
  Also
  as in the proof of \cref{thm:rzhupoly1}, \(\brac*{s_p-s_{i,j}}\) dividing
  \(g_\parity(h,q_p)\) implies that
  \(\brac*{s_p+s_{i,j}}=\brac*{s_p-s_{u-r,v-s}}\) does so as well and thus
  \(q_p-q_{i,j}\) divides \(g_\parity(h,q_p)\) for all \((i,j)\in
  \Rkac{u,v}\), \(i>j\). Since \(u>v\) every class of
  \(\rRkac{u,v}\) has a representative of this form, thus proving the lemma
  for odd \(u\) and \(v\).

  For \(u\) and \(v\) even, one must consider the action of the zero mode of
  the \(y_0\) descendant of the singular vector. This action vanishes on the
  relaxed highest weight vector \(\Rfket{p;j\brac*{\theta+\psi}-\psi}\) if
  and only if the matrix element
  \begin{equation}
    \Rffbracket{p;\brac*{j+\tfrac{u-2}{2}}\brac*{\theta+\psi}-\psi}{
      \scrs{2}{v-1}c(w)\vop{(1-v)\xi
        a+\brac*{u-\frac{u+v}{2v}}\brac*{\theta+\psi}}{w}}
    {p;j\brac*{\theta+\psi}-\psi}
  \end{equation}
  vanishes. This matrix element
  evaluates to
  \begin{align}
    &\Rffbracket{p;\brac*{j+\tfrac{u-2}{2}}\brac*{\theta+\psi}-\psi}{
      \scrs{2}{v-1}c(w)\vop{(1-v)\xi
        a+\brac*{u-\frac{u+v}{2v}}\brac*{\theta+\psi}}{w}}
    {p;j\brac*{\theta+\psi}-\psi}\nonumber\\
    &\quad=
    -w^{1-\frac{u}{2}}(-2)^{\frac{v-2}{2}}
    \jprod{
      \fjack{\admp{v-1}{\frac{u-v}{2}+1,\frac{u-v}{2}+1}}{-3}{z},\prod_{i=1}^{v-1}\brac*{1+\frac{z_i}{w}}^{\xi
       p+\frac{\xi^2}{2}}}{v-1}{\frac{2}{\xi^{2}-1}}
  \end{align}
  The usual trick of looking for cells common to all Young diagrams of
  partitions dominated by \(\admp{v-1}{\frac{u-v}{2}+1,\frac{u-v}{2}+1}\)
  gives the length \(v-1\) partition
  \(\rho=\sqbrac*{\frac{u}{2},\frac{u}{2}-1,\frac{u}{2}-1,\frac{u}{2}-2,\frac{u}{2}-2,\dots,\frac{u-v}{2}+1,\frac{u-v}{2}+1}\)
  by \cite[Lem.~3.2.(2)]{BloSVir16}.
  The zeros of the above matrix element therefore include those of
  \begin{align}
   \prod_{b\in\rho}\brac*{s_p-s_{\ell'(b)+2a'(b)+1,\ell'(b)+1}}
   =\prod_{\substack{(i,j)\in\Rkac{u,v}\\i>j}}\brac*{s_p-s_{i,j}}.
  \end{align}
  As in the case of odd \(u\) and \(v\) this implies that
  \(g_\parity(h,q_{i,j})=0\) for all \([(i,j)]\in \rRkac{u,v}\).
\end{proof}

\flushleft

\end{document}